\def\jcdot{\scriptscriptstyle\bullet}
\def\twomapr#1#2{\smash{\mathop{\longrightarrow}\limits^{#1}_{#2}}}
\def\invlim{\mathop{\vtop{\ialign{##\crcr$\hfill{\lim}\hfil$\crcr
\noalign{\kern1pt\nointerlineskip}\leftarrowfill\crcr\noalign
{\kern -3pt}}}}\limits}
\def\dirlim{\mathop{\vtop{\ialign{##\crcr$\hfill{\lim}\hfil$\crcr
\noalign{\kern1pt\nointerlineskip}\rightarrowfill\crcr\noalign
{\kern -3pt}}}}\limits}
\def\lomapr#1{\smash{\mathop{\relbar\joinrel\longrightarrow}\limits^{#1}}}
 \def\verylomapr#1{\smash{\mathop{\relbar\joinrel\relbar\joinrel\relbar\joinrel\longrightarrow}\limits^{#1}}}
\def\veryverylomapr#1{\smash{\mathop{\relbar\joinrel\relbar\joinrel\relbar
\joinrel\relbar\joinrel\relbar\joinrel\longrightarrow}\limits^{#1}}}
\def\phi{\varphi}
\def\epsilon{\varepsilon}
\let\mathcal\mathscr
\newtheorem{thmx}{Theorem}
\newtheorem{theorem}{Theorem}[section]
 \newtheorem{lemma}[theorem]{Lemma}
 \newtheorem{proposition}[theorem]{Proposition}
 \newtheorem{corollary}[theorem]{Corollary}
\theoremstyle{definition}
\newtheorem{definition}[theorem]{Definition}
\newtheorem{remark}[theorem]{Remark}
\newtheorem{example}[theorem]{Example}
\newtheorem*{acknowledgments}{Acknowledgments}
\newcommand{\hk}{\mathrm{HK}}
\newcommand{\dr}{\mathrm{dR}}
\newcommand{\ove}{\overline}
\newcommand{\what}{\widehat}
\newcommand{\Lie}{\operatorname{Lie}}
\newcommand{\Qp}{\mathbf{Q}_p}
\newcommand{\Zp}{\mathbf{Z}_p}
\renewcommand{\phi}{\varphi}
\newcommand{\ddr}{\mathrm{D}_{\mathrm{dR}}}
\newcommand{\dfont}{\mathrm{D}}
\newcommand{\dcroc}[1]{[\![ #1 ]\!]}
\newcommand{\nc}{nc}
\newcommand{\eff}{eff}
\newcommand{\pcr}{pcr}
\newcommand{\nd}{nd}
\newcommand{\ovu}{\overline{U}}
\newcommand{\R}{\mathrm {R} }
\newcommand{\LL}{\mathrm {L} }
\newcommand{\pst}{\mathrm{pst}}
\newcommand{\pri}{^{\prime}}
\newcommand{\bq}{{\mathbf Q}}
\newcommand{\bz}{{\mathbf Z}}
\newcommand{\norm}{\operatorname{norm} }
\newcommand{\ovk}{\overline{K} }
 \newcommand{\ovv}{\overline{V} }
\newcommand{\ord}{\operatorname{ord} }
\newcommand{\gp}{\operatorname{gp} }
\newcommand{\ad}{\operatorname{ad} }
\newcommand{\pr}{\operatorname{pr} }
\newcommand{\Pic}{\operatorname{Pic} }
 \newcommand{\holim}{\operatorname{holim} }
 \newcommand{\cont}{\operatorname{cont}  }
  \newcommand{\proeet}{\operatorname{pro\acute{e}t}  }
 \newcommand{\eet}{\operatorname{\acute{e}t} }
 \newcommand{\dlog}{\operatorname{dlog} }
 \newcommand{\ZAR}{\operatorname{ZAR} }
 \newcommand{\nr}{\operatorname{nr} }
 \newcommand{\Spec}{\operatorname{Spec} }
 \newcommand{\Spf}{\operatorname{Spf} }
 \newcommand{\Hom}{\operatorname{Hom} }
 \newcommand{\Ext}{\operatorname{Ext} }
 \newcommand{\Rep}{\operatorname{Rep} }
 \newcommand{\Gal}{\operatorname{Gal} }
 \newcommand{\tr}{ \operatorname{tr} }
 \newcommand{\can}{ \operatorname{can} }
 \newcommand{\id}{ \operatorname{Id} }
\newcommand{\synt}{ \operatorname{syn} }
 \newcommand{\Cone}{\operatorname{Cone} }
\newcommand{\st}{\operatorname{st} }
\newcommand{\bk}{\operatorname{BK} }
 \newcommand{\kker}{\operatorname{Ker} }
 \newcommand{\crr}{\operatorname{cr} }
 \newcommand{\gr}{\operatorname{gr} }
 \newcommand{\im}{\operatorname{Im} }
 \newcommand{\kr}{^{\cdot }}
 \newcommand{\sff}{{\mathcal{F}}}
 \newcommand{\sh}{{\mathcal{H}}}
 \newcommand{\sv}{{\mathcal{V}}}
 \newcommand{\sk}{{\mathcal{K}}}
 \newcommand{\sll}{{\mathcal{L}}}
 \newcommand{\so}{{\mathcal O}}
 \newcommand{\sj}{{\mathcal J}}
 \newcommand{\se}{{\mathcal{E}}}
 \newcommand{\sa}{{\mathcal{A}}}
 \newcommand{\sx}{{\mathcal{X}}}
 \newcommand{\sss}{{\mathcal{S}}}
\newcommand{\sd}{{\mathcal{D}}}
\newcommand{\sm}{{\mathcal{M}}}
\newcommand{\spp}{{\mathcal{P}}}
 \newcommand{\wh}{\widehat}
 \newcommand{\E}{\mathbb{E}}
\DeclareMathOperator{\PSh}{PSh}
\DeclareMathOperator{\Sp}{Sp_h}
\newcommand{\DMe}{DM^{eff}}
\newcommand{\DM}{DM}
\newcommand{\Af}{\mathbb A^1}
\begin{document}
 \title[Syntomic cohomology and  $p$-adic regulators for varieties over $p$-adic fields]
 {Syntomic cohomology and $p$-adic regulators for varieties over $p$-adic fields}
 \author{Jan Nekov\'a\v{r}, Wies{\l}awa Nizio{\l}}
 \date{\today}
\thanks{The authors' research was supported in part by the grant ANR-BLAN-0114 and by  the NSF grant DMS0703696, respectively.}
 \email{jan.nekovar@imj-prg.fr, wieslawa.niziol@ens-lyon.fr}
 \begin{abstract}
 We show that the logarithmic version of the syntomic cohomology of Fontaine and Messing for semistable varieties over $p$-adic rings  extends uniquely to a cohomology theory for varieties over $p$-adic fields that satisfies $h$-descent. This new cohomology -  syntomic cohomology -  is a  Bloch-Ogus cohomology theory, admits period map to \'etale cohomology, and has a syntomic descent spectral sequence (from an algebraic closure of the given field to the field itself) that is compatible with the Hochschild-Serre spectral sequence on the \'etale side and is related to the Bloch-Kato exponential map. In  relative dimension zero  we recover the potentially semistable  Selmer groups and,
as an application, we prove that Soul\'e's \'etale regulators land in the potentially semistable  Selmer groups.

  Our construction of syntomic cohomology is based on new ideas and techniques developed by Beilinson and Bhatt in their recent work on $p$-adic comparison theorems.
 \end{abstract}
 \maketitle
 \tableofcontents
 \section{Introduction}In this article we define syntomic cohomology for varieties over $p$-adic fields, relate it to the Bloch-Kato exponential map,  and use it to study the images of Soul\'e's \'etale regulators. Contrary to all the previous constructions of syntomic cohomology (see below for a brief review) we do not restrict ourselves to varieties coming with a nice model over the integers. Hence our syntomic regulators make no integrality assumptions on the $K$-theory classes in the domain.
\subsection{Statement of the main result} 
 Recall that, for varieties proper and smooth over a $p$-adic ring of mixed characteristic,
 syntomic cohomology (or its non-proper variant:  syntomic-\'etale cohomology) was introduced by Fontaine and Messing \cite{FM} in their proof of the Crystalline Comparison Theorem as a natural bridge between crystalline cohomology and \'etale cohomology. It was generalized  to log-syntomic cohomology for semistable varieties by Kato \cite{Kas}. For a log-smooth scheme $\sx$ over a complete discrete valuation ring $V$ of mixed characteristic $(0,p)$ and a perfect residue field, and for any $r\geq 0$, rational log-syntomic cohomology of $\sx$  can be defined as the "filtered Frobenius eigenspace" in log-crystalline cohomology, i.e., as the following mapping fiber
\begin{equation}
\label{first}
\R\Gamma_{\synt}(\sx,r):=\Cone\big(\R\Gamma_{\crr}(\sx,\sj^{[r]})\lomapr{1-\phi_r}\R\Gamma_{\crr}(\sx)\big)[-1],
\end{equation}
where $\R\Gamma_{\crr}(\cdot,\sj^{[r]})$ denotes the  absolute  rational log-crystalline cohomology (i.e., over ${\mathbf Z}_p)$ of the $r$'th Hodge filtration sheaf $\sj^{[r]}$ and $\phi_r$ is the crystalline Frobenius divided by $p^r$.
 This definition  suggested that the log-syntomic cohomology could be the sought for  $p$-adic  analog  of Deligne-Beilinson cohomology. Recall that, for a complex manifold $X$,  the latter can be defined as the cohomology $\R\Gamma(X,{\mathbf Z}(r)_{\sd})$ of Deligne complex ${\mathbf Z}(r)_{\sd}$:
$$0\to {\mathbf Z}(r)\to \Omega^1_X\to\Omega^2_X\to\ldots \to \Omega^{r-1}_X\to 0
$$
 And, indeed, since its introduction, log-syntomic cohomology  has been used with some success in the study   of special values of $p$-adic $L$-functions and in formulating $p$-adic Beilinson conjectures (cf. \cite{BE} for a review).

  The syntomic cohomology theory with
$\Qp$-coefficients $R\Gamma_{\synt}(X_h,r)$ ($r\geq 0$) for arbitrary
varieties -- more generally, for arbitrary essentially finite diagrams
of varieties -- over the $p$-adic field $K$ (the fraction field of $V$) that we construct in this article is a generalization of Fontaine-Messing(-Kato) log-syntomic cohomology.  That is,  for a semistable scheme \footnote{Throughout the Introduction, the divisors at infinity of semistable schemes have no multiplicities.} $\sx$ over $V$ we have $\R\Gamma_{\synt}(\sx,r)\simeq\R\Gamma_{\synt}(X_h,r)$, where $X$ is the largest subvariety of $\sx_K$ with trivial log-structure. An analogous theory $R\Gamma_{\synt}(X_{\ovk,h},r)$ ($r\geq 0$) exists
for (diagrams of) varieties over $\ovk$, where $\ovk$ is an algebraic closure of $K$.

   Our main result can be stated as follows.
\begin{thmx}
\label{main1}
For any variety $X$ over $K$, there is a canonical  graded commutative dg $\Qp$-algebra $\R\Gamma_{\synt}(X_h,*)$ such that
\begin{enumerate}
\item it is the unique extension of  log-syntomic cohomology to varieties over $K$ that satisfies $h$-descent, i.e., for any hypercovering $\pi: Y_{\jcdot}\to X$ in $h$-topology, we have a quasi-isomorphism 
$$\pi^*:\R\Gamma_{\synt}(X_h,*)\stackrel{\sim}{\to}\R\Gamma_{\synt}(Y_{\jcdot,h},*).
$$
\item it is a Bloch-Ogus cohomology theory \cite{BO}.
\item for $X=\Spec(K)$, $H^*_{\synt}(X_h,r)\simeq H^*_{\st}(G_K,\bq_p(r))$, where $H^i_{\st}(G_K, -)$ denotes the $\Ext$-group $\Ext^i(\Qp, -)$ in the
category of (potentially) semistable representations of $G_K=\Gal(\ovk/K)$.
\item
There are functorial syntomic period morphisms
\[
\rho_{\synt}: R\Gamma_{\synt}(X_h,r)\to R\Gamma(X_{\eet},{\mathbf Q}_p(r)),\qquad
\rho_{\synt}: R\Gamma_{\synt}(X_{\ovk,h},r) \to R\Gamma(X_{\ovk,\eet},{\mathbf Q}_p(r))
\]
compatible with products which induce quasi-isomorphisms
\[
\tau_{\leq r} R\Gamma_{\synt}(X_h,r) \stackrel{\sim}{\to}
\tau_{\leq r} R\Gamma(X_{\eet},{\mathbf Q}_p(r)),\qquad
\tau_{\leq r} R\Gamma_{\synt}(X_{\ovk,h},r) \stackrel{\sim}{\to}
\tau_{\leq r} R\Gamma(X_{\ovk,\eet},{\mathbf Q}_p(r)).
\]
\item
 The Hochschild-Serre spectral sequence for \'etale cohomology
\[
^{\eet}E^{i,j}_2 = H^i(G_K,H^j(X_{\ovk,\eet},{\mathbf Q}_p(r)))
\Longrightarrow H^{i+j}(X_{\eet},{\mathbf Q}_p(r))
\]
has a syntomic analog
\[
^{\synt}E^{i,j}_2 = H^i_{\st}(G_K,H^j(X_{\ovk,\eet},{\mathbf Q}_p(r)))
\Longrightarrow H^{i+j}_{\synt}(X_{h},r).
\]
\item
 There is a canonical morphism of spectral sequences
${}^{\synt}E_t \to {}^{\eet}E_t$ compatible with the syntomic period map. 
\item
 There are syntomic Chern classes
\[
{c}_{i,j}^{\synt} \colon K_j(X) \to H^{2i-j}_{\synt}(X_{h},i)
\]
compatible with \'etale Chern classes via the syntomic period map.
\end{enumerate}
\end{thmx}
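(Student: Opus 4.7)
The plan is to construct $\R\Gamma_{\synt}(X_h,*)$ by $h$-sheafifying the Fontaine-Messing-Kato log-syntomic complex (\ref{first}) on the category of semistable models over $V$, following Beilinson's framework for $p$-adic de Rham cohomology and the refinements developed by Bhatt. Concretely, I would first reinterpret (\ref{first}) as a mapping fiber whose two sides (the filtered piece $\R\Gamma_{\crr}(\cdot,\sj^{[r]})$ and the full log-crystalline complex) each admit the structure of an $h$-sheaf of $E_\infty$-algebras on semistable pairs, separating the geometric ingredient (a Hyodo-Kato / filtered log-de-Rham complex) from the linear-algebra ingredient (the Frobenius $\phi_r$ and the monodromy operator $N$). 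Since by de Jong every variety $X/K$ admits an $h$-hypercovering by generic fibers of semistable pairs over finite extensions of $V$, the right Kan extension along the inclusion of semistable pairs into the $h$-site of $K$-varieties produces a canonical functor $X\mapsto \R\Gamma_{\synt}(X_h,*)$ with graded commutative dg algebra structure; item (1) is then the universal property of the Kan extension.

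For the Bloch-Ogus axioms (2), I would first establish Mayer-Vietoris, the projective bundle formula, homotopy invariance, and Gysin sequences on the semistable side, where they reduce to known statements in log-crystalline and filtered log-de-Rham cohomology, and then transfer them through $h$-descent. The syntomic period map (4) is built model-wise from the classical Fontaine-Messing period morphism into log-\'etale cohomology, whose $\tau_{\leq r}$-truncation is a quasi-isomorphism by the semistable comparison theorem of Tsuji; $h$-sheafifying both sides, using that $\R\Gamma(X_{\eet},\Qp(r))$ itself satisfies $h$-descent via proper hyperdescent for \'etale cohomology, yields the asserted quasi-isomorphisms. Chern classes (7) arise by producing a map from $K$-theory to the infinite-loop space associated to syntomic cohomology on semistable models, after which $h$-descent extends them to all $K$-varieties, and compatibility with \'etale Chern classes is forced through the period map (4).

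For (3), one unwinds the definition at $\Spec(V)$: the mapping fiber in (\ref{first}) computes $\Ext^*(\Qp,\Qp(r))$ in the category of (potentially) semistable $G_K$-representations, via the Bloch-Kato description of $H^*_{\st}(G_K,-)$ in terms of filtered $(\phi,N,G_K)$-modules. The syntomic descent spectral sequence (5) and its compatibility with Hochschild-Serre (6) come from the parallel construction of $\R\Gamma_{\synt}(X_{\ovk,h},r)$ equipped with a semilinear $G_K$-action: one defines a $G_K$-equivariant comparison map $\R\Gamma_{\synt}(X_h,r)\to \R\Gamma_{\synt}(X_{\ovk,h},r)^{\mathrm{st}}$ (the derived functor of semistable $G_K$-invariants), shows it is a quasi-isomorphism by reduction to the semistable case through Tsuji's comparison, and then filters by the Galois degree to obtain the spectral sequence; the compatibility in (6) is forced on each page by the semistable period map composed with the Fontaine-Messing map into $\R\Gamma(X_{\ovk,\eet},\Qp(r))$.

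The main obstacle will be arranging the $h$-sheafification so that simultaneously (a) the combined $(\phi,N,G_K)$-structure survives on the cohomology of individual, possibly non-proper and non-smooth, varieties, and (b) the identification in (3) genuinely produces the Ext groups in the category of potentially semistable representations rather than merely an abstract Galois cohomology group. This is where the Beilinson-Bhatt machinery of derived $h$-descent for filtered log-crystalline complexes is indispensable, since one needs to maintain Frobenius, monodromy, and the filtration coherently under hypercoverings by alterations. A subsidiary difficulty is constructing the syntomic Hochschild-Serre spectral sequence before its abutment is fully identified, which requires a cofinal system of semistable alterations over $\ovk$ stable, in a suitable pro-sense, under the $G_K$-action.
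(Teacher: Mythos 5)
Your overall shape (h-sheafify the Fontaine--Messing--Kato complexes on semistable pairs, use alterations, compare with \'etale cohomology via period maps, then extract a descent spectral sequence) is indeed the strategy of the paper, but the proposal skips the two points where essentially all of the work lies. First, item (1) is not "the universal property of the Kan extension": the content is that the $h$-sheafified theory actually restricts to log-syntomic cohomology on semistable models, i.e.\ that the ramified finite extensions forced on you by alterations do no damage. Log-crystalline cohomology does \emph{not} satisfy finite ramified Galois descent (the complexes $\R\Gamma_{\crr}(U,\ove{U},\sj^{[r]})$ become huge), so one must first prove, via Dwork's trick, that $\R\Gamma_{\synt}(U,\ove{U},r)_{\bq}\simeq C_{\st}(\R\Gamma_{\hk}(U,\ove{U})\{r\})$ (Proposition \ref{reduction1}), then deduce finite Galois descent for log-syntomic cohomology (Proposition \ref{hypercov11}) and only then get $\R\Gamma_{\synt}(U,\ove{U},r)_\bq\stackrel{\sim}{\to}\R\Gamma_{\synt}(U_h,r)$ (Proposition \ref{hypercov}). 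Moreover the identification $\alpha_{\synt}$ cannot simply be sheafified, because its construction involves a Frobenius power depending on the base field; the paper has to bound the cohomological dimension, truncate a semistable hypercovering, base-change to one large field $K'$, and descend (Proposition \ref{reduction2}). Without this circle of arguments your item (1) is unproved and your "unwinding" for item (3) has nothing to unwind, since it presupposes exactly the comparison $\alpha_{\synt}$ with the complex computing $H^*_{\st}(G_K,\bq_p(r))$.

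Second, the arithmetic part of (4) does not follow from "modelwise Tsuji plus $h$-descent": descending from $\ovk$ to $K$ introduces Galois cohomology, which shifts degrees, so a $\tau_{\leq r}$-statement on models does not propagate formally. The paper first proves the geometric $\tau_{\leq r}$-comparison using the Beilinson--Bhatt filtered Poincar\'e lemma, the fundamental exact sequence, and the vanishing results of Corollary \ref{resolution3} (comparison of $C^+(D)$ with $C(D)$ when $F^1D_K=0$), and then obtains the arithmetic statement from the compatibility of the syntomic descent spectral sequence with Hochschild--Serre together with the isomorphism $H^i_{\st}(G_K,V)\simeq H^i(G_K,V)$ in the relevant range of Hodge--Tate weights -- which requires Berger's results and the vanishing $H^2(G_K,V)=0$ proved in Appendix~A, inputs your outline never invokes. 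Relatedly, your construction of (5)--(6) through "the derived functor of semistable $G_K$-invariants" filtered "by Galois degree" is not a defined object; the paper instead works with the explicit complexes $C_{\st}$, $C_{\pst}$, $C^+$, $C$ built from Beilinson--Hyodo--Kato cohomology and the period rings $B^+_{\st}$, $B^+_{\dr}$, compares their Bousfield--Kan spectral sequences, and needs both the abstract comparison theorem for Postnikov-system versus hypercohomology spectral sequences (Theorem \ref{speccomp}) and a passage to the pro-\'etale site to make continuous Galois cohomology of these period-ring complexes meaningful. These are not optional refinements but the mechanism by which (5), (6), and the arithmetic half of (4) are actually established.
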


  As is shown in \cite{DN}, syntomic cohomology $R\Gamma_{\synt}(X_h,*)$  can be interpreted as an absolute $p$-adic Hodge cohomology. That is, it is a derived $\Hom$ in the category of admissible $(\phi, N, G_K)$-modules between the trivial module and a complex of such modules canonically associated to a variety. Alternatively, it is a derived $\Hom$ in the category of potentially semistable representations between the trivial representation  and a complex of such representations  canonically associated to a variety. A particularly simple construction of such a complex, using Beilinson's Basic Lemma, was proposed by Beilinson (and is presented in \cite{DN}).  The category of modules over the syntomic cohomology algebra $R\Gamma_{\synt}(X_h,*)$ (taken in a  motivic sense) yields a category of $p$-adic Galois representations that better approximates the category of geometric representations than the category of potentially semistable representations \cite{DN}. For further applications of syntomic cohomology algebra we refer the interested reader to Op. cit. 
  
  Similarly, as is shown in \cite{HT}, geometric syntomic cohomology $R\Gamma_{\synt}(X_{\ovk,h},*)$   is a derived $\Hom$ in the category of effective $\phi$-gauges (with one paw) \cite{LF1} between the trivial gauge  and a complex of such gauges canonically associated to a variety. In particular, geometric syntomic cohomology group is a  finite dimensional Banach-Colmez Space \cite{CB}, hence has a very rigid structure.
  
   The syntomic descent spectral sequence and its compatibility with the Hochschild-Serre spectral sequence in \'etale cohomology imply the following proposition.
\begin{proposition}Let $i\geq 0$. The composition
\begin{align*}
H^{i-1}_{\dr}(X)/F^r \stackrel{\partial}{\to} H^i_{\synt}(X_h,r)\stackrel{\rho_{\synt}}{\longrightarrow} H^i_{\eet}(X,\Qp(r))\to H^i_{\eet}(X_{\ovk},\Qp(r))
\end{align*}
is the zero map. The induced (from the syntomic descent spectral sequence) map 
$$
H^{i-1}_{\dr}(X)/F^{r}\to H^1(G_K,H^{i-1}_{\eet}(X_{\ovk},\Qp(r)))
$$
 is equal to the  Bloch-Kato exponential associated with the Galois representation $H^{i-1}_{\eet}(X_{\ovk},\Qp(r))$.
\end{proposition}
This yields a comparison between $p$-adic \'etale regulators, syntomic
regulators, and the Bloch-Kato exponential (which was proved in the
good reduction case in \cite{JS} and \cite[Thm. 5.2]{NS}\footnote{The Bloch-Kato exponential  is called $l$ there.}) that is of fundamental importance
for theory of special values of $L$-functions, both complex valued
and $p$-adic. The point is that syntomic regulators  can be thought of as an abstract $p$-adic integration theory. 
The comparison results stated
above then relate certain  $p$-adic integrals to the values of the 
$p$-adic \'etale regulator via the Bloch-Kato exponential map. A modification of syntomic cohomology  developed in \cite{BS}
in the good reduction case (resp. in \cite{BLZ} -- using the techniques
of the present article -- in the case of arbitrary reduction) can be used to perform explicit computations.
For example, the formulas from \cite[\S{3}]{BLZ} were applied  to 
a calculation of certain $p$-adic regulators in \cite{BDR} and in \cite{DR}.

\subsection{Construction of syntomic cohomology}
  We will now sketch  the proof of Theorem \ref{main1}. Recall first that a little bit after log-syntomic cohomology had appeared on the scene, Selmer groups of Galois representations -- describing extensions in certain categories of Galois representations -- were introduced by Bloch and Kato \cite{BK} and linked to special values of $L$-functions.  And a syntomic cohomology (in the good reduction case), a priori different than that of Fontaine-Messing,  was defined in  \cite{NS} and by Besser in  \cite{BS} as a higher dimensional analog of the complexes computing these groups. The guiding idea here was that just as Selmer groups classify extensions in certain categories of "geometric" Galois representations their higher dimensional analogs -- syntomic cohomology groups -- should classify extensions in a category of "$p$-adic motivic sheaves". This was shown to be the case for $H^1$ by Bannai \cite{Ban} who has also shown that Besser's (rigid) syntomic cohomology is a $p$-adic analog of Beilinson's absolute Hodge cohomology \cite{BE0}.

   Complexes computing the semistable and potentially semistable Selmer groups were introduced in \cite{JH} and  \cite{FPR}. For a semistable scheme $\sx$ over $V$, their higher dimensional analog can be written as the following homotopy limit\footnote{See section 1.5 for an explanation of the notation we use for certain homotopy limits.}
\begin{equation}
\label{first2}
\R\Gamma^{\prime}_{\synt}(\sx,r):=\left[
\begin{aligned}\xymatrix@=40pt{
  \R\Gamma_{\hk}(\sx_0)\ar[d]^{N}\ar[r]^-{(1-\phi_r,\iota_{\dr})} &
\R\Gamma_{\hk}(\sx_0)\oplus \R\Gamma_{\dr}(\sx_K)/F^r\ar[d]^{(N,0)}\\
  \R\Gamma_{\hk}(\sx_0)\ar[r]^-{1-\phi_{r-1}} & \R\Gamma_{\hk}(\sx_0)
 }\end{aligned}\right]
\end{equation}
where $\sx_0$ is the special fiber of $\sx$, $\R\Gamma_{\hk}(\cdot)$ is the Hyodo-Kato cohomology,  $N$ denotes the Hyodo-Kato monodromy, and $\R\Gamma_{\dr}(\cdot)$ is the logarithmic de Rham cohomology. The map $\iota_{\dr}$ is the Hyodo-Kato morphism that induces a quasi-isomorphism $\iota_{\dr}:\R\Gamma_{\hk}(\sx_0)\otimes_{K_0}K\stackrel{\sim}{\to}\R\Gamma_{\dr}(\sx_K)$, for $K_0$ - the fraction field of Witt vectors of the residue field of $V$.

  Using Dwork's trick, we prove (cf. Proposition \ref{reduction1}) that the two definitions of log-syntomic cohomology are the same, i.e., that there is  a  quasi-isomorphism $$\alpha_{\synt}: \R\Gamma_{\synt}(\sx,r)\stackrel{\sim}{\to}\R\Gamma_{\synt}^{\prime}(\sx,r).
$$ It follows that log-syntomic cohomology groups vanish in degrees strictly higher than $2\dim X_K +2$ and that, if $\sx=\Spec(V)$, then $H^i\R\Gamma_{\synt}(\sx,r)\simeq H^i_{\st}(G_K,\bq_p(r))$.

  The  syntomic cohomology for varieties over $p$-adic fields that we introduce in this article is a generalization of the log-syntomic cohomology of Fontaine and Messing. Observe that it is clear how one can try to use log-syntomic cohomology to define syntomic cohomology for varieties over fields that satisfies $h$-descent. Namely, for a variety $X$ over $K$, consider the $h$-topology of $X$ and recall that (using alterations) one can show  that it  has a basis consisting of semistable models over  finite extensions of $V$ \cite{BE1}. By $h$-sheafifying the complexes $Y\mapsto \R\Gamma_{\synt}(Y,r)$  (for a semistable model $Y$)  we get  syntomic complexes $\sss(r)$. We define the ({\it arithmetic}) syntomic cohomology as 
$$
\R\Gamma_{\synt}(X_h,r):=\R\Gamma(X_h,\sss(r)).
$$

   A priori it is not clear  that the so defined syntomic  cohomology   behaves well: the finite  ramified field extensions introduced by alterations  are in general a problem for log-crystalline cohomology. For example, the related complexes $\R\Gamma_{\crr}(X_h,\sj^{[r]})$ are huge. However, taking Frobenius eigenspaces cuts off the "noise" and the resulting syntomic complexes do indeed behave well. To get an idea why this is the case,   $h$-sheafify the complexes $Y\mapsto \R\Gamma^{\prime}_{\synt}(Y,r)$ and imagine that you can sheafify the maps $\alpha_{\synt}$ as well. We get sheaves  $\sss^{\prime}(r)$ and  quasi-isomorphisms  $\alpha_{\synt}:\sss(r)\stackrel{\sim}{\to}\sss^{\prime}(r)$. Setting  $\R\Gamma^{\prime}_{\synt}(X_h,r):=\R\Gamma(X_h,\sss^{\prime}(r))$ we  obtain the following quasi-isomorphisms
\begin{equation}
\label{first3}
\R\Gamma_{\synt}(X_h,r)\simeq \R\Gamma^{\prime}_{\synt}(X_h,r)\simeq \left[
\begin{aligned}\xymatrix@=40pt{
  \R\Gamma_{\hk}(X_h)\ar[d]^{N}\ar[r]^-{(1-\phi_r,\iota_{\dr})} &
\R\Gamma_{\hk}(X_h)\oplus \R\Gamma_{\dr}(X_K)/F^r\ar[d]^{(N,0)}\\
  \R\Gamma_{\hk}(X_h)\ar[r]^-{1-\phi_{r-1}} & \R\Gamma_{\hk}(X_h)
 }\end{aligned}\right]
\end{equation}
where $\R\Gamma_{\hk}(X_h)$ denotes the Hyodo-Kato cohomology (defined  as $h$-cohomology of the  presheaf:   $Y\mapsto \R\Gamma_{\hk}(Y_0)$) and $\R\Gamma_{\dr}(\cdot)$ is Deligne's de Rham cohomology \cite{De}. The Hyodo-Kato map $\iota_{\dr}$ is the $h$-sheafification of the logarithmic Hyodo-Kato map. It is well-known that Deligne's de Rham cohomology  groups are finite rank $K$-vector spaces; it turns out that the Hyodo-Kato cohomology groups are finite rank  $K_0$-vector spaces: we have a quasi-isomorphism 
$\R\Gamma_{\hk}(X_h)\stackrel{\sim}{\to}\R\Gamma_{\hk}(X_{\ovk,h})^{G_K}$ and the geometric Hyodo-Kato groups $H^*\R\Gamma_{\hk}(X_{\ovk,h})$ are finite rank  $K_0^{\nr}$-vector spaces, where $K_0^{\nr}$ is the maximal unramified extension of $K_0$ (see (\ref{trivialization})  below). 

It follows that syntomic cohomology groups  vanish in degrees higher than $2\dim X_K+2$ and that syntomic cohomology is, in fact, a generalization of the classical log-syntomic cohomology, i.e.,  for a semistable scheme $\sx$ over $V$ we have $\R\Gamma_{\synt}(\sx,r)\simeq\R\Gamma_{\synt}(X_h,r)$, where $X$ is the largest subvariety of $\sx_K$ with trivial log-structure. This follows from the quasi-isomorphism $\alpha_{\synt}$:  logarithmic Hyodo-Kato and de Rham cohomologies (over a fixed base) satisfy proper descent and the finite fields extensions that appear as the "noise" in  alterations do not destroy anything  since logarithmic Hyodo-Kato and de Rham cohomologies satisfy finite Galois descent.

 Alas, we were not able to sheafify the map $\alpha_{\synt}$. The reason for that is that the construction of $\alpha_{\synt}$ uses a twist by a high power of Frobenius  -- a power depending on the field $K$. And alterations are going to introduce a finite extension of $K$ -- hence a need for higher and higher powers of Frobenius. So instead we construct directly the map 
$\alpha_{\synt}:\R\Gamma_{\synt}(X_h,r)\to\R\Gamma^{\prime}_{\synt}(X_h,r) $. To do that we show first that the syntomic cohomological dimension of $X$ is finite. Then we take a semistable $h$-hypercovering of $X$, truncate it at an appropriate level, extend the base field $K$ to $K^{\prime}$, and base-change everything to $K^{\prime}$. There we can work with one field and use the map $\alpha_{\synt}$ defined earlier. Finally, we show that we can descend. 

\subsection{Syntomic period maps} 
We pass now to the construction of the period maps from  syntomic to \'etale cohomology that appear in Theorem \ref{main1}. They are easier to define over $\ovk$, i.e., from the {\it geometric} syntomic cohomology. In this setting, things go smoother with $h$-sheafification  since
going all the way up to $\ovk$ before completing kills a lot of "noise" in log-crystalline cohomology. More precisely, 
 for a semistable scheme $\sx$ over $V$, we have the following canonical quasi-isomorphisms \cite{BE2}
\begin{equation}
\label{trivialization}
\iota_{\crr}: \R\Gamma_{\hk}(\sx_{\ovv})^{\tau}_{B^+_{\crr}}\stackrel{\sim}{\to}\R\Gamma_{\crr}(\sx_{\ovv}),\quad \iota_{\dr}: \R\Gamma_{\hk}(\sx_{\ovv})^{\tau}_{\ovk}\stackrel{\sim}{\to} \R\Gamma_{\dr}(\sx_{\ovk}),
\end{equation}
  where $\ovv$ is the integral closure of $V$ in $\ovk$,  $B^+_{\crr}$ is the crystalline  period ring,  and $\tau$ denotes certain twist. These quasi-isomorphisms $h$-sheafify well: for a variety $X$over $K$, they induce the following  quasi-isomorphisms \cite{BE2}
\begin{equation}
\label{trivialization1}
\iota_{\crr}: \R\Gamma_{\hk}(X_{\ovk,h})^{\tau}_{B^+_{\crr}}\stackrel{\sim}{\to}\R\Gamma_{\crr}(X_{\ovk,h}),\quad \iota_{\dr}: \R\Gamma_{\hk}(X_{\ovk,h})^{\tau}_{\ovk}\stackrel{\sim}{\to} \R\Gamma_{\dr}(X_{\ovk}),
\end{equation}
where the terms have obvious meaning. Since
 Deligne's de Rham cohomology has proper descent (by definition),  it follows that  $h$-crystalline cohomology behaves well. That is, if
we define crystalline  sheaves
 $\sj^{[r]}_{\crr}$ and $\sa_{\crr}$ on $X_{\ovk,h}$ by $h$-sheafifying the complexes $Y\mapsto \R\Gamma_{\crr}(Y,\sj^{[r]})$ and $Y\mapsto \R\Gamma_{\crr}(Y)$, respectively, for $Y$ which are a base change to $\ovv$ of a semistable scheme over a finite extension of $V$ (such schemes $Y$ form a basis of $X_{\ovk,h}$) then the complexes $\R\Gamma(X_{\ovk,h},\sj^{[r]})$ and $\R\Gamma_{\crr}(X_{\ovv,h}):=\R\Gamma(X_{\ovk,h},\sa_{\crr})$ generalize log-crystalline cohomology (in the sense described above) and the latter one is a perfect complex of $B^+_{\crr}$-modules.

  We obtain syntomic complexes
 $\sss(r)$ on $X_{\ovk,h}$ by $h$-sheafifying the complexes $Y\mapsto \R\Gamma_{\synt}(Y,r)$ and (geometric) syntomic cohomology by setting $\R\Gamma_{\synt}(X_{\ovk,h},r):=\R\Gamma(X_{\ovk,h},\sss(r))$. They fit into  an analog of the exact sequence (\ref{first}) and, by the above,  
 generalize  log-syntomic cohomology.

 To construct the syntomic period maps
 \begin{equation}
 \label{periodss}
 \rho_{\synt}: \R\Gamma_{\synt}(X_{\ovk,h},r)\to \R\Gamma(X_{\ovk,\eet},\bq_p(r)),\quad  \rho_{\synt}: \R\Gamma_{\synt}(X_{h},r)\to \R\Gamma(X_{\eet},\bq_p(r))
 \end{equation}
 consider the syntomic complexes $\sss_n(r)$:
 the mod-$p^n$ version of the syntomic complexes $\sss(r)$ on $X_{\ovk,h}$. We have the distinguished triangle
$$\sss_n(r)\to \sj_{\crr,n}^{[r]}\lomapr{p^r-\phi} \sa_{\crr,n}
$$
Recall that the filtered Poincar\'e Lemma of Beilinson and Bhatt \cite{BE2}, \cite{BH} yields a quasi-isomorphism $\rho_{\crr}: J_{\crr,n}^{[r]}\stackrel{\sim}{\to} \sj_{\crr,n}^{[r]}$,
where $J^{[r]}_{\crr}\subset A_{\crr}$ is the $r$'th filtration level of the period ring $A_{\crr}$. Using the  fundamental sequence of $p$-adic Hodge Theory $$0\to {\mathbf Z}/p^n(r)^{\prime}\to J^{<r>}_{\crr,n}\lomapr{1-\phi_r} A_{\crr,n}\to 0,$$
where ${\mathbf Z}/p^n(r)^{\prime}:=(1/(p^aa!){\mathbf Z}_p(r))\otimes {\mathbf Z}/p^n$ and $a$ denotes the largest integer $\leq r/(p-1)$,  we obtain the syntomic period map $\rho_{\synt}:\sss_n(r)\to {\mathbf Z}/p^n(r)^{\prime}$. It is a quasi-isomorphism modulo a universal constant. It induces the geometric syntomic period map in (\ref{periodss}),  and, by Galois descent, its arithmetic analog.

       To study the descent spectral sequences from Theorem \ref{main1}, we need to consider the other version of syntomic cohomology, i.e., the complexes
\begin{equation}
\label{first31}
\R\Gamma^{\prime}_{\synt}(X_{\ovk,h},r):=\left[\begin{aligned}\xymatrix@C=40pt{  \R\Gamma_{\hk}(X_{\ovk,h})\otimes_{K_0^{\nr}} B^+_{\st}\ar[r]^-{(1-\phi_r,\iota_{\dr})}\ar[d]^N & \R\Gamma_{\hk}(X_{\ovk,h})\otimes_{K_0^{\nr}} B^+_{\st}\oplus (\R\Gamma_{\dr}(X_{\ovk})\otimes _{\ovk}B^+_{\dr})/F^r\ar[d]^{(N,0)}\\
  \R\Gamma_{\hk}(X_{\ovk,h})\otimes_{K_0^{\nr}} B^+_{\st}\ar[r]^{1-\phi_{r-1}} &  \R\Gamma_{\hk}(X_{\ovk,h})\otimes _{K_0^{\nr}}B^+_{\st}
  }\end{aligned}\right]
\end{equation}
where $B^+_{\st}$ and $B^+_{\dr}$ are the semistable and de Rham $p$-adic period rings, respectively. 
We deduce a quasi-isomorphism 
$\R\Gamma_{\synt}(X_{\ovk,h},r)\stackrel{\sim}{\to}\R\Gamma^{\prime}_{\synt}(X_{\ovk,h},r)$.
\begin{remark}
This quasi-isomorphism yields, for a semistable scheme $\sx$ over $V$, the following exact sequence
$$
\to H^i_{\synt}(\sx_{\ovk},r)\to (H^i_{\hk}(\sx)_{\bq}\otimes_{K_0} B_{\st}^+)^{\phi=p^r,N=0}\to (H^i_{\dr}(\sx_K)\otimes_K B_{\dr}^+)/F^r\to H^{i+1}_{\synt}(\sx_{\ovk},r)\to
$$
It is a sequence of finite dimensional Banach-Colmez Spaces \cite{CB} and as such is a key in the proof of  semistable comparison theorem for formal schemes in \cite{CN}. 
\end{remark}

    We also have a syntomic period map 
 \begin{equation}
 \label{first4}
 \rho_{\synt}^{\prime}:\R\Gamma^{\prime}_{\synt}(X_{\ovk,h},r)\to \R\Gamma(X_{\ovk,\eet},\bq_p(r))
 \end{equation}
  that is compatible with the map $\rho_{\synt}$ via $\alpha_{\synt}$. To describe how it is constructed, recall that
   the crystalline period map of Beilinson induces compatible Hyodo-Kato and de Rham period maps \cite{BE2}
\begin{equation}
\label{first5}
\rho_{\hk}:\R\Gamma_{\hk}(X_{\ovk,h})\otimes _{K_0^{\nr}}B^+_{\st}{\to} \R\Gamma(X_{\ovk,\eet},\bq_p)\otimes B_{\st}^+,\quad \rho_{\dr}: \R\Gamma_{\dr}(X_K)\otimes_K B_{\dr}^+{\to}\R\Gamma(X_{\ovk,\eet},\bq_p)\otimes B_{\dr}^+
\end{equation}
Applying them to the above homotopy limit, removing all the pluses from the period rings, reduces the homotopy limit to the complex 
\begin{equation}
\label{first32}
\left[\begin{aligned}\xymatrix@C=40pt{  \R\Gamma(X_{\ovk,\eet},\Qp(r))\otimes B_{\st}\ar[r]^-{(1-\phi_r,\iota_{\dr})}\ar[d]^N & \R\Gamma(X_{\ovk,\eet},\Qp(r))\otimes B_{\st}\oplus (\R\Gamma(X_{\ovk,\eet},\Qp(r))\otimes B_{\dr})/F^r\ar[d]^{(N,0)}\\
  \R\Gamma(X_{\ovk,\eet},\Qp(r))\otimes B_{\st}\ar[r]^{1-\phi_{r-1}} &  \R\Gamma(X_{\ovk,\eet},\Qp(r))\otimes B_{\st}
  }\end{aligned}\right]
\end{equation}
By the familiar fundamental exact sequence 
$$
0\to \bq_p(r)\to B_{\st}\verylomapr{(N,1-\phi_r,\iota)}  B_{\st}\oplus B_{\st}\oplus B_{\dr}/F^r\veryverylomapr{(1-\phi_{r-1})-N}B_{\st}\to 0
$$
the above complex is quasi-isomorphic to $\R\Gamma(X_{\ovk,\eet},\Qp(r))$. 
This yields the syntomic period morphism from (\ref{first4}). We like to think of geometric syntomic cohomology as being represented by the complex from (\ref{first31}) and of geometric \'etale cohomology as represented by the complex (\ref{first32}).

  From the above constructions we derive several of the  properties mentioned in Theorem \ref{main1}.
 The quasi-isomorphisms (\ref{first5}) give that
 $$H^i_{\hk}(X_{\ovk,h})\simeq D_{\pst}(H^i(X_{\ovk,\eet},\Qp(r))),\quad H^i_{\hk}(X_h)\simeq D_{\st}(H^i(X_{\ovk,\eet},\Qp(r))),$$
where $D_{\pst}$ and $D_{\st}$ are the functors from \cite{FPR}.
 This combined with the diagram (\ref{first3}) immediately yields the spectral sequence ${}^{\synt}E_t$ since  the cohomology groups of the total complex of
$$ \left[
\begin{aligned}\xymatrix@=40pt{
  H^j_{\hk}(X_h)\ar[d]^{N}\ar[r]^-{(1-\phi_r,\iota_{\dr})} &
H^j_{\hk}(X_h)\oplus H^j_{\dr}(X_K)/F^r\ar[d]^{(N,0)}\\
  H^j_{\hk}(X_h)\ar[r]^-{1-\phi_{r-1}} & H^j_{\hk}(X_h)
 }\end{aligned}\right]
$$
 are equal to $H^*_{\st}(G_K,H^j(X_{\ovk,\eet},\Qp(r)))$. Moreover,
 the sequence of natural  maps of diagrams $(\ref{first3})\to (\ref{first31})\stackrel{\rho_{\synt}}{\to} (\ref{first32})$ yields a compatibility of the syntomic descent spectral sequence with the Hochschild-Serre spectral sequence in \'etale cohomology (via the period maps). We remark that, in the case of proper varieties with semistable reduction, this fact was announced in \cite{JB}.
  
  Looking again at the period map $\rho_{\synt}: (\ref{first31}){\to} (\ref{first32})$ we see that truncating all the complexes  at level $r$ will allow us to drop $+$ from the first diagram. Hence   we have
$$
\rho_{\synt}:\tau_{\leq r}\R\Gamma_{\synt}(X_{\ovk,h},r)\stackrel{\sim}{\to} \tau_{\leq r}\R\Gamma(X_{\ovk,\eet},\bq_p(r))
$$   To conclude  that we have
$$
\rho_{\synt}:\tau_{\leq r}\R\Gamma_{\synt}(X_h,r)\stackrel{\sim}{\to} \tau_{\leq r}\R\Gamma(X_{\eet},\bq_p(r))
$$ 
as well, 
we look at the map of spectral sequences ${}^{\synt}E\to {}^{\eet}E$ and observe that, in the stated ranges of the Hodge-Tate filtration we have $H^*_{\st}(G_K,\cdot)=H^*(G_K,\cdot)$ (a fact that follows, for example, from the work of Berger \cite{BER}).
\subsection{$p$-adic regulators}

  As an application of Theorem \ref{main1}, we look at the question of the image of Soul\'e's \'etale regulators 
$$
r^{\eet}_{r,i}: K_{2r-i-1}(X)_0\to H^1(G_K,H^i(X_{\ovk,\eet},\bq_p(r))),
$$
where $K_{2r-i-1}(X)_0:=\ker(c^{\eet}_{r,i+1}:K_{2r-i-1}(X)\to H^{i+1}(X_{\ovk,\eet},\bq_p(r)))$,
inside the Galois cohomology group. We prove that 
\begin{thmx}
The regulators $r^{\eet}_{r,i}$ factor through the group $H^1_{\st}(G_K,H^i(X_{\ovk,\eet},\bq_p(r)))$.
\end{thmx}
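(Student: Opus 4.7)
The plan is to construct a syntomic regulator $r^{\synt}_{r,i}$ that a priori takes values in the potentially semistable Selmer group, and then to use the compatibility furnished by Theorem \ref{main1} to identify its image in $H^1$ with Soul\'e's regulator. Given $x\in K_{2r-i-1}(X)_0$, write $V^b := H^b(X_{\ovk,\eet},\bq_p(r))$. By Theorem \ref{main1}(7) the syntomic Chern class $c^{\synt}(x):=c^{\synt}_{r,2r-i-1}(x) \in H^{i+1}_{\synt}(X_h,r)$ satisfies $\rho_{\synt}(c^{\synt}(x)) = c^{\eet}(x)\in H^{i+1}(X_{\eet},\bq_p(r))$. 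The key tool will be the descent spectral sequence of Theorem \ref{main1}(5),
\[
{}^{\synt}E_2^{a,b} = H^a_{\st}(G_K, V^b) \Longrightarrow H^{a+b}_{\synt}(X_h,r),
\]
together with its comparison to the Hochschild-Serre spectral sequence ${}^{\eet}E$ (Theorem \ref{main1}(6)). A crucial elementary input is that the $G_K$-representations $V^b$ are potentially semistable (by the $C_{\pst}$ theorem of Beilinson/Tsuji); full faithfulness of the inclusion of the pst category into all $p$-adic representations then yields $H^0_{\st}(G_K, V^b) = H^0(G_K, V^b)$ and an injection $H^1_{\st}(G_K, V^b) \hookrightarrow H^1(G_K, V^b)$.

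The first step will be to transfer vanishing across the spectral sequence comparison. By hypothesis $c^{\eet}(x)$ restricts to zero on $X_{\ovk}$, so its image under the \'etale edge map $H^{i+1}(X_{\eet},\bq_p(r))\to H^0(G_K,V^{i+1})$ vanishes. Since the comparison ${}^{\synt}E_2^{0,i+1}\to{}^{\eet}E_2^{0,i+1}$ is the identity on $H^0(G_K,V^{i+1})$, and since $\ker(d_r^{\synt})\subseteq\ker(d_r^{\eet})$ at the spot $(0,i+1)$ for every $r\geq 2$ (by compatibility of differentials), the natural map ${}^{\synt}E^{0,i+1}_\infty\hookrightarrow{}^{\eet}E^{0,i+1}_\infty$ is injective. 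Consequently the image of $c^{\synt}(x)$ under the syntomic edge map $H^{i+1}_{\synt}(X_h,r)\to {}^{\synt}E^{0,i+1}_\infty$ also vanishes. This forces $c^{\synt}(x)$ into the first step of the syntomic abutment filtration, and hence produces a canonical class
\[
r^{\synt}_{r,i}(x) \in {}^{\synt}E^{1,i}_\infty \subseteq {}^{\synt}E_2^{1,i} = H^1_{\st}(G_K,V^i),
\]
the last inclusion holding because no differential in a first-quadrant spectral sequence lands at $(1,i)$. A second application of the comparison of spectral sequences identifies the image of $r^{\synt}_{r,i}(x)$ under $H^1_{\st}(G_K,V^i)\hookrightarrow H^1(G_K,V^i)$ with Soul\'e's regulator $r^{\eet}_{r,i}(x)$, giving the desired factorization.

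Once Theorem \ref{main1} is in hand---supplying the syntomic Chern classes compatible with their \'etale counterparts, the syntomic descent spectral sequence, and its compatibility with Hochschild-Serre---the argument is essentially formal. The main obstacle therefore lies not in the present theorem but rather in Theorem \ref{main1}(6), the compatibility of the two descent spectral sequences through the period map; granted this, the factorization rests only on the elementary identity $H^0_{\st}(G_K,V)=H^0(G_K,V)$ and the injectivity of $H^1_{\st}\hookrightarrow H^1$ for pst representations $V$.
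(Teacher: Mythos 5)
Your argument is correct and is essentially the paper's own proof: both use the syntomic Chern class $c^{\synt}_{r,i+1}$ compatible with the \'etale one via $\rho_{\synt}$, observe that on $K_{2r-i-1}(X)_0$ its image dies under the edge map to $H^0_{\st}(G_K,H^{i+1})\simeq H^0(G_K,H^{i+1})$, and then read off a class in ${}^{\synt}E^{1,i}_\infty\subseteq H^1_{\st}(G_K,H^i(X_{\ovk,\eet},\bq_p(r)))$ mapping to $r^{\eet}_{r,i}(x)$ by the compatibility of the syntomic descent and Hochschild--Serre spectral sequences (Theorem \ref{stHS}). The only difference is presentational: the paper phrases the edge/filtration step through the subgroups $H^{i+1}_{\synt}(X_h,r)_0$, $H^{i+1}_{\eet}(X,\bq_p(r))_0$ and the induced maps $\delta$, while you make the same point via injectivity of the comparison on $E^{0,i+1}_\infty$ terms.
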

As we explain in the article, this fact is known to follow from the work of Scholl \cite{Sc} on "geometric" extensions associated to $K$-theory classes. In our approach, this is a simple consequence of good properties of  syntomic cohomology and the existence of the syntomic descent spectral sequence. Namely, as can be easily derived from the presentation (\ref{first3}), syntomic cohomology has projective space theorem and homotopy property \footnote{As explained in Appendix ~B, it follows that it is a Bloch-Ogus cohomology theory.} hence admits Chern classes from higher $K$-theory. It can be easily shown  that they are compatible with  the \'etale Chern classes via the syntomic period maps. The factorization we want in the above theorem follows then from the compatibility of the two descent spectral sequences.

\subsection{Notation and Conventions}
Let $V$ be a complete discrete valuation ring with fraction field
$K$  of characteristic 0, with perfect
residue field $k$ of characteristic $p$, and with maximal ideal ${\mathfrak m}_K$. Let $v$ be the valuation on $K$ normalized so that $v(p)=1$. Let $\ovk$ be an algebraic closure of $K$ and let $\overline{V}$ denote the integral closure of $V$ in $\ovk$. Let
$W(k)$ be the ring of Witt vectors of $k$ with
 fraction field $K_0$ and denote by $K_0^{\nr}$ the maximal unramified extension of $K_0$. Denote by $e_K$ the absolute ramification index of $K$, i.e., the degree of $K$ over $K_0$. Set $G_K=\Gal(\overline {K}/K)$ and let $I_K$ denote its inertia subgroup.  Let $\phi$ be the absolute
Frobenius on $W(\overline {k})$.
 We will denote by $V$,
$V^{\times}$, and $V^0$ the scheme $\Spec (V)$ with the trivial, canonical
(i.e., associated to the closed point), and $({\mathbf N}\to V, 1\mapsto 0)$
log-structure respectively. For a log-scheme $X$ over $\so_K$, $X_n$ will denote its reduction mod $p^n$, $X_0$ will denote its special fiber.

   Unless otherwise stated, we work in the category of integral quasi-coherent  log-schemes. In general, we will not distinguish between simplicial abelian groups and complexes of abelian groups. 

  Let $A$ be an abelian category with enough projective objects. In this paper $A$ will be the category of abelian groups or ${\mathbf Z}_p$-, ${\mathbf Z}/p^n$-, or $\bq_p$-modules. Unless otherwise stated, we work in the (stable) $\infty$-category $\sd(A)$, i.e., stable $\infty$-category whose objects are (left-bounded) chain complexes of projective objects of $A$. For a readable introduction to such categories the reader may consult \cite{MG}, \cite[1]{Lu2}. The $\infty$-derived category is essential to us for two reasons: first, it allows us to work simply with the Beilinson-Hyodo-Kato complexes; second, it supplies functorial homotopy limits.
 
  Many of our constructions will involve sheaves of objects from $\sd(A)$. The reader may consult the notes of Illusie \cite{IL} and Zheng \cite{Zhe} for a brief introduction to the subject and \cite{Lu1}, \cite{Lu2} for a thorough treatment.
  
  We will use a shorthand for certain homotopy limits. Namely,  if $f:C\to C'$ is a map  in the dg derived category of abelian groups, we set
$$[\xymatrix{C\ar[r]^f&C'}]:=\holim(C\to C^{\prime}\leftarrow 0).$$ 
We also set
$$
\left[\begin{aligned}
\xymatrix{C_1\ar[d]\ar[r]^f & C_2\ar[d]\\
C_3\ar[r]^g & C_4
}\end{aligned}\right]
:=[[C_1\stackrel{f}{\to} C_2]\to [C_3\stackrel{g}{\to} C_4]],
$$ 
 where the diagram in the brackets is a commutative diagram in the dg derived category.
\begin{acknowledgments}
Parts of this article were written during our visits to the Fields Institute in Toronto in spring 2012. The second author worked on this article also at BICMR, Beijing, 
and at the University of Padova. We would like to thank these institutions for  their support and hospitality.
 
This article was inspired by the work of Beilinson on $p$-adic comparison
theorems. We would like to thank him for discussions related to his work. Luc Illusie and Weizhe Zheng helped us understand  the $\infty$-category theory involved in Beilinson's work  and made their notes \cite{IL}, \cite{Zhe} available to us -- we would like to thank them for that. 
We have also profited from conversations with Laurent Berger, Amnon Besser,
Bharghav Bhatt, Bruno Chiarellotto, Pierre Colmez,  Fr\'ed\'eric D\'eglise, Luc Illusie, Tony Scholl, and Weizhe Zheng -
we are grateful for these exchanges. Moreover, we would like to thank Pierre
Colmez for reading and correcting parts of this article. Special thanks go to
Laurent Berger and Fr\'ed\'eric D\'eglise for writing the Appendices. 
 
\end{acknowledgments}
\section{Preliminaries}
In this section we will do some preparation. In the first part, we will collect some relevant facts from the literature concerning period rings, derived log de Rham complex, and $h$-topology. In the second part, we will prove vanishing results in Galois cohomology and a criterion comparing two spectral sequences that we will need to compare the syntomic descent spectral sequence with the \'etale Hochschild-Serre spectral sequence. 
\subsection{The rings of periods}
Let us recall briefly the definitions of the rings of periods $B_{\crr}$, $B_{\dr}$, $B_{\st}$
of Fontaine
 \cite{F1}. Let $A_{\crr}$ denote Fontaine's ring of crystalline periods \cite[2.2,2.3]{F1}. This is a $p$-adically complete ring such that $A_{\crr,n}:=A_{\crr}/p^n$ is a universal PD-thickening of $\overline{V}_n$ over $W_n(k)$. Let $J_{\crr,n}$ denote its PD-ideal, $A_{\crr,n}/J_{\crr,n}=\overline{V}_n$.
We have
$$
A_{\crr,n}=H^0_{\crr}(\Spec(\overline{V}_n)/W_n(k)),
\quad B^+_{\crr}:=A_{\crr}[1/p],\quad
B_{\crr}:=B^+_{\crr}[t^{-1}],
$$
where
$t$ is a certain element of $B^+_{\crr}$
(see \cite{F1} for a precise definition
of $t$). The ring $B^+_{\crr}$ is a topological $K_0$-module equipped
with a Frobenius
$\phi$ coming from the crystalline cohomology and a natural $G_K$-action.
We have that $\phi (t)=pt$ and that $G_K$ acts on $t$ via the cyclotomic
character.

   Let
$$
B^+_{\dr}:=\invlim_r({\bold Q}\otimes \invlim_n A_{\crr,n}/
J_{\crr,n}^{[r]}),\quad B_{\dr}:=B^+_{\dr}[t^{-1}].
$$
The ring $B^+_{\dr}$ has a discrete valuation given by the powers of $t$.
Its quotient
field is $B_{\dr}$. We set $F^nB_{\dr}=t^nB^+_{\dr}$. This defines a  descending filtration  on
$B_{\dr}$.

  The period ring $B_{\st}$ lies between $B_{\crr}$ and $B_{\dr}$ \cite[3.1]{F1}. To define it, choose a sequence of elements $s=(s_n)_{n\geq 0}$
  of $\overline{V}$ such that $s_0=p$ and $s_{n+1}^p=s_n$. Fontaine associates to it an element $u_{s }$
of $B^+_{\dr}$ that is transcendental over $B^+_{\crr}$. Let $B^+_{\st}$ denote the subring of $B_{\dr}$ generated by
$B^+_{\crr}$ and $u_{s}$.  It is a polynomial algebra
in one variable over $B^+_{\crr}$. The ring $B^+_{\st}$ does not depend on the choice of
 $s$ (because for another sequence $s^{\prime}=(s^{\prime}_n)_{n\geq 0}$ we have $u_s-u_{s^\prime}\in {\mathbf Z}_pt\subset B_{\crr}^+$). The action of $G_K$ on $B^+_{\dr}$ restricts well to $B^+_{\st}$.
The Frobenius $\phi $
extends to $B^+_{\st}$ by $\phi (u_{s})=pu_{s}$ and one defines the monodromy operator
$N:B^+_{\st}\to B^+_{\st} $
as the unique $B^+_{\crr}$-derivation such that $Nu_{s}=-1$.
We have $N\phi=p\phi N$  and the short exact sequence
\begin{equation}
\label{kwak11}
0\to B^+_{\crr}\to B^+_{\st}\stackrel{N}{\to}B^+_{\st}\to 0
\end{equation}
 Let $B_{\st }=B_{\crr}[u_{s }]$.
We denote by $\iota$
 the injection
$\iota:B^+_{\st}\hookrightarrow B^+_{\dr}$. The topology on $B_{\st}$ is the one induced by $B_{\crr}$ and the inductive topology; the map $\iota$ is continuous (though the topology on $B_{\st}$ is not the one induced from $B_{\dr}$).
\subsection{Derived log de Rham complex}
In this subsection we collect a few facts about  the relationship between  crystalline cohomology and de Rham cohomology.

  Let $S$ be a log-PD-scheme on which $p$ is nilpotent. For a log-scheme $Z$ over $S$, let $\LL\Omega^{\scriptscriptstyle\bullet}_{Z/S}$ denote the derived log de Rham complex (see \cite[3.1]{BE1} for a review). This is a commutative dg $\so_S$-algebra on $Z_{\eet}$ equipped with a Hodge filtration $F^m$. There is a natural morphism of filtered commutative dg $\so_S$-algebras \cite[1.9.1]{BE2}
\begin{equation}
\label{kappamap}
\kappa:\quad \LL\Omega^{\scriptscriptstyle\bullet}_{Z/S}\to \R u_{Z/S*}(\so_{Z/S}),
\end{equation}
where $u_{Z/S}: Z_{\crr}\to Z_{\eet}$ is the projection from the log-crystalline to the \'etale topos.
The following theorem was proved by Beilinson \cite[1.9.2]{BE2} by direct computations of both sides.
\begin{theorem}
\label{beilinson}
Suppose that $Z,S$ are fine and $f:Z\to S$ is an integral, locally complete intersection morphism.  Then (\ref{kappamap}) yields quasi-isomorphisms
$$\kappa_m:\quad \LL\Omega^{\scriptscriptstyle\bullet}_{Z/S}/F^m\stackrel{\sim}{\to} \R u_{Z/S*}(\so_{Z/S}/\sj^{[m]}_{Z/S}).
$$
\end{theorem}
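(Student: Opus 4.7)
The plan is to prove the quasi-isomorphism \'etale-locally on $Z$ by identifying both sides with one explicit PD-de Rham complex attached to a PD-envelope. Since the formation of both $\LL\Omega^{\scriptscriptstyle\bullet}_{Z/S}/F^m$ and $\R u_{Z/S*}(\so_{Z/S}/\sj^{[m]}_{Z/S})$ commutes with \'etale localization on $Z$ and on $S$, I would work locally. Using that $Z, S$ are fine and $f$ is integral and lci, I factor $f$ \'etale-locally as a regular closed immersion $i \colon Z \hookrightarrow Y$ followed by a log-smooth integral morphism $g \colon Y \to S$, with the log structure on $Z$ pulled back from $Y$ and the ideal of $i$ generated by a regular sequence $(x_1,\dots,x_c)$ disjoint from the log structure. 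Concretely, one picks a chart of $f$ and embeds $Z$ into a relative affine space over the ``toric'' log-smooth $S$-scheme produced by the chart.

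With this local presentation, the right-hand side is computed by the standard description of log-crystalline cohomology via the log-PD-envelope $D = D_Z(Y)$:
\begin{equation*}
\R u_{Z/S*}(\so_{Z/S}/\sj^{[m]}_{Z/S}) \;\simeq\; \bigl(\so_D \otimes_{\so_Y} \Omega^{\scriptscriptstyle\bullet}_{Y/S}\bigr)/F^m,
\end{equation*}
where $F^m$ is the PD-Hodge filtration $\sum_{i+j \geq m} J^{[i]}_D\cdot \Omega^j_{Y/S}$ and the differential is the extension of $d$ by the PD-structure. For the left-hand side, I would use the transitivity cofiber sequence of the derived log de Rham complex attached to $Z \to Y \to S$. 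Since $g$ is log-smooth, $\LL\Omega^{\scriptscriptstyle\bullet}_{Y/S} \simeq \Omega^{\scriptscriptstyle\bullet}_{Y/S}$ as filtered cdgas, so the task reduces to computing $\LL\Omega^{\scriptscriptstyle\bullet}_{Z/Y}/F^m$. Taking a simplicial free log-resolution of $\so_Z$ over $\so_Y$, which for a regular immersion with trivial log contribution is degree-wise a Koszul resolution, and then applying the derived log de Rham functor, produces precisely the Hodge-filtered PD-Koszul complex on $D$. This matches the local description of the RHS and the comparison map is induced by $\kappa$.

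The final step is to check that the globally defined $\kappa_m$ is compatible with these two local models, which follows from the naturality of $\kappa$ and the universal property of the PD-envelope. The main technical obstacle is the local computation of $\LL\Omega^{\scriptscriptstyle\bullet}_{Z/Y}/F^m$ in the log setting: one needs the log enhancement of Illusie's identification of the derived de Rham complex of a regular quotient $A/(x_1,\dots,x_c)$ with the Hodge-completed de Rham complex of the PD-envelope of $(x_1,\dots,x_c)$, together with a careful matching of the Hodge filtration on the derived side with the PD-filtration on the crystalline side. The lci and integrality hypotheses are used precisely to reduce this log computation to the classical non-log case applied to the regular immersion $i$, since in the factorization above all log contributions are absorbed by the log-smooth factor $g$.
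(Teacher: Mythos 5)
The paper does not actually prove this statement: it is quoted from Beilinson \cite[1.9.2]{BE2}, where it is established by exactly the kind of direct local computation you outline --- \'etale-local factorization of the integral lci morphism as a strict regular immersion into a log-smooth $S$-scheme, the PD-envelope description of $\R u_{Z/S*}(\so_{Z/S}/\sj^{[m]}_{Z/S})$, and the log analogue of Illusie's identification of the Hodge-filtered derived de Rham complex of a regular immersion with the PD-filtered de Rham complex of the PD-envelope --- so your sketch follows essentially the same route as the cited source. The only wording to adjust is the appeal to a ``transitivity cofiber sequence'' for derived log de Rham complexes, which does not exist as such: what one actually uses is the filtered (Katz--Oda-type) description of $\LL\Omega^{\scriptscriptstyle\bullet}_{Z/S}$ for the composite $Z\hookrightarrow Y\to S$, or equivalently the transitivity triangle for log cotangent complexes applied to the graded pieces $\gr^i_F\LL\Omega^{\scriptscriptstyle\bullet}_{Z/S}\simeq L\Lambda^i(L_{Z/S})[-i]$, which is how the contributions of $\LL\Omega^{\scriptscriptstyle\bullet}_{Z/Y}$ and $\Omega^{\scriptscriptstyle\bullet}_{Y/S}$ are combined.
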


 Recall \cite[Def. 7.20]{BH} that a log-scheme  is called G-log-syntomic if it is log-syntomic and the local log-smooth models can be chosen to be of Cartier type. The next theorem, finer than Theorem \ref{beilinson},  was proved by Bhatt \cite[Theorem 7.22]{BH} by looking at the conjugate filtration of the l.h.s.
\begin{theorem}
\label{bhatt}
Suppose that $f:Z\to S$ is G-log-syntomic. Then we have a quasi-isomorphism
$$\kappa:\quad \LL\Omega^{\scriptscriptstyle\bullet}_{Z/S}\stackrel{\sim}{\to} \R u_{Z/S*}(\so_{Z/S}).
$$
\end{theorem}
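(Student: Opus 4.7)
Since the statement is \'etale-local on $Z$, I would first reduce to the situation where $Z$ admits a global log-chart together with a factorization $Z\hookrightarrow Y \to S$ in which $Y\to S$ is log-smooth of Cartier type and $Z\hookrightarrow Y$ is cut out by a Koszul-regular sequence; this is exactly what the definition of G-log-syntomic (log-syntomic with local Cartier type models) provides, cf.\ \cite[Def.~7.20]{BH}. Both sides of $\kappa$ are commutative dg-$\so_S$-algebras that are contravariantly functorial in $Z$, so it is enough to check quasi-isomorphism after such a local reduction.

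The key idea, following Bhatt's treatment of the non-log case, is to replace the Hodge filtration used in Theorem \ref{beilinson} by the conjugate filtration, which is better adapted to the non-Hodge-completed statement. The derived log de Rham complex $\LL\Omega^{\scriptscriptstyle\bullet}_{Z/S}$ carries a canonical exhaustive increasing conjugate filtration $\Fil^{\conj}_\bullet$ whose mod-$p$ graded pieces are identified, via a derived log Cartier isomorphism, with Frobenius twists of $\bigl(\wedge^i\text{(log cotangent complex)}\bigr)[-i]$. The G-log-syntomic hypothesis ensures that the log cotangent complex of $Z/S$ is perfect and concentrated in degree $0$ (a locally free module of finite rank), so these graded pieces reduce to the classical $\Omega^i_{Z_0/S_0}[-i]$. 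On the crystalline side, Kato's log Cartier isomorphism for Cartier type morphisms endows $\R u_{Z/S*}(\so_{Z/S})$ with a parallel conjugate filtration whose graded pieces are again $\Omega^i_{Z_0/S_0}[-i]$. Because $p$ is nilpotent on $S$, both filtrations exhaust their targets in finitely many steps, so completeness is automatic and it suffices to check that $\kappa$ respects the filtrations and induces Kato's Cartier isomorphism on graded pieces.

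The main obstacle is precisely this last compatibility: verifying that $\kappa$ preserves the two conjugate filtrations and matches the derived log Cartier map on $\LL\Omega^{\scriptscriptstyle\bullet}$ with Kato's crystalline Cartier map on the graded pieces of $\R u_{Z/S*}(\so_{Z/S})$. I would first establish this on the smooth Cartier-type model $Y\to S$, where $\LL\Omega^{\scriptscriptstyle\bullet}_{Y/S}\simeq \Omega^{\scriptscriptstyle\bullet}_{Y/S}$ by log-smoothness, $\R u_{Y/S*}(\so_{Y/S})\simeq \Omega^{\scriptscriptstyle\bullet}_{Y/S}$ by Kato, and the two Cartier operators are known to coincide. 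I would then propagate the statement from $Y$ to $Z$ along the Koszul-regular immersion $Z\hookrightarrow Y$: on the de Rham side by exhibiting $\LL\Omega^{\scriptscriptstyle\bullet}_{Z/S}$ as a derived divided power envelope of $\LL\Omega^{\scriptscriptstyle\bullet}_{Y/S}$ along $Z\hookrightarrow Y$, and on the crystalline side by the classical PD envelope description; Koszul-regularity guarantees that both derived base changes compute the expected classical objects and that the two conjugate filtrations match term by term. Combining graded-piece equivalence with completeness of both filtrations then yields the desired quasi-isomorphism $\kappa$.
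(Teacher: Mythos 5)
The paper gives no proof of this statement: it is a citation of Bhatt's Theorem~7.22 in \cite{BH}, with the one-line remark that Bhatt's argument proceeds ``by looking at the conjugate filtration of the l.h.s.'' Your proposal follows that same route---local reduction to a Cartier-type factorization $Z\hookrightarrow Y\to S$, conjugate filtrations on both $\LL\Omega^{\scriptscriptstyle\bullet}_{Z/S}$ and $\R u_{Z/S*}(\so_{Z/S})$, and matching the derived log Cartier map against Kato's crystalline Cartier map on graded pieces---so in broad outline it agrees with what the paper says Bhatt did.

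There is, however, a genuine error in the reconstruction. You assert that ``the G-log-syntomic hypothesis ensures that the log cotangent complex of $Z/S$ is perfect and concentrated in degree $0$ (a locally free module of finite rank), so these graded pieces reduce to the classical $\Omega^i_{Z_0/S_0}[-i]$.'' This is false. G-log-syntomic maps are log-lci but typically not log-smooth, so $L_{Z/S}$ is perfect of Tor-amplitude $[-1,0]$, not $[0,0]$: from the local model $Z\hookrightarrow Y\to S$ it is the two-term complex $[\,I/I^2\to \Omega^1_{Y/S}|_Z\,]$. It collapses to a locally free sheaf in degree $0$ only when $Z\to S$ is itself log-smooth, in which case the theorem would add nothing to the Hodge-completed statement (Theorem~\ref{beilinson} plus Kato) and there would be no need for the conjugate filtration. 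Consequently the graded pieces of the conjugate filtration are the derived wedge powers $L\Lambda^i L^{(1)}_{Z_0/S_0}[-i]$ of a two-term complex, not $\Omega^i_{Z_0/S_0}[-i]$. The comparison with the crystalline side still goes through---both conjugate filtrations have these derived wedges as graded pieces, and the derived Cartier isomorphism is formulated exactly to accommodate them---but as written your argument implicitly reduces to the log-smooth case, which is precisely the case that requires no new input. You need to carry the full two-term $L_{Z/S}$ through the identification of graded pieces on both sides; the propagation along the Koszul-regular immersion (your derived PD-envelope step) is where this must be done carefully, since the immersion contributes the degree $-1$ term of the cotangent complex.
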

Combining the two theorems above, we get a filtered version:
\begin{corollary}
Suppose that $f:Z\to S$ is G-log-syntomic. Then we have a quasi-isomorphism
$$F^m\LL\Omega^{\scriptscriptstyle\bullet}_{Z/S}\stackrel{\sim}{\to} \R u_{Z/S*}(\sj^{[m]}_{Z/S}).
$$
\end{corollary}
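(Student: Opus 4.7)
The plan is to deduce the filtered quasi-isomorphism from the two theorems just stated by comparing the natural distinguished triangles associated to the Hodge filtration on the derived log de Rham complex and the PD-filtration on the structure sheaf of the log-crystalline site.

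Concretely, I would first observe that the map $\kappa$ of filtered commutative dg $\so_S$-algebras from (\ref{kappamap}) induces a morphism of distinguished triangles
\begin{equation*}
\begin{aligned}\xymatrix@C=20pt{
F^m\LL\Omega^{\scriptscriptstyle\bullet}_{Z/S}\ar[r]\ar[d]^{\kappa} & \LL\Omega^{\scriptscriptstyle\bullet}_{Z/S}\ar[r]\ar[d]^{\kappa} & \LL\Omega^{\scriptscriptstyle\bullet}_{Z/S}/F^m\ar[d]^{\kappa_m}\\
\R u_{Z/S*}(\sj^{[m]}_{Z/S})\ar[r] & \R u_{Z/S*}(\so_{Z/S})\ar[r] & \R u_{Z/S*}(\so_{Z/S}/\sj^{[m]}_{Z/S})
}\end{aligned}
\end{equation*}
in the derived category of sheaves on $Z_{\eet}$. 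By Theorem \ref{bhatt}, applicable since $f:Z\to S$ is G-log-syntomic, the middle vertical arrow is a quasi-isomorphism.

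Next, I would invoke Theorem \ref{beilinson} to conclude that the right-hand vertical arrow $\kappa_m$ is a quasi-isomorphism. For this to apply one needs $Z$ and $S$ to be fine and $f$ to be integral and a locally complete intersection morphism; these are part of the definition of a (G-)log-syntomic morphism, so the hypothesis holds automatically. Given that the two right squares commute and the two right arrows are quasi-isomorphisms, the five-lemma applied to the long exact sequences of cohomology sheaves (or equivalently, the functoriality of the cofiber construction in the stable $\infty$-category $\sd(A)$) forces the left vertical arrow to be a quasi-isomorphism as well.

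The argument is essentially formal once both input theorems are in hand; there is no serious obstacle, only the bookkeeping check that the relevant hypotheses of Theorem \ref{beilinson} are implied by G-log-syntomicity and that the triangles really fit into the above square with the filtration map $\kappa$ — both of which are immediate from the constructions recalled in \cite[3.1]{BE1} and \cite[1.9]{BE2}.
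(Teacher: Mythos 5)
Your argument is exactly the paper's proof: both compare the Hodge-filtration triangle for $\LL\Omega^{\scriptscriptstyle\bullet}_{Z/S}$ with the PD-filtration triangle for $\R u_{Z/S*}(\so_{Z/S})$ via $\kappa$, apply Theorem \ref{bhatt} to the middle arrow and Theorem \ref{beilinson} to the right one, and conclude for the left arrow by the triangle (five-lemma) argument. The extra check that G-log-syntomicity supplies the hypotheses of Theorem \ref{beilinson} is correct and harmless.
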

\begin{proof}
 Consider the following commutative diagram with exact rows
$$
\begin{CD}
F^m\LL\Omega^{\scriptscriptstyle\bullet}_{Z/S} @>>>  \LL\Omega^{\scriptscriptstyle\bullet}_{Z/S} @>>>\LL\Omega^{\scriptscriptstyle\bullet}_{Z/S}/F^m \\
@VVV @VV\wr V @VV\wr V\\
\R u_{Z/S*}(\sj^{[m]}_{Z/S})@>>>  \R u_{Z/S*}(\so_{Z/S})
 @>>> \R u_{Z/S*}(\so_{Z/S}/\sj^{[m]}_{Z/S}).
\end{CD}
$$
and use the above theorems of Bhatt and Beilinson.
\end{proof}

 Let $X$ be a fine, proper,  log-smooth scheme over $V^{\times}$.  Set
\begin{align*}
\R\Gamma(X_{\eet},\LL\Omega^{\scriptscriptstyle\bullet,\wedge}_{X/W(k)})\wh{\otimes}{\mathbf Q}_p :=(\holim_n\R\Gamma(X_{\eet},\LL\Omega^{\scriptscriptstyle\bullet,\wedge}_{X_n/W_n(k)}))\otimes {\mathbf Q}
\end{align*}
and similarly for complexes over $V^{\times}$. 
Here the hat over derived log de Rham complex refers to the completion with respect to the Hodge filtration (in the sense of prosystems). For $r\geq 0$, consider the following sequence of maps 
\begin{equation}
\label{composition1}
\begin{aligned}
  \R\Gamma_{\dr}(X_K)/F^r & \stackrel{\sim}{\leftarrow}\R\Gamma(X,\LL\Omega^{\scriptscriptstyle\bullet}_{X/V^{\times}}/F^r)_\bq \stackrel{\sim}{\to} \R\Gamma(X_{\eet},\LL\Omega^{\scriptscriptstyle\bullet}_{X/V^{\times}}/F^r)\wh{\otimes}\Qp\\
 &  \stackrel{\sim}{\to}\R\Gamma_{\crr}(X,\so_{X/V^{\times}}/\sj^{[r]}_{X/V^{\times}})_{\mathbf Q}
  \leftarrow \R\Gamma_{\crr}(X,\so_{X/W(k)}/\sj^{[r]}_{X/W(k)})_{\mathbf Q} 
\end{aligned}
 \end{equation}
  The first quasi-isomorphism follows from the fact that the natural map $\LL\Omega_{X_K/K_0}^{\scriptscriptstyle\bullet}/F^r\stackrel{\sim}{\to} \Omega_{X_K/K_0}^{\scriptscriptstyle\bullet}/F^r$ is a quasi-isomorphism since $X_K$ is log-smooth over $K_0$. The second quasi-isomorphism follows from $X$ being proper and log-smooth over $V^{\times}$, the third one from Theorem \ref{beilinson}. Define the map 
$$
 \gamma_r^{-1}:\quad  \R\Gamma_{\crr}(X,\so_{X/W(k)}/\sj^{[r]}_{X/W(k)})_{\mathbf Q}\to \R\Gamma_{\dr}(X_K)/F^r 
$$ 
as the composition (\ref{composition1}). 

\begin{corollary}
\label{Langer}
Let $X$ be a fine, proper,  log-smooth scheme over $V^{\times}$. Let $r\geq 0$. There exists a canonical  quasi-isomorphism
$$\gamma_r:\quad \R\Gamma_{\dr}(X_K)/F^r \stackrel{\sim}{\to} \R\Gamma_{\crr}(X,\so_{X/W(k)}/\sj^{[r]}_{X/W(k)})_{\mathbf Q}
$$
\end{corollary}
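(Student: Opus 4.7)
The plan is to set $\gamma_r := (\gamma_r^{-1})^{-1}$, where $\gamma_r^{-1}$ is the composition (\ref{composition1}). For this to make sense, each of the four arrows in (\ref{composition1}) must be a quasi-isomorphism. The first three arrows are already justified in the comments immediately preceding the corollary: the first uses log-smoothness of $X_K/K_0$ so that $\LL\Omega^{\scriptscriptstyle\bullet}_{X_K/K_0}/F^r\simeq \Omega^{\scriptscriptstyle\bullet}_{X_K/K_0}/F^r$; the second is a coherence/formal-GAGA comparison coming from properness and log-smoothness of $X/V^{\times}$; the third is Theorem \ref{beilinson} applied to $X\to V^{\times}$. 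Thus the task reduces to showing that the fourth, unmarked arrow
\[
\R\Gamma_{\crr}(X,\so_{X/W(k)}/\sj^{[r]}_{X/W(k)})_{\mathbf Q}\longrightarrow \R\Gamma_{\crr}(X,\so_{X/V^{\times}}/\sj^{[r]}_{X/V^{\times}})_{\mathbf Q},
\]
induced by the base-change $W(k)\to V^{\times}$, is also a quasi-isomorphism.

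For this, I would apply Theorem \ref{beilinson} a second time, now to the composite morphism $X\to W(k)$. Since $X\to V^{\times}$ is log-smooth (hence integral and locally complete intersection), and $V^{\times}\to W(k)$ is itself integral and l.c.i.\ (the underlying map $\Spec V\to\Spec W(k)$ is a quotient by a single Eisenstein polynomial, and the canonical log-structure is freely generated by a uniformizer), the composite $X\to W(k)$ is integral and l.c.i.\ as well. Theorem \ref{beilinson} then yields $\R\Gamma_{\crr}(X,\so_{X/W(k)}/\sj^{[r]}_{X/W(k)})_{\mathbf Q}\simeq \R\Gamma(X,\LL\Omega^{\scriptscriptstyle\bullet}_{X/W(k)}/F^r)\widehat{\otimes}\Qp$, and the problem becomes that of showing the natural map
\[
\R\Gamma(X,\LL\Omega^{\scriptscriptstyle\bullet}_{X/W(k)}/F^r)\widehat{\otimes}\Qp\longrightarrow \R\Gamma(X,\LL\Omega^{\scriptscriptstyle\bullet}_{X/V^{\times}}/F^r)\widehat{\otimes}\Qp
\]
is a quasi-isomorphism.

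The main obstacle is this last step. The natural tool is the filtered transitivity triangle for derived log de Rham,
\[
\pi^{*}\LL\Omega^{\scriptscriptstyle\bullet}_{V^{\times}/W(k)}\longrightarrow \LL\Omega^{\scriptscriptstyle\bullet}_{X/W(k)}\longrightarrow \LL\Omega^{\scriptscriptstyle\bullet}_{X/V^{\times}}
\]
with $\pi\colon X\to V^{\times}$, compatible with Hodge filtrations. After Hodge-completion and $p$-inversion, it suffices to show that the leftmost term becomes rationally trivial in the sense that its contribution modulo $F^r$ disappears after $\widehat{\otimes}\Qp$. This in turn reduces to verifying that $\LL\Omega^{\scriptscriptstyle\bullet,\wedge}_{V^{\times}/W(k)}\otimes\Qp\simeq K$ concentrated in degree zero, which encodes the intuitive fact that, upon inverting $p$, the canonical log-structure on $V^{\times}$ becomes trivial (a uniformizer $\pi$ is a unit in $K$) and the underlying extension $K/K_0$ is finite separable, hence \'etale. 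The technical heart of the proof lies in controlling the Hodge filtration on $\LL\Omega^{\scriptscriptstyle\bullet}_{V^{\times}/W(k)}$ carefully enough to pass cleanly through the completion-and-rationalization step at each finite level $r$.
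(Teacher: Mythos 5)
Your setup and reduction are correct and match the paper's: you rightly observe that the first three arrows in (\ref{composition1}) are quasi-isomorphisms by the reasons stated before the corollary, so the whole issue is the last (unmarked) map; you then correctly apply Theorem \ref{beilinson} to both $X\to W(k)$ and $X\to V^{\times}$ (the l.c.i.\ hypotheses hold for the reasons you give) to convert the question into one about Hodge-completed derived log de Rham complexes. Your instinct that the answer ultimately comes from the fact that $V^{\times}$ is ``rationally trivial'' over $W(k)$ is also the right one.

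The gap is the tool you invoke for the last step. There is no ``filtered transitivity triangle''
\[
\pi^{*}\LL\Omega^{\scriptscriptstyle\bullet}_{V^{\times}/W(k)}\longrightarrow \LL\Omega^{\scriptscriptstyle\bullet}_{X/W(k)}\longrightarrow \LL\Omega^{\scriptscriptstyle\bullet}_{X/V^{\times}}
\]
for derived (log) de Rham complexes: unlike the cotangent complex, $\LL\Omega^{\scriptscriptstyle\bullet}$ does not satisfy a two-term transitivity exact triangle. (What does exist is a multi-step Eilenberg--Moore-type filtration, but writing that as a single distinguished triangle is incorrect, and the paper does not appeal to any such filtration.) Because of this, your claim that the problem ``reduces to verifying $\LL\Omega^{\scriptscriptstyle\bullet,\wedge}_{V^{\times}/W(k)}\otimes\Qp\simeq K$'' is not substantiated by a valid argument, and indeed this statement alone does not obviously control the Hodge filtration on $\LL\Omega^{\scriptscriptstyle\bullet}_{X/W(k)}$ versus $\LL\Omega^{\scriptscriptstyle\bullet}_{X/V^{\times}}$.

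The correct move, which is what the paper does, is to pass to the graded pieces of the Hodge filtration first, using the identification $\gr^i_F\LL\Omega^{\scriptscriptstyle\bullet}_{X_n/S}\simeq L\Lambda^i_X(L_{X_n/S})[-i]$, and then use the genuine transitivity distinguished triangle for the cotangent complex
\[
\so_X\otimes_V L_{V^{\times}_n/W_n(k)}\longrightarrow L_{X_n/W_n(k)}\longrightarrow L_{X_n/V^{\times}_n}.
\]
Applying $L\Lambda^i$ yields a distinguished triangle on $\gr^i_F$, and the left term vanishes after inverting $p$ and passing to the $\holim_n$ precisely because $L_{V^{\times}_n/W_n(k)}$ is annihilated by a power of $p$ bounded independently of $n$ (the explicit computation: $L_{V^{\times}/W(k)}$ is concentrated in degree $0$, where it sits in a short exact sequence between $\Omega_{V/W(k)}\simeq V/\sd_{K/K_0}$ and $V/\mathfrak m_K$, both torsion). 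Your proposal identifies the right phenomenon (rational triviality of the relative log de Rham theory of $V^{\times}/W(k)$) but lacks the mechanism -- cotangent-complex transitivity on Hodge-graded pieces -- that turns that phenomenon into the desired filtered quasi-isomorphism.
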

\begin{proof}
 It suffices to show that the last map in the composition (\ref{composition1}) is also a quasi-isomorphism. By  Theorem \ref{beilinson}, this map  is quasi-isomorphic to the map
$$
  (\R\Gamma(X_{\eet},\LL\Omega^{\scriptscriptstyle\bullet,\wedge}_{X/W(k)})\wh{\otimes}\Qp)/F^r \to (\R\Gamma (X_{\eet},\LL\Omega^{\scriptscriptstyle\bullet,\wedge}_{X/V^{\times}})\wh{\otimes}\Qp)/F^r $$
  Hence it suffices to show that the natural map 
$$
  \gr^i_{F}\R\Gamma(X_{\eet},\LL\Omega^{\scriptscriptstyle\bullet,\wedge}_{X/W(k)})\wh{\otimes}\Qp \to \gr^i_F\R\Gamma (X_{\eet},\LL\Omega^{\scriptscriptstyle\bullet,\wedge}_{X/V^{\times}})\wh{\otimes}\Qp
$$
  is a quasi-isomorphism for all $i\geq 0$. 
  
  Fix $n\geq 1$ and $i\geq 0$ and recall \cite[1.2]{BE1} that we have a natural identification
$$\gr^i_F\LL\Omega^{\scriptscriptstyle\bullet}_{X_n/W_n(k)}\stackrel{\sim}{\to}L\Lambda^i_X(L_{X_n/W_n(k)})[-i],\quad \gr^i_F\LL\Omega^{\scriptscriptstyle\bullet}_{X_n/V^{\times}_n}\stackrel{\sim}{\to}L\Lambda^i_X(L_{X_n/V^{\times}_n})[-i],
$$ 
where $L_{Y/S}$ denotes the relative log cotangent complex \cite[3.1]{BE1} and $L\Lambda_X({\scriptscriptstyle\bullet })$ is the nonabelian left derived functor of the exterior power functor.
The distinguished triangle
$$
\so_X\otimes_VL_{V^{\times}_n/W_n(k)}\to L_{X_n/W_n(k)}\to L_{X_n/V^{\times}_n}
$$
yields a distinguished triangle
$$
L\Lambda_X^i(\so_X\otimes_VL_{V^{\times}_n/W_n(k)})[-i]\to \gr^i_F\LL\Omega^{\scriptscriptstyle\bullet}_{X_n/W_n(k)}\to  \gr^i_F\LL\Omega^{\scriptscriptstyle\bullet}_{X_n/V^{\times}_n}
$$
Hence we have a  distinguished triangle
$$ \holim_n \R\Gamma(X_{\eet},L\Lambda_X^i(\so_X\otimes_VL_{V^{\times}_n/W_n(k)}))\otimes{\mathbf Q}[-i] \to \gr^i_{F}\R\Gamma(X_{\eet},\LL\Omega^{\scriptscriptstyle\bullet,\wedge}_{X/W(k)})\wh{\otimes}\Qp \to \gr^i_F\R\Gamma (X_{\eet},\LL\Omega^{\scriptscriptstyle\bullet,\wedge}_{X/V^{\times}})\wh{\otimes}\Qp
$$

   It suffices to show that the term on the left is zero. But this will follow as soon as we show that the cohomology groups of $L_{V^{\times}_n/W_n(k)}$ are annihilated by $p^c$, where $c$ is a constant independent of $n$. To show this recall that $V$ is  a log complete intersection over $W(k)$. If $\pi$ is a generator of $V/W(k)$, $f(t)$ its minimal polynomial then (cf. \cite[6.9]{Ol}) $L_{V^{\times}/W(k)}$ is quasi-isomorphic to the cone of the multiplication by $f^{\prime}(\pi)$ map on $V$. Hence $L_{V^{\times}/W(k)}$ is acyclic in non-zero degrees, $H^0L_{V^{\times}/W(k)}=\Omega_{V^{\times}/W(k)}$ is a cyclic $V$-module and we have a short exact sequence
$$
0\to \Omega_{V/W(k)}\to \Omega_{V^{\times}/W(k)}\to V/{\mathfrak m}_K\to 0
$$
Since $\Omega_{V/W(k)}\simeq V/\sd_{K/K_0}$, where $\sd_{K/K_0}$ is the different, we get that $p^cH^0L_{V^{\times}/W(k)}=0$ for a constant $c$ independent of $n$. Since
$L_{V^{\times}/W(k)}\simeq L_{V^{\times}/W(k)}\otimes^{L}_VV_n$,
we are done.
\end{proof}
\begin{remark}
Versions of the above corollary appear in various degrees of generality in the proofs of the $p$-adic comparison theorems (cf. \cite[Lemma 4.5]{KM}, \cite[Lemma 2.7]{Ln}). They are proved using computations in crystalline cohomology. We find the above argument based on Beilinson comparison Theorem \ref{beilinson} particularly conceptual and pleasing.
\end{remark}
\subsection{$h$-topology} In this subsection we review terminology connected with $h$-topology from Beilinson papers \cite{BE1}, \cite{BE2}, \cite{BH}; we will use it freely.   Let $\mathcal{V}ar_K$ be the category of varieties (i.e., reduced and separated schemes of finite type) over a field $K$.
 An {\em arithmetic pair} over $K$ is an open embedding $j:U\hookrightarrow \overline{U}$ with dense image of a $K$-variety $U$ into a reduced proper flat $V$-scheme $\overline{U}$. A morphism $(U,\overline{U})\to (T,\overline{T})$ of pairs is a map $\overline{U}\to \overline{T}$ which sends $U$ to $T$. In the case that the pairs represent log-regular schemes this is the same as a map of log-schemes.
 For a pair $(U,\overline{U})$, we set $V_U:=\Gamma(\overline{U},\so_{\overline{U}})$ and $K_U:=\Gamma(\overline{U}_K,\so_{\overline{U}})$. $K_U$  is a product of several finite extensions of $K$ (labeled by the connected components of $\overline{U}$) and, if $\overline{U}$ is normal, $V_U$ is the product of the corresponding rings of integers.  We will denote by $\spp_K^{ar}$ the category of arithmetic pairs over $K$.
 A  {\em semistable pair} ({\em ss-pair}) over $K$ \cite[2.2]{BE1} is a pair of schemes $(U,\overline{U})$ over $(K,V)$ such that
 (i) $\overline{U}$ is regular and proper over $V$, (ii) $\overline{U}\setminus U$ is a divisor with normal crossings on $\overline{U}$, and (iii) the closed fiber $\overline{U}_0$ of $\overline{U}$ is reduced.
Closed fiber is taken over the closed points of $V_U$.  We will think of ss-pairs as log-schemes  equipped  with log-structure given by the divisor $\overline{U}\setminus U$. The closed fiber $\overline{U}_0$ has the induced log-structure. We will say that the log-scheme $(U,\overline{U})$ is {\em split} over $V_U$. We will denote by $\spp_K^{ss}$ the category of ss-pairs over $K$.  A semistable pair is called {\em strict} if the irreducible components of the closed fiber are regular.  We will often work with the larger category $\spp_K^{\log}$ of log-schemes $(U,\overline{U})\in \spp^{ar}_K$ log-smooth over $V_U^{\times}$. 

  A {\em semistable pair} ({\em ss-pair}) over $\ovk$ \cite[2.2]{BE1} is a pair of connected schemes $(T,\overline{T})$ over $(\ovk,\overline{V})$ such that there exists an ss-pair $(U,\overline{U})$ over $K$ and a $\ovk$-point $\alpha: K_U\to \ovk$ such that $(T,\overline{T})$ is isomorphic to the base change $(U_{\ovk},\overline{U}_{\overline{V}})$.
  We will denote by $\spp_{\ovk}^{ss}$ the category of ss-pairs over $\ovk$.
  
  A {\em geometric pair} over $K$ is a pair $(U,\overline{U})$ of varieties over $K$ such that $\overline{U}$ is proper and $U\subset \overline{U}$ is open and dense. We say that the pair $(U,\ovu)$ is a {\em nc-pair}  if $\overline{U}$ is regular and $\ovu\setminus U$ is a divisor with normal crossings in $\ovu$; it is {strict nc-pair} if the irreducible components of $U\setminus \ovu$ are regular.  A morphism of pairs $f:(U_1,\ovu_1)\to (U,\ovu)$ is a map $\ovu_1\to \ovu$ that sends $U_1$ to $U$. We denote the category of nc-pairs over $K$ by $\spp_K^{\nc}$.

  For a field $K$, the $h$-topology (cf. \cite{SV},\cite[2.3]{BE1}) on $\mathcal{V}ar_K$ is the coarsest topology finer than the Zariski and proper topologies.
  \footnote{The latter is generated by a pretopology whose coverings are proper surjective maps.} It is stronger than the \'etale and proper topologies. It is generated
  by the pretopology whose coverings are finite families of maps $\{Y_i\to X\}$ such that $Y:=\coprod Y_i\to X$ is a universal topological epimorphism
  (i.e., a subset of $X$ is Zariski open if and only if its preimage in $Y$ is open). We denote by $\mathcal{V}ar_{K,h},X_h$ the corresponding $h$-sites. For any of the categories  $\spp$ mentioned above let $\gamma: (U,\ovu)\to U$ denote the forgetful functor.  Beilinson proved \cite[2.5]{BE1} that the categories $\spp^{\nc}$, $(\spp_K^{ar},\gamma)$ and $(\spp_K^{ss},\gamma)$ form a base for $\mathcal{V}ar_{K,h}$. One can easily modify his argument to conclude the same about the categories $(\spp_K^{\log},\gamma)$.
\subsection{Galois cohomology} In this subsection we review the definition of (higher) semistable Selmer groups and prove that in stable ranges they are the same as Galois cohomology groups.
Our main references  are \cite{F2}, \cite{F3}, \cite{CF}, \cite{BK}, \cite{FPR}, \cite{JH}.
Recall \cite{F2}, \cite{F3} that a $p$-adic representation $V$ of $G_K$ (i.e., a finite dimensional continuous ${\bq}_p$-vector space representation) is called {\em semistable} (over $K$) if
$\dim_{K_0} (B_{\st}\otimes_{{\mathbf Q}_p} V )^{G_K} = \dim_{{\mathbf Q}_p} (V )$. It
 is called {\em potentially semistable} if there exists a
finite extension $K^{\prime}$ of
$K$ such that $V|G_{K^{\prime}}$
 is semistable over $K^{\prime}$. We denote by $\Rep_{\st}(G_K)$ and $\Rep_{\pst}(G_K)$
the categories of semistable and   potentially semistable representations of
$G_K$, respectively.

 As in \cite[4.2]{F2} a $\phi$-module over $K_0$ is a pair $(D,\phi)$, where $D$ is a finite dimensional $K_0$-vector space, $\phi=\phi_D$ is a $\phi$-semilinear automorphism of $D$; a $(\phi,N)$-module is a triple $(D,\phi,N)$, where $(D,\phi)$ is a $\phi$-module and $N=N_V$ is a $K_0$-linear endomorphism of $D$ such that $N\phi=p\phi N$ (hence $N$ is nilpotent). A filtered $(\phi,N)$-module  is a 
 $(D,\phi,N,F^{\scriptscriptstyle\bullet})$, where $(D,\phi,N)$ is a $(\phi, N)$-module and $F^{\scriptscriptstyle\bullet} $  is a decreasing finite filtration of $D_K$ by $K$-vector
spaces. There is a notion of a {\em (weakly) admissible} filtered $(\phi, N)$-module \cite{CF}. Denote by $MF^{\ad}_K(\phi,N)\subset MF_K(\phi,N)$ the categories of admissible filtered $(\phi, N)$-modules and  filtered $(\phi, N)$-modules, respectively. We know \cite{CF} that the pair of functors $D_{\st}(V)=(B_{\st}\otimes_{{\mathbf Q}_p}V)^{G_K}$, $V_{\st}(D)=(B_{\st}\otimes_{K_0}D)^{\phi=\id,N=0}\cap F^0(B_{\dr}\otimes_{K}D_K)$  defines an equivalence of categories $MF_K^{\ad}(\phi,N)\simeq \Rep_{\st}(G_K)$.

 For $D\in MF_K(\phi,N)$, set
$$
C_{\st}(D):=
\left[\begin{aligned}\xymatrix@C=36pt{D\ar[d]^N \ar[r]^-{(1-\phi,\can)} & D\oplus D_K/F^0\ar[d]^{(N,0)}\\
D\ar[r]^{1-p\phi} & D
}\end{aligned}\right]
$$
Here the brackets denote the total complex of the double complex inside the brackets.
Consider also the following complex
$$
C^+(D):=
\left[\begin{aligned}\xymatrix@C=40pt{D\otimes_{K_0}B^+_{\st}\ar[d]^N \ar[r]^-{(1-\phi,\can\otimes \iota)} & D\otimes_{K_0}B^+_{\st}\oplus (D_K\otimes_{K}B^+_{\dr})/F^0\ar[d]^{(N,0)}\\
D\otimes_{K_0}B^+_{\st}\ar[r]^{1-p\phi} & D\otimes_{K_0}B^+_{\st}}\end{aligned}\right]
$$
Define $C(D)$ by omitting the superscript $+$ in the above diagram. We have $C_{\st}(D)=C(D)^{G_K}$.
\begin{remark}
Recall \cite[1.19]{JH}, \cite[3.3]{FPR} that to every   $p$-adic representation $V$ of $G_K$ we can associate a complex
$$C_{\st}(V):\quad  D_{\st}(V)\veryverylomapr{(N,1-\phi,\iota)}D_{\st}(V)\oplus D_{\st}(V)\oplus t_V\veryverylomapr{(1-p\phi)-N} D_{\st}(V)\to 0\cdots
$$
where $t_V:=(V\otimes_{{\mathbf Q}_p} (B_{\dr}/B^+_{\dr}))^{G_K}$ \cite[I.2.2.1]{FPR}. The cohomology of this complex is called $H^*_{\st}(G_K,V)$. If $V$ is semistable then $C_{\st}(V)=C_{\st}(D_{\st}(V))$ hence $H^*(C_{\st}(D_{\st}(V)))=H^*_{\st}(G_K,V)$. If $V$ is potentially semistable the groups $H^*_{\st}(G_K,V)$ compute Yoneda extensions of ${\mathbf Q}_p$ by $V$ in the category of potentially  semistable representations \cite[I.3.3.8]{FPR}. In general \cite[I.3.3.7]{FPR}, $H^0_{\st}(G_K,V)\stackrel{\sim}{\to} H^0(G_K,V)$ and $H^1_{\st}(G_K,V)\hookrightarrow H^1(G_K,V)$ computes $\st$-extensions\footnote{Extension $0\to V_1\to V_2\to V_3\to 0$ is called $\st$ if the sequence $0\to D_{\st}(V_1)\to D_{\st}(V_2)\to D_{\st}(V_3)\to 0$ is exact.} of ${\mathbf Q}_p$ by $V$.
\end{remark}
\begin{remark}
\label{basics}
 Let $D\in MF_K(\phi, N)$. Note that
\begin{enumerate}
\item $H^0(C(D))=V_{\st}(D)$;
\item for $i\geq 2$, $H^i(C^+(D))=H^i(C(D))=0$ (because $N$ is surjective on $B^+_{\st}$ and $B_{\st}$);
\item if $F^1D_K=0$ then $F^0(D_K\otimes _KB^+_{\dr})=F^0(D_K\otimes _KB_{\dr})$ (in particular, the map of complexes $C^+(D)\to C(D)$ is an injection);
\item if $D=D_{\st}(V)$ is admissible  then we have quasi-isomorphisms
$$
C(D)\stackrel{\sim}{\leftarrow}V\otimes_{{\mathbf Q}_p}[B_{\crr}\verylomapr{(1-\phi,\can)}B_{\crr}\oplus B_{\dr}/F^0]\stackrel{\sim}{\leftarrow}V\otimes_{{\mathbf Q}_p}(B_{\crr}^{\phi=1}\cap F^0)=V
$$
and the  map of complexes $C_{\st}(D)\to C(D)$ represents  the canonical map $H^i_{\st}(G_K,V)\to H^i(G_K,V)$.
\end{enumerate}
\end{remark}
\begin{lemma}(\cite[Theorem II.5.3]{F1})
If $X\subset B_{\crr}\cap B_{\dr}^+$ and $\phi(X)\subset X$ then $\phi^2(X)\subset B_{\crr}^+$.
\end{lemma}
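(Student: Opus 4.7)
The plan is to fix an arbitrary $x \in X$ and exploit the presentation $B_{\crr} = B_{\crr}^+[1/t]$ together with the fact that $B_{\dr}^+$ is a discrete valuation ring with uniformizer $t$. Write $x = y/t^n$ with $y \in B_{\crr}^+$ and $n \geq 0$ chosen minimal; the hypothesis $x \in B_{\dr}^+$ then translates into $y \in t^n B_{\dr}^+$. It will suffice to prove $\phi^2(y) \in t^n B_{\crr}^+$, since in that case $\phi^2(x) = \phi^2(y)/(p^2 t)^n$ lies in $B_{\crr}^+$.

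Because $X$ is $\phi$-stable and contained in $B_{\dr}^+$, the same analysis applied to $\phi(x), \phi^2(x), \ldots$, together with $\phi(t) = pt$ (and the fact that $p$ is a unit of $B_{\dr}$), gives $\phi^k(y) \in t^n B_{\dr}^+$ for every $k \geq 0$. The problem is thereby reduced to the following purely internal assertion about the period rings: if $y \in B_{\crr}^+$ satisfies $\phi^k(y) \in t^n B_{\dr}^+$ for all $k \geq 0$, then already $\phi^2(y) \in t^n B_{\crr}^+$.

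The main obstacle is that the inclusion $t^n B_{\crr}^+ \hookrightarrow B_{\crr}^+ \cap t^n B_{\dr}^+$ is strict in general, and one must show that Frobenius contracts this discrepancy. The key input, going back to \cite{F1}, is an estimate of the shape
\[
\phi\bigl(B_{\crr}^+ \cap t^n B_{\dr}^+\bigr) \subset t^n B_{\crr}^+ + p \cdot \bigl(B_{\crr}^+ \cap t^n B_{\dr}^+\bigr),
\]
obtained by expanding elements in the divided powers of a generator $\xi$ of the PD-ideal of $A_{\crr}$ and using the relations $\phi(t) = pt$ and $\phi(\xi) \in p A_{\crr}$. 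Iterating this estimate once more and invoking the $p$-adic completeness of $A_{\crr}$ forces $\phi^2(y) \in t^n B_{\crr}^+$, which is where the ``$2$'' in the statement comes from. This Frobenius-contraction estimate is the only delicate step; everything else is formal manipulation of the presentation $B_{\crr} = B_{\crr}^+[1/t]$.
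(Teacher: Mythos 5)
Note first that the paper offers no proof of this lemma at all: it is quoted verbatim from Fontaine \cite[Thm.~II.5.3]{F1}. So the only question is whether your argument stands on its own, and it does not. Your reduction is correct but contentless: writing $x=y/t^n$ and using $\phi(t)=pt$ merely restates the lemma as ``$\phi^2(y)\in t^nB^+_{\crr}$ whenever $y\in B^+_{\crr}$ and $\phi^k(y)\in t^nB^+_{\dr}$ for all $k\geq 0$''. All the content is in your ``Frobenius-contraction estimate'', and as stated it is vacuous: $B^+_{\crr}\cap t^nB^+_{\dr}$ is a $\bq_p$-vector space (both terms are), so $p\cdot\bigl(B^+_{\crr}\cap t^nB^+_{\dr}\bigr)=B^+_{\crr}\cap t^nB^+_{\dr}$ and the displayed inclusion asserts nothing. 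To give it meaning you must work integrally, with $A_{\crr}$ and the divided powers of a generator $\xi$ of $\ker\theta$, and control the divided-power denominators of $y$ and of all its Frobenius iterates simultaneously; that is exactly the hard part of Fontaine's theorem, which your sketch gestures at (``expanding in divided powers of $\xi$'') but does not carry out.

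Even granting a meaningful integral version $\phi(M)\subset N+pM$ for suitable lattices $M\subset B^+_{\crr}\cap t^nB^+_{\dr}$ and $N\subset t^nB^+_{\crr}$, your deduction fails: since each application of $\phi$ changes the element, iteration only yields $\phi^k(y)\in N+p^kM$ for each $k$ separately, i.e.\ the images of the iterates in $M/N$ tend to $0$; it does not place the fixed element $\phi^2(y)$ in $N$, and $p$-adic completeness of $A_{\crr}$ cannot be invoked for elements of $B^+_{\crr}=A_{\crr}[1/p]$ without a uniform bound on denominators -- which is again the point at issue. The known arguments (Fontaine's original one, and the streamlined versions using the intermediate ring $B^+_{\max}$, with $B^+_{\crr}\subset B^+_{\max}$ and $\phi(B^+_{\max})\subset B^+_{\crr}$) rest instead on a genuine divisibility lemma, roughly: if $\theta(\phi^k(z))=0$ for all $k\geq 0$ then $z$ (or $\phi(z)$) is divisible by $t$ in the relevant ring; this is proved by explicit valuation estimates on expansions $\sum_m a_m\xi^m/m!$ with Witt-vector coefficients tending to $0$, and the Frobenii appearing in the statement are precisely what converts divided-power denominators into geometric ones so that convergence can be controlled. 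Your proposal would need to prove a lemma of that strength; without it the argument does not close.
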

\begin{proposition}If $D\in MF_K(\phi, N)$ and $F^1D_K=0$ then $H^0(C(D)/C^+(D))=0$.
\end{proposition}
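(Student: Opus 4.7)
The assumption $F^1 D_K=0$ gives $F^0(D_K\otimes_K B^+_{\dr})=D_K\otimes B^+_{\dr}$, so the term $(D_K\otimes B^+_{\dr})/F^0$ in $C^+(D)$ vanishes while $(D_K\otimes B_{\dr})/F^0 = D_K\otimes B_{\dr}/B^+_{\dr}$. Unwinding the total complex, $H^0(C(D)/C^+(D))$ is the kernel of the differential in degree~$0$, so an element is the class of some $x\in D\otimes_{K_0} B_{\st}$ satisfying (a)~$(1-\phi)x\in D\otimes B^+_{\st}$, (b)~$Nx\in D\otimes B^+_{\st}$, and (c)~$\iota(x)\in D_K\otimes B^+_{\dr}$. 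The plan is to show $x\in D\otimes B^+_{\st}$ by first reducing to an element of $D\otimes B_{\crr}$ modulo $D\otimes B^+_{\crr}$, then invoking the preceding Fontaine lemma.

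Using $B_{\st}/B^+_{\st}\cong(B_{\crr}/B^+_{\crr})[u_s]$ as a polynomial module, I write the class $\bar x\in D\otimes B_{\st}/B^+_{\st}$ as $\sum_i\bar x_i\,u_s^i$ with $\bar x_i\in D\otimes B_{\crr}/B^+_{\crr}$. Condition~(b) reads $N\bar x=0$, which via $Nu_s=-1$ and $N$ acting on the $D$-factor by $N_D$ unfolds to the recursion $(i+1)\bar x_{i+1}=N_D\bar x_i$, forcing $\bar x=\exp(u_sN_D)\bar x_0$ for some $\bar x_0\in D\otimes B_{\crr}/B^+_{\crr}$ (the sum is finite by nilpotency of $N_D$). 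From $N\phi=p\phi N$ on $D$ one has $\phi_DN_D^i=p^{-i}N_D^i\phi_D$, and coupled with $\phi(u_s)=pu_s$ this yields $\phi\bar x=\exp(u_sN_D)\phi\bar x_0$. Since $\exp(u_sN_D)$ is invertible (its inverse is $\exp(-u_sN_D)$, a finite polynomial), condition~(a) reduces to $\phi\bar x_0=\bar x_0$. An analogous calculation gives $\iota(\bar x)=\exp(\iota(u_s)N_D)\iota(\bar x_0)$ in $D_K\otimes B_{\dr}/B^+_{\dr}$; since $\iota(u_s)\in B^+_{\dr}$, the operator $\exp(\iota(u_s)N_D)$ is invertible on this quotient, so (c) reduces to $\iota(\bar x_0)=0$.

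It suffices now to prove: any $\tilde x_0\in D\otimes B_{\crr}$ with $(1-\phi)\tilde x_0\in D\otimes B^+_{\crr}$ and $\iota(\tilde x_0)\in D_K\otimes B^+_{\dr}$ lies in $D\otimes B^+_{\crr}$. Pick a $K_0$-basis $(e_j)$ of $D$, which is simultaneously a $K$-basis of $D_K$, and write $\tilde x_0=\sum_j x_j\otimes e_j$ with $x_j\in B_{\crr}$; the de Rham condition forces each $x_j\in B_{\crr}\cap B^+_{\dr}$. The matrix of $\phi_D$ in $(e_j)$ has entries in $K_0\subset B^+_{\crr}$, so the Frobenius condition shows that the $B^+_{\crr}$-submodule $X=B^+_{\crr}+\sum_j B^+_{\crr}\,x_j\subset B_{\crr}\cap B^+_{\dr}$ is stable under $\phi$. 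Fontaine's lemma then gives $\phi^2(x_j)\in B^+_{\crr}$ for all $j$, hence $\phi^2\tilde x_0\in D\otimes B^+_{\crr}$; iterating $(1-\phi)\tilde x_0\in D\otimes B^+_{\crr}$ yields $\tilde x_0\equiv\phi^2\tilde x_0\pmod{D\otimes B^+_{\crr}}$, so $\tilde x_0\in D\otimes B^+_{\crr}$. Thus $\bar x_0=0$, $\bar x=0$, and $x\in D\otimes B^+_{\st}$. The main technical point is the exponential rewriting: it decouples the $u_s$-direction cleanly enough for conditions~(a) and~(c) to descend to $\bar x_0$, so that Fontaine's lemma can do its work at the $B_{\crr}$-level.
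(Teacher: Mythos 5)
Your proof is correct, but it takes a genuinely different route from the paper's. The paper argues by induction on the nilpotency index of $N$: it filters $D$ by $D_1=\ker N$, uses Remark \ref{basics}(3) to get a short exact sequence of the quotient complexes, and in the base case $N=0$ eliminates the monodromy direction via $0\to B^+_{\crr}\to B^+_{\st}\stackrel{N}{\to}B^+_{\st}\to 0$, so that $H^0$ is identified with $(D\otimes_{K_0}((B_{\crr}\cap B^+_{\dr})/B^+_{\crr}))^{\phi=1}$; the endgame is Fontaine's lemma ($\phi(X)\subset X\Rightarrow\phi^2(X)\subset B^+_{\crr}$) applied to the preimage of a finite-dimensional $K_0$-span. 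You instead handle general nilpotent $N$ in one stroke: solving $N\bar x=0$ explicitly in $D\otimes(B_{\crr}/B^+_{\crr})[u_s]$ as $\bar x=\exp(u_sN_D)\bar x_0$, and using that $\exp(u_sN_D)$ (resp.\ $\exp(\iota(u_s)N_D)$, since $u_s\in B^+_{\dr}$) is invertible and interacts with $1-\phi$ through $N\phi=p\phi N$ and $\phi(u_s)=pu_s$, you transport all three cocycle conditions onto $\bar x_0$ and then run the same Fontaine-lemma argument, with the finitely generated $B^+_{\crr}$-module $B^+_{\crr}+\sum_j B^+_{\crr}x_j$ in place of the paper's $K_0$-span. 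This avoids the d\'evissage (and the injectivity input of Remark \ref{basics}(3) for the sub- and quotient modules $D_1,D_2$) at the cost of the exponential bookkeeping, which is in fact the same trivialization mechanism ($\beta_p$, $\beta_{[\tilde p]}$) used elsewhere in the paper. Two points to tighten: (i) your opening claim that $(D_K\otimes_K B^+_{\dr})/F^0=0$ and $(D_K\otimes_K B_{\dr})/F^0=D_K\otimes_K(B_{\dr}/B^+_{\dr})$ is false unless $F^0D_K=D_K$; what $F^1D_K=0$ actually gives (Remark \ref{basics}(3)) is $F^0(D_K\otimes_K B_{\dr})=F^0(D_K\otimes_K B^+_{\dr})\subseteq D_K\otimes_K B^+_{\dr}$, which is all you need: the de Rham term of $C(D)/C^+(D)$ is $D_K\otimes_K(B_{\dr}/B^+_{\dr})$, so your conditions (a)--(c) stand as stated; (ii) the $\phi$-stability of $X$ is not immediate from the Frobenius condition, which expresses each $x_k$ in terms of the $\phi(x_j)$ rather than conversely -- you must invert the matrix of $\phi_D$, which is legitimate because $\phi_D$ is by definition a semilinear automorphism, so its matrix lies in $GL(K_0)$ and $K_0\subset B^+_{\crr}$ (the paper's step $\phi(Y_a)\subset Y_a$ uses the same inversion).
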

\begin{proof}
We will argue by induction on $m$ such that $N^m=0$.
Assume first that $m=1$ (hence $N=0$). We have
\begin{align*}
C(D)/C^+(D) & =\left[\begin{aligned}\xymatrix@C=40pt{D\otimes_{K_0}(B_{\st}/B^+_{\st})\ar[r]^-{(1-\phi,\can\otimes\iota)}\ar[d]^{1\otimes N} & D\otimes_{K_0}(B_{\st}/B^+_{\st})\oplus D_K\otimes_{K}(B_{\dr}/B^+_{\dr})\ar[d]^{(1\otimes N,0)}\\
D\otimes_{K_0}(B_{\st}/B^+_{\st})\ar[r]^{1-p\phi} & D\otimes_{K_0}(B_{\st}/B^+_{\st})
}\end{aligned}\right]\\
 & \stackrel{\sim}{\leftarrow}
[D\otimes_{K_0}(B_{\crr}/B^+_{\crr})\verylomapr{(1-\phi,\can)} D\otimes_{K_0}(B_{\crr}/B^+_{\crr})\oplus D_K\otimes_{K}(B_{\dr}/B^+_{\dr})]
\end{align*}
Write $D=\oplus_{i=1}^{r}K_0d_i$ and, for $1\leq i\leq r$, consider the following maps
$$
p_i:H^0(C(D)/C^+(D))=(D\otimes_{K_0}((B_{\crr}\cap B_{\dr}^+)/B^+_{\crr}))^{\phi=1}\subset \oplus_{i=1}^rd_i\otimes ((B_{\crr}\cap B_{\dr}^+)/B^+_{\crr})\stackrel{\pr_i}{\to}(B_{\crr}\cap B_{\dr}^+)/B^+_{\crr}
$$
Let $Y_a$, $a\in H^0(C(D)/C^+(D))$, denote the $K_0$-subspace of $(B_{\crr}\cap B_{\dr}^+)/B^+_{\crr}$ spanned by $p_1(a),\ldots,p_r(a)$.
We have $(p_1(a),\ldots ,p_r(a))^T=M\phi(p_1(a),\ldots ,p_r(a))^T$, for $M\in GL_r(K_0)$. Hence $\phi(Y_a)\subset Y_a$. Let $X_a\subset B_{\crr}\cap B^+_{\dr}$ be the inverse image of $Y_a$ under the projection $B_{\crr}\cap B^+_{\dr}\to (B_{\crr}\cap B^+_{\dr})/B_{\crr}^+$ (naturally $B^+_{\crr}\subset X_a$). Then $\phi(X_a)\subset X_a + B^+_{\crr}=X_a.$ By the above lemma $\phi^2(X_a)\subset B^+_{\crr}$. Hence $\phi^2(Y_a)=0$ and (applying $M^{-2}$) $Y_a=0$. This implies that $a=0$ and $H^0(C(D)/C^+(D))=0$, as wanted.

  For general $m>0$, consider the filtration $D_1\subset D$, where $D_1:=\ker(N)$ with induced structures. Set $D_2:= D/D_1$ with induced structures. Then $D_1,D_2\in MF_{K}(\phi,N)$; $N^i$ is trivial on $D_1$ for $i=1$ and on $D_2$ for $i=m-1$. Clearly $F^1D_{1,K}=F^1D_{2,K}=0$. Hence, by Remark \ref{basics}.3, we have a short exact sequence
$$
0\to C(D_1)/C^+(D_1)\to C(D)/C^+(D)\to C(D_2)/C^+(D_2)\to 0
$$
By the inductive assumption $H^0(C(D_1)/C^+(D_1))=H^0(C(D_2)/C^+(D_2))=0$. Hence $H^0(C(D)/C^+(D))=0$, as wanted.
\end{proof}
\begin{corollary}
If $D\in MF_K(\phi, N)$ and $F^1D_K=0$ then $H^0(C^+(D))=H^0(C(D))=V_{\st}(D)$ ($\subset D\otimes_{K_0}B^+_{\st}$) and $H^1(C^+(D))\hookrightarrow H^1(C(D))$.
\end{corollary}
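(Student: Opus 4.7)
The plan is to derive the corollary as an essentially formal consequence of the preceding proposition, via the long exact sequence in cohomology.

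First, I would invoke Remark \ref{basics}.3: since $F^1 D_K = 0$, the inclusion $F^0(D_K \otimes_K B^+_{\dr}) = F^0(D_K \otimes_K B_{\dr})$ holds, which means the natural map of double complexes $C^+(D) \to C(D)$ is term-by-term an injection (all three $B^+_{\st}$-entries inject into $B_{\st}$, and the de Rham term is unchanged). So we obtain a short exact sequence of complexes
\begin{equation*}
0 \to C^+(D) \to C(D) \to C(D)/C^+(D) \to 0.
\end{equation*}

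Next I would pass to the long exact sequence in cohomology:
\begin{equation*}
0 \to H^0(C^+(D)) \to H^0(C(D)) \to H^0(C(D)/C^+(D)) \to H^1(C^+(D)) \to H^1(C(D)) \to \cdots
\end{equation*}
By the proposition just proved, $H^0(C(D)/C^+(D)) = 0$. Feeding this vanishing into the long exact sequence immediately gives $H^0(C^+(D)) \xrightarrow{\sim} H^0(C(D))$ and $H^1(C^+(D)) \hookrightarrow H^1(C(D))$, which are the two claims of the corollary.

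Finally, I would identify $H^0(C(D)) = V_{\st}(D)$ by citing Remark \ref{basics}.1, and note that the containment $V_{\st}(D) \subset D \otimes_{K_0} B^+_{\st}$ follows tautologically because $H^0(C^+(D))$ is by definition computed as a subspace (in fact the joint kernel of three maps) of the top-left term $D \otimes_{K_0} B^+_{\st}$ of the double complex $C^+(D)$. There is no real obstacle here; the corollary is a formal cleanup of the proposition, and the only thing one needs to verify carefully is the injectivity of $C^+(D) \hookrightarrow C(D)$, which is exactly what the hypothesis $F^1 D_K = 0$ is designed to ensure.
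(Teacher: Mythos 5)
Your proof is correct and is exactly the argument the paper intends (the corollary is left without an explicit proof there): Remark \ref{basics}.3 gives the injection $C^+(D)\hookrightarrow C(D)$, the long exact sequence together with the vanishing $H^0(C(D)/C^+(D))=0$ from the preceding proposition yields both claims, and Remark \ref{basics}.1 identifies $H^0(C(D))$ with $V_{\st}(D)$.
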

\begin{corollary}
\label{MF1}
If $D\in MF^{\ad}_K(\phi, N)$ and $F^1D_K=0$ then
$$H^i(C^+(D))=H^i(C(D))=
\begin{cases}
V_{\st}(D)  &i=0\\
0  & i\neq 0
\end{cases}
$$
(i.e., $C^+(D)\stackrel{\sim}{\to} C(D)$).
\end{corollary}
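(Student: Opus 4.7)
The plan is to simply piece together the three preceding facts: the quasi-isomorphism in Remark \ref{basics}(4) which computes $C(D)$ when $D$ is admissible, Remark \ref{basics}(2) which gives vanishing in degrees $\geq 2$, and the preceding corollary which compares $C^+(D)$ with $C(D)$.

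First I would use admissibility. Since $D \in MF^{\ad}_K(\phi,N)$, we may write $D = D_{\st}(V)$ for a semistable representation $V$, and Remark \ref{basics}(4) provides a chain of quasi-isomorphisms
\[
C(D) \simeq V \otimes_{\Qp} \bigl[B_{\crr} \xrightarrow{(1-\phi,\can)} B_{\crr} \oplus B_{\dr}/F^0\bigr] \simeq V.
\]
In particular $C(D)$ is concentrated in degree $0$ with $H^0(C(D)) = V = V_{\st}(D)$, and $H^i(C(D)) = 0$ for all $i \geq 1$.

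Next I would combine this with the previous corollary. That corollary (applied under the hypothesis $F^1 D_K = 0$) gives $H^0(C^+(D)) = H^0(C(D)) = V_{\st}(D)$, which settles degree $0$, and gives an injection $H^1(C^+(D)) \hookrightarrow H^1(C(D))$. Since the target vanishes by the computation above, we obtain $H^1(C^+(D)) = 0$. Finally, Remark \ref{basics}(2) immediately yields $H^i(C^+(D)) = H^i(C(D)) = 0$ for all $i \geq 2$. Putting these three ranges together proves the claim, and the fact that the natural map $C^+(D) \to C(D)$ is a quasi-isomorphism follows formally since it induces isomorphisms on all cohomology groups.

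There is essentially no obstacle here; all the real work has already been done in the preceding proposition (which handles the hard step of showing that $H^0$ of the quotient complex $C(D)/C^+(D)$ vanishes, via Fontaine's lemma on $B_{\crr} \cap B_{\dr}^+$) and in Remark \ref{basics}(4) (which invokes admissibility through the fundamental exact sequence). The only thing to be mildly careful about is ensuring that one interprets Remark \ref{basics}(4) correctly, so that the quasi-isomorphism $C(D) \simeq V$ indeed places the cohomology entirely in degree $0$; this is clear from the explicit form of the intermediate complex $V \otimes_{\Qp}[B_{\crr} \to B_{\crr} \oplus B_{\dr}/F^0]$, whose cohomology in positive degrees vanishes by the fundamental exact sequence of $p$-adic Hodge theory.
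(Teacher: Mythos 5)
Your proposal is correct and matches the intended (unwritten) argument: the paper states this corollary without proof precisely because it follows at once from Remark \ref{basics}(4) (giving $C(D)\simeq V=V_{\st}(D)$ in degree $0$ via the fundamental exact sequence), the immediately preceding corollary (transferring $H^0$ and giving $H^1(C^+(D))\hookrightarrow H^1(C(D))=0$), and Remark \ref{basics}(2) for degrees $\geq 2$. Nothing is missing.
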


 A filtered $(\phi,N,G_K)$-module is a  tuple $(D,\phi,N,\rho, F^{\scriptscriptstyle\bullet})$, where
\begin{enumerate}
\item $D$ is a  finite dimensional $K_0^{\nr}$-vector
space;
\item  $\phi : D \to D$ is a Frobenius  map;
\item $N : D \to D$ is  a $K_0^{\nr}$-linear monodromy map such that $N\phi = p\phi N$;
\item  $\rho$ is a $K_0^{\nr}$-semilinear $G_K$-action on $D$  (hence $\rho|I_K$ is linear) that is smooth, i.e., all vectors have open stabilizers, 
and that commutes with $\phi$ and $N$;
 \item  $F^{\scriptscriptstyle\bullet}$ is a decreasing finite filtration  of $D_K:=(D\otimes _{K_0^{\nr}}\ovk)^{G_K}$ by $K$-vector
spaces.

\end{enumerate}
Morphisms between filtered $(\phi,N, G_K)$-modules are $K_0^{\nr}$-linear maps preserving all
structures. There is a notion of a {\em (weakly) admissible} filtered $(\phi, N,G_K)$-module \cite{CF}, \cite{F3}. Denote by $MF_K^{\ad}(\phi,N,G_K)\subset MF_K^{}(\phi,N,G_K)$ the categories of admissible filtered $(\phi, N,G_K)$-modules and  filtered $(\phi, N,G_K)$-modules, respectively. We know \cite{CF} that the pair of functors $D_{\pst}(V)=\injlim_H(B_{\st}\otimes_{{\mathbf Q}_p}V)^{H}$, $H\subset G_K$ - an open subgroup, $V_{\pst}(D)=(B_{\st}\otimes_{K_0^{\nr}}D)^{\phi=\id,N=0}\cap F^0(B_{\dr}\otimes_{K}D_K)$  define an equivalence of categories $MF_K^{\ad}(\phi,N,G_K)\simeq \Rep_{\pst}(G_K)$.

  For $D\in MF_K(\phi,N,G_K)$, set \footnote{We hope that the notation below will not lead to confusion with the semistable case in general but if in doubt we will add the data of the field $K$ in the latter case.}
$$
C_{\pst}(D):=
\left[\begin{aligned}\xymatrix@C=36pt{D_{\st}\ar[d]^N \ar[r]^-{(1-\phi,\can)} & D_{\st}\oplus D_{K}/F^0\ar[d]^{(N,0)}\\
D_{\st}\ar[r]^{1-p\phi} & D_{\st}
}\end{aligned}\right]
$$
Here $D_{\st}:=D^{G_K}$.
Consider also the following complex (we set $D_{\ovk}:=D\otimes_{K_0^{\nr}}\ovk$)
$$
C^+(D):=
\left[\begin{aligned}\xymatrix@C=40pt{D\otimes_{K_0^{\nr}}B^+_{\st}\ar[d]^N \ar[r]^-{(1-\phi,\can\otimes \iota)} & (D\otimes_{K_0^{\nr}}B^+_{\st})\oplus (D_{\ovk}\otimes_{\ovk}B^+_{\dr})/F^0\ar[d]^{(N,0)}\\
D\otimes_{K_0^{\nr}}B^+_{\st}\ar[r]^{1-p\phi} & D\otimes_{K_0^{\nr}}B^+_{\st}}\end{aligned}\right]
$$
Define $C(D)$ by omitting the superscript $+$ in the above diagram. We have $C_{\pst}(D)=C(D)^{G_K}$.
\begin{remark}
\label{pst=st}
If $V$ is potentially semistable then $C_{\st}(V)=C_{\pst}(D_{\pst}(V))$ hence $H^*(C_{\pst}(D_{\pst}(V)))=H^*_{\st}(G_K,V)$.
\end{remark}
\begin{remark}
\label{resolution2}
If $D=D_{\pst}(V)$ is admissible  then we have quasi-isomorphisms
$$
C(D)\stackrel{\sim}{\leftarrow}V\otimes_{{\mathbf Q}_p}[B_{\crr}\verylomapr{(1-\phi,\can)}B_{\crr}\oplus B_{\dr}/F^0]\stackrel{\sim}{\leftarrow}V\otimes_{{\mathbf Q}_p}(B_{\crr}^{\phi=1}\cap F^0)=V
$$
and the  map of complexes $C_{\pst}(D)\to C(D)$ represents  the canonical map $H^i_{\st}(G_K,V)\to H^i(G_K,V)$.
\end{remark}
\begin{remark}
\label{BKexp}Let $D=D_{\pst}(V)$ be admissible. 
The  Bloch-Kato exponential $$ (Z^1 C(D))^{G_K} \to H^1(G_K,V)$$
is given by the coboundary map arising from the exact sequence
$$ 0 \to V \to C^0(D) \to Z^1 C(D) \to 0. $$ Its restriction to the de Rham
part of $Z^1 C(D)$ is the  Bloch-Kato exponential
$$\exp_{\bk}: D_K/F^0\to H^1(G_K,V).$$ It is also obtained by applying
$Rf$, where $f(-) = (-)^{G_K}$, to the coboundary map $\partial : Z^1 C(D)
\to V[1]$ arising from the above exact sequence (see the proof of Theorem
\ref {stHS} for an appropriate formalism of continuous cohomology).
Note that the composition of the canonical maps
$$ Z^1 C(D) \to (\sigma_{\geq 1} C(D))[1] \to C(D) [1]
\stackrel{\sim}{\leftarrow} V[1] $$ is not equal to $\partial$, but to
$- \partial$, by \eqref{sign}.
\end{remark}
\begin{corollary}
\label{resolution3}
If $D\in MF^{\ad}_K(\phi, N, G_K)$ and $F^1D_K=0$ then
$$H^i(C^+(D))\stackrel{\sim}{\to}H^i(C(D))=
\begin{cases}
V_{\pst}(D)  &i=0\\
0  & i\neq 0
\end{cases}
$$
(i.e., $C^+(D)\stackrel{\sim}{\to} C(D)$).
\end{corollary}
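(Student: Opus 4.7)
The plan is to descend the statement from Corollary \ref{MF1} via a finite Galois extension $K'/K$ over which $V := V_{\pst}(D)$ becomes semistable. Since the $I_K$-action on $D$ factors through a finite quotient, I may choose $K'$ large enough that $V|_{G_{K'}}$ is semistable and $I_{K'}$ acts trivially on $D$. Set $D' := D^{G_{K'}}$. Then $D'$ is a $K'_0$-vector space (since $(K_0^{\nr})^{G_{K'}} = K'_0$), the canonical map $D' \otimes_{K'_0} K_0^{\nr} \stackrel{\sim}{\to} D$ is an isomorphism of $(\phi,N,G_K)$-modules by Galois descent, and $D' = D_{\st,K'}(V|_{G_{K'}})$ is an admissible filtered $(\phi,N)$-module over $K'_0$.

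Second, I would observe that the complexes $C^+(D)$ and $C(D)$ coincide termwise with the semistable complexes $C^+_{K'}(D')$ and $C_{K'}(D')$ built from $D'$ over $K'$. Indeed, by associativity of tensor products, $D \otimes_{K_0^{\nr}} B^+_{\st} = D' \otimes_{K'_0} B^+_{\st}$ and $D_{\ovk} \otimes_{\ovk} B^+_{\dr} = D' \otimes_{K'_0} B^+_{\dr} = D'_{K'} \otimes_{K'} B^+_{\dr}$, and likewise without the pluses; the differentials $\phi$, $N$, $\iota$ match by construction. The filtrations are compatible because
$$
(D \otimes_{K_0^{\nr}} \ovk)^{G_{K'}} = D' \otimes_{K'_0} \ovk^{G_{K'}} = D' \otimes_{K'_0} K' = D'_{K'},
$$
so the filtration on $D'_{K'}$ is identified with the $K'$-base change of the filtration on $D_K$; in particular the assumption $F^1 D_K = 0$ yields $F^1 D'_{K'} = 0$.

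Corollary \ref{MF1} applied to $D'$ then gives $C^+_{K'}(D') \stackrel{\sim}{\to} C_{K'}(D')$, concentrated in degree zero with value $V_{\st,K'}(D')$. Since $V_{\st,K'}(D') = V = V_{\pst}(D)$ by the compatibility of the functors $D_{\st,K'}$ and $D_{\pst}$ under restriction to $G_{K'}$, translating back through the identifications above yields the corollary. The only step requiring real care is the filtration identification in paragraph two; the rest is formal, expressing the fact that enlarging the scalar field from $K'_0$ to $K_0^{\nr}$ is invisible to the homotopy limit under consideration.
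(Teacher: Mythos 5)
Your proposal is correct and follows essentially the same route as the paper: pass to a finite Galois extension $K'/K$ over which $D$ becomes semistable, set $D'=D^{G_{K'}}$, identify $C^+(D)$ and $C(D)$ termwise (including filtrations) with the semistable complexes built from $D'$ over $K'$, and invoke Corollary \ref{MF1}. The only cosmetic difference is that the paper cites Remark \ref{resolution2} for $C(D)\simeq V_{\pst}(D)[0]$ while you recover the degree-zero value via $V_{\st,K'}(D')=V_{\pst}(D)$, which amounts to the same thing.
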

\begin{proof}
By Remark \ref{resolution2} we have $C(D)\simeq V_{\pst}(D)[0]$. To prove the isomorphism $H^i(C^+(D))\stackrel{\sim}{\to}H^i(C(D))$, $i\geq 0$,  take a finite Galois extension $K^{\prime}/K$  such that $D$ becomes semistable over $K^{\prime}$, i.e.,  $I_{K^{\prime}}$ acts trivially on $D$. We have   $(D^{\prime},\phi,N)\in MF_{K^{\prime}}^{\ad}(\phi,N)$, where $D^{\prime}:=D^{G_{K^{\prime}}}$ and  (compatibly)   $D\simeq D^{\prime}\otimes_{K_0^{\prime}}K_0^{\nr}$, $F^{\scriptscriptstyle\bullet}D^{\prime}_{K^{\prime}}\simeq F^{\scriptscriptstyle\bullet}D_K\otimes_KK^{\prime}$. It easily follows that
$C^+(D)=C^+(K^{\prime},D^{\prime})$ and
$C(D) = C(K^{\prime}, D^{\prime})$. Since $F^1D^{\prime}_{K^{\prime}}=0$, our corollary is now a consequence of
 Corollary \ref{MF1}
\end{proof}
\begin{proposition}
\label{resolution33}
If $D\in MF^{\ad}_K(\phi, N, G_K)$ and $F^1D_K=0$ then, for $i\geq 0$, the natural map
$$H^i_{\st}(G_K,V_{\pst}(D))\stackrel{\sim}{\to} H^i(G_K,V_{\pst}(D))
$$
is an isomorphism. 
\end{proposition}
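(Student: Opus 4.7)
The plan is to combine Corollary~\ref{resolution3} with the quasi-isomorphism $C(D) \simeq V[0]$ of Remark~\ref{resolution2} (writing $V := V_{\pst}(D)$) to obtain a resolution $C^+(D) \simeq V[0]$ in the derived category of $G_K$-modules. Applying $\R\Gamma(G_K, -)$ then gives $\R\Gamma(G_K, C^+(D)) \simeq \R\Gamma(G_K, V)$, so that derived global sections of $C^+(D)$ compute $H^*(G_K, V)$.

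Next I would identify the na\"ive $G_K$-invariants $C^+(D)^{G_K}$ with the complex $C_{\pst}(D)$. Choose a finite Galois extension $K'/K$, with maximal unramified subfield $K_0'$, such that $D = D' \otimes_{K_0'} K_0^{\nr}$ with $D' := D^{G_{K'}} \in MF^{\ad}_{K'}(\phi, N)$. The identification $D \otimes_{K_0^{\nr}} B^+_{\st} \simeq D' \otimes_{K_0'} B^+_{\st}$, combined with $(B^+_{\st})^{G_{K'}} = K_0'$, yields, after $\Gal(K'/K)$-descent, the equality $(D \otimes_{K_0^{\nr}} B^+_{\st})^{G_K} = D_{\st}$. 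Under the hypothesis $F^1 D_K = 0$, a parallel analysis (with Remark~\ref{basics}(3) controlling the Hodge filtration) gives $((D_{\ovk} \otimes_{\ovk} B^+_{\dr})/F^0)^{G_K} = D_K/F^0$. The resulting equality $C^+(D)^{G_K} = C_{\pst}(D)$ produces the natural map $C_{\pst}(D) \to \R\Gamma(G_K, C^+(D)) \simeq \R\Gamma(G_K, V)$, which represents the canonical map $H^i_{\st}(G_K, V) \to H^i(G_K, V)$.

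It remains to show this map is a quasi-isomorphism. Via the hyperderived spectral sequence $E_1^{p,q} = H^q(G_K, C^+(D)^p)$, the problem reduces to the term-by-term vanishing $H^q(G_K, C^+(D)^p) = 0$ for $q \geq 1$. Using Hochschild-Serre for $K'/K$ (noting that $\Gal(K'/K)$ contributes no higher cohomology on $\bq_p$-vector spaces), one further reduces to the semistable case over $K'$, where the vanishings for $H^q(G_{K'}, B^+_{\st})$ and for the de Rham term follow from classical $p$-adic Hodge theory. The main obstacle will be the de Rham term: the condition $F^1 D_K = 0$ is essential in arranging that the Hodge filtration on $D_K$ interacts favorably with the $t$-adic filtration on $B^+_{\dr}$, so that only nontrivial Tate twists appear in the graded pieces of $(D_{\ovk} \otimes_{\ovk} B^+_{\dr})/F^0$, whose higher Galois cohomology vanishes by Tate's theorem.
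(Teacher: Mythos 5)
Your strategy stands or falls with the term-by-term acyclicity claim $H^q(G_K,C^+(D)^p)=0$ for $q\geq 1$, and that claim is false. For the $B^+_{\st}$-entries: $H^1(G_K,B^+_{\st})\neq 0$, since the class $\log\chi\in H^1(G_K,\Qp)$ maps, via $\Qp\subset B^+_{\st}$ and the $G_K$-equivariant composite $B^+_{\st}\hookrightarrow B^+_{\dr}\stackrel{\theta}{\to}\mathbf{C}_p$, onto a generator of Tate's $H^1(G_K,\mathbf{C}_p)=K$; the same holds after replacing $K$ by $K'$. For the de Rham entry: the hypothesis $F^1D_K=0$ only says the filtration jumps are $\leq 0$, so the graded pieces of $(D_{\ovk}\otimes_{\ovk}B^+_{\dr})/F^0$ are of the form $(\text{finite-dim})\otimes\mathbf{C}_p(j)$ with $j\geq 0$, and the piece $j=0$ is $(D_K/F^0D_K)\otimes\mathbf{C}_p$, which is nonzero as soon as $F^0D_K\neq D_K$ (the typical situation in this paper, e.g. $D=H^j_{\hk}\{r\}$ with $j<r$); you would need $F^0D_K=0$, not $F^1D_K=0$, for "only nontrivial Tate twists" to appear, and again $H^1(G_K,\mathbf{C}_p)=K\neq 0$. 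So the hyperderived spectral sequence does not degenerate for the naive reason you invoke, and the conclusion does not follow formally from $C^+(D)\simeq V_{\pst}(D)[0]$ plus taking invariants. A further warning sign: if your argument worked, then $H^2(G_K,V)\cong H^2(C_{\st}(D))$ would follow for free, and the latter vanishes by an elementary Frobenius estimate; this would make the genuinely nontrivial vanishing theorem of Appendix~A a triviality, which it is not.

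The paper's actual proof is quite different: both sides satisfy finite Galois descent, so one reduces to $D=D_{\st}(V)$ for $V$ semistable and compares degree by degree. The case $i=0$ is immediate; $i=1$ is exactly Berger's theorem that (in these Hodge--Tate weights) the Bloch--Kato exponential identifies $H^1_{\st}$ with $H^1$ (\cite[Thm.~6.2, Lemme~6.5]{BER}); for $i=2$ one shows separately that $H^2(C_{\st}(D))=0$ (surjectivity of $1-p\phi$ via a suitable lattice, using weak admissibility and $F^1D_K=0$) and that $H^2(G_K,V)=0$, the latter being Berger's argument with $(\phi,\Gamma)$-modules reproduced in Appendix~A; for $i>2$ both sides vanish. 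These analytic inputs (exponential isomorphism, $(\phi,\Gamma)$-module computation) are the real content of the proposition, and your proposal has no substitute for them.
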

\begin{proof}
Both sides satisfy Galois descent for finite Galois extensions. We can
assume, therefore, that $D = D_{\st}(V)$ for a semistable representation
$V$ of $G_K$. For $i=0$ we have (even without assuming $F^1D_K=0$)
$H^0(C_{\st}(D)) = H^0(C(D)^{G_K}) = H^0(C(D))^{G_K} = V^{G_K}$.
For $i=1$ the statement is proved in \cite[Thm. 6.2, Lemme 6.5]{BER}. For $i=2$
it follows from the assumption $F^1D_K=0$ (by weak admissibility of $D$) that
there is a $W(k)$-lattice $M \subset D$ such that $\phi^{-1}(M) \subset
p^2 M$, which implies that $1 - p\phi = -p\phi(1 - p^{-1}\phi^{-1}) : D \to D$
is surjective, hence $H^2(C_{\st}(D)) = 0$ (cf. the proof of
\cite[Lemme 6.7]{BER}). The proof of the fact that $H^2(G_K, V) = 0$ if
$F^1D_K=0$ was kindly communicated to us by L.~Berger; it is reproduced
in Appendix~A (cf. Theorem \ref{main}). For $i > 2$ both terms vanish.
\end{proof}
\subsection{Comparison of spectral sequences}
\label{Jan}
The purpose of this subsection is to prove a derived category theorem
(Theorem \ref{speccomp}) that will be used later to relate the syntomic
descent spectral sequence with  the \'etale Hochschild-Serre spectral
sequence (cf. Theorem \ref{stHS}). 
Let $D$ be a triangulated category and $H : D \to A$ a cohomological
functor to an abelian category $A$. A finite collection of adjacent
exact triangles (a ``Postnikov system" in the language of \cite
[IV.2, Ex. 2]{GM})

\begin{equation}
\label{postnikov}
\xymatrix@C=10pt@R=15pt{
& Y^0 \ar[dr] && Y^1 \ar[dr] &&&& Y^n \ar[dr] &\\
X = X^0 \ar[ur] && X^1 \ar [ur] \ar[ll]^(.45){[1]} && X^2 \ar[ll]^{[1]}
&& X^n \ar@{.}[ll] \ar[ru] && X^{n+1} = 0 \ar[ll]^(.55){[1]}\\}
\end{equation}
gives rise to an exact couple

$$
D_1^{p,q} = H^q(X^p) = H(X^p[q]),\qquad E_1^{p,q} = H^q(Y^p)
\Longrightarrow H^{p+q}(X).
$$
The induced filtration on the abutment is given by

$$
F^p H^{p+q}(X) = \im \left(D_1^{p,q} = H^q(X^p) \to H^{p+q}(X)\right).
$$

\begin{remark}
In the special case when $A$ is the heart of a non-degenerate $t$-structure
$(D^{\leq n}, D^{\geq n})$ on $D$ and $H = \tau_{\leq 0} \tau_{\geq 0}$,
 the following conditions are equivalent:

\begin{enumerate}
\label{ass}
\item $E_2^{p,q} = 0$ for $p\not= 0$;
\item $D_2^{p,q} = 0$ for all $p, q$;
\item $D_r^{p,q} = 0$ for all $p, q$ and $r > 1$;
\item the sequence $0 \to H^q(X^p) \to H^q(Y^p) \to H^q(X^{p+1}) \to 0$
is exact for all $p, q$;
\item the sequence $0 \to H^q(X) \to H^q(Y^0) \to H^q(Y^1) \to \cdots$
is exact for all $q$;
\item the canonical map $H^q(X) \to E_1^{{\scriptscriptstyle\bullet},q}$
is a quasi-isomorphism, for all $q$;
\item the triangle $\tau_{\leq q} X^p \to \tau_{\leq q} Y^p \to
\tau_{\leq q} X^{p+1}$ is exact for all $p, q$.
\end{enumerate}
\end{remark}

From now on until the end of \ref{Jan}  assume that $D = D(A)$ is the derived
category of $A$ with the standard $t$-structure and that $X^i, Y^i \in
D^+(A)$, for all $i$. Furthermore, assume that $f : A \to A^\prime$
is a left exact functor to an abelian category $A^\prime$ and that
$A$ admits a class of $f$-adapted objects (hence the derived functor
$\R f : D^+(A) \to D^+(A^\prime)$ exists).

Applying $\R f$ to (\ref{postnikov}) we obtain another Postnikov system,
this time in $D^+(A^\prime)$. The corresponding exact couple

\begin{equation}
\label{firstcouple}
{}^I D_1^{p,q} = (\R^q f)(X^p),\qquad {}^I E_1^{p,q} = (\R^q f)(Y^p)
\Longrightarrow (\R^{p+q}f)(X)
\end{equation}
induces filtration

$$
{}^I F^p (\R^{p+q}f)(X) = \im \left({}^I D_1^{p,q} = (\R^q f)(X^p) \to
(\R^{p+q}f)(X) \right).
$$
Our goal is to compare (\ref{firstcouple}), under the equivalent assumptions
(\ref{ass}), to the hypercohomology exact couple

\begin{equation}
\label{secondcouple}
{}^{II} D_2^{p,q} = (\R^{p+q}f)(\tau_{\leq q-1} X),\qquad {}^{II} E_2^{p,q} =
(\R^p f)(H^q(X)) \Longrightarrow (\R^{p+q}f)(X)
\end{equation}
for which

$$
{}^{II} F^p (\R^{p+q}f)(X) = \im \left({}^{II} D_2^{p-1,q+1} =
(\R^{p+q} f)(\tau_{\leq q} X) \to (\R^{p+q}f)(X) \right).
$$

\begin{theorem}
\label{speccomp}
Under the assumptions (\ref{ass}) there is a natural morphism of exact couples
$(u, v) : ({}^I D_2, {}^I E_2) \to ({}^{II} D_2, {}^{II} E_2)$. Consequently,
we have ${}^I F^p \subseteq {}^{II} F^p$ for all $p$ and there is a natural
morphism of spectral sequences ${}^I E_r^{*,*} \to {}^{II} E_r^{*,*}$
($r > 1$) compatible with the identity map on the common abutment.
\end{theorem}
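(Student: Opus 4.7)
The strategy is to construct the morphism $(u, v)$ by producing a natural comparison in $D^+(A)$ before applying $\R f$. The first exact couple (\ref{firstcouple}) arises by applying $\R f$ to the Postnikov tower (\ref{postnikov}); the second exact couple (\ref{secondcouple}) is the hypercohomology exact couple, obtained by applying $\R f$ to the canonical truncation tower $\cdots\to \tau_{\leq q-1} X \to \tau_{\leq q} X \to \cdots \to X$ whose graded pieces are $H^q(X)[-q]$. It therefore suffices to build a natural morphism, in $D^+(A)$ and compatible with the identity on the common abutment $X$, from the Postnikov tower to the canonical truncation tower, and then apply $\R f$.

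The key input is condition (7) of (\ref{ass}): for every $q$, the triangle
$$
\tau_{\leq q} X^p \to \tau_{\leq q} Y^p \to \tau_{\leq q} X^{p+1}
$$
is exact. Thus $\{\tau_{\leq q} X^p,\ \tau_{\leq q} Y^p\}_p$ is a finite Postnikov tower for $\tau_{\leq q} X^0 = \tau_{\leq q} X$, and the natural truncation maps $\tau_{\leq q} Z \to \tau_{\leq q+1} Z$ (for $Z = X^p$ or $Y^p$) organize these into a bigraded system compatible with the Postnikov structure. Together with condition (6)---which identifies $H^q(X) \to H^q(Y^\bullet)$ as a resolution in $A$---and condition (4)---which makes this resolution strict (splits into short exact sequences)---this double-indexed system relates the truncated Postnikov system to the canonical filtration of $\tau_{\leq q} X$, thereby producing the desired morphism of towers.

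Concretely, to define $u^{p, q}: {}^I D_2^{p, q} \to {}^{II} D_2^{p, q} = \R^{p+q} f(\tau_{\leq q-1} X)$, one takes a class $\alpha \in {}^I D_2^{p, q} = \im(\R^{q-1} f(X^{p+1}) \to \R^q f(X^p))$ and lifts it iteratively along the connecting maps $\R^{q-j} f(X^{p+j}) \to \R^{q-j+1} f(X^{p+j-1})$ of the Postnikov triangles, arriving at a class in $\R^{p+q} f(X^0) = \R^{p+q} f(X)$ which, using condition (7) applied in the range $\leq q-1$, canonically lifts to $\R^{p+q} f(\tau_{\leq q-1} X)$. The map $v^{p, q}: {}^I E_2^{p, q} \to {}^{II} E_2^{p, q} = \R^p f(H^q X)$ is then determined by compatibility with the couple maps $(j, k)$. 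The inclusion ${}^I F^p \subseteq {}^{II} F^p$ and the induced morphism of spectral sequences ${}^I E_r^{*,*} \to {}^{II} E_r^{*,*}$ ($r > 1$), compatible with the identity on the abutment, follow formally from $(u, v)$ being a morphism of exact couples.

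The main obstacle will be coherence: verifying that the iterated lifts are well-defined modulo the indeterminacies intrinsic to derived-category constructions, and that $(u, v)$ intertwines the $(i, j, k)$-maps of the two couples. This is precisely what condition (7) provides: truncation commutes with the Postnikov triangles, so any ambiguity in choosing the successive lifts lies within the canonical filtration $\tau_{\leq q-1} X$ of $X$ and is absorbed upon passing to $\R^{p+q} f(\tau_{\leq q-1} X)$.
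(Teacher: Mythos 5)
Your reduction to ``a morphism of towers in $D^+(A)$ from the Postnikov tower to the canonical truncation tower'' is not available, and this is the crux of the matter. The Postnikov filtration is indexed by $p$, whereas the hypercohomology couple (\ref{secondcouple}) comes from the canonical filtration $\tau_{\leq q}X$, indexed by cohomological degree; no single map of filtered objects $\{X^p\}\to\{\tau_{\leq\scriptscriptstyle\bullet}X\}$ induces the comparison, and accordingly no morphism of couples exists at the $(D_1,E_1)$ level --- the statement is genuinely one about the \emph{derived} couples, constructed degree by degree. The correct construction of $u$ uses that $(\R^{q}f)(X^{p})$ depends only on $\tau_{\leq q}X^{p}$ and that, by condition (7) of (\ref{ass}), the truncated triangles remain exact, so that $(\tau_{\leq q}X^{p})[-p]$ maps to $\tau_{\leq q}X$; this yields $u'\colon {}^I D_1^{p,q}\to{}^{II}D_2^{p-1,q+1}$ with $u'k_1=0$ and $\alpha_{II}u'=\alpha_I$, and one sets $u:=u'i_1^{-1}$ on ${}^I D_2$. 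In particular your step ``the class in $(\R^{p+q}f)(X)$ canonically lifts to $(\R^{p+q}f)(\tau_{\leq q-1}X)$'' is false as stated: lifts along $(\R^{p+q}f)(\tau_{\leq q-1}X)\to(\R^{p+q}f)(X)$ are never canonical; one must instead show that the map factors through the truncation \emph{before} pushing to the abutment.

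The second gap is the treatment of $v$ and of the compatibilities. Defining $v$ as ``determined by compatibility with the couple maps $(j,k)$'' is circular: $v$ has to be constructed independently, via the resolution $H^q(X)\to E_1^{\scriptscriptstyle\bullet,q}$ provided by condition (6), as the composite $H^p(i\mapsto(\R^qf)(Y^i))\to(\R^pf)(E_1^{\scriptscriptstyle\bullet,q})\stackrel{\sim}{\leftarrow}(\R^pf)(H^q(X))$, and it must be corrected by the sign $(-1)^p$ arising from comparing $Z^p(M^{\scriptscriptstyle\bullet})$ with $M[p]$ (diagram (\ref{sign})); without that sign the identities $vj_2=j_2u$ and $uk_2=k_2v$ do not hold. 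Proving these identities is where most of the work lies: one needs the auxiliary cones $U^p$ with the epimorphism $(\R^qf)(U^p)\twoheadrightarrow{}^IE_2^{p,q}$, two octahedra, and an explicit identification of the composite maps. Condition (7) enters there only at one specific point (the vanishing of the boundary maps $H^q(X^{p+2}[-1])\to H^q(X^p)$, which makes the triangles $\tau_{\leq q}X^p\to\tau_{\leq q}U^p\to(\tau_{\leq q-1}X^{p+2})[-1]$ exact); it does not by itself ``absorb the indeterminacies'' or yield the intertwining of the couple maps. As written, your final paragraph records this gap rather than closing it.
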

\begin{proof}
{\bf Step 1:} we begin by constructing a natural map $u : {}^I D_2 \to
{}^{II} D_2$.

For each $p > 0$ there is a commutative diagram in $D^+(A^\prime)$

$$
\xymatrix{
(\R^{p+q}f)((\tau_{\leq q} Y^{p-1})[-p]) \ar [r] \ar[d]^\wr &
(\R^{p+q}f)((\tau_{\leq q} X^p)[-p]) \ar[r] \ar[d]^\wr &
(\R^{p+q}f)(\tau_{\leq q} X) \ar[d]^{\alpha_{II}} \\
{}^I E_1^{p-1,q} = (\R^{p+q}f)(Y^{p-1}[-p]) \ar[r]^(.54){k_1} &
{}^I D_1^{p,q} = (\R^{p+q}f)(X^p[-p]) \ar [ru]^{u^\prime}
\ar[r]^(.62){\alpha_I} & (\R^{p+q}f)(X)\\}
$$
whose both rows are complexes. This defines a map $u^\prime : {}^I D_1^{p,q}
\to {}^{II} D_2^{p-1,q+1}$ such that $u^\prime k_1 = 0$ and $\alpha_{II}
u ^\prime = \alpha_I$ (hence ${}^I F^p  = \im(\alpha_I) \subseteq
\im(\alpha_{II}) = {}^{II} F^p$).
By construction, the diagram (with exact top row)

$$
\xymatrix{
{}^I E_1^{p,q-1} \ar[r]^{k_1} \ar[rd]^0 & {}^I D_1^{p+1,q-1} \ar[r]^(.55){i_1}
\ar[d]^{u^\prime} & {}^I D_1^{p,q} \ar[d]^{u^\prime}\\
& {}^{II} D_2^{p,q} \ar[r]^(.45){i_2} & {}^{II} D_2^{p-1,q+1}\\}
$$
is commutative for each $p\geq 0$, which implies that the map

$$
u = u^\prime i_1^{-1} : {}^I D_2^{p,q} = i_1({}^I D_1^{p+1,q-1})
\to {}^{II} D_2^{p,q}
$$
is well-defined and satisfies $u i_2 = i_2 u$.

\smallskip
\noindent
{\bf Step 2:} for all $q$, the canonical quasi-isomorphism $H^q(X) \to
E_1^{{\scriptscriptstyle\bullet},q}$ induces natural morphisms

\begin{align*}
v^\prime : {}^I E_2^{p,q} &= H^p(i \mapsto (\R^q f)(Y^i)) \to
H^p(i \mapsto f(H^q(Y^i))) \to (\R^p f)(i \mapsto H^q(Y^i))\\
&= (\R^p f)(E_1^{{\scriptscriptstyle\bullet},q}) \stackrel\sim\longleftarrow
(\R^p f)(H^q(X)) = {}^{II} E_2^{p,q};\\
\end{align*}
set $v = (-1)^p v^\prime : {}^I E_2^{p,q} \to {}^{II} E_2^{p,q}$.

It remains to show that $u$ and $v$ are compatible with the maps

$$
{}^? D_2^{p-1,q+1} \stackrel {j_2} \longrightarrow {}^? E_2^{p,q}
\stackrel {k_2} \longrightarrow {}^? D_2^{p+1,q} \qquad\qquad (? = I, II).
$$
{\bf Step 3:} for any complex $M^{\scriptscriptstyle\bullet}$ over $A$
denote by $Z^i(M^{\scriptscriptstyle\bullet}) = \kker (\delta^i : M^i \to
M^{i+1})$ the subobject of cycles in degree $i$.

If $M^{\scriptscriptstyle\bullet}$ is a resolution of an object $M$ of $A$,
then each exact sequence

\begin{equation}
\label{cycle}
0 \longrightarrow Z^p(M^{\scriptscriptstyle\bullet})
\longrightarrow M^p \stackrel{\delta^p}\longrightarrow
Z^{p+1}(M^{\scriptscriptstyle\bullet}) \longrightarrow 0
\qquad\qquad (p\geq 0)
\end{equation}
can be completed to an exact sequence of resolutions

$$
\xymatrix@C=15pt@R=15pt{
0 \ar[r] & Z^p(M^{\scriptscriptstyle\bullet}) \ar[r] \ar[d]^{\can} &
M^p \ar[r] \ar[d]^{\can} & Z^{p+1}(M^{\scriptscriptstyle\bullet}) \ar[r]
\ar[d]^{-\can} & 0\\
0 \ar[r] & (\sigma_{\geq p}(M^{\scriptscriptstyle\bullet}))[p] \ar[r] &
(\sigma_{\geq p} {\rm Cone}(M^{\scriptscriptstyle\bullet} \stackrel
{\rm id} \to M^{\scriptscriptstyle\bullet}))[p] \ar[r] &
(\sigma_{\geq p+1}(M^{\scriptscriptstyle\bullet}))[p+1] \ar[r] & 0.\\}
$$
By induction, we obtain that the following diagram, whose top arrow
is the composition of the natural maps $Z^i \to Z^{i-1}[1]$ induced by
(\ref{cycle}), commutes in $D^+(A)$.

\begin{equation}
\label{sign}
\xymatrix@C=10pt@R=15pt{
Z^p(M^{\scriptscriptstyle\bullet}) \ar[r] \ar[d]^{\can} &
Z^0(M^{\scriptscriptstyle\bullet})[p] = M[p] \ar[d]^{(-1)^p\can}\\
(\sigma_{\geq p}(M^{\scriptscriptstyle\bullet}))[p] \ar[r]^(.58){\can} &
M^{\scriptscriptstyle\bullet}[p]\\}
\end{equation}
We are going to apply this statement to $M = H^q(X)$ and
$M^{\scriptscriptstyle\bullet} = E_1^{\scriptscriptstyle\bullet,q}$, when
$Z^p(M^{\scriptscriptstyle\bullet}) = D_1^{p,q} = H^q(X^p)$ and
$Z^0(M^{\scriptscriptstyle\bullet}) = H^q(X)$.

\smallskip
\noindent
{\bf Step 4:} we are going to investigate ${}^I E_2^{p,q}$.

Complete the morphism $Y^p \to Y^{p+1}$ to an exact triangle
$U^p \to Y^p \to Y^{p+1}$ in $D^+(A)$ and fix a lift $X^p \to U^p$
of the morphism $X^p \to Y^p$.

There are canonical epimorphisms

\begin{equation}
\label{epi}
(\R^q f)(U^p) \twoheadrightarrow
\kker((\R^q f)(Y^p) \stackrel {j_1 k_1} \longrightarrow (\R^q f)(Y^{p+1})) =
Z^p({}^I E_1^{\scriptscriptstyle\bullet,q}) \twoheadrightarrow
{}^I E_2^{p,q}
\end{equation}
and the map

$$
k_2 : {}^I E_2^{p,q} \to {}^I D_2^{p+1,q} = \kker({}^I D_1^{p+1,q}
\stackrel {j_1} \longrightarrow {}^I E_1^{p+1,q})
$$
is induced by the restriction of $k_1 : {}^I E_1^{p,q} \to {}^I D_1^{p+1,q}$
to $Z^p({}^I E_1^{\scriptscriptstyle\bullet,q})$.

The following octahedron (in which we have drawn only the four exact faces)

$$
\xymatrix@C=15pt@R=15pt{
X^{p+2} \ar[rd]_{[1]} && Y^{p+1} \ar[ll]\\
& X^{p+1} \ar[ru] \ar[dl] &\\
X^p[1] \ar[rr]^{[1]} && Y^p \ar[lu]\\}
\qquad\qquad
\xymatrix@C=15pt@R=15pt{
X^{p+2} \ar[dd]_{[1]} && Y^{p+1} \ar[dl]\\
& U^p[1] \ar[lu] \ar[dr]^{[1]} &\\
X^p[1] \ar[ru] && Y^p \ar[uu]\\}
$$
shows that the triangle $X^p \to U^p \to X^{p+2}[-1]$ is exact and the
diagrams

$$
\xymatrix@C=15pt@R=15pt{
U^p[1] \ar[r] \ar[d] & Y^p[1] \ar[d]\\
X^{p+2} \ar[r] & X^{p+1}[1]\\}
\qquad\qquad
\xymatrix@C=15pt@R=15pt{
(\R^q f)(U^p) \ar[r] \ar[d] & Z^p({}^I E_1^{\scriptscriptstyle\bullet,q})
\ar[d]_{k_1}\\
(\R^q f)(X^{p+2}[-1]) = {}^I D_2^{p+2,q-1} \ar[r]^(.72){i_1} &
{}^I D_2^{p+1,q}\\}
$$
commute. The previous discussion implies that the composite map

$$
(\R^q f)(U^p) \twoheadrightarrow Z^p({}^I E_1^{\scriptscriptstyle\bullet,q})
\twoheadrightarrow {}^I E_2^{p,q} \stackrel {k_2} \longrightarrow
{}^I D_2^{p+1,q} \stackrel u \to {}^{II} D_2^{p+1,q} =
(\R^q f)((\tau_{\leq q-1} X)[p+1])
$$
is obtained by applying $\R^q f$ to

\begin{equation}
\label{comp1}
\tau_{\leq q}\, U^p \to \tau_{\leq q}(X^{p+2}[-1]) =
(\tau_{\leq q-1} X^{p+2})[-1] \to (\tau_{\leq q-1}\, X)[p+1].
\end{equation}
{\bf Step 5:} all boundary maps $H^q(X^{p+2}[-1]) \to H^q(X^p)$ vanish by
(\ref{ass}), which means that the following triangles are exact.

$$
\tau_{\leq q}\, X^p \to \tau_{\leq q}\, U^p \to \tau_{\leq q}(X^{p+2}[-1]) =
(\tau_{\leq q-1}\, X^{p+2})[-1]
$$
The commutative diagram

$$
\xymatrix@C=15pt@R=15pt{
\tau_{\leq q}\, U^p \ar[r] & H^q(U^p)[-q] \ar[r] &
\kker\left(H^q(Y^p) \to H^q(Y^{p+1})\right)[-q]\\
\tau_{\leq q}\, X^p \ar[r] \ar[u] & H^q(X^p)[-q] \ar[u] \ar@2{-}[ru] &\\}
$$
gives rise to an octahedron

$$
\xymatrix@C=10pt@R=15pt{
V^p \ar[rd]_{[1]} && H^q(X^p)[-q] \ar[ll]\\
& \tau_{\leq q}\, U^p \ar[ru] \ar[dl] &\\
\tau_{\leq q}(X^{p+2}[-1]) \ar[rr]^{[1]} && \tau_{\leq q}\,X^p \ar[lu]\\}
\qquad
\xymatrix@C=10pt@R=15pt{
V^p \ar[dd]_{[1]} && H^q(X^p)[-q] \ar[dl]\\
& (\tau_{\leq q-1}\, X^p)[1] \ar[lu] \ar[dr]^{[1]} &\\
X^p[1] \ar[ru] && Y^p \ar[uu]\\}
$$
In particular, the following diagram commutes.

\begin{equation}
\label{comp2}
\xymatrix@C=15pt@R=15pt{
\tau_{\leq q}\, U^p \ar[r] \ar[d] & H^q(X^p)[-q] \ar[d]\\
\tau_{\leq q}(X^{p+2}[-1]) \ar[r] & (\tau_{\leq q-1}\, X^p)[1]\\}
\end{equation}
{\bf Step 6:} the diagram (\ref{sign}) implies that the composition of
$v : {}^I E_2^{p,q} \to {}^{II} E_2^{p,q}$ with the second epimorphism
in (\ref{epi}) is equal to the composite map

\begin{align*}
&Z^p({}^I E_1^{\scriptscriptstyle\bullet,q}) =
\kker\left((\R^q f)(\tau_{\leq q}\,Y^p) \to
(\R^q f)(\tau_{\leq q}\,Y^{p+1})\right)\to\\
&\to \kker\left((\R^q f)(H^q(Y^p)[-q]) \to (\R^q f)(H^q(Y^{p+1})[-q])\right) =\\
&= (\R^q f)(Z^p(E_1^{\scriptscriptstyle\bullet,q})[-q]) \to
(\R^q f)(Z^0(E_1^{\scriptscriptstyle\bullet,q})[-q+p]) = (\R^p f)(H^q(X)) =
{}^{II} E_2^{p,q}.\\
\end{align*}
As a result, the composition of $v$ with (\ref{epi}) is obtained
by applying $\R^q f$ to

\begin{equation}
\label{comp3}
\tau_{\leq q}\, U^p \to H^q(X^p)[q] \to H^q(X)[-q+p].
\end{equation}
Consequently, the composite map

$$
{}^I D_1^{p,q} = (\R^q f)(\tau_{\leq q}\, X^p) \stackrel {j_1} \longrightarrow
Z^p({}^I E_1^{\scriptscriptstyle\bullet,q}) \twoheadrightarrow {}^I E_2^{p,q}
\stackrel v \to {}^{II} E_2^{p,q}
$$
is given by applying $\R^q f$ to

$$
\tau_{\leq q}\, X^p \to H^q(X^p)[q] \to H^q(X)[-q+p],
$$
hence is equal to $j_2 u^\prime$. It follows that $v j_2 = v j_1 i_1^{-1}
= j_2 u^\prime i_1^{-1} = j_2 u$.

\smallskip\noindent
{\bf Step 7:} the diagram (\ref{comp2}) implies that the map
(\ref{comp1}) coincides with the composition of (\ref{comp3})
with the canonical map $H^q(X)[-q+p] \to (\tau_{\leq q-1}\, X)[p+1]$,
hence $u k_2 = k_2 v$. Theorem is proved.
\end{proof}

\smallskip\noindent
\begin{example}
\label{filtered}
If $K^{\scriptscriptstyle\bullet}$ is a bounded below filtered complex
over $A$ (with a finite filtration)

$$
K^{\scriptscriptstyle\bullet} = F^0 K^{\scriptscriptstyle\bullet} \supset
F^1 K^{\scriptscriptstyle\bullet} \supset \cdots \supset
F^n K^{\scriptscriptstyle\bullet} \supset F^{n+1} K^{\scriptscriptstyle\bullet}
= 0,
$$
then the objects

$$
X^p = F^p K^{\scriptscriptstyle\bullet}[p],\qquad
Y^p =
(F^p K^{\scriptscriptstyle\bullet}/F^{p+1} K^{\scriptscriptstyle\bullet})[p]
= gr^p_F(K^{\scriptscriptstyle\bullet})[p] \in D^+(A)
$$
form a Postnikov system of the kind considered in (\ref{postnikov}).
The corresponding spectral sequences are equal to

$$
E_1^{p,q} = H^{p+q}(gr^p_F(K^{\scriptscriptstyle\bullet})) \Longrightarrow
H^{p+q}(K^{\scriptscriptstyle\bullet}),\qquad
{}^I E_1^{p,q} = (\R^{p+q}f)(gr^p_F(K^{\scriptscriptstyle\bullet}))
\Longrightarrow (\R^{p+q}f)(K^{\scriptscriptstyle\bullet}).
$$
In the special case when $K^{\scriptscriptstyle\bullet}$ is the total
complex associated to a first quadrant bicomplex
$C^{\scriptscriptstyle\bullet, \scriptscriptstyle\bullet}$ and the filtration
$F^p$ is induced by the column filtration on
$C^{\scriptscriptstyle\bullet, \scriptscriptstyle\bullet}$, then the complex
$f(K^{\scriptscriptstyle\bullet})$ over $A^\prime$ is equipped with
a canonical filtration $(f F^p)(f(K^{\scriptscriptstyle\bullet})) =
f(F^p K^{\scriptscriptstyle\bullet})$ satisfying

$$
gr^p_{f(F)}(f(K^{\scriptscriptstyle\bullet})) =
f(gr^p_F(K^{\scriptscriptstyle\bullet})).
$$
Under the assumptions (\ref{ass}), the corresponding exact couple

$$
{}^f D_1^{p,q} = H^{p+q}(f(F^p K^{\scriptscriptstyle\bullet})),\qquad
{}^f E_1^{p,q} = H^{p+q}(gr^p_{f(F)}(f(K^{\scriptscriptstyle\bullet}))) =
H^{p+q}(f(gr^p_F(K^{\scriptscriptstyle\bullet}))) \Longrightarrow
H^{p+q}(f(K^{\scriptscriptstyle\bullet}))
$$
then naturally maps to the exact couple (\ref{firstcouple}), hence
(beginning from $(D_2, E_2)$) to the exact couple (\ref{secondcouple}),
by Theorem \ref{speccomp}.
\end{example}

\section{Syntomic cohomology}
In this section we will define the arithmetic and geometric syntomic cohomologies of varieties over $K$ and $\ovk$, respectively,  and study their basic properties.
\subsection{Hyodo-Kato morphism revisited}  We will need to use the Hyodo-Kato morphism on the level of derived categories and vary it in $h$-topology. Recall that the original morphism depends on the choice of a uniformizer and a change of such is encoded in a transition function involving exponential of the monodromy.  Since the fields of definition of semistable models in the bases for $h$-topology change we will need to use these transitions functions. The problem though is that in the most obvious (i.e., crystalline) definition of the Hyodo-Kato complexes the monodromy is (at best) homotopically nilpotent - making the exponential in the transition functions impossible to define. Beilinson \cite{BE2} solves this problem by representing Hyodo-Kato complexes using modules with nilpotent monodromy. In this subsection we will summarize what we need from his approach.

   At first a quick reminder. Let $(U,\overline{U})$ be a log-scheme, log-smooth over $V^{\times}$. For any $r\geq 0$, consider its absolute (meaning over $W(k)$) log-crystalline cohomology complexes
\begin{align*}
\R\Gamma_{\crr}(U,\overline{U},\sj^{[r]})_n: &  =\R\Gamma(\overline{U}_{\eet},\R u_{U^{\times}_n/W_n(k)*}\sj^{[r]}_{U^{\times}_n/W_n(k)}),\quad
\R\Gamma_{\crr}(U,\overline{U},\sj^{[r]}):=\holim_n\R\Gamma_{\crr}(U,\overline{U},\sj^{[r]})_n,\\
\R\Gamma_{\crr}(U,\overline{U},\sj^{[r]})_\bq: & =\R\Gamma_{\crr}(U,\overline{U},\sj^{[r]})\otimes\bq_p,
\end{align*}
where $U^{\times}$ denotes  the log-scheme $(U,\overline{U})$ and $u_{U^{\times}_n/W_n(k)}: (U^{\times}_n/W_n(k))_{\crr}\to \overline{U}_{\eet}$ is the projection from the log-crystalline to the \'etale topos. For $r\geq 0$,   we write $\sj^{[r]}_{U^{\times}_n/W_n(k)}$ for the r'th divided power of the canonical PD-ideal $\sj_{U^{\times}_n/W_n(k)}$; for $r\leq 0$, we set
$\sj^{[r]}_{U^{\times}_n/W_n(k)}:=\so_{U^{\times}_n/W_n(k)}$ and we will often omit it from the notation. The absolute  log-crystalline cohomology complexes 
are filtered $E_{\infty}$ algebras over $W_n(k)$, $W(k)$, or $K_0$, respectively. Moreover, the rational ones are filtered commutative dg algebras.
\begin{remark}
The canonical pullback map
$$\R\Gamma(\overline{U}_{\eet},\R u_{U^{\times}_n/W_n(k)*}\sj^{[r]}_{U^{\times}_n/W_n(k)})\stackrel{\sim}{\to} \R u_{U^{\times}_n/{\mathbf Z}/p^n*}\sj^{[r]}_{U^{\times}_n/{\mathbf Z}/p^n})
$$
is a quasi-isomorphism. In what follows we will often call both the "absolute crystalline cohomology".
\end{remark} 

 Let   $W(k)<t_l> $ be the divided powers  polynomial algebra generated by elements $t_l$, $l\in {\mathfrak m}_K/ {\mathfrak m}^2_K\setminus \{0\}$, subject to the relations $t_{al}=[\overline{a}]t_l,$ for $a\in V^*$, where   $[\overline{a}]\in W(k)$ is the Teichm\"{u}ller lift of $\overline{a}$ - the reduction mod $\mathfrak m_K$ of $a$. Let $R_{V}$ (or simply $R$) be the $p$-adic completion  of the subalgebra of $W(k)<t_l>$ generated by $t_l$ and $t_l^{ie_K}/i!$, $i\geq 1$.  For a fixed $l$, the ring $R$   is the following $W(k)$-subalgebra of $K_0[[t_l]]$:
\begin{align*}
R 
  =\{\sum_{i=0}^{\infty} a_i\frac{t_l^i}{\lfloor i/e_K\rfloor !}\mid a_i\in W(k), \lim_{i\rightarrow \infty}a_i=0\}.
\end{align*}
One extends the Frobenius $\phi_R$ (semi-linearly) to $R$ by setting $\phi_R(t_l)=t_l^p$ and defines a  monodromy operator $N_R$ as a $W(k)$-derivation by setting 
$
N_R(t_l)=-t_l.
$
Let $E:=\Spec(R)$ equipped with the log-structure generated by the $t_l$'s. 

  We have two  exact closed embeddings $$
  i_0:W(k)^0\hookrightarrow E,\quad i_{\pi}: V^{\times}\hookrightarrow E. 
  $$ 
The first one is canonical and   induced by $
t_l\mapsto 0$.   The second one depends on the choice of the class of the uniformizing parameter $\pi\in {\mathfrak m}_K/p{\mathfrak m}_{K}$ up to multiplication by Teichm\"{u}ller elements. It is induced by $t_l\mapsto [\overline{l/\pi}]\pi$.

 Assume that $(U,\overline{U})$ is  of Cartier type (i.e., the special fiber $\overline{U}_0$ is of Cartier type). Consider the log-crystalline and the Hyodo-Kato complexes (cf. \cite[1.16]{BE2})
 $$
\R\Gamma_{\crr}((U,\overline{U})/R,\sj^{[r]})_n:  =\R\Gamma_{\crr}((U,\overline{U})_{n}/R_n,\sj^{[r]}_{\overline{U}_n/R_n}),\quad
\R\Gamma_{\hk}(U,\overline{U})_n:=\R\Gamma_{\crr}((U,\overline{U})_0/W_n(k)^0).
$$
Let $
\R\Gamma_{\crr}((U,\overline{U})/R,\sj^{[r]})$ and $\R\Gamma_{\hk}(U,\overline{U})$ be their homotopy inverse limits. The last complex is called the {\em Hyodo-Kato} complex.
The complex $\R\Gamma_{\crr}((U,\overline{U})/R)$ is $R$-perfect and $$\R\Gamma_{\crr}((U,\overline{U})/R)_n\simeq \R\Gamma_{\crr}((U,\overline{U})/R)\otimes^{L}_RR_n\simeq \R\Gamma_{\crr}((U,\overline{U})/R)\otimes^L{\mathbf Z}/p^n. $$
In general, we have
 $\R\Gamma_{\crr}((U,\overline{U})/R,\sj^{[r]})_n\simeq \R\Gamma_{\crr}((U,\overline{U})/R,\sj^{[r]})\otimes^L{\mathbf Z}/p^n$.
 The complex $\R\Gamma_{\hk}(U,\overline{U})$ is $W(k)$-perfect and
 $$\R\Gamma_{\hk}(U,\overline{U})_n\simeq \R\Gamma_{\hk}(U,\overline{U})\otimes^L_{W(k)}W_n(k)\simeq \R\Gamma_{\hk}(U,\overline{U})\otimes^L{\mathbf Z}/p^n. $$
 We normalize  the  monodromy  operators $N$ on the rational complexes $\R\Gamma_{\crr}((U,\overline{U})/R)_{\bq}$ and $\R\Gamma_{\hk}(U,\overline{U})_\bq$ by replacing the standard $N$ \cite[3.6]{HK} by $N_R:=e_{K}^{-1}N$. This makes them  compatible with base change. 
 The embedding $i_0: (U,\overline{U})_0\hookrightarrow (U,\overline{U}) $ over $i_0: W_n(k)^0\hookrightarrow E_n$ yields compatible morphisms $i^*_{0,n}: \R\Gamma_{\crr}((U,\overline{U})/R)_n\to \R\Gamma_{\hk}(U,\overline{U})_n$. Completing, we get  a morphism $$i^*_0: \R\Gamma_{\crr}((U,\overline{U})/R)\to \R\Gamma_{\hk}(U,\overline{U}),$$ 
 which induces a quasi-isomorphism
 $i^*_0:\R\Gamma_{\crr}((U,\overline{U})/R)\otimes ^L_RW(k)\stackrel{\sim}{\to}\R\Gamma_{\hk}(U,\overline{U})$.  All the above objects have an action of Frobenius and these morphisms are compatible with Frobenius. The Frobenius action is invertible on $\R\Gamma_{\hk}(U,\overline{U})_{\mathbf Q}$.

   The  map $i^*_{0}: \R\Gamma_{\crr}((U,\overline{U})/R)_{{\mathbf Q}}\to \R\Gamma_{\hk}(U,\overline{U})_{{\mathbf Q}}$  admits a unique (in the classical derived category) $W(k)$-linear section $\iota_{\pi}$  \cite[1.16]{BE2}, \cite[4.4.6]{Ts} that commutes with $\phi$ and $N$. The map $\iota_{\pi}$ is functorial and its $R$-linear extension is a quasi-isomorphism
  $$\iota_{\pi}: R\otimes_{W(k)}\R\Gamma_{\hk}(U,\overline{U})_{{\mathbf Q}}\stackrel{\sim}{\to} \R\Gamma_{\crr}((U,\overline{U})/R) _{{\mathbf Q}}.$$  The composition (the {\em Hyodo-Kato map})
  $$
  \iota_{\dr,\pi}:=\gamma_r^{-1}i^*_{\pi}\cdot\iota_{\pi}:\quad  \R\Gamma_{\hk}(U,\overline{U})_{{\mathbf Q}}\to \R\Gamma_{\dr}(U,\overline{U}_{K}),  $$
where 
$$
\gamma_r^{-1}:\quad \R\Gamma_{\crr}(U,\overline{U},\so/\sj^{[r]})_{{\mathbf Q}}\stackrel{\sim}{\to}\R\Gamma_{\dr}(U,\overline{U}_{K})/F^r
$$
 is the quasi-isomorphism from Corollary \ref{Langer}, 
  induces a $K$-linear functorial quasi-isomorphism (the {\em Hyodo-Kato quasi-isomorphism}) \cite[4.4.8, 4.4.13]{Ts}
  \begin{equation}
  \label{HKqis}
  \iota_{\dr,\pi}: \R\Gamma_{\hk}(U,\overline{U})\otimes_{W(k)}K\stackrel{\sim}{\to}\R\Gamma_{\dr}(U,\overline{U}_{K}) \end{equation}

   We are going now to describe the Beilinson-Hyodo-Kato morphism and to  study it on a few examples. Let $S_n=\Spec({\mathbf Z}/p^n)$ equipped with the trivial log-structure and let $S=\Spf({\mathbf Z}_p)$ be the induced formal log-scheme. For any log-scheme $Y\to S_1$ let $D_{\phi}((Y/S)_{\crr},\so_{Y/S})$ denote the derived category of Frobenius $\so_{Y/S}$-modules and $D_{\phi}^{\pcr}(Y/S)$ its thick subcategory of perfect F-crystals, i.e., those Frobenius modules that are perfect crystals \cite[1.11]{BE2}. We call a perfect F-crystal $(\sff,\phi)$ {\em non-degenerate} if the map $L\phi^*(\sff)\to \sff$ is an isogeny. The corresponding derived category is denoted by $D_{\phi}^{\pcr}(Y/S)^{\nd}$. It has a dg category structure \cite[1.14]{BE2} that we denote by $\sd_{\phi}^{\pcr}(Y/S)^{\nd}$.
We will omit $S$ if understood.

  Suppose now that $Y$ is a fine log-scheme that is affine. Assume also that
 there is a PD-thickening 
$P=\Spf R$ of  $Y$ that is formally smooth over $S$
 and such that  $R$ is a $p$-adically complete ring with no  $p$-torsion.
 Let $f:Z\to Y$ be a log-smooth map of Cartier type with $Z$ fine and proper over $Y$. Beilinson \cite[1.11,1.14]{BE2} proves the following theorem.
\begin{theorem}
\label{kk1}
The complex  $\sff:=Rf_{\crr*}(\so_{Z/S})$ is a non-degenerate perfect F-crystal.
\end{theorem}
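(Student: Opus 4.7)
The plan is to reduce everything to an explicit de Rham model on the ambient PD-thickening $P$ and then to invoke standard finiteness together with the Cartier isomorphism in the log-smooth Cartier type setting.

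First, I would show that evaluating $\sff$ on $P$ itself produces something computable. Since $f: Z \to Y$ is log-smooth, $Y \hookrightarrow P$ is a PD-thickening, and $P$ is formally smooth over $S$, one can lift $Z$ to a formally log-smooth $p$-adic formal scheme $\widetilde Z \to P$ either globally (when obstructions vanish) or, in general, Zariski/étale-locally; the local lifts can be assembled via a simplicial covering by affines. Using the standard comparison between log-crystalline cohomology and log de Rham cohomology relative to a formally smooth PD-envelope (Kato, followed by its derived/$\infty$-categorical enhancement as in Beilinson), the value of $\sff$ on $P$ is then quasi-isomorphic to $Rf_* \Omega^{\scriptscriptstyle\bullet}_{\widetilde Z/P}$, and the value on an arbitrary PD-thickening $(U,U') \in (Y/S)_{\crr}$ is obtained by base change from a lift of $Z_{U}$ to $U'$. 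This computation simultaneously identifies $\sff$ up to canonical quasi-isomorphism independently of the chosen lift (that is the crystal property) and realizes it as a concrete complex of $R$-modules.

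Next, I would argue perfectness. The log de Rham complex $\Omega^{\scriptscriptstyle\bullet}_{\widetilde Z/P}$ is a bounded complex of locally free $\so_{\widetilde Z}$-modules of finite rank on a proper formal $P$-scheme, and $R$ is $p$-adically complete and $p$-torsion free; derived $p$-adic completeness, together with the standard formal-proper base change and coherence of higher direct images for a proper log-smooth morphism, then forces $Rf_* \Omega^{\scriptscriptstyle\bullet}_{\widetilde Z/P}$ to be a perfect complex of $R$-modules with the expected base change to $R_n$. This gives the perfect crystal structure.

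Finally comes the Frobenius and the non-degeneracy, which is the main obstacle. The absolute Frobenius of $Z_1$ over $S_1$ induces by functoriality a Frobenius $\phi: L\phi^*\sff \to \sff$, so only the isogeny statement requires work. For this I would reduce modulo $p$ and factor the absolute Frobenius as relative Frobenius followed by projection. Because $f$ is of Cartier type, Kato's log Cartier isomorphism $\cal{H}^i(F_*\Omega^{\scriptscriptstyle\bullet}_{Z_1/Y_1}) \simeq \Omega^i_{Z_1^{(p)}/Y_1}$ holds, which shows that relative Frobenius on $L\phi^*\sff$ becomes a quasi-isomorphism mod $p$ up to a shift/twist that is matched by the graded pieces of the Hodge filtration. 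A standard devissage (filtering by the Hodge filtration and inducting on the filtration step) then upgrades this mod-$p$ statement, using bounded $p$-torsion on each graded piece, to the assertion that $L\phi^*\sff \to \sff$ becomes an isomorphism after inverting $p$, i.e.\ is an isogeny. The Cartier-type hypothesis is used in exactly one place but is essential: without it the Cartier isomorphism, hence the non-degeneracy, can fail.
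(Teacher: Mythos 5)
A preliminary remark: the paper does not prove this statement at all — it is quoted from Beilinson \cite[1.11, 1.14]{BE2} — so there is no in-paper argument to compare yours with. Your first two steps (computing $\sff$ on $P$ via local liftings $\widetilde Z/P_n$ and the log-crystalline/de Rham comparison, deducing the crystal property from base change, and getting perfectness from properness, coherence and derived base change over the $p$-adically complete, $p$-torsion free ring $R$) follow the route one expects from that reference and are fine in outline, granting that the crystal/base-change property is itself a theorem you assert rather than prove.

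The non-degeneracy step, however, contains a genuine gap. The claim that relative Frobenius ``becomes a quasi-isomorphism mod $p$ up to a shift/twist'' is false as stated: already for $Z=\mathbb{P}^1_Y$ Frobenius acts by $p$ on $R^2f_{\crr *}$, hence by $0$ mod $p$. What the log Cartier isomorphism actually provides is an identification of the cohomology sheaves of $F_*\Omega^{\scriptscriptstyle\bullet}_{Z_1/Y_1}$ with the Hodge sheaves of the Frobenius twist, i.e.\ it computes the graded pieces of the conjugate filtration; it does not make $\phi$ invertible mod $p$ in any degree. More seriously, no mod-$p$ statement combined with ``bounded $p$-torsion'' can be upgraded to the isogeny claim: the cone of $L\phi^*\sff\to\sff$ is a perfect complex, and what must be shown is that it is killed after inverting $p$, which is invisible modulo $p$ (and if the cone really were acyclic mod $p$, derived Nakayama would force $\phi$ to be an isomorphism integrally, which is wrong). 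The missing input is an integral divisibility statement: on a local lifting, Frobenius is divisible by $p^i$ on the $i$-th step of the Hodge filtration (because $d(x^p)=px^{p-1}dx$, resp.\ its logarithmic analogue on charts of Cartier type), and since the filtration has length at most $d=\dim(Z/Y)$ one constructs a Verschiebung $V:\sff\to L\phi^*\sff$ with $\phi V=V\phi=p^{d}$; equivalently one can argue by Poincar\'e duality. This is the Berthelot--Ogus/Mazur divided-Frobenius argument, extended to the log Cartier-type setting by Hyodo--Kato and Kato, and it is what must replace your mod-$p$ devissage; the Cartier-type hypothesis does enter exactly there, as you say, but through the divisibility and the Cartier isomorphism together, not through any mod-$p$ invertibility of Frobenius.
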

   
   Let $D_{\phi,N}(K_0)$ denote the bounded derived category of $(\phi, N)$-modules. By \cite[1.15]{BE2}, it has a dg category structure that we will denote by $\sd_{\phi,N}(K_0)$. We call $(\phi,N)$-module {\em effective} if it contains a $W(k)$-lattice preserved by $\phi$ and $N$. Denote by $\sd_{\phi,N}(K_0)^{\eff}\subset \sd_{\phi,N}(K_0)$  the bounded derived category of the abelian category of effective modules. 

  Let $f:Y\to k^0$ be a log-scheme. We think of $k^0$ as $W(k)^{\times}_1$. Then the map $f$ is given by a $k$-structure on $Y$ plus a section $l=f^*(\overline{p})\in\Gamma(Y,M_Y)$ such that its image in $\Gamma(Y,\so_Y)$ equals $0$. We will often write $f=f_l,l=l_f$. 

  Beilinson proves the following theorem \cite[1.15]{BE2}. 
\begin{theorem}
\label{kk2}
\begin{enumerate}
\item There is a natural functor
\begin{equation}
\label{kwaku-kwik}
\epsilon_{f}=\epsilon_l: \sd_{\phi,N}(K_0)^{\eff}\to \sd^{\pcr}_{\phi}(Y)^{\nd}\otimes \bq.
\end{equation}
\item $\epsilon_{f}$ is compatible with base change, i.e., for any $\theta: Y^{\prime}\to Y$ one has a canonical identification $\epsilon_{f\theta}\stackrel{\sim}{\to}L\theta^*_{\crr}\epsilon_{f}$. For any $a\in k^*, m\in {\mathbf Z}_{>0}$, there is a canonical identification $\epsilon_{al^m}(V,\phi,N)\stackrel{\sim}{\to}\epsilon_l(V,\phi,mN)$.
\item Suppose that $Y$ is a local scheme with residue field $k$ and nilpotent maximal ideal, $M_Y/\so^*_Y={\mathbf Z}_{>0}$, and the map $f^*:M_{k^0}/k^*\to M_Y/\so^*_Y$ is injective. Then (\ref{kwaku-kwik}) is an equivalence of dg categories. 
\end{enumerate}
\end{theorem}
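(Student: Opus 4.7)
The plan is to construct $\epsilon_l$ by crystalline gluing, using the log-structure section $l$ as a coordinate, and to invert it in case (3) by evaluation on a universal PD-thickening. A key preliminary observation is that $N$ is nilpotent on any $(\phi,N)$-module over $K_0$: the relation $N\phi = p\phi N$ forces $N$ to shift $\phi$-eigenvalues by a factor of $p$, and $\phi$ has only finitely many eigenvalues. Hence $\exp(xN)$ is a finite sum and converges whenever $x$ is topologically nilpotent.

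For part (1), given $(V,\phi,N) \in \sd_{\phi,N}(K_0)^{\eff}$, choose a $W(k)$-lattice $M \subset V$ preserved by $\phi$ and $N$. On a PD-thickening $(U,T)$ in $(Y/S)_{\crr}$, the section $l|_U$ lifts Zariski-locally to some $\tilde l \in \Gamma(T,M_T)$, and its image $t \in \so_T$ is topologically nilpotent because $l$ maps to $0$ in $\so_Y$. Set
\[
\sff_M(T,\tilde l) := M \otimes_{W(k)} \so_T,
\]
equipped with Frobenius $\phi_M \otimes \phi_T$ and log-connection $\nabla := -N \otimes d\log\tilde l$. Two lifts $\tilde l, \tilde l'$ differ by a unit $u = \tilde l/\tilde l' \equiv 1$ modulo the PD-ideal, so the gluing datum $\exp((\log u)\, N)$ converges by nilpotency of $N$; it is horizontal and Frobenius-equivariant by a direct computation, the latter using $N\phi = p\phi N$ to match the $p$-scaling of $\log u$ under $\phi_T$. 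This produces a non-degenerate perfect F-crystal, and after inverting $p$ the construction is independent of $M$, yielding the functor $\epsilon_l$ with its dg-enhancement.

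Part (2) is largely formal. Functoriality in $T$ yields $\epsilon_{f\theta} \simeq L\theta^*_{\crr}\epsilon_f$. For the rescaling $l \mapsto al^m$, replace $\tilde l$ by $[\widetilde a]\tilde l^m$: the Teichm\"uller factor is $d\log$-flat, so $d\log\tilde l \mapsto m\, d\log\tilde l$, and the connection becomes $-(mN) \otimes d\log\tilde l$, matching $\epsilon_l(V,\phi,mN)$.

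For part (3), the hypotheses on $Y$ --- local with nilpotent maximal ideal, $M_Y/\so^*_Y = {\mathbf Z}_{>0}$, and $M_{k^0}/k^* \hookrightarrow M_Y/\so^*_Y$ injective --- force $(Y/S)_{\crr}$ to admit a universal pro-PD-thickening modeled on the $p$-completion of $W(k)[t]$ with log-structure generated by $t$, where $t$ lifts a generator of $M_Y/\so^*_Y$ mapping to $l$. Evaluation of an F-crystal on this thickening, combined with reading off $N$ from the connection along $d\log t$, produces an effective $(\phi,N)$-module and defines a quasi-inverse to $\epsilon_l$. The main obstacle is verifying the quasi-inverse property on the dg level: this reduces to a log Poincar\'e lemma --- the relative log de Rham complex of the universal thickening along $d\log t$ is a resolution of $\so$ --- together with faithful descent of the F-crystal structure under evaluation, where the nondegeneracy hypothesis is invoked to move between integral and rational statements and to lift the equivalence from the classical derived category to the dg enhancement.
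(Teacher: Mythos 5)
The paper does not prove Theorem~\ref{kk2}: it quotes it from Beilinson \cite[1.15]{BE2}, and the only supporting material reproduced is the description of $\epsilon_f$ on sections via the Fontaine--Hyodo--Kato torsor $\tau_{A_\bq}$ together with Example~\ref{standard}. Your parts (1) and (2) reproduce that mechanism in trivialized form: picking a lift $\tilde l$ of $l$ amounts to choosing the element $\tilde l^{1/v(\tilde l)}\in v^{-1}(1)$ of Example~\ref{standard}, and your gluing datum $\exp((\log u)N)$ is precisely the change-of-trivialization $\beta_{t_1}=\beta_t\exp(Na(t_1,t))$ recorded there, so this is consistent with Beilinson's construction. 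One small correction: once $N$ is nilpotent, $\exp(xN)$ is a finite sum for any $x$, so ``topological nilpotency of $x$'' is not the operative hypothesis; what matters is that $u\equiv 1$ modulo the PD-ideal so that $\log u$ itself is defined via the divided powers (you do note this later, but the opening sentence misattributes the convergence). Part (2) is formal as you say, and the $d\log[\widetilde a]=0$ computation for the rescaling is correct.

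Part (3) has a genuine gap. You claim the hypotheses produce a universal pro-PD-thickening ``modeled on the $p$-completion of $W(k)[t]$'' uniformly in $Y$, but the thickening depends on $Y$ (compare $W(k)^0$ for $k^0_1$ with $\Spf R$ for $V^\times_1$ in Example~\ref{crucial}, where the former sends the monoid generator to $0$ rather than to a free variable). More importantly, evaluating a perfect $F$-crystal on a thickening $\Spf A$ produces a perfect $A_\bq$-complex with log-connection; it does not, as you assert, directly ``produce an effective $(\phi,N)$-module'' over $K_0$. A quasi-inverse must still specialize along $i_0^*: A_\bq\to K_0$ and one has to argue that this specialization loses nothing on the rationalized $F$-crystal --- that reduction is the real content of the equivalence and is not supplied by the ``log Poincar\'e lemma'' you invoke, which only compares crystalline and de Rham descriptions over the same base. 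Finally, the role you assign to nondegeneracy is misdirected: the isogeny condition on $L\phi^*\sff\to\sff$ is what makes the evaluated Frobenius invertible after $\otimes\bq$, so that the candidate inverse lands in $\phi$-modules at all; it has nothing to do with ``lifting the equivalence from the classical derived category to the dg enhancement''.
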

In particular, we have an equivalence of dg categories
$$\epsilon:=\epsilon_{\overline{p}}: \sd_{\phi,N}(K_0)^{\eff}\stackrel{\sim}{\to}\sd^{\pcr}_{\phi}(k^0)^{\nd}\otimes\bq
$$ and a canonical identification $\epsilon_f=Lf^*_{\crr}\epsilon$.

   On the level of sections the functor (\ref{kwaku-kwik}) has a simple description \cite[1.15.3]{BE2}. Assume that $Y=\Spec(A/J)$, where $A$ is a $p$-adic algebra and $J$ is a PD-ideal in $A$, and that we have a PD-thickening $i:Y\hookrightarrow T=\Spf(A)$. Let $\lambda_{l,n}$ be the preimage of $l$ under the map $\Gamma(T_n,M_{T_n})\to i_*\Gamma(Y,M_Y)$. It is a trivial $(1+J_n)^{\times}$-torsor. Set $\lambda_A:=\invlim{_n}\Gamma(T_n,\lambda_{l,n})$. It is a $(1+J)^{\times}$-torsor. Let $\tau_{A_\bq}$ be the {\em Fontaine-Hyodo-Kato} torsor: $A_\bq$-torsor obtained from $\lambda_A$ by the pushout by $(1+J)^{\times}\stackrel{\log}{\to}J\to A_\bq$. We call the ${\mathbb G}_a$-torsor $\Spec A_\bq^{\tau}$ over $\Spec A_\bq$ with sections $\tau_{A_\bq}$ the same name. Denote by $N_{\tau}$ the $A_\bq$-derivation of $A_\bq^{\tau}$ given by the action of the generator of $\Lie_{{\mathbb G_a}}$.

  Let $M$ be an $(\phi, N)$-module. Integrating the action of the monodromy  $N_M$ we get an action of the group ${\mathbb G}_a$ on $M$. Denote by $M^{\tau}_{A_\bq}$ the $\tau_{A_\bq}$-twist of $M_{A_\bq}:=M\otimes_{K_0}A_\bq$. It can be represented as  the module of maps $v:\tau_{A_\bq}\to M_{A_\bq}$ that are $A_{\bq}$-equivariant, i.e., such that $v(\tau+a)=\exp(aN)(v(\tau))$, $\tau\in \tau_{A_\bq}$, $a\in A_\bq$. We can also write 
$$M^{\tau}_{A_\bq}=(M\otimes_{K_0}A^{\tau}_\bq)^{{\mathbb G}_a}=(M\otimes_{K_0}A^{\tau}_\bq)^{N=0},
$$
where $N:=N_M\otimes 1+ 1\otimes N_{\tau}$. Now, by definition,
\begin{equation}
\label{isom}
\epsilon_f(M)(Y,T)=M^{\tau}_{A_\bq}
\end{equation}

  The algebra $A^{\tau}_\bq$ has a concrete description. Take the natural map $a:\tau_{A_\bq}\to A^{\tau}_\bq$ of $A_\bq$-torsors which maps $\tau\in \tau_{A_\bq}$ to a function $a({\tau})\in A^{\tau}_\bq$ whose value on any $\tau^{\prime}\in \tau_{A_\bq}$ is $\tau-\tau^{\prime}\in A_\bq$. This map is compatible with the logarithm $\log: (1+J)^{\times}\to A$. The algebra $A^{\tau}_{A_\bq}$ is freely generated over $A_\bq$ by  $a({\tau})$ for any $\tau\in\tau_{A_\bq}$; the $A_\bq$-derivation $N_{\tau}$ is defined by $N_{\tau}(a({\tau}))=-1$.
That is, for chosen $\tau\in\tau_{A_\bq}$, we can write
\begin{align*}
A^{\tau}_{\bq}  =A_\bq[a({\tau})],\quad
N_{\tau}(a({\tau}))=-1
\end{align*}
For every lifting $\phi_T$ of Frobenius to $T$ we have $\phi^*_T\lambda_A=\lambda_A^p$. Hence  Frobenius $\phi_T$ extends canonically to a Frobenius $\phi_{\tau}$ on $A_\bq^{\tau}$ in such a way that $N_\tau\phi_{\tau}=p\phi_{\tau} N_{\tau}$. The isomorphism (\ref{isom}) is compatible with Frobenius.

  \begin{example}
\label{standard}
  As an example, consider the case when the pullback map $f^*:\bq=(M_{k^0}/k^*)^{\gp}\otimes\bq\stackrel{\sim}{\to} (\Gamma(Y,M_Y)/k^*)^{\gp}\otimes\bq$ is an isomorphism. We have a surjection $v: (\Gamma(T,M_T)/k^*)^{\gp}\otimes \bq \to \bq$ with the kernel $\log: (1+J)^{\times}_\bq\stackrel{\sim}{\to} J_\bq=A_{\bq}$. We obtain an identification of $A_\bq$-torsors $\tau_{A_\bq}\simeq v^{-1}(1)$. Hence every non-invertible $t\in \Gamma(T,M_T)$ yields an element $t^{1/v(t)}\in v^{-1}(1)$ and a trivialization of $\tau_{A_\bq}$.

   For a fixed element $t^{1/v(t)}\in v^{-1}(1)$, we can write
\begin{align*}
A^{\tau}_{A_\bq}  =A_\bq[a (t^{1/v(t))}], \quad
N_{\tau}(a( t^{1/v(t))})=-1
\end{align*}
 For an $(\phi, N)$-module $M$, the twist $M^{\tau}_{A_\bq}$ can be trivialized \begin{align*}
\beta_t: M\otimes_{K_0  }A_\bq & \stackrel{\sim}{\to} M^{\tau}_{A_\bq}=(M\otimes_{K_0}A_\bq[a( t^{1/v(t)})])^{N=0} \\
m & \mapsto \exp(N_M(m) a(t^{1/v(t))})
\end{align*}
For a different choice  $t_1^{1/v(t_1)}\in v^{-1}(1)$, the two trivializations $\beta_t,\beta_{t_1}$ are related by the formula 
$$
\beta_{t_1}=\beta_t\exp(N_M(m)a(t_1,t)),\quad a(t_1,t)=a(t_1)/v(t_1)-a(t)/v(t).
$$ 
\end{example}

  Consider the map $f:V^{\times}_1\to k^0$. By Theorem \ref{kk2},  we have the equivalences of dg categories
\begin{align*}
\epsilon:  & \quad \sd_{\phi,N}(K_0)^{\eff}\stackrel{\sim}{\to}\sd^{\pcr}_{\phi}(k^0)^{\nd}\otimes\bq,\\
\epsilon_{f}=Lf^*_{\crr}\epsilon: & \quad \sd_{\phi,N}(K_0)^{\eff}\stackrel{\sim}{\to}\sd^{\pcr}_{\phi}(V_1^{\times})^{\nd}\otimes\bq
\end{align*}
  
  Let $Z_1\to V_1^{\times}$ be a log-smooth map of Cartier type with $Z_1$  fine and proper over $V_1$. By Theorem \ref{kk1} $Rf_{\crr*}(\so_{Z_1/{\mathbf Z}_p})$ is a non-degenerate perfect F-crystal on $V_{1,\crr}$. Set
$$
\R\Gamma^{B}_{\hk}(Z_1):=\epsilon^{-1}_{f}Rf_{\crr*}(\so_{Z_1/{\mathbf Z}_p})_\bq \in \sd_{\phi,N}(K_0).
$$
We will call it the {\em Beilinson-Hyodo-Kato} complex \cite[1.16.1]{BE2}. \begin{example}
\label{crucial}
To get familiar with the Beilinson-Hyodo-Kato complexes we will work out some examples.
\begin{enumerate}
 \item Let $g:X\to V^{\times}$ be a log-smooth log-scheme, proper, and of Cartier type. 
Adjunction yields a quasi-isomorphism
\begin{align}
\label{adjunction}
\epsilon_f\R\Gamma^{B}_{\hk}(X_1)=\epsilon_f\epsilon^{-1}_{f}Rg_{\crr*}(\so_{X_1/{\mathbf Z}_p})_\bq\stackrel{\sim}{\to} Rg_{\crr*}(\so_{X_1/{\mathbf Z}_p})_\bq
\end{align}
Evaluating it on the PD-thickening $V^{\times}_1\hookrightarrow V^{\times}$ (here $A=V$, $J=pV$, $l=\overline{p}$, $\lambda_V=p(1+J)^{\times}$, $\tau_K=p(1+J)^{\times}\times_{(1+J)^{\times}}K$), we get a map 
\begin{align*}
\R\Gamma^{B}_{\hk}(X_1)^{\tau}_K & =\epsilon_f\R\Gamma^{B}_{\hk}(X_1)(V^{\times}_1\hookrightarrow V^{\times})
\stackrel{\sim}{\to} Rg_{\crr*}(\so_{X_1/{\mathbf Z}_p})(V^{\times}_1\hookrightarrow V^{\times})_\bq=\R\Gamma_{\crr}(X_1/V^{\times})_\bq\\
 & \simeq \R\Gamma_{\crr}(X/V^{\times})_\bq\simeq \R\Gamma_{\dr}(X_K)
\end{align*}
We will call it the {\em Beilinson-Hyodo-Kato} map \cite[1.16.3]{BE2}
\begin{equation}
\label{HK1}
\iota^B_{\dr}:\quad \R\Gamma_{\hk}^B(X_1)^{\tau}_K\stackrel{\sim}{\to}\R\Gamma_{\dr}(X_K)
\end{equation}
Recall that 
$$
\R\Gamma_{\hk}^B(X_1)^{\tau}_K=(\R\Gamma_{\hk}^B(X_1)\otimes_{K_0}K[a(\tau)])^{N=0},\quad \tau\in\tau_K
$$
This makes it clear that the Beilinson-Hyodo-Kato map is not only functorial for log-schemes over $V^{\times}$ but, by Theorem \ref{kk2}, it is also compatible with base change of $V^{\times}$. Moreover, if we use
 the canonical trivialization by $p$
\begin{align*}
\beta=\beta_p:\quad \R\Gamma_{\hk}^B(X_1)_K & \stackrel{\sim}{\to}\R\Gamma^B_{\hk}(X)_K^{\tau}
=(\R\Gamma_{\hk}^B(X_1)\otimes_{K_0}K[a(p)])^{N=0}\\
& x\mapsto \exp(N(x)a(p))
\end{align*}
we get that the composition (which we also call the Beilinson-Hyodo-Kato map and denote by $\iota^B_{\dr}$) 
$$
\iota^B_{\dr}=\iota^B_{\dr}\beta:\quad  \R\Gamma_{\hk}^B(X_1)\to \R\Gamma_{\dr}(X_K)
$$
is functorial and compatible with base change. 

\item 
Evaluating the map (\ref{adjunction}) on the PD-thickening $V^{\times}_1\hookrightarrow E$ associated to a uniformizer $\pi$ (here $A=R$, $l=\overline{p}$), we get a map 
\begin{equation}
\label{kappar}
\kappa_R:\quad \R\Gamma^{B}_{\hk}(X_1)^{\tau}_{R_\bq} \stackrel{\sim}{\to}   \R\Gamma_{\crr}(X/R)_\bq
\end{equation}
as the composition
\begin{align*}
\R\Gamma^{B}_{\hk}(X_1)^{\tau}_{R_\bq} & =\epsilon_f\R\Gamma^{B}_{\hk}(X_1)(V^{\times}_1\hookrightarrow E)
\stackrel{\sim}{\to} Rg_{\crr*}(\so_{X_1/{\mathbf Z}_p})(V^{\times}_1\hookrightarrow E)_\bq=\R\Gamma_{\crr}(X_1/R)_\bq\\
 & \simeq \R\Gamma_{\crr}(X/R)_\bq
\end{align*}
We have 
$$
\R\Gamma_{\hk}^B(X_1)^{\tau}_{R_\bq}=(\R\Gamma_{\hk}^B(X_1)\otimes_{K_0}R_{\bq}[a(\tau)])^{N=0},\quad \tau\in\tau_{R_\bq}
$$
Since the map $\kappa_R$ is compatible with the log-connection on $R$ it is also compatible with the normalized monodromy operators. Specifically, if we define the monodromy on the left hand side of (\ref{kappar}) as 
\begin{align*}
N:\quad \R\Gamma^{B}_{\hk}(X_1)^{\tau}_{R_\bq} & \to \R\Gamma^{B}_{\hk}(X_1)^{\tau}_{R_\bq},\\
\sum_I m_{\tau_I}\otimes r_{\tau_I}a^{k_I}(\tau_I) & \mapsto \sum_I (N_M(m_{\tau_I})\otimes r_{\tau_I}a^{k_I}(\tau_I) + m_{\tau_I}\otimes N_R(r_{\tau_I})a^{k_I}(\tau_I))
\end{align*}
 the two operators will correspond under the map $\kappa_R$. 

    The exact immersion $i_{\pi}: V^{\times}\hookrightarrow E$, yields a commutative diagram
$$
\xymatrix{
\R\Gamma^{B}_{\hk}(X_1)^{\tau}_{R_\bq}\ar[r]^{\sim}\ar[d]^{i^*_{\pi}}
 & \R\Gamma_{\crr}(X/R)_\bq\ar[d]^{i^*_{\pi}}\\
\R\Gamma^{B}_{\hk}(X_1)^{\tau}_{K}\ar[r]^{\sim} &  \R\Gamma_{\crr}(X/V^{\times})_\bq }
$$
If $p=u\pi^{e_K}$, $u\in V^{\times}$,  we have $\lambda_R=\tilde{u}t_{\pi}^{e_K}(1+J)^{\times}$, where $\tilde{u}\in R$ is such that $\tilde{u}$ lifts $u$. Alternatively, $\lambda_R=[\overline{u}]t_{\pi}^{e_K}(1+J)^{\times}$. We have the associated trivialization
\begin{align*}
\beta_{\pi}:\quad \R\Gamma_{\hk}^B(X_1)\otimes_{K_0}R_{\bq} & \stackrel{\sim}{\to}\R\Gamma_{\hk}^B(X_1)^{\tau}_{R_\bq}=(\R\Gamma_{\hk}^B(X_1)\otimes_{K_0}R_{\bq}[a(\tau_{\pi})])^{N=0}, \quad \tau_{\pi}:=[\overline{u}]t^{e_K}_{\pi},\\
& x\mapsto \exp(N(x)a(\tau_{\pi}))
\end{align*}

\item  Consider the log-scheme $k^0_1$: the scheme $\Spec(k)$ with the log-structure induced by the exact closed immersion $i:k^0_1\hookrightarrow V^{\times}_1$. We have the commutative diagram
$$
\xymatrix{
X_0\ar@{^{(}->}[r]^i\ar[d]^{g_0} & X_1\ar[d]^g\\
k^0_1\ar[rd]_{f_0}\ar@{^{(}->}[r]^i & V^{\times}_1\ar[d]^f\\
 & k^0
}
$$
The morphisms $f, f_0$ map $\overline{p}$ to $\overline{p}$. By log-smooth base change we have a canonical quasi-isomorphism 
$Li^*Rg_{\crr*}(\so_{X_1/{{\mathbf Z}_p}
})\simeq Rg_{0\crr*}(\so_{X_1/{{\mathbf Z}_p}})$. By Theorem \ref{kk2} we have the equivalence of dg categories
$$
\epsilon_{f_0}:  \quad \sd_{\phi,N}(K_0)^{\eff}\stackrel{\sim}{\to}\sd^{\pcr}_{\phi}(k^0_1)^{\nd}\otimes\bq,\quad 
\epsilon_{f_0}=Li^*\epsilon_f.
$$
This implies the natural quasi-isomorphisms
\begin{align*}
\R\Gamma_{\hk}^B(X_1) & =\epsilon^{-1}_fRg_{\crr*}(\so_{X_1/{{\mathbf Z}_p}})_\bq \simeq 
\epsilon^{-1}_{f_0}Li^*Rg_{\crr*}(\so_{X_1/{{\mathbf Z}_p}})_\bq\\
& \simeq 
\epsilon_{f_0}^{-1}Rg_{0\crr*}(\so_{X_0/{{\mathbf Z}_p}})_\bq
\end{align*}
Hence, by adjunction,
$$
\epsilon_{f_0}\R\Gamma_{\hk}^B(X_1) =\epsilon_{f_0}\epsilon_{f_0}^{-1}Rg_{0\crr*}(\so_{X_0/{{\mathbf Z}_p}})_\bq\simeq 
Rg_{0\crr*}(\so_{X_0/{{\mathbf Z}_p}})_\bq
$$
We will evaluate  both sides on the PD-thickening $ k^0_1\hookrightarrow W(k)^0$. Here we write the log-structure on $W(k)^0$ as associated to the map $\Gamma(V^{\times},M_{V^{\times}})\to k\to W(k)$, $a\mapsto \overline{a}$. We take $A=W(k),$  $l={p}$, $J=pW(k)$, $\lambda_{W(k)}=\overline{p}(1+pW(k))^{\times}$, $\tau_{K_0}=\overline{p}(1+pW(k))^{\times}\times_{(1+pW(k))^{\times}}K_0$. We get a quasi-isomorphism
$$\kappa:\quad 
\R\Gamma^{B}_{\hk}(X_1)^{\tau}_{K_0} \stackrel{\sim}{\to}  \R\Gamma_{\hk}(X)_\bq
$$
as the composition
\begin{align*}
\R\Gamma_{\hk}^B(X_1)^{\tau}_{K_0} & =\epsilon_{f_0}\R\Gamma_{\hk}^B(X_1) (k^0_1\hookrightarrow W(k)^0)\simeq 
Rg_{0\crr*}(\so_{X_0/{{\mathbf Z}_p}})(k^0_1\hookrightarrow W(k)^0)_\bq\\
& =\R\Gamma_{\crr}(X_0/W(k)^0)_\bq=\R\Gamma_{\hk}(X)_\bq
\end{align*}
To compare the monodromy operators on  both sides of the map $\kappa$, note that by Theorem \ref{kk2}, we have the canonical identification
$$Rg_{0\crr*}(\so_{X_0/{{\mathbf Z}_p}})_\bq\simeq 
\epsilon_{f_0}(\R\Gamma_{\hk}^B(X_1),N) \simeq \epsilon_{\overline{p}}(\R\Gamma_{\hk}^B(X_1) ,e_KN)
$$
Hence, from the description of the Hyodo-Kato monodromy in \cite[3.6]{HK}, it follows easily that the map $\kappa$ pairs   the operator $N$ on $\R\Gamma_{\hk}^B(X_1)^{\tau}_{K_0}$ defined by
$$N(\sum_I m_{\tau_I}\otimes r_{\tau_I}a^{k_I}(\tau_I))=\sum_I (N_M(m_{\tau_I})\otimes r_{\tau_I}a^{k_I}(\tau_I) + m_{\tau_I}\otimes N_R(r_{\tau_I})a^{k_I}(\tau_I)),
$$
with the normalized Hyodo-Kato monodromy on $\R\Gamma_{\hk}(X)_\bq$. 

  Composing the map $\kappa$ with the trivialization
\begin{align*}
\beta=\beta_{p}:\quad \R\Gamma_{\hk}^B(X_1) & \stackrel{\sim}{\to}\R\Gamma_{\hk}^B(X_1)^{\tau}_{K_0}
=(\R\Gamma_{\hk}^B(X_1)[a(\overline{p})])^{N=0}\\
& x\mapsto \exp(N(x)a(\overline{p}))
\end{align*}
we get a quasi-isomorphism between Beilinson-Hyodo-Kato complexes and the (classical) Hyodo-Kato complexes.
\begin{align}
\label{B=K}
\kappa=\beta\kappa:\quad \R\Gamma_{\hk}^B(X_1)\stackrel{\sim}{\to}\R\Gamma_{\hk}(X)_\bq
\end{align}
The trivialization above is compatible with Frobenius and the normalized monodromy hence so is the quasi-isomorphism (\ref{B=K}). It is clearly functorial and, by Theorem \ref{kk2}, compatible with base change.

  By functoriality (Theorem \ref{kk2}), the morphism of PD-thickenings  (exact closed immersion) $i_0: (k^0_1\hookrightarrow W(k)^0)\hookrightarrow (V^{\times}_1\hookrightarrow R)$ yields the right square in the following  diagram
\begin{equation}
\label{infinity-category}
\xymatrix{
\R\Gamma_{\hk}(X)_\bq\ar[r]^{\iota_{\pi}} & \R\Gamma_{\crr}(X_1/R)_\bq \ar[r]^{i^*_0} & \R\Gamma_{\hk}(X)_\bq\\ 
  \R\Gamma_{\hk}^B(X_1)^{\tau}_{K_0}\ar[u]^{\wr}_{\kappa}\ar[r]^{\iota_{\pi}}
 & \R\Gamma_{\hk}^B(X_1)^{\tau}_{R_\bq}\ar[u]^{\wr}_{\kappa_R}\ar[r]^{i^*_0} &
 \R\Gamma_{\hk}^B(X_1)^{\tau}_{K_0}\ar[u]^{\wr}_{\kappa}
}
\end{equation}
In the left square the bottom map $\iota_{\pi}$ is induced by the natural map $ K_0\to R$ and by sending $a(\overline{p})\mapsto a(\tau_{\pi})$.
 It is a (right) section to $i_0^*$ and it (together with the vertical maps) commutes with Frobenius. By uniqueness of the top map $\iota_{\pi}$ this makes the left square commute in the classical derived category (of abelian groups).

  It is easy to check that we have the  following commutative diagram
$$
\xymatrix{
\R\Gamma_{\hk}^B(X_1)^{\tau}_{K_0}\ar[r]^{\iota_{\pi}} & \R\Gamma_{\hk}^B(X_1)^{\tau}_{R_\bq}\ar[r]^{i_{\pi}^*} & \R\Gamma_{\hk}^B(X_1)^{\tau}_K\\
\R\Gamma_{\hk}^B(X_1)\ar[u]^{\beta_p}_{\wr}\ar[rr]^{\can} & & \R\Gamma^B_{\hk}(X_1)_K\ar[u]^{\beta_p}_{\wr}
}
$$
and that the composition of maps on the top of it is equal to the map induced by the canonical map $K_0\to K$ and the map $\lambda_{W(k)^0}
\to \lambda_V^{\times}$, $\overline{p}\to p$.
\end{enumerate}
  Combining the commutative diagrams in  parts (2) and (3) of this example we get the following commutative diagram.
$$
\xymatrix{
\R\Gamma_{\hk}(X)\ar[r]^{\iota_{\pi}} & \R\Gamma_{\crr}(X_1/R)_\bq\ar[r]^{i_{\pi}^*} & \R\Gamma_{\crr}(X_1/V^{\times})_\bq\\
\R\Gamma^B_{\hk}(X_1)^{\tau}_{K_0} \ar[u]^{\wr}_{\kappa}\ar[r]^{\iota_{\pi}} & \R\Gamma^B_{\hk}(X_1)_{R_\bq}^{\tau} \ar[r]^{i^*_{\pi}}\ar[u]^{\wr}_{\kappa_R} &  \R\Gamma^B_{\hk}(X_1)_K^{\tau}\ar[u]^{\wr}_{\iota^B_{\dr}}\\
\R\Gamma_{\hk}^B(X_1)\ar[u]^{\beta_p}_{\wr}\ar[rr]^{\can} & & \R\Gamma^B_{\hk}(X_1)_K\ar[u]^{\beta_p}_{\wr}
}
$$
Since the composition of the top maps is equal to the Hyodo-Kato map $\iota_{\dr}$ and the bottom map is just the canonical map 
$\R\Gamma_{\hk}(X_1)\to \R\Gamma_{\hk}(X_1)_K$ we obtain that the Hyodo-Kato and the Beilinson-Hyodo-Kato maps are related by a natural quasi-isomorphism, i.e., that the following diagram commutes.
\begin{equation}
\label{Beilinson=HK}
\xymatrix{
\R\Gamma_{\hk}(X)\ar[r]^{\iota_{\dr,\pi}} & \R\Gamma_{\dr}(X_{K})\\
\R\Gamma^B_{\hk}(X_1)\ar[ru]_{\iota_{\dr}^B}\ar[u]^{\wr}_{\kappa} 
}
\end{equation}
\end{example} 

   The above examples can be generalized \cite[1.16]{BE2}. It turns out that the relative crystalline cohomology of all the base changes of the map $f$ can be described using the Beilinson-Hyodo-Kato complexes \cite[1.16.2]{BE2}. Namely, let $\theta:Y\to V^{\times}_1$ be an affine log-scheme and let $T$ be a $p$-adic PD-thickening of $Y$, $T=\Spf(A),$ $Y=\Spec(A/J)$. Denote by $f_Y:Z_{1Y}\to Y$ the $\theta$-pullback of $f$. Beilinson proves the following theorem \cite[1.16.2]{BE2}.
\begin{theorem}
\label{Bthm}
 \begin{enumerate}
  \item The $A$-complex $\R\Gamma_{\crr}(Z_{1Y}/T,\so_{Z_{1Y/T}})$ is perfect, and one has $$
\R\Gamma_{\crr}(Z_{1Y}/T_n,\so_{Z_{1Y/T_n}})=\R\Gamma_{\crr}(Z_{1Y}/T,\so_{Z_{1Y/T}})\otimes^{L}{\mathbf Z}/p^n.
$$
\item There is a canonical Beilinson-Hyodo-Kato quasi-isomorphism of $A_\bq$-complexes
\begin{equation*}
 \kappa_{A_\bq}^B: \R\Gamma^B_{\hk}(Z_1)^{\tau}_{A_\bq}\stackrel{\sim}{\to}\R\Gamma_{\crr}(Z_{1Y}/T,\so_{Z_{1Y/T}})_\bq
\end{equation*}
If there is a Frobenius lifting $\phi_T$, then $\kappa^B_{A_\bq}$ commutes with its action.
 \end{enumerate}
\end{theorem}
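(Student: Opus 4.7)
The plan is to bootstrap both parts of the theorem from Theorem \ref{kk1}, Theorem \ref{kk2}, and standard base change for log-crystalline cohomology, by evaluating the tautological adjunction $\epsilon_f \epsilon^{-1}_f \stackrel{\sim}{\to} \id$ on the given PD-thickening $(Y \hookrightarrow T)$. By definition, $\R\Gamma^B_{\hk}(Z_1) = \epsilon^{-1}_f Rf_{\crr*}(\so_{Z_1/{\mathbf Z}_p})_\bq$, so this adjunction gives a canonical identification $\epsilon_f \R\Gamma^B_{\hk}(Z_1) \simeq Rf_{\crr*}(\so_{Z_1/{\mathbf Z}_p})_\bq$ in $\sd^{\pcr}_{\phi}(V_1^{\times})^{\nd} \otimes \bq$; everything else is engineered from this by base change along $\theta$.

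For part (1), Theorem \ref{kk1} tells us that $Rf_{\crr*}(\so_{Z_1/{\mathbf Z}_p})$ is a non-degenerate perfect F-crystal on $V_1^{\times}$. Applying log-crystalline base change along $\theta: Y \to V_1^{\times}$ gives $L\theta^*_{\crr} Rf_{\crr*}(\so_{Z_1/{\mathbf Z}_p}) \simeq R(f_Y)_{\crr*}(\so_{Z_{1Y}/{\mathbf Z}_p})$, whose value on the PD-thickening $T = \Spf(A)$ of $Y$ is $\R\Gamma_{\crr}(Z_{1Y}/T, \so_{Z_{1Y}/T})$. Since a perfect crystal evaluated on any PD-thickening is a perfect complex over the structure ring, this gives the asserted perfectness over $A$. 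The mod-$p^n$ statement then follows from the standard compatibility of crystal evaluation with the closed immersion $T_n \hookrightarrow T$, which on the level of cohomology is derived reduction $\otimes^L {\mathbf Z}/p^n$.

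For part (2), I combine the above with the base-change compatibility in Theorem \ref{kk2}(2), which provides a canonical identification $\epsilon_{f\theta} \simeq L\theta^*_{\crr}\epsilon_f$. Applied to $\R\Gamma^B_{\hk}(Z_1)$, this gives
$$
\epsilon_{f\theta}\R\Gamma^B_{\hk}(Z_1) \simeq L\theta^*_{\crr}\epsilon_f\epsilon^{-1}_f Rf_{\crr*}(\so_{Z_1/{\mathbf Z}_p})_\bq \simeq L\theta^*_{\crr} Rf_{\crr*}(\so_{Z_1/{\mathbf Z}_p})_\bq \simeq R(f_Y)_{\crr*}(\so_{Z_{1Y}/{\mathbf Z}_p})_\bq.
$$
Evaluating both sides on $(Y \hookrightarrow T)$: the right side becomes $\R\Gamma_{\crr}(Z_{1Y}/T, \so_{Z_{1Y}/T})_\bq$, while the explicit formula (\ref{isom}) identifies the left side with the Fontaine-Hyodo-Kato twist $(\R\Gamma^B_{\hk}(Z_1) \otimes_{K_0} A^{\tau}_{\bq})^{N=0} = \R\Gamma^B_{\hk}(Z_1)^{\tau}_{A_\bq}$. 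This composition is the desired quasi-isomorphism $\kappa^B_{A_\bq}$, and it is functorial in $(Y, T)$ by construction of each step.

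Finally, for the Frobenius compatibility when a lift $\phi_T$ is given, the functor $\epsilon$ is $\phi$-equivariant by its very construction as an equivalence of F-crystal categories, while the Frobenius $\phi_\tau$ extending $\phi_T$ to $A^{\tau}_\bq$ is the canonical one making $N_\tau \phi_\tau = p\phi_\tau N_\tau$ hold, so the explicit formula (\ref{isom}) intertwines the two Frobenii; hence $\kappa^B_{A_\bq}$ commutes with $\phi$. The main obstacle in this plan is verifying that the abstract evaluation $\epsilon_f(M)(Y, T)$ really coincides, for the general $\theta$ allowed in the statement, with the concrete torsor-twist $M^{\tau}_{A_\bq}$ described by (\ref{isom})—in other words, that the local description Beilinson gives on sections globalizes along $\theta$. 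Once this verification is in place (which is Beilinson's content in \cite[1.15-1.16]{BE2}), the rest of the argument is just bookkeeping of base-change isomorphisms.
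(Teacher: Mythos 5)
The paper does not contain a proof of Theorem \ref{Bthm} at all: it is quoted verbatim from Beilinson \cite[1.16.2]{BE2}, so there is no internal argument to measure yours against. That said, your sketch reconstructs the intended derivation correctly, and it is the same mechanism the paper itself uses in Example \ref{crucial} for the particular thickenings $V^{\times}_1\hookrightarrow V^{\times}$, $V^{\times}_1\hookrightarrow E$ and $k^0_1\hookrightarrow W(k)^0$: apply the adjunction $\epsilon_f\epsilon_f^{-1}\to\id$ to $Rf_{\crr*}(\so_{Z_1/{\mathbf Z}_p})_\bq$, invoke the base-change identification $\epsilon_{f\theta}\simeq L\theta^*_{\crr}\epsilon_f$ of Theorem \ref{kk2}, and evaluate on the thickening $(Y\hookrightarrow T)$ via the section-level formula (\ref{isom}).

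Two points deserve flagging, though neither is a fatal gap. First, your part (1) uses more than the literal statement of Theorem \ref{kk1}: you need the cohomological base-change half of Beilinson's pushforward theorem, namely that the value of the perfect F-crystal $Rf_{\crr*}(\so_{Z_1/{\mathbf Z}_p})$ on the object given by $\theta:Y\to V^{\times}_1$ together with $Y\hookrightarrow T$ is computed by $\R\Gamma_{\crr}(Z_{1Y}/T,\so_{Z_{1Y}/T})$, and that evaluation on $T_n\hookrightarrow T$ is derived reduction mod $p^n$; this is part of \cite[1.11]{BE2} but is not quoted in Theorem \ref{kk1}, so it must be cited separately rather than treated as ``standard''. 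Second, as you yourself acknowledge, the identification of the abstract evaluation $\epsilon_{f\theta}(M)(Y,T)$ with the torsor twist $M^{\tau}_{A_\bq}$, together with its Frobenius equivariance for a chosen lift $\phi_T$, is precisely Beilinson's \cite[1.15.3]{BE2}; granting that, your Frobenius argument goes through. In sum, your proposal is a sound reduction of the theorem to the cited results of Beilinson, which is in effect all the paper itself does.
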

\subsection{Log-syntomic cohomology}
We will study now (rational) log-syntomic cohomology.
  Let $(U,\overline{U})$ be log-smooth over $V^{\times}$. For $r\geq 0$, define the mod $p^n$, completed, and rational log-syntomic complexes
\begin{align}
\label{log-syntomic}
\R\Gamma_{\synt}(U,\overline{U},r)_n & :=
\Cone(\R\Gamma_{\crr}(U,\overline{U},\sj^{[r]})_n\verylomapr{p^r-\phi} \R\Gamma_{\crr}(U,\overline{U})_n)[-1],\\
  \R\Gamma_{\synt}(U,\overline{U},r) & :=\holim _n\R\Gamma_{\synt}(U,\overline{U},r)_n,\notag\\
 \R\Gamma_{\synt}(U,\overline{U},r)_{\mathbf Q} & :=
\Cone(\R\Gamma_{\crr}(U,\overline{U},\sj^{[r]})_\bq\verylomapr{1-\phi_r} \R\Gamma_{\crr}(U,\overline{U})_\bq)[-1].\notag
\end{align}
Here the Frobenius $\phi$ is defined by the composition
\begin{align*}
\phi: \R\Gamma_{\crr}(U,\overline{U},\sj^{[r]})_n  & \to \R\Gamma_{\crr}(U,\overline{U})_n \stackrel{\sim}{\to }\R\Gamma_{\crr}((U,\overline{U})_1/W(k))_n
 \stackrel{\phi}{\to}
\R\Gamma_{\crr}((U,\overline{U})_1/W(k))_n\\
 & \stackrel{\sim}{\leftarrow}\R\Gamma_{\crr}(U,\overline{U})_n
\end{align*}
and $\phi_r:=\phi/p^r$. The mapping fibers are taken in the $\infty$-derived category of abelian groups.  The direct sums
$$
  \bigoplus _{r\geq 0}\R\Gamma_{\synt}(U,\overline{U},r)_n,\quad  \bigoplus_{r\geq 0} \R\Gamma_{\synt}(U,\overline{U},r),\quad
  \bigoplus_{r \geq 0} \R\Gamma_{\synt}(U,\overline{U},r)_\bq 
$$
are graded $E_{\infty}$ algebras over ${\mathbf Z}/p^n$, ${\mathbf Z}_p$, and $\bq_p$, respectively \cite[1.6]{HS}. The rational log-syntomic complexes are moreover graded commutative dg algebras over $\bq_p$ \cite[4.1]{HS}, \cite[3.22]{MG}, \cite{Lu2}.
Explicit definition of syntomic product structure  can be found in \cite[2.2]{Ts}.

We have
$ \R\Gamma_{\synt}(U,\overline{U},r)_n\simeq \R\Gamma_{\synt}(U,\overline{U},r)\otimes^L{\mathbf Z}/p^n. $
There is  a canonical quasi-isomorphism of graded $E_{\infty}$ algebras
\begin{align*}
\R\Gamma_{\synt}(U,\overline{U},r)_n  \stackrel{\sim}{\to}\Cone(\R\Gamma_{\crr}(U,\overline{U})_n\verylomapr{(p^r-\phi,\can)} \R\Gamma_{\crr}(U,\overline{U})_n\oplus \R\Gamma_{\crr}(U,\overline{U},\so/\sj^{[r]})_n)[-1].
\end{align*}
Similarly in the completed  and rational cases.

 Since, by Corollary \ref{Langer}, there is a quasi-isomorphism
$$
\gamma_r^{-1}:\quad \R\Gamma_{\crr}(U,\overline{U},\so/\sj^{[r]})_{{\mathbf Q}}\stackrel{\sim}{\to}\R\Gamma_{\dr}(U,\overline{U}_{K})/F^r,
$$ we have a particularly nice canonical description of rational log-syntomic cohomology
\begin{align*}
\R\Gamma_{\synt}(U,\overline{U},r)_{{\mathbf Q}} \stackrel{\sim}{\to}
[\R\Gamma_{\crr}(U,\overline{U})_{{\mathbf Q}}\verylomapr{(1-\phi_r,\gamma_r^{-1})} \R\Gamma_{\crr}(U,\overline{U})_{{\mathbf Q}}\oplus \R\Gamma_{\dr}(U,\overline{U}_{K})/F^r)],
\end{align*}
where square brackets stand for mapping fiber. 
\begin{remark}
In the above definition one can replace the map $1-\phi_r$ with any polynomial map $P\in 1+XK[X]$ to obtain the analog of Besser's finite polynomial cohomology. This was studied in \cite{BLZ}.
\end{remark}
  For arithmetic pairs  $(U,\overline{U})$ that are  log-smooth over $V^{\times}$ and of Cartier type this  can be simplified further  by  using Hyodo-Kato complexes (cf. Proposition \ref{reduction1} below). 
To do that, consider the  following sequence of maps of homotopy limits. Homotopy limits are taken in the $\infty$-derived category (to do that we define the maps $\iota_{\pi}$ by the zigzag from diagram (\ref{infinity-category})). We will describe the coherence data only if they are nonobvious. 
\begin{align*}
\R\Gamma_{\synt}(U,\overline{U},r)_{\mathbf Q}
 & \stackrel{\sim}{\to}
   \xymatrix@C=36pt{[\R\Gamma_{\crr}(U,\overline{U})_{\mathbf Q}\ar[r]^-{(1-\phi_r,\gamma_r^{-1})} & \R\Gamma_{\crr}(U,\overline{U})_{\mathbf Q}\oplus  \R\Gamma_{\dr}(U,\overline{U}_{K})/F^r ]}\\
    &  \stackrel{\sim}{\to}
  \left[\begin{aligned}{\xymatrix{\R\Gamma_{\crr}((U,\overline{U})/R)_{\mathbf Q}\ar[rr]^-{(1-\phi_r,i^*_{\pi}\gamma_r^{-1})}\ar[d]^{N}  && \R\Gamma_{\crr}((U,\overline{U})/R)_{\mathbf Q}\oplus \R\Gamma_{\dr}(U,\overline{U}_{K})/F^r \ar[d]^{(N,0)}\\
\R\Gamma_{\crr}((U,\overline{U})/R)_{\mathbf Q}\ar[rr]^-{1-\phi_{r-1}}  && \R\Gamma_{\crr}((U,\overline{U})/R)_{\mathbf Q}}}\end{aligned}\right]\\
&  \stackrel{\iota_{\pi}}{\leftarrow}
   \left[\begin{aligned}\xymatrix@C=40pt{\R\Gamma_{\hk}(U,\overline{U})_{\mathbf Q}\ar[r]^-{(1-\phi_r,\iota_{\dr,\pi})}\ar[d]^{N}  & \R\Gamma_{\hk}(U,\overline{U})_{\mathbf Q}
   \oplus \R\Gamma_{\dr}(U,\overline{U}_{K}) /F^r\ar[d]^{(N,0)}\\
\R\Gamma_{\hk}(U,\overline{U})_{\mathbf Q}\ar[r]^{1-\phi_{r-1}}  & \R\Gamma_{\hk}(U,\overline{U})_{\mathbf Q}}\end{aligned}\right]
\end{align*}
The first map was described above. The second one   is induced by the distinguished triangle  
$$\R\Gamma_{\crr}(U,\overline{U})\to \R\Gamma_{\crr}((U,\overline{U})/R)\stackrel{N}{\to}\R\Gamma_{\crr}((U,\overline{U})/R)$$ The third one - by the section
$\iota_{\pi}: \R\Gamma_{\hk}(U,\overline{U})_{\mathbf Q}\to \R\Gamma_{\crr}((U,\overline{U})/R)_{\mathbf Q}$ (notice that $\iota_{\dr,\pi}=\gamma_{r}^{-1}i^*_{\pi}\iota_{\pi}$). We will show below that the third map  is a quasi-isomorphism. 

   Set $C_{\st}(\R\Gamma_{\hk}(U,\overline{U})\{r\})$ equal to the last homotopy limit in the above diagram.
\begin{proposition}
\label{reduction1} Let $(U,\overline{U})$ be an arithmetic  pair that is  log-smooth over $V^{\times}$ and of Cartier type. Let $r\geq 0$. Then the above  diagram defines a canonical quasi-isomorphism.
$$\alpha_{\synt,\pi}:\quad \R\Gamma_{\synt}(U,\overline{U},r)_{\mathbf Q}\stackrel{\sim}{\to} C_{\st}(\R\Gamma_{\hk}(U,\overline{U})\{r\}).
$$
\end{proposition}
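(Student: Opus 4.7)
The plan is to verify that each of the three arrows in the displayed chain is a quasi-isomorphism; the statement will then follow by inverting the third (which goes in the reverse direction) and composing. All compatibilities with Frobenius and monodromy are assumed in the background and are ensured by the cited functoriality of the constructions.

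\emph{First arrow.} I would start from the triangle $\sj^{[r]}_{U^{\times}/W(k)} \to \so_{U^{\times}/W(k)} \to \so_{U^{\times}/W(k)}/\sj^{[r]}_{U^{\times}/W(k)}$ to identify the rational syntomic complex (\ref{log-syntomic}) with the mapping fiber of
$$
(1-\phi_r,\can):\ \R\Gamma_{\crr}(U,\overline{U})_{\mathbf Q}\to \R\Gamma_{\crr}(U,\overline{U})_{\mathbf Q}\oplus\R\Gamma_{\crr}(U,\overline{U},\so/\sj^{[r]})_{\mathbf Q}.
$$
Then Corollary~\ref{Langer} replaces the last term by $\R\Gamma_{\dr}(U,\overline{U}_K)/F^r$ via the quasi-isomorphism $\gamma_r^{-1}$; this step is purely formal once the quasi-isomorphism of Beilinson (Theorem \ref{beilinson}) is in hand.

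\emph{Second arrow.} I would use the monodromy triangle
$$
\R\Gamma_{\crr}(U,\overline{U})_{\mathbf Q}\to \R\Gamma_{\crr}((U,\overline{U})/R)_{\mathbf Q}\stackrel{N}{\to}\R\Gamma_{\crr}((U,\overline{U})/R)_{\mathbf Q},
$$
which realizes the absolute (i.e., over $W(k)$) rational crystalline cohomology as the homotopy fiber of $N$ on the cohomology relative to $R$. Inserting this into the fiber of Step~1 converts that mapping fiber into the total fiber of the $2\times 2$ bicomplex displayed in the proposition. The only subtlety is the Frobenius twist: the identity $N\phi = p\phi N$ yields $N\phi_r=\phi_{r-1}N$, which is why the bottom horizontal map reads $1-\phi_{r-1}$. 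The top horizontal map involves $i^*_\pi$ because $\gamma_r^{-1}$ was defined on the $V^\times$-relative cohomology, which is obtained from the $R$-relative cohomology via the exact closed immersion $i_\pi:V^\times\hookrightarrow E$.

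\emph{Third arrow.} Here I would invoke the $R$-linear quasi-isomorphism
$$
\iota_\pi:\ R\otimes_{W(k)}\R\Gamma_{\hk}(U,\overline{U})_{\mathbf Q}\stackrel{\sim}{\to}\R\Gamma_{\crr}((U,\overline{U})/R)_{\mathbf Q},
$$
which (as recalled in the excerpt) is the unique $W(k)$-linear section of $i_0^*$ commuting with $\phi$ and $N$. Under $\iota_\pi$, the monodromy on the right corresponds to $N_M\otimes 1+1\otimes N_R$ on the left. Since $N_R$ is surjective on $R_{\mathbf Q}$ with kernel $K_0$ (because $N_R(t_l^n)=-n t_l^n$), the homotopy fiber of $N_M\otimes 1+1\otimes N_R$ is quasi-isomorphic to $\R\Gamma_{\hk}(U,\overline{U})_{\mathbf Q}$. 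Applying this fiberwise on the $2\times 2$ bicomplex of Step~2 produces precisely $C_{\st}(\R\Gamma_{\hk}(U,\overline{U})\{r\})$, the arrow $i^*_\pi\gamma_r^{-1}\iota_\pi$ collapsing to the Hyodo-Kato map $\iota_{\dr,\pi}$ by the very definition of the latter given in (\ref{HKqis}).

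\emph{Main obstacle.} The delicate point is coherence in the $\infty$-derived category: what is a priori a zigzag of quasi-isomorphisms between $2\times 2$ homotopy limits must be promoted to a genuine morphism $\alpha_{\synt,\pi}$. This relies on the fact that $\iota_\pi$ is canonical (unique with its compatibilities with $\phi$ and $N$) and on the strict commutativity of the Frobenius/monodromy diagrams attached to Beilinson's construction of the Hyodo-Kato complex, as reviewed after Theorem~\ref{Bthm}. Once these coherences are in place the three quasi-isomorphisms assemble into the desired map $\alpha_{\synt,\pi}$, and independence of the arbitrary intermediate choices is automatic because each step is a quasi-isomorphism.
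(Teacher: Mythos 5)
Your Steps 1 and 2 agree with the paper: the identification of $\R\Gamma_{\synt}(U,\overline{U},r)_{\mathbf Q}$ with the fiber of $(1-\phi_r,\gamma_r^{-1})$ via Corollary \ref{Langer}, and the passage to the $2\times 2$ square via the monodromy triangle for $\R\Gamma_{\crr}((U,\overline{U})/R)$, are exactly the first two arrows of the displayed chain. The problem is Step 3. You propose to take the homotopy fiber of $N$ first (column-wise) and claim that, after the $R$-linear quasi-isomorphism $\iota$, the fiber of $N_M\otimes 1+1\otimes N_R$ on $R_{\mathbf Q}\otimes_{W(k)}\R\Gamma_{\hk}$ is $\R\Gamma_{\hk}$ because ``$N_R$ is surjective on $R_{\mathbf Q}$ with kernel $K_0$''. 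That claim is false: the image of $N_R$ has zero constant term, and even on the augmentation ideal $I=\ker(R_{\mathbf Q}\to K_0)$ surjectivity fails for convergence reasons (e.g. $\sum_k p^k t_l^{p^k}/\lfloor p^k/e_K\rfloor!$ lies in $I$ but its only formal $N_R$-preimages have bounded, non-decaying coefficients, hence are not in $R_{\mathbf Q}$). Equivalently, the fiber of $N$ on $\R\Gamma_{\crr}((U,\overline{U})/R)_{\mathbf Q}$ is the absolute rational crystalline cohomology $\R\Gamma_{\crr}(U,\overline{U})_{\mathbf Q}$, which is ``huge'' (this is exactly the point of the remark following the proposition), so the column-wise comparison with $[\R\Gamma_{\hk}\xrightarrow{N}\R\Gamma_{\hk}]$ cannot be a quasi-isomorphism. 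A symptom of the gap is that your Step 3 never uses Frobenius, whereas the statement is only true because of it.

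The paper's proof takes the Frobenius eigenspace first: setting $\R\Gamma_{\crr}((U,\overline{U})/R,t)$ and $\R\Gamma_{\hk}(U,\overline{U},t)$ to be the fibers of $1-\phi_t$ (for $t=r,r-1$), it reduces the comparison of the two squares to showing that $\iota_\pi$ (equivalently, using $i_0^*\iota_\pi=\id$, that $i_0^*$) is a quasi-isomorphism between these fibers. Via the $R$-linear quasi-isomorphism $\iota$ this amounts to proving that $p^t-\phi$ acts invertibly on $I\otimes_{W(k)}H^i_{\hk}(U,\overline{U})_{\mathbf Q}$, which is Dwork's trick: the formal inverse $\sum_{n\geq 0}(p^{-t}\phi)^n$ converges because $\phi(T^{\{k\}})=$ (unit)$\cdot T^{\{kp\}}$ with $\ord_p\bigl(\lfloor kp^n/e_K\rfloor!/\lfloor k/e_K\rfloor!\bigr)\geq p^{n-1}$, uniformly in $k$. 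So the essential content of the proposition is this Frobenius-eigenspace argument, and it is precisely what your proposal is missing; the coherence issues you flag in the last paragraph are secondary by comparison.
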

\begin{proof}We need to show that the map $\iota_{\pi}$ in the above diagram is a quasi-isomorphism. 
Define complexes ($r\geq -1$)
\begin{align*}
\R\Gamma_{\crr}((U,\overline{U})/R,r):= & \Cone(\R\Gamma_{\crr}((U,\overline{U})/R)_{{\mathbf Q}}\stackrel{1-\phi_r}{\longrightarrow}\R\Gamma_{\crr}((U,\overline{U})/R)_{{\mathbf Q}})[-1],\\
\R\Gamma_{\hk}(U,\overline{U},r):= & \Cone(\R\Gamma_{\hk}(U,\overline{U})_{{\mathbf Q}}\stackrel{1-\phi_r}{\longrightarrow}\R\Gamma_{\hk}(U,\overline{U})_{{\mathbf Q}})[-1]
\end{align*}
 It suffices to prove that the following 
maps
\begin{equation}
\label{reduction}
i^*_0: \R\Gamma_{\crr}((U,\overline{U})/R,r)  \stackrel{\sim}{\to} \R\Gamma_{\hk}(U,\overline{U},r),\quad
\iota_{\pi}: \R\Gamma_{\hk}(U,\overline{U},r)  \stackrel{\sim}{\to} \R\Gamma_{\crr}((U,\overline{U})/R,r)
\end{equation}
are quasi-isomorphisms.  Since $i^*_0\iota_{\pi}=\id$, it suffices to show that the map $i^*_0$ is a quasi-isomorphism. Base-changing to $W(\overline{k})$, we may assume that the residue field of $V$ is algebraically closed.
It suffices to show that, for $i\geq 0$, $t\geq -1$,  in the commutative diagram
$$
\begin{CD}
H^i_{\hk}(U,\overline{U})_{{\mathbf Q}}@>p^t-\phi >> H^i_{\hk}(U,\overline{U})_{{\mathbf Q}}\\
@AA i^*_0 A @AA i^*_0 A\\
H^i_{\crr}((U,\overline{U})/R)_{{\mathbf Q}}@>p^t-\phi >>H^i_{\crr}((U,\overline{U})/R)_{{\mathbf Q}}
\end{CD}
$$
the vertical maps induce isomorphisms between the kernels and cokernels of the horizontal maps.

  Since the   $W(k)$-linear map $\iota_{\pi}$ commutes with $\phi$ and its $R$-linear extension is a quasi-isomorphism
  $$\iota_{\pi}: R\otimes_{W(k)}\R\Gamma_{\hk}(U,\overline{U})_{{\mathbf Q}}\stackrel{\sim}{\to} \R\Gamma_{\crr}((U,\overline{U})/R) _{{\mathbf Q}}$$ it suffices to show that
  in the following commutative diagram
  $$
\begin{CD}
H^i_{\hk}(U,\overline{U})_{{\mathbf Q}}@>p^t-\phi >> H^i_{\hk}(U,\overline{U})_{{\mathbf Q}}\\
@AA i_0\otimes\id  A @AA i_0 \otimes\id A \\
R\otimes_{W(k)}H^i_{\hk}(U,\overline{U})_{{\mathbf Q}}@>p^t-\phi >> R\otimes_{W(k)}H^i_{\hk}(U,\overline{U})_{{\mathbf Q}}\end{CD}
$$
the vertical maps induce isomorphisms between the kernels and cokernels of the horizontal maps.  This will follow if we show that the following map
$$I\otimes_{W(k)}H^i_{\hk}(U,\overline{U})_{{\mathbf Q}}\stackrel{p^t-\phi}{\longrightarrow} I\otimes_{W(k)}H^i_{\hk}(U,\overline{U})_{{\mathbf Q}},
$$
for $I\subset R$ - the kernel of the projection $i_0:R_{\mathbf Q}\to K_0$, $t_l\mapsto 0$, is an isomorphism.  We argue as Langer in \cite[p. 210]{Ln}. Let $M:=H^i_{\hk}(U,\overline{U})/tor$.
It is a lattice in $H^i_{\hk}(U,\overline{U})_{{\mathbf Q}}$ that is stable under Frobenius. Consider the formal inverse $\psi:=\sum_{n\geq 0}(p^{-t}\phi)^n$ of $1-p^{-t}\phi$. It suffices to show that, for $y\in I\otimes_{W(k)}M$, $\psi(y)\in I\otimes_{W(k)}M$.
Fix $l$ and let  $T^{\{k\}}:=t_l^k/\lfloor k/e_K\rfloor !$.  We will show that, for any $m\in M$, $\psi(T^{\{k\}}\otimes m)\in I\otimes_{W(k)}M$ and the infinite  series converges uniformly in $k$. We have
$$(p^{-t}\phi)^n(T^{\{k\}}\otimes m )=\frac{\lfloor kp^n/e_K\rfloor !}{\lfloor k/e_K\rfloor !p^{tn}}T^{\{kp^n\}}\otimes m\pri $$
and $\ord_p(\lfloor kp^n/e_K\rfloor !/\lfloor k/e_K\rfloor !)\geq p^{n-1}$. Hence $\frac{\lfloor kp^n/e_K\rfloor !}{\lfloor k/e_K\rfloor !p^{tn}}$  converges $p$-adically to zero, uniformly in $k$, as wanted.
\end{proof}
\begin{remark}
It was Langer \cite[p.193]{Ln} (cf. \cite[Lemma 2.13]{JS} in the good reduction case) who observed the fact that while, in general, the crystalline cohomology $\R\Gamma_{\crr}(U,\ove{U})$ behaves badly (it is "huge"), after taking "filtered Frobenius eigenspaces" we obtain syntomic cohomology $\R\Gamma_{\synt}(U,\ove{U},r)_{\bq}$ that behaves well (it is "small"). In \cite[3.5]{JB}  this phenomenon is explained by relating syntomic cohomology to the complex $C_{\st}(\R\Gamma_{\hk}(U,\ove{U})\{r\})$.
\end{remark}
\begin{remark}
\label{reduction21}
The construction of the map $\alpha_{\synt,\pi}$ depends on the choice of the uniformizer $\pi$, which  makes the $h$-sheafification impossible. We will show now that there is a functorial and compatible with base change quasi-isomorphism $\alpha^{\prime}_{\synt}$ between rational syntomic cohomology and certain complexes built from Hyodo-Kato cohomology and de Rham cohomology that $h$-sheafify well.

Set
\begin{align*}
\alpha^{\prime}_{\synt}:\quad \R\Gamma_{\synt}(U,\overline{U},r)_{\mathbf Q}
 & \stackrel{\sim}{\to}
   [\R\Gamma_{\crr}(U,\overline{U},r)\lomapr{\gamma_r^{-1}}   \R\Gamma_{\dr}(U,\overline{U}_{K})/F^r ]\\
&  \stackrel{\beta}{\to}
   [\R\Gamma_{\hk}(U,\overline{U},r)^{N=0}\lomapr{\iota^{\prime}_{\dr}}  \R\Gamma_{\dr}(U,\overline{U}_{K}) /F^r]
\end{align*}
Here the two morphisms $\beta$ and $\iota^{\prime}_{\dr}$ are defined as the following compositions
\begin{align*}
   \beta:\quad &  \R\Gamma_{\crr}(U,\overline{U},r)\stackrel{\sim}{\to}   \R\Gamma_{\crr}(U_0,\overline{U}_0,r)\stackrel{\sim}{\to}      \R\Gamma_{\hk}(U,\overline{U},r)^{N=0}\\
  \iota^{\prime}_{\dr}:\quad &   \R\Gamma_{\hk}(U,\overline{U},r)^{N=0}\stackrel{\beta}{\leftarrow}  \R\Gamma_{\crr}(U,\overline{U},r)\stackrel{\gamma_r^{-1}}{\to}\R\Gamma_{\dr}(U,\overline{U}_{K}),
\end{align*}
where $(\cdots )^{N=0}$ denotes the mapping fiber of the monodromy. 
The map $\beta$ is a quasi-isomorphism because so is each of the intermediate maps. To see this for the map $i^*_0: \R\Gamma_{\crr}(U,\overline{U},r)\to   \R\Gamma_{\crr}(U_0,\overline{U}_0,r)$, consider the following  factorization 
$$
F^m: \R\Gamma_{\crr}(U,\overline{U},r)\stackrel{i^*_0}{\to}  \R\Gamma_{\crr}(U_0,\overline{U}_0,r)
\stackrel{\psi_m}{\to}\R\Gamma_{\crr}(U,\overline{U},r)
$$ of the $m$'th power of the Frobenius, where $m$ is large enough. We also have $ i^*_0\psi_m=F^m$. Since Frobenius is a quasi-isomorphism on $\R\Gamma_{\crr}(U,\overline{U},r)$ and $\R\Gamma_{\crr}(U_0,\overline{U}_0,r)$ both $i^*_0$ and $\psi_m$ are quasi-isomorphisms as well.  The second morphism in the sequence defining $\beta$ is a quasi-isomorphism by an argument similar to the one we used in the proof of Proposition \ref{reduction1}.

 Define the complex  $$  C^{\prime}_{\st}(\R\Gamma_{\hk}(U,\overline{U})\{r\}):= [\R\Gamma_{\hk}(U,\overline{U},r)^{N=0}\lomapr{\iota^{\prime}_{\dr}}  \R\Gamma_{\dr}(U,\overline{U}_{K}) /F^r].
 $$ We have obtained a quasi-isomorphism
 $$\alpha^{\prime}_{\synt}:\quad \R\Gamma_{\synt}(U,\overline{U},r)_{\mathbf Q}\stackrel{\sim}{\to} C^{\prime}_{\st}(\R\Gamma_{\hk}(U,\overline{U})\{r\})
 $$
 It is clearly functorial but it is also easy to check that it is compatible with base change (of the base $V$). 
\end{remark}

  Define the complex  
  $$  C_{\st}(\R\Gamma^B_{\hk}(U,\overline{U})\{r\}):= [\R\Gamma^B_{\hk}(U_1,\overline{U}_1,r)^{N=0}\lomapr{\iota^{B}_{\dr}}  \R\Gamma_{\dr}(U,\overline{U}_{K}) /F^r].
$$
From the commutative diagram  (\ref{Beilinson=HK})
we obtain the natural quasi-isomorphisms
\begin{align*}
\gamma:  & \quad   C_{\st}(\R\Gamma^B_{\hk}(U,\overline{U})\{r\})   \stackrel{\sim}{\to}C_{\st}(\R\Gamma_{\hk}(U,\overline{U})\{r\})\\
\alpha_{\synt,\pi}^B:=\gamma^{-1}\alpha_{\synt,\pi}:   & \quad \R\Gamma_{\synt}(U,\overline{U},r)_{\mathbf Q}  \stackrel{\sim}{\to}C_{\st}(\R\Gamma^B_{\hk}(U,\overline{U})\{r\})
\end{align*}

  We will show now that log-syntomic cohomology satisfies finite Galois descent.
Let $(U,\overline{U})$ be a fine log-scheme, log-smooth over $V^{\times}$, and of Cartier type. Let $r\geq 0$. Let $K^{\prime}$ be a finite Galois extension of $K$ and let $G=\Gal(K\pri/K)$. Let
$(T,\overline{T})=(U\times_{V}{V^{\prime}},\overline{U}\times _{V}{V^{\prime}})$, $V^{\prime}$ - the ring of integers in $K^{\prime}$,  be the base change of $(U,\overline{U})$ to $(K\pri,V\pri)$, and let $f: (T,\overline{T})\to (U,\overline{U})$ be the canonical projection. 
Take $R=R_{V}$,  $N$, $e$, $\pi$ associated to $V$.   Similarly, we define $R\pri:=R_{V\pri}$, $N\pri$, $e\pri$, $\pi\pri$. 
Write the map  $\alpha^B_{\synt,\pi}$ as 
$$
\xymatrix{
\R\Gamma_{\synt}(U,\overline{U},r)_{\mathbf Q}\ar[d]^{\alpha^B_{\synt,\pi}}_{\wr}\ar[r]^-{\sim}_-h & 
[\R\Gamma^{B,\tau}_{\hk}((U,\overline{U})_{R},r)^{N=0}\ar[r]^{i^*_{\pi}} & \R\Gamma_{\dr}(U,\overline{U}_{K}) /F^r]\\
C_{\st}(\R\Gamma^B_{\hk}(U,\overline{U}) \{r\})\ar[r]^{\sim} & [\R\Gamma^{B}_{\hk}(U,\overline{U},r)^{N=0}\ar[r]^{\iota^B_{\dr}}\ar[u]_{\wr}^{\iota_{\pi}\beta} & \R\Gamma_{\dr}(U,\overline{U}_{K}) /F^r]\ar@{=}[u]
}
$$
Here we defined the map  $h$  as the composition
\begin{equation}
\label{h}
\R\Gamma_{\synt}(U,\overline{U},r)_{\mathbf Q}\to \R\Gamma_{\crr}((U,\overline{U})/R)_\bq\stackrel{\sim}{\leftarrow} \R\Gamma^B_{\hk}(U_1,\overline{U}_1)_{R_\bq}^{\tau}
\end{equation}

  From the construction of the Beilinson-Hyodo-Kato map $\iota_{\dr}^B: \R\Gamma^{B}_{\hk}(T_1,\overline{T}_1)\to \R\Gamma_{\dr}(T,\overline{T}_{K^{\prime}}) 
$ it follows that it is $G$-equivariant; hence the complex $C_{\st}(\R\Gamma^B_{\hk}(T,\overline{T})\{r\})$ is equipped with a natural $G$-action. We claim that the map $\alpha^B_{\synt,\pi^\prime}$ induces a natural map 
\begin{align*}
  \tilde{\alpha}^B_{\synt,\pi^\prime}: & \quad  \R\Gamma(G,\R\Gamma_{\synt}(T,\overline{T},r)_{{\mathbf Q}})\to 
   \R\Gamma(G,C_{\st}(\R\Gamma^B_{\hk}(T,\overline{T})\{r\})),\\
     \tilde{\alpha}^B_{\synt,\pi^\prime} & :=  (1/|G|)\sum_{g\in G}\alpha^B_{\synt,g(\pi^\prime)}
   \end{align*}
   To see this it suffices to show that, for every $g\in G$, we have a commutative diagram
   $$
 \begin{CD}
   \R\Gamma_{\synt}(T,\overline{T},r)_{{\mathbf Q}}@>\alpha^B_{\synt,\pi^\prime}>>
   C_{\st}(\R\Gamma^B_{\hk}(T,\overline{T})\{r\})\\
   @VV g^* V @VV g^* V\\
     \R\Gamma_{\synt}(T,\overline{T},r)_{{\mathbf Q}}@>\alpha^B_{\synt,g(\pi^\prime)}>>
   C_{\st}(\R\Gamma^B_{\hk}(T,\overline{T})\{r\})
    \end{CD}
      $$ 
      We accomplish this by constructing natural morphisms 
 \begin{align*}
 g^*: \quad & \R\Gamma_{\crr}((T,\overline{T})/R_{\pi\pri}\pri)\to \R\Gamma_{\crr}((T,\overline{T})/R_{g(\pi\pri)}\pri),\\
  g^*: \quad & \R\Gamma^B_{\hk}(T_1,\overline{T}_1)_{R^\prime_{\pi^{\prime}}}^{\tau}\to \R\Gamma^B_{\hk}(T_1,\overline{T}_1)_{R^\prime_{g(\pi^{\prime})}}^{\tau}
     \end{align*}
  that are compatible with the maps in (\ref{h}) that define $h$, the maps  $\iota_?$ and $i^*_?$, and the trivialization $\beta$. We define the pullbacks $g^*$ from a map  $g:R\pri_{\pi\pri}\to R\pri_{g(\pi\pri)}$ constructed by lifting the action of $g$ from $V_1^\prime$ to $R\pri$ by setting $g(t\pri_{\pi\pri} )=t\pri_{g(\pi\pri)}$ and taking the induced action of $g$ on $W(k\pri)$.  This map is compatible with Frobenius and monodromy. The induced pullbacks $g^*$  are clearly compatible with the map $i^*_0$ and the maps $\iota_?$, the maps $i^*_{\pi\pri}$, $i^*_{g(\pi\pri)}$, and  the trivialization $\beta$.   From the construction of the Beilinson-Hyodo-Kato map, the pullbacks $g^*$ are also compatible with the maps $\kappa_{R^\prime_?}$; hence with the map $h$, as wanted.

  \begin{proposition}
\label{hypercov11}
\begin{enumerate}
\item The following diagram commutes in the (classical) derived category.
$$
\begin{CD}
\R\Gamma_{\synt}(U,\overline{U},r)_{{\mathbf Q}} @> f^*>>  \R\Gamma(G,\R\Gamma_{\synt}(T,\overline{T},r)_{{\mathbf Q}})\\
@VV\alpha^B_{\synt,\pi}V @VV\tilde{\alpha}^B_{\synt,\pi^{\prime}}V\\
C_{\st}(\R\Gamma^B_{\hk}(U,\overline{U}) \{r\}) @> f^* >> \R\Gamma(G,C_{\st}(\R\Gamma^B_{\hk}(T,\overline{T})\{r\}))
\end{CD}
$$
\item  The natural 
map
$$f^*: \R\Gamma_{\synt}(U,\overline{U},r)_{{\mathbf Q}} \stackrel{\sim}{\to} \R\Gamma(G,\R\Gamma_{\synt}(T,\overline{T},r)_{{\mathbf Q}})
$$
is  a quasi-isomorphism.\end{enumerate}
\end{proposition}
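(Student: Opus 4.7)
The plan is to reduce part (2) to part (1) combined with separate Galois descent statements for Hyodo-Kato and de Rham cohomology, and to prove part (1) by establishing a stronger un-averaged statement and then averaging.

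\emph{Part (1).} For each individual $g\in G$ I would show that the diagram
\[
\begin{CD}
\R\Gamma_{\synt}(U,\overline{U},r)_{\bq} @>f^*>> \R\Gamma_{\synt}(T,\overline{T},r)_{\bq}\\
@VV\alpha^B_{\synt,\pi}V @VV\alpha^B_{\synt,g(\pi^\prime)}V\\
C_{\st}(\R\Gamma^B_{\hk}(U,\overline{U})\{r\}) @>f^*>> C_{\st}(\R\Gamma^B_{\hk}(T,\overline{T})\{r\})
\end{CD}
\]
already commutes in the classical derived category. This would follow from the compatibility of every ingredient of $\alpha^B_{\synt,\cdot}$ with base change: the filtered quasi-isomorphism $\gamma_r$ of Corollary \ref{Langer} is natural; the Beilinson-Hyodo-Kato map $\iota^B_{\dr}$ is functorial and compatible with base change by Theorem \ref{kk2} and Example \ref{crucial}(1); and the $W(k)$-linear section $\iota_\pi$, although \textit{a priori} dependent on the uniformizer, is canonical in the classical derived category (cf.\ the discussion preceding (\ref{HKqis})) so its $\pi'$-version pulls back to the $\pi$-version. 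Averaging the commutativity over $g\in G$ and using that $f^*\alpha^B_{\synt,\pi}$ factors through $\R\Gamma(G,-)$ would yield part (1).

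\emph{Part (2).} Granted (1), both vertical maps in the diagram of (1) are quasi-isomorphisms: the left by Proposition \ref{reduction1} combined with Remark \ref{reduction21}, and the right because applying $\R\Gamma(G,-)$ to each individual $\alpha^B_{\synt,g(\pi^\prime)}$ yields a quasi-isomorphism, and the $\bq$-linear average $(1/|G|)\sum_g$ of quasi-isomorphisms is still a quasi-isomorphism. Hence (2) reduces to the bottom horizontal descent statement
\[
f^*\colon C_{\st}(\R\Gamma^B_{\hk}(U,\overline{U})\{r\}) \stackrel{\sim}{\to} \R\Gamma(G, C_{\st}(\R\Gamma^B_{\hk}(T,\overline{T})\{r\})).
\]
Since $C_{\st}(\cdot)$ is a finite homotopy limit of complexes built from $\R\Gamma^B_{\hk}$ (with monodromy and Frobenius) and $\R\Gamma_{\dr}(\cdot)/F^r$, and $\R\Gamma(G,-)$ commutes with finite homotopy limits, the problem splits into two separate finite Galois descent statements for rational $\R\Gamma^B_{\hk}$ and for $\R\Gamma_{\dr}(\cdot)/F^r$.

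\emph{Descent for the two pieces.} The de Rham case is standard: $(T,\overline{T}_{K^\prime})\to (U,\overline{U}_K)$ restricts over the generic fiber to a finite Galois \'etale cover in characteristic zero, and filtered de Rham cohomology descends along such covers. For Hyodo-Kato, I would invoke the quasi-isomorphism $\kappa\colon \R\Gamma^B_{\hk}\stackrel{\sim}{\to}\R\Gamma_{\hk,\bq}$ of (\ref{B=K}) (which is functorial and base-change compatible by Theorem \ref{kk2}) to reduce to Galois descent for the classical Hyodo-Kato complex. That, in turn, follows by applying the equivalence $\epsilon$ of Theorem \ref{kk2} to the $G$-action and combining with the fact that rational absolute log-crystalline cohomology satisfies finite Galois descent (here one uses that $k^\prime/k$ is Galois with the induced action and that, on the Beilinson side, $\epsilon_f$ is itself compatible with the automorphisms coming from $G$). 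The main obstacle is verifying precisely this last $G$-equivariance of the Beilinson functor under the residue-field extension $k^\prime/k$; this should be handled by the base-change statement in Theorem \ref{kk2}(2) applied separately to each $g\in G$.
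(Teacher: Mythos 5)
There is a genuine gap in your proof of part (1), and it is exactly the point the paper's proof is designed to overcome. You assert that the un-averaged square (with $\alpha^B_{\synt,\pi}$ on the left and $\alpha^B_{\synt,g(\pi^{\prime})}$ on the right) commutes because every ingredient is "compatible with base change", and in particular that the section $\iota_{\pi}$, being canonical in the classical derived category, "pulls back" to $\iota_{\pi^{\prime}}$. But $\iota_{\pi}$ and $\iota_{\pi^{\prime}}$ are sections of two different projections, living over the two different rings $R=R_V$ and $R^{\prime}=R_{V^{\prime}}$, and there is no natural map $\R\Gamma_{\crr}((U,\overline{U})/R)\to\R\Gamma_{\crr}((T,\overline{T})/R^{\prime})$ lying over $f$ along which one could compare them: the only reasonable ring map $a:R\to R^{\prime}$, $t_{\pi}\mapsto t_{\pi^{\prime}}^{e_1}[\overline{v}]^{-1}$ with $v=(\pi^{\prime})^{e_1}\pi^{-1}$, does \emph{not} reduce to $f$ modulo $p$ (one only has $F^s a_1=F^s f_1$ after composing with a large power of Frobenius). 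This is why the paper's proof reduces to the totally ramified case and then constructs $f^*$ on $\R\Gamma_{\crr}((U,\overline{U})/R,r)^{N=0}$ via Dwork's trick — the map $\phi_R^s a$, the diagram of complexes $C_V$, $C_{V^{\prime}}$ with the $(F^s,p^s F^s)$-twists, the check that the result is independent of $s$ and $G$-equivariant (using $x_g^{e_1}\overline{v}^{-1}=g(\overline{v})^{-1}$), and then the analogous construction and compatibilities on the Beilinson--Hyodo--Kato side. Uniqueness of $\iota_{\pi}$ as a $(\phi,N)$-equivariant section gives no such comparison; indeed the Hyodo--Kato map genuinely depends on the uniformizer (a change of uniformizer twists it by an exponential of the monodromy), which is precisely why $\alpha_{\synt}$ does not sheafify and why the averaged map $\tilde{\alpha}^B_{\synt,\pi^{\prime}}$ is introduced. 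Your proposal simply assumes this hard step.

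Two smaller points. In part (2), your argument that the right-hand vertical map is a quasi-isomorphism because "the $\bq$-linear average of quasi-isomorphisms is still a quasi-isomorphism" is false as a general principle (the average of the $G$-conjugates of an isomorphism can even vanish); whatever one proves here must use the $G$-equivariance structure, i.e., the commutative squares relating $\alpha^B_{\synt,\pi^{\prime}}$ and $\alpha^B_{\synt,g(\pi^{\prime})}$ via $g^*$, together with part (1) and finite Galois descent for the Hyodo--Kato and de Rham cohomologies — which is how the paper argues. Finally, in your descent discussion you invoke "finite Galois descent for rational absolute log-crystalline cohomology"; this is not available (the absolute crystalline complexes are exactly the objects that behave badly under ramified base change — only the Frobenius-eigenspace complexes $\R\Gamma_{\crr}(\cdot,r)$ descend), and it is also unnecessary: the classical Hyodo--Kato complex is computed on the special fiber over $W(k)^0$, so its descent splits into the trivial totally ramified case and ordinary unramified descent.
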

\begin{proof} 
The second claim of the proposition follows from the first one and the fact that the Hyodo-Kato and de Rham cohomologies satisfy finite Galois decent.

  Since everything in sight is functorial and satisfies finite unramified Galois descent we may assume that the extension $K\pri/K$ is totally ramified. 
First,  we will construct a $G$-equivariant (for the trivial action of $G$ on $R$) map
    $$
   f^*:  \R\Gamma_{\crr}((U,\overline{U})/R,r)^{N=0} \to \R\Gamma_{\crr}((T,\overline{T})/R\pri,r)^{N\pri=0}   
    $$ 
   such   that   the following diagram commutes
\begin{equation}
\label{diag1}
\begin{CD}
\R\Gamma_{\crr}(U,\overline{U},r)@> f^*>>  \R\Gamma_{\crr}(T,\overline{T},r)\\
@VV\wr V @VV\wr V\\
\R\Gamma_{\crr}((U,\overline{U})/R,r)^{N=0} @> f^*>>  \R\Gamma_{\crr}((T,\overline{T})/R\pri,r)^{N\pri=0}\\
@A\wr A\iota_{\pi} A @A\wr A\iota_{\pi^{\prime}} A\\
\R\Gamma_{\hk}(U,\overline{U},r)^{N=0}@> f^* > \sim >  \R\Gamma_{\hk}(T,\overline{T},r)^{N\pri=0}
\end{CD}
\end{equation}
\begin{remark}
Note that the bottom map is an isomorphism because $f^*$ acts trivially on the Hyodo-Kato complexes.
 The commutativity of the above diagram and  the quasi-isomorphisms (\ref{reduction}) will imply that a totally ramified Galois extension does not change the log-crystalline complexes $\R\Gamma_{\crr}(U,\overline{U},r)$ and 
$\R\Gamma_{\crr}((U,\overline{U})/R,r)^{N=0}$. 
\end{remark}

 Let $e_1$ be  the ramification index of $V\pri/V$. 
 Set   $v=(\pi\pri)^{e_1}\pi^{-1}$, and choose an integer $s$
such that $(\pi\pri)^{p^s}\in pV\pri$. Set $T:=t_{\pi}, T\pri:=t_{\pi\pri}$ and define the morphism $a: R\to R\pri $ by
$T\mapsto (T\pri)^{e_1}[\overline{v}]^{-1}$. Since $V\pri_1$ and $V_1$ are defined by $pR+T^{e}R$ and by $pR\pri+(T\pri)^{e\pri}R\pri$, respectively, $a$ induces a morphism
$a_1: V_{1}\to V\pri_1$. We have $F^sa_1=F^sf_1$, where $F$ is the absolute Frobenius on $\Spec(V_{1})$. Notice that in general $f_1\neq a_1$ if $v[\overline{v}]^{-1}\ncong 1 \mod pV\pri$.
The morphism $\phi_R^sa: \Spec(R\pri)\to \Spec(R)$ is compatible with $F^sf_1:\Spec(V\pri_1)\to\Spec(V_{1})$ and it commutes with the operators $N$ and $p^sN\pri$.
  We have the following commutative diagram
$$
\xymatrix{
(T,\overline{T})_1\ar[rr]^-{F^sf_1} \ar[d] && (U,\overline{U})_1\ar[d]\\
\Spec(V^{\prime}_1)\ar[rr] ^-{F^sa_1=F^sf_1}\ar[d]&& \Spec(V_{1})\ar[d]\\
\Spec(R^{\prime}) \ar[rr] ^-{\phi_R^sa}&& \Spec(R)
}
$$
 Hence the commutative diagram of distinguished triangles
\begin{equation}
\label{brrrr}
\begin{CD}
\R\Gamma_{\crr}(U,\overline{U})_\bq @>>> \R\Gamma_{\crr}((U,\overline{U})/R)_\bq@> eN >> \R\Gamma_{\crr}((U,\overline{U})/R)_\bq\\
@VV f^*F^s V @VV f^*F^sV @VV p^se_1f^*F^sV \\
 \R\Gamma_{\crr}(T,\overline{T})_\bq@>>> \R\Gamma_{\crr}((T,\overline{T})/R\pri)_\bq@> e^{\prime}N\pri >> \R\Gamma_{\crr}((T,\overline{T})/R\pri)_\bq.
\end{CD}
\end{equation}

    To see how this diagram arises we may assume (by the usual \v{C}ech argument) that we have a fine affine log-scheme $X_n/V^{\times}_n$  that is log-smooth over $V^{\times}_n$. We can also assume that we have a lifting of $X_n\hookrightarrow Z_n$ over $\Spec(W_n(k)[T])$ (with the log-structure coming from $T$) and a lifting of Frobenius $\phi_Z$ on $Z_n$ that is compatible with the Frobenius $\phi_R$.  Recall \cite[Lemma 4.2]{Kas} that the horizontal distinguished triangles in the above diagram arise from an exact sequence of complexes of sheaves on $X_{n,\eet}$
\begin{equation}
\label{monodromyC}
0\to C_V\pri[-1]\verylomapr{\wedge \dlog T} C_V\to C_V\pri \to 0
\end{equation}
where $C_V:=R_n\otimes _{W_n(k)[T]}\Omega\kr_{Z_n/W_n(k)}$ and $C_V\pri:=R_n\otimes _{W_n(k)[T]}\Omega\kr_{Z_n/W_n(k)[T]}$. Now consider the base change of $Z_n/W_n(k)[T]$ by the map $F^sa:\Spec(W_n(k)[T\pri])\to\Spec(W_n(k)[T])$ and the related complexes (\ref{monodromyC}). We get a commutative diagram of complexes of sheaves on $X_{n,\eet}$ (note that $X_{V\pri,n,\eet}=X_{n,\eet}$)
$$
\begin{CD}
0@>>> C_{V\pri}\pri[-1] @>\wedge\dlog T\pri >>C_{V\pri} @>>> C_{V\pri }\pri@>>>  0\\
@. @AA p^se_1a^*\phi_Z^sA @AA a^*\phi_Z^s A @AA a^*\phi_Z^s A @.\\
0 @>>> C_V\pri[-1] @> \wedge\dlog T >>C_V @>>> C_V\pri @>>> 0
\end{CD}
$$
Hence diagram (\ref{brrrr}).

Combining diagram (\ref{brrrr}) with Frobenius we obtain  the following commutative diagram
$$
\begin{CD}
\R\Gamma_{\crr}(U,\overline{U},r) @< F^s << \R\Gamma_{\crr}(U,\overline{U},r)@> f^*F^s >> \R\Gamma_{\crr}(T,\overline{T},r)\\
@VV\wr V @VV\wr V @VV\wr V\\
\R\Gamma_{\crr}((U,\overline{U})/R,r)^{N=0} @<(F^s, p^sF^s) <<\R\Gamma_{\crr}((U,\overline{U})/R,r)^{N=0} @> (a^*F^s, p^sa^*F^s) >> \R\Gamma_{\crr}((T,\overline{T})/R\pri,r)^{N\pri=0}\\
@V\wr V i_0^* V @V\wr V i_0^* V @V\wr V i_0^* V\\
\R\Gamma_{\hk}(U,\overline{U},r)^{N=0} @< (F^s,p^sF^s) <\sim <\R\Gamma_{\hk}(U,\overline{U},r)^{N=0} @> (F^s,p^sF^s) >\sim > \R\Gamma_{\hk}(T,\overline{T},r)^{N\pri=0}
\end{CD}$$
It follows that all the maps in the above diagram are quasi-isomorphisms. We define the map $$
f^*: \R\Gamma_{\crr}((U,\overline{U})/R,r)^{N=0}\to \R\Gamma_{\crr}((T,\overline{T})/R\pri,r)^{N\pri=0}
$$
 by the middle row. Since, for any $g\in G$, we have ${v}_{g(\pi\pri)}=g({v}_{\pi\pri})$, the map $f^*$ is $G$-equivariant. In the (classical) derived category, this definition is independent of the constant $s$ we have chosen. Since $i^*_0$ is a quasi-isomorphism and $i^*_0\iota_{?*}=\id$, the diagram (\ref{diag1}) commutes as well, as wanted.  

  We define the map 
\begin{equation}
\label{BHK}
f^*: \R\Gamma^{B,\tau}_{\hk}((U,\overline{U})/R,r)^{N=0}\to \R\Gamma^{B,\tau}_{\hk}((T,\overline{T})/R\pri,r)^{N\pri=0}
\end{equation}
in an analogous way. By the above diagram and by compatibility of the Beilinson-Hyodo-Kato constructions with base change and with Frobenius, the two pullback maps $f^*$ are compatible via the morphism $h$, i.e.,  the following diagram commutes
$$
\xymatrix{
\R\Gamma_{\crr}(U,\overline{U},r) \ar[r]\ar[d]^{f^*} &
\R\Gamma_{\crr}((U,\overline{U})/R,r)^{N=0}\ar[d]^{f^*}  & \R\Gamma^{B,\tau}_{\hk}((U,\overline{U})/R,r)^{N=0} \ar[l]_{\kappa_R}\ar[d]^{f^*} \\
\R\Gamma_{\crr}(T,\overline{T},r)\ar[r] & \R\Gamma_{\crr}((T,\overline{T})/R\pri,r)^{N\pri=0} & \R\Gamma^{B,\tau}_{\hk}((T,\overline{T})/R\pri,r)^{N\pri=0}\ar[l]_{\kappa_{R^\prime} }
}
 $$
 From the analog of  diagram (\ref{diag1}) for the Beilinson-Hyodo-Kato complexes and by the universal nature of the trivialization at $\overline{p}$ we obtain that the pullback map $f^*$ is compatible with the maps $\beta\iota_?$.
It remains to show that we have a commutative diagram
$$
\xymatrix{
\R\Gamma^B_{\hk}(U,\overline{U},r)^{N=0}\ar[r]^{f^*}_{\sim}\ar[d]^{\iota^B_{\dr}}  & \R\Gamma^B_{\hk}(T,\overline{T},r)^{N^\prime=0}\ar[d]^{\iota^B_{\dr}}\\
\R\Gamma_{\dr}(U,\overline{U}_{K})/F^r \ar[r]^{f^*} & \R\Gamma_{\dr}(T,\overline{T}_{K^{\prime}})/F^r
}
$$
But this follows since  the Beilinson-Hyodo-Kato map is compatible with base change.
\end{proof}
\subsection{Arithmetic syntomic cohomology}
We are now ready to introduce and study arithmetic syntomic cohomology, i.e., syntomic cohomology over $K$. 
 Let $\sj^{[r]}_{\crr}$, $\sa_{\crr}$, and $\sss(r)$ for $r\geq 0$ be the $h$-sheafifications on $\mathcal{V}ar_{K}$ of the presheaves sending $(U,\overline{U})\in \spp^{ss}_{K}$ to $
\R\Gamma_{\crr}(U,\overline{U},J^{[r]})$,  $
\R\Gamma_{\crr}(U,\overline{U})$, and $\R\Gamma_{\synt}(U,\overline{U},r)$, respectively.  Let $\sj^{[r]}_{\crr,n}$, $\sa_{\crr,n}$, and $\sss_n(r)$ denote the $h$-sheafifications of
the mod-$p^n$ versions of the respective presheaves. We have
$$
\sss_n(r)\simeq \Cone(\sj^{[r]}_{\crr,n}\stackrel{p^r-\phi}{\longrightarrow}\sa_{\crr,n})[-1],\quad \sss(r)\simeq \Cone(\sj^{[r]}_{\crr}\stackrel{p^r-\phi}{\longrightarrow}\sa_{\crr})[-1] .
$$
For $r\geq 0$, define $\sss(r)_{\mathbf Q}$ as the $h$-sheafification of the presheaf sending ss-pairs $(U,\overline{U})$ to $\R\Gamma_{\synt}(U,\overline{U},r)_{\mathbf Q}$. We have
$$\sss(r)_{\mathbf Q}\simeq \Cone(\sj^{[r]}_{\crr,{\mathbf Q}}\lomapr{1-\phi_r}\sa_{\crr,{\mathbf Q}})[-1]
$$

 For $X\in \mathcal{V}ar_{K}$, set $\R\Gamma_{\synt}(X_h,r)_n=\R\Gamma(X_h,\sss_n(r))$, $\R\Gamma_{\synt}(X_h,r):=\R\Gamma(X_h,\sss(r)_{\mathbf Q})$. We have
\begin{align*}
\R\Gamma_{\synt}(X_h,r)_n & \simeq \Cone(\R\Gamma(X_h,\sj^{[r]}_{\crr,n})\stackrel{p^r-\phi}{\longrightarrow}\R\Gamma(X_h,\sa_{\crr,n}))[-1],\\
 \R\Gamma_{\synt}(X_h,r) & \simeq \Cone(\R\Gamma(X_h,\sj^{[r]}_{\crr,{\mathbf Q}})\stackrel{1-\phi_r}{\longrightarrow}\R\Gamma(X_h,\sa_{\crr,{\mathbf Q}}))[-1] .
\end{align*}
We will  often write  $\R\Gamma_{\crr}(X_h)$ for $\R\Gamma(X_h,\sa_{\crr})$ if this does not cause confusion.

  Let $\sa_{\hk}$ be the $h$-sheafification of the presheaf $(U,\overline{U})\mapsto \R\Gamma_{\hk}(U,\overline{U})_{\mathbf Q}$ on $\spp_{K}^{ss}$; this is an $h$-sheaf of $E_{\infty}$ $ K_0$-algebras on ${\mathcal V}ar_{K}$ equipped with a $\phi$-action and a derivation $N$ such that $N\phi=p\phi N$. For $X\in {\mathcal V}ar_{K}$, set $\R\Gamma_{\hk}(X_h):=\R\Gamma(X_h,\sa_{\hk})$.  Similarly, we define $h$-sheaves $\sa^B_{\hk}$ and the complexes
  $\R\Gamma^B_{\hk}(X_h):=\R\Gamma(X_h,\sa^B_{\hk})$.  The maps $\kappa: \R\Gamma^B_{\hk}(U_1,\overline{U}_1)\to \R\Gamma_{\hk}(U,\overline{U})_{\mathbf Q}$ $h$-sheafify and we obtain  functorial quasi-isomorphisms 
  \begin{align*}
  \kappa: \quad \sa^B_{\hk}  \stackrel{\sim}{\to} \sa_{\hk},\quad 
  \kappa:\quad \R\Gamma^B_{\hk}(X_h)  \stackrel{\sim}{\to } \R\Gamma_{\hk}(X_h).
 \end{align*}
 \begin{remark}
 The complexes $\sj^{[r]}_{\crr,n}$ and $\sss_n(r)$ (and their completions) have a concrete description. For the complexes $ \sj^{[r]}_{\crr,n}$: we can represent the presheaves $(U,\ove{U})\mapsto \R\Gamma_{\crr}(U,\ove{U},\sj^{[r]}_n)$ by Godement resolutions (on the crystalline site), sheafify them for the $h$-topology on $\spp^{ss}_K$, and then move them to ${\mathcal V}ar_K$. For the complexes $\sss_n(r)$: the maps $p^r-\phi$ can be lifted to the Godement resolutions and their mapping fiber (defining $\sss_n(r)(U,\ove{U})$) can be computed in the abelian category of complexes of abelian groups. To get $\sss_n(r)$ we $h$-sheafify on $\spp^{ss}_K$ and pass to ${\mathcal V}ar$.
\end{remark}

   Let, for a moment, $K$ be any field of characteristic zero. Consider the presheaf $(U,\overline{U})\mapsto \R\Gamma_{\dr}(U,\overline{U}):=\R\Gamma(\overline{U},\Omega^{\scriptscriptstyle\bullet}_{(U,\overline{U})})$ of filtered dg $K$-algebras on $\spp^{nc}_K$. Let $\sa_{\dr}$ be its $h$-sheafification. It is a sheaf of filtered $K$-algebras on $\mathcal{V}ar_K$. For $X\in \mathcal{V}ar_K$, we have Deligne's de Rham complex of $X$ equipped with Deligne's Hodge filtration: $\R\Gamma_{\dr}(X_h):=\R\Gamma(X_h,\sa_{\dr})$. Beilinson proves the following comparison statement.
 \begin{proposition} (\cite[2.4]{BE1}) 
 \label{deRham1}
 \begin{enumerate}
 \item For $(U,\overline{U})\in \spp^{nc}_K$, the canonical map $\R\Gamma_{\dr}(U,\overline{U})\stackrel{\sim}{\to} \R\Gamma_{\dr}(U_h)$ is a filtered quasi-isomorphism.
 \item The cohomology groups $H^i_{\dr}(X_h):=H^i\R\Gamma_{\dr}(X_h)$ are $K$-vector spaces of dimension equal to the rank of $H^i(X_{\overline{K},\eet},{\mathbf Q}_p)$.
 \end{enumerate}
 \end{proposition}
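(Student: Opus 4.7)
The plan is to deduce both parts from the fact that $(\spp^{nc}_K,\gamma)$ is a base for $\mathcal{V}ar_{K,h}$ together with descent properties of the log de Rham presheaf on nc-pairs. For part (1), it suffices to show that the presheaf $(U,\ove{U}) \mapsto \R\Gamma_{\dr}(U,\ove{U})$ on $\spp^{nc}_K$ is already a sheaf (of filtered complexes) for the topology induced from $\mathcal{V}ar_{K,h}$. Since the $h$-topology is generated by Zariski covers and proper surjections, this splits into two pieces. Zariski descent is immediate, because $\Omega^{\scriptscriptstyle\bullet}_{(U,\ove{U})/K}$ is a complex of quasi-coherent sheaves on $\ove{U}$ and the log structure is Zariski-local.

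Proper descent is the main content. Here the plan is to reduce any proper surjection between nc-pairs to the case of an admissible blow-up, namely a blow-up of $\ove{U}$ along a smooth center contained in a stratum of the boundary divisor $\ove{U}\setminus U$. For such blow-ups, a direct computation of the relative log de Rham complex (a log Poincar\'e lemma) shows that pullback is a filtered quasi-isomorphism. General proper covers of nc-pairs are then handled via weak factorization, or, following Guill\'en--Navarro and Deligne, by constructing cubical hyperresolutions for which cohomological descent for the log de Rham complex is verified one face at a time. Preservation of the Hodge filtration is automatic as all constructions involved are strict.

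For part (2), given $X$, choose a proper hypercover $\pi: Y_{\scriptscriptstyle\bullet} \to X$ with $(V_n,Y_n) \in \spp^{nc}_K$, available from Beilinson's base result \cite[2.5]{BE1}. By part (1) and $h$-descent, $H^i_{\dr}(X_h)$ is finite-dimensional over $K$, assembled via a hypercohomology spectral sequence from the log de Rham groups $H^q_{\dr}(V_n,Y_n)$, and agrees with Deligne's algebraic de Rham cohomology of $X$. Fixing an embedding $K \hookrightarrow \mathbf{C}$, Deligne's comparison theorem gives
\[
H^i_{\dr}(X_h) \otimes_K \mathbf{C} \simeq H^i(X(\mathbf{C}),\mathbf{C}),
\]
while Artin's comparison gives $H^i(X_{\ovk,\eet},\bq_p) \otimes_{\bq_p} \mathbf{C} \simeq H^i(X(\mathbf{C}),\bq_p)\otimes_{\bq_p}\mathbf{C}$. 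Comparing dimensions with the topological Betti numbers of $X(\mathbf{C})$ yields the claim.

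The main obstacle is establishing filtered proper descent for the log de Rham presheaf on $\spp^{nc}_K$. The subtlety is that between arbitrary nc-pairs a proper surjection need not respect boundary strata strictly, so one has to reduce to admissible blow-ups via a weak factorization or hyperresolution argument. I would follow Beilinson \cite{BE1}, who packages this once and for all via a cubical hyperresolution formalism, rather than reprove it from scratch; the remaining ingredient — the Hodge-filtered log Poincar\'e lemma for admissible blow-ups — is then a local computation on the blow-up chart.
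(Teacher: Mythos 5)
First, be aware that the paper does not prove this proposition at all: it is imported verbatim from Beilinson \cite[2.4]{BE1}, so your sketch has to be measured against his argument. Beilinson's route is quite different from yours: he does not try to devissage proper covers into blow-ups. The inputs are Deligne's Hodge theory -- independence of $\R\Gamma_{\dr}(U,\ove{U})$ of the nc-compactification and functoriality in $U$ alone (Hodge II), $E_1$-degeneration of the Hodge--de Rham spectral sequence together with strictness of morphisms of mixed Hodge structures for the filtered part -- and, after choosing an embedding $K\hookrightarrow \mathbf{C}$, the comparison with Betti cohomology, for which cohomological descent for proper surjections and open coverings is classical; this treats \emph{all} $h$-coverings at once. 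Your part (2) (descent onto an nc-hypercovering, then Deligne's and Artin's comparisons over $\mathbf{C}$) is essentially the intended argument and is fine modulo part (1).

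The gaps are in part (1). (a) Weak factorization only factors proper \emph{birational} maps of smooth (nc) pairs; an $h$-covering of an nc-pair need not be birational -- e.g.\ a finite Galois cover or an alteration -- and such covers cannot be reduced to admissible blow-ups. You would need at least a separate trace/Galois-descent argument for finite surjective maps together with the structure theorem refining $h$-covers into open cover $\circ$ proper birational $\circ$ finite surjective, none of which appears in your sketch; likewise the Guill\'en--Navarro criterion gives descent along cubical hyperresolutions built from blow-ups, not along arbitrary $h$-hypercoverings, so invoking it does not close the argument either. (b) ``Zariski descent is immediate'' is not: the filtered complex lives on $\ove{U}$, and the members of an open cover of $U$ carry their own compactifications, so already this step needs independence of the compactification (Deligne) or the Betti comparison, not quasi-coherence. (c) The filtered statement is not ``automatic because the constructions are strict'': it requires $E_1$-degeneration for the log de Rham complex of an nc-pair and strictness of morphisms of mixed Hodge structures; without these, a quasi-isomorphism need not be a filtered quasi-isomorphism. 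In short, the blow-up/weak-factorization strategy fails for non-birational $h$-covers, and the filtered upgrade is missing, whereas the Betti-comparison argument of \cite{BE1} supplies both.
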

 \begin{corollary}
 \label{blowup}
 For a geometric pair $(U,\overline{U})$ over $K$ that is saturated and log-smooth, the canonical map $$\R\Gamma_{\dr}(U,\overline{U})\stackrel{\sim}{\to} \R\Gamma_{\dr}(U_h)$$ is a filtered quasi-isomorphism. 
 \end{corollary}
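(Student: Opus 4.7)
The plan is to reduce the saturated log-smooth case to the nc-pair case already settled in Proposition~\ref{deRham1}. Since $(U,\overline{U})$ is saturated and log-smooth, it is \'etale-locally toric, with log-structure possibly arising from a non-transverse union of toric divisors. By Kato's toric desingularisation one finds a composition of admissible log-blow-ups
\[
\pi\colon(U,\overline{U}')\longrightarrow(U,\overline{U})
\]
which is an isomorphism over $U$ and whose target $(U,\overline{U}')$ is a strict nc-pair lying in $\spp_K^{\nc}$. By construction $\pi$ is proper and log-\'etale.

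The key technical step is to establish that the pullback
\[
\pi^{*}\colon\R\Gamma_{\dr}(U,\overline{U})\longrightarrow\R\Gamma_{\dr}(U,\overline{U}')
\]
is a filtered quasi-isomorphism. Since $\pi$ is log-\'etale, the relative log differentials $\Omega^{1}_{\overline{U}'/\overline{U}}$ vanish, which yields $\Omega^{r}_{(U,\overline{U}')}\simeq\pi^{*}\Omega^{r}_{(U,\overline{U})}$ for every $r$. Kato's log-blow-up lemma gives $\R\pi_{*}\so_{\overline{U}'}=\so_{\overline{U}}$, and the projection formula then upgrades this to $\R\pi_{*}\Omega^{r}_{(U,\overline{U}')}=\Omega^{r}_{(U,\overline{U})}$. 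Because the Hodge filtration on $\Omega^{\scriptscriptstyle\bullet}$ is the stupid filtration $\sigma^{\geq r}$, the statement at the level of $F^{r}\Omega^{\scriptscriptstyle\bullet}$ is an immediate consequence.

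With these two ingredients in hand, the corollary follows from the commutative square
\[
\begin{CD}
\R\Gamma_{\dr}(U,\overline{U}) @>\pi^{*}>> \R\Gamma_{\dr}(U,\overline{U}')\\
@VVV @VV\wr V\\
\R\Gamma_{\dr}(U_{h}) @= \R\Gamma_{\dr}(U_{h})
\end{CD}
\]
whose right vertical arrow is a filtered quasi-isomorphism by Proposition~\ref{deRham1}(1) applied to the nc-pair $(U,\overline{U}')$, whose top arrow is a filtered quasi-isomorphism by the previous step, and whose bottom arrow is the identity; hence the left vertical arrow, which is the map in question, is also a filtered quasi-isomorphism.

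The main obstacle is the filtered log-blow-up invariance in the middle step: the unfiltered statement $\R\pi_{*}\so_{\overline{U}'}=\so_{\overline{U}}$ is classical, but extending it to each graded piece of the Hodge filtration uses the log-\'etaleness of $\pi$ in an essential way, in order to replace differentials on $\overline{U}'$ by pullbacks of differentials on $\overline{U}$ before invoking the projection formula; the saturatedness hypothesis enters already in the first step to guarantee the existence of an nc-resolution via toric blow-ups.
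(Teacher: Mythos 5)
Your argument is essentially the paper's own proof: resolve $(U,\overline{U})$ by a log-blow-up to an nc-pair (the paper cites \cite[Theorem 5.10]{NR} for this), note that the induced map on filtered log de Rham cohomology is a filtered quasi-isomorphism, and conclude from Proposition \ref{deRham1} via the same commutative triangle. The only difference is that you spell out the log-blow-up invariance step (log-\'etaleness, $\R\pi_*\so=\so$, projection formula on each graded piece of the stupid filtration), which the paper simply asserts; that justification is correct.
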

 \begin{proof}
 Recall \cite[Theorem 5.10]{NR} that there is a log-blow-up $(U,\overline{T})\to (U,\overline{U})$ that resolves singularities of $(U,\overline{U})$, i.e., such that $(U,\ove{T}) \in \spp^{\nc}_K$. We have a commutative diagram
$$
\xymatrix{
\R\Gamma_{\dr}(U,\overline{T})\ar[r]^{\sim} &\R\Gamma_{\dr}(U_h)\\
\R\Gamma_{\dr}(U,\overline{U})\ar[u]^{\wr}\ar[ur]&
}
$$
The vertical map is a filtered quasi-isomorphism; the horizontal map is a filtered quasi-isomorphism by the above proposition. Our corollary follows.
 \end{proof}
 \begin{remark}
 Another proof of the above result (and a mild generalization) that does not use resolution of singularities can be found in \cite[1.19]{BE2} (where it is attributed to A.Ogus).
 \end{remark}
  Return now to our $p$-adic field $K$. 
 \begin{remark}
 \label{descent}
By construction the complexes $\R\Gamma(X_h,\sj^{[r]}_{\crr,{\mathbf Q}})$, $\R\Gamma_{\synt}(X_h,r)$, $\R\Gamma_{\hk}(X_h)$, $\R\Gamma^B_{\hk}(X_h)$, and $\R\Gamma_{\dr}(X_h)$ satisfy $h$-descent. In particular, since $h$-topology is finer than the \'etale topology, they satisfy Galois descent for finite extensions. Hence, 
 for any finite Galois extension $K_1/K$, the natural maps
$$
\R\Gamma^*_{?}(X_h)\stackrel{\sim}{\to}\R\Gamma (G,\R\Gamma^*_{?}(X_{K_1,h})),\quad ?={\crr}, {\synt},\hk, \dr; \quad *=B,\emptyset
$$
where $G=\Gal(K_1/K)$, are (filtered) quasi-isomorphisms.  Since $G$ is finite, it follows that the natural maps
$$\R\Gamma^*_{\hk}(X_h)\otimes_{K_0}K_{1,0}\stackrel{\sim}{\to}\R\Gamma^*_{\hk}(X_{K_1,h}),\quad \R\Gamma_{\dr}(X_h)\otimes_{K}K_1\stackrel{\sim}{\to}\R\Gamma_{\dr}(X_{K_1,h})
$$ 
are (filtered) quasi-isomorphisms as well.  
\end{remark}

  Recall from \cite[2.5]{BE2} and
(\ref{isomorphism}) below that for a fine log-scheme $X$, log-smooth over $V^{\times}$, and of Cartier type we have a quasi-isomorphism $\R\Gamma_{\crr}(X_{\overline{V}},\sj^{[r]}_{X_{\overline{V}}/W(k)})_{\mathbf Q}\simeq \R\Gamma(X_{\ovk,h},\sj^{[r]}_{\crr})_{\mathbf Q}$. We can descend this result to $K$ but on the level of rational log-syntomic cohomology; the key observation being that the field extensions introduced by the alterations are harmless since, by Proposition \ref{hypercov11},  log-syntomic cohomology satisfies finite Galois descent. Along the way we will get an analogous comparison quasi-isomorphism for the Hyodo-Kato cohomology. 
  
\begin{proposition}
\label{hypercov}
For any arithmetic pair $(U,\overline{U})$ that is fine, log-smooth over $V^{\times}$, and of Cartier type, and $r\geq 0$, the canonical maps
$$ \R\Gamma_{\hk}^*(U,\overline{U})_{\mathbf Q} \stackrel{\sim}{\to} \R\Gamma_{\hk}^*(U_h),\quad \R\Gamma_{\synt}(U,\overline{U},r)_{\mathbf Q} \stackrel{\sim}{\to} \R\Gamma_{\synt}(U_h,r)
$$
are quasi-isomorphisms.
\end{proposition}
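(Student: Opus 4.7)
The plan is to use the uniformizer-free presentation $\alpha^{\prime}_{\synt}$ of Remark~\ref{reduction21} to exhibit $\R\Gamma_{\synt}(U,\overline{U},r)_{\bq}$ as a functorial, base-change-compatible mapping fiber of $\R\Gamma_{\hk}(U,\overline{U},r)^{N=0}\to \R\Gamma_{\dr}(U,\overline{U}_K)/F^r$. Since this presentation $h$-sheafifies, the syntomic quasi-isomorphism in the proposition reduces to the Hyodo--Kato quasi-isomorphism together with the de Rham analogue, which is Proposition~\ref{deRham1} (applied after a log-blow-up if necessary, c.f.\ Corollary~\ref{blowup}). Via $\kappa$ the Hyodo--Kato and Beilinson-Hyodo-Kato statements are equivalent, so one may work throughout with the Beilinson version.

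For the Hyodo--Kato claim, the strategy is to pick a semistable $h$-hypercovering $\pi:(T_{\jcdot},\overline{T}_{\jcdot})\to (U,\overline{U})$, available because $(\spp_K^{ss},\gamma)$ is a basis of $\mathcal{V}ar_{K,h}$, and then truncate it at a level $N$ dictated by the finite Hyodo--Kato cohomological dimension. Enlarge $K$ to a finite Galois extension $K'/K$ large enough to contain the fields of definition of all $(T_i,\overline{T}_i)$ with $i\leq N$; after base change to $K'$ the truncated hypercovering consists of ss-pairs over one common base $V'$, so all Hyodo--Kato complexes can be compared consistently. Over $K'$, push further to $\ovk$, where Beilinson's theorem \cite[2.5]{BE2} (recalled just before the statement of the proposition) provides the $h$-descent of crystalline -- hence Beilinson-Hyodo-Kato -- cohomology, and descend by the $G_{K'}$-action. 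Finally, Proposition~\ref{hypercov11} furnishes the finite Galois descent for the extension $K'/K$, converting the $K'$-statement into the $K$-statement; the same trick transfers to the Hyodo--Kato and de Rham pieces via $\alpha^{\prime}_{\synt}$.

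The main obstacle is the ramified-base-change behaviour of the classical Hyodo--Kato map, which depends on a choice of uniformizer and transforms under change of uniformizer by the exponential of the monodromy operator. This is precisely why Remark~\ref{reduction21} and the Beilinson-Hyodo-Kato formalism (Theorem~\ref{kk2}, whose virtue is compatibility with arbitrary base change) are indispensable: they supply the uniformizer-free, functorial presentations that enable $h$-sheafification, so that the alteration-plus-descent argument sketched above goes through. Once these functoriality issues are settled, the rest is a combination of Beilinson's geometric comparison and the finite Galois descent already proved in Proposition~\ref{hypercov11}.
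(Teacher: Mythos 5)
Your reduction of the syntomic statement to the Hyodo--Kato statement plus filtered de Rham descent (via the sheafified $\alpha^{\prime}_{\synt}$ of Remark \ref{reduction21}, with Proposition \ref{deRham1} and Corollary \ref{blowup} handling the de Rham piece) is sound and parallel to what the paper does, as is the general device of truncating a semistable hypercovering, passing to a common finite Galois extension $K^{\prime}$, and descending by finite Galois descent of the type proved in Proposition \ref{hypercov11}. The genuine gap is in the step that carries all the weight: the Hyodo--Kato $h$-descent over the common base $K^{\prime}$. You propose to ``push further to $\ovk$'', invoke Beilinson's geometric theorem there, and ``descend by the $G_{K^{\prime}}$-action''. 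But descent along the \emph{infinite} extension $\ovk/K^{\prime}$ is exactly what is not available at this point: the identification $\R\Gamma^B_{\hk}(X_{K^{\prime},h})\simeq \R\Gamma^B_{\hk}(X_{\ovk,h})^{G_{K^{\prime}}}$ is Proposition \ref{HKdR}, which in the paper is \emph{deduced from} Proposition \ref{hypercov}, so your route is circular. Remark \ref{descent} only gives Galois descent for finite extensions, and no continuity or vanishing statement for continuous $G_{K^{\prime}}$-cohomology of these large, non-finite complexes is established beforehand --- this is precisely the ``noise'' problem for crystalline-type cohomology over ramified extensions that the paper's strategy is built to avoid. (A smaller imprecision: the truncation level cannot be ``dictated by the finite Hyodo--Kato cohomological dimension'' of $\R\Gamma_{\hk}(U_h)$, since that finiteness is also only known after the proposition; one instead fixes an arbitrary degree $t$ and uses that cohomology in degrees $\leq t$ depends only on the $(t+1)$-truncated hypercovering.)

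What has to replace the excursion to $\ovk$ is a purely arithmetic proof of the same-base case, and this is what the paper supplies: for a $(t+1)$-truncated hypercovering by pairs log-smooth of Cartier type over a single $V^{\prime\times}$, one compares absolute crystalline cohomology with $\R\Gamma_{\crr}(\cdot/R^{\prime})$ via the monodromy triangle, uses the section $\iota$ to pass to Hyodo--Kato complexes, and then tensors with $K^{\prime}$ and applies the Hyodo--Kato quasi-isomorphism (\ref{HKqis}) to reduce everything to filtered de Rham descent, i.e.\ Proposition \ref{deRham1} and Corollary \ref{blowup}. Moreover, the mixed-base case is not handled by merely base-changing the given hypercovering to $K^{\prime}$ (which would only prove a statement about $X_{K^{\prime}}$); the paper refines the hypercovering of $X$ against the \v{C}ech nerve $N_X(X_{V^{\prime}})$, takes the diagonal of the resulting bisimplicial scheme (Friedlander's theorem), and then the crucial input from Proposition \ref{hypercov11} is the $G$-equivariant comparison maps $\tilde{\alpha}^B_{\synt,g(\pi^{\prime})}$, which convert the augmented \v{C}ech complex of twisted crystalline cohomology into the $G$-cohomology of Hyodo--Kato groups, exact because $G$ is finite and the coefficients are $\bq$-vector spaces. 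So your architecture is right, but the Hyodo--Kato descent over the fixed base must be proved arithmetically along these lines rather than imported from the geometric ($\ovk$) theorem.
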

\begin{proof}
It suffices to show that for any $h$-hypercovering $(U_{\scriptscriptstyle\bullet},\overline{U}_{\scriptscriptstyle\bullet})\to (U,\overline{U})$ by pairs from $\spp^{\log}_{K}$ the natural maps $$
\R\Gamma_{\hk}(U,\overline{U})_{{\mathbf Q}} \to \R\Gamma_{\hk}(U_{\scriptscriptstyle\bullet},\overline{U}_{\scriptscriptstyle\bullet})_{{\mathbf Q}},\quad \R\Gamma_{\synt}(U,\overline{U},r)_{{\mathbf Q}} \to \R\Gamma_{\synt}(U_{\scriptscriptstyle\bullet},\overline{U}_{\scriptscriptstyle\bullet},r)_{{\mathbf Q}}
$$ are (modulo taking a refinement of $(U_{\scriptscriptstyle\bullet},\overline{U}_{\scriptscriptstyle\bullet})$)  quasi-isomorphisms.
  For the second map, since we have a canonical quasi-isomorphism
  $$\R\Gamma_{\synt}(U,\overline{U},r)_{\mathbf Q}\stackrel{\sim}{\to}
\Cone(\R\Gamma_{\crr}(U,\overline{U},r)_{\mathbf Q}\to  \R\Gamma_{\crr}(U,\overline{U},\so/\sj^{[r]})_{\mathbf Q})[-1]
  $$
  it suffices to show that, up to a refinement of the hypercovering, we have quasi-isomorphisms
 $$\R\Gamma_{\crr}(U,\overline{U},\so/\sj^{[r]})_{\mathbf Q}\stackrel{\sim}{\to}
  \R\Gamma_{\crr}(U_{\scriptscriptstyle\bullet},\overline{U}_{\scriptscriptstyle\bullet},\so/\sj^{[r]})_{\mathbf Q},\quad \R\Gamma_{\crr}(U,\overline{U},r)_{\mathbf Q}\stackrel{\sim}{\to}\R\Gamma_{\crr}(U_{\scriptscriptstyle\bullet},\overline{U}_{\scriptscriptstyle\bullet},r)_{\mathbf Q} .
 $$
 For the first of these  maps, by Corollary \ref{Langer}  this amounts to showing that the following map is a quasi-isomorphism
$$\R\Gamma(\overline{U}_K,\Omega^{\scriptscriptstyle\bullet}_{(U,\overline{U}_K)})/F^r \stackrel{\sim}{\to}
\R\Gamma(\overline{U}_{\scriptscriptstyle\bullet,K},\Omega^{\scriptscriptstyle\bullet}_{(U_{\scriptscriptstyle\bullet},\overline{U}_{\scriptscriptstyle\bullet,K})})/F^r.
$$
But, by Corollary \ref{blowup} this map is quasi-isomorphic to the map
$$\R\Gamma_{\dr}(U_h)/F^r\to \R\Gamma_{\dr}(U_{\scriptscriptstyle\bullet,h})/F^r,
$$
which is clearly a quasi-isomorphism.

 Hence it suffices to show that, up to a refinement of the hypercovering, we have quasi-isomorphisms
 $$\R\Gamma_{\hk}(U,\overline{U})_{\mathbf Q}\stackrel{\sim}{\to}
  \R\Gamma_{\hk}(U_{\scriptscriptstyle\bullet},\overline{U}_{\scriptscriptstyle\bullet})_{\mathbf Q},\quad \R\Gamma_{\crr}(U,\overline{U},r)_{\mathbf Q}\stackrel{\sim}{\to}\R\Gamma_{\crr}(U_{\scriptscriptstyle\bullet},\overline{U}_{\scriptscriptstyle\bullet},r)_{\mathbf Q} .
 $$
 Fix  $t\geq 0$. To show that $H^t\R\Gamma_{\crr}(U,\overline{U},r)_{\mathbf Q}\stackrel{\sim}{\to}H^t\R\Gamma_{\crr}(U_{\scriptscriptstyle\bullet},\overline{U}_{\scriptscriptstyle\bullet},r)_{\mathbf Q}
$ is a quasi-isomorphism  we will often work with $(t+1)$-truncated $h$-hypercovers. This is
because  $\tau_{\leq t}\R\Gamma_{\crr}(U_{\scriptscriptstyle\bullet},\overline{U}_{\scriptscriptstyle\bullet},r)\simeq
\tau_{\leq t}\R\Gamma_{\crr}((U_{\scriptscriptstyle\bullet},\overline{U}_{\scriptscriptstyle\bullet})_{\leq t+1},r)$, where $(U_{\scriptscriptstyle\bullet},\overline{U}_{\scriptscriptstyle\bullet})_{\leq t+1}$ denotes the $t+1$-truncation.
 Assume first that we have an  $h$-hypercovering $(U_{\scriptscriptstyle\bullet},\overline{U}_{\scriptscriptstyle\bullet})\to (U,\overline{U})$  of arithmetic pairs over $K$, where  each pair $(U_i,\overline{U}_i)$, $i\leq t+1$, is log-smooth  over $V^{\times}$ and of Cartier type.   We claim that then already the maps
\begin{equation}
\label{firsteq}
\tau_{\leq t}\R\Gamma_{\hk}(U,\overline{U})_{\mathbf Q}\stackrel{\sim}{\to}
\tau_{\leq t}\R\Gamma_{\hk}((U_{\scriptscriptstyle\bullet},\overline{U}_{\scriptscriptstyle\bullet})_{\leq t+1})_{\mathbf Q};\quad 
\tau_{\leq t}\R\Gamma_{\crr}(U,\overline{U})_{\mathbf Q}\stackrel{\sim}{\to}
\tau_{\leq t}\R\Gamma_{\crr}((U_{\scriptscriptstyle\bullet},\overline{U}_{\scriptscriptstyle\bullet})_{\leq t+1})_{\mathbf Q}
\end{equation}
are quasi-isomorphisms. To see the second quasi-isomorphism consider the following commutative diagram of distinguished triangles ($R=R_V$)
$$
\begin{CD}
\R\Gamma_{\crr}(U,\overline{U}) @>>> \R\Gamma_{\crr}((U,\overline{U})/R)@> N >> \R\Gamma_{\crr}((U,\overline{U})/R)\\
@VVV @VVV @VVV \\
 \R\Gamma_{\crr}((U_{\scriptscriptstyle\bullet},\overline{U}_{\scriptscriptstyle\bullet})_{\leq t+1})@>>> \R\Gamma_{\crr}((U_{\scriptscriptstyle\bullet},\overline{U}_{\scriptscriptstyle\bullet})_{\leq t+1}/R)@> N >> \R\Gamma_{\crr}((U_{\scriptscriptstyle\bullet},\overline{U}_{\scriptscriptstyle\bullet})_{\leq t+1}/R)
\end{CD}
$$
It suffices to show that
the two right vertical arrows are rational quasi-isomorphisms in degrees less
than or equal to $t$. But we have the $R$-linear quasi-isomorphisms
$$\iota: R\otimes_{W(k)}\R\Gamma_{\hk}(U,\overline{U})_{{\mathbf Q}}\stackrel{\sim}{\to}\R\Gamma((U,\overline{U})/R)_{{\mathbf Q}}, \quad
\iota:  R\otimes_{W(k)}\R\Gamma_{\hk}((U_{\scriptscriptstyle\bullet},\overline{U}_{\scriptscriptstyle\bullet})_{\leq t+1})_{{\mathbf Q}}\stackrel{\sim}{\to}
\R\Gamma((U_{\scriptscriptstyle\bullet},\overline{U}_{\scriptscriptstyle\bullet})_{\leq t+1}/R)_{ {\mathbf Q}}.
$$
Hence to show both quasi-isomorphisms (\ref{firsteq}), it suffices to show that the map
 $$\tau_{\leq t}\R\Gamma_{\hk}(U,\overline{U})_{{\mathbf Q}}\to
\tau_{\leq t}\R\Gamma_{\hk}((U_{\scriptscriptstyle\bullet},\overline{U}_{\scriptscriptstyle\bullet})_{\leq t+1})_{{\mathbf Q}}
 $$
 is a quasi-isomorphism. 
 
   Tensoring over $K_0$ with $K$ and using the Hyodo-Kato quasi-isomorphism (\ref{HKqis}) we reduce to showing that
 the map  $$
 \tau_{\leq t}\R\Gamma(\overline{U}_K,\Omega\kr_{(U,\overline{U}_K)})\to \tau_{\leq t}
 \R\Gamma(\overline{U}_{\scriptscriptstyle\bullet K, \leq t+1},\Omega\kr_{(U_{\scriptscriptstyle\bullet},\overline{U}_{\scriptscriptstyle\bullet,K})_{\leq t+1}})
$$ is a quasi-isomorphism. And this we have done above.

  To treat the general case, set $X=(U,\overline{U}), Y_{\scriptscriptstyle\bullet}=(U_{\scriptscriptstyle\bullet},\overline{U}_{\scriptscriptstyle\bullet})$. We will do a base change to reduce to the case discussed above.
  We may assume that all the fields $K_{n,i}$, $K_{U_n}\simeq \prod K_{n,i}$ are Galois over $K$.
  Choose a finite Galois extension $(V\pri,K\pri)/(V,K)$ for $K\pri$ Galois over all the fields $K_{n,i}$, $n\leq t+1$.
  Write  $N_X(X_{V\pri})$ for the "\v{C}ech nerve" of $X_{V\pri}/X$. The term  $N_X(X_{V\pri}) _n$ is defined as the  $(n+1)$-fold fiber product of $X_{V\pri}$ over $X$: $N_X(X_{V\pri}) _n=(U\times_{K}K^{\prime,n+1},(\overline{U}\times_{V}V^{\prime,n+1})^{\norm})$, where $V^{\prime,n+1},K^{\prime,n+1}$ are defined as the $(n+1)$-fold product of $V\pri$ over $V$ and of $K\pri$ over $K$, respectively. Normalization is taken with respect to the open regular subscheme $U\times_{K}K^{\prime,n+1}$. Note that
  $N_X(X_{V\pri}) _n\simeq (U\times_{K}K\pri \times  G^{n},\overline{U}\times_{V}V\pri \times G^{n})$, $G=\Gal(K\pri/K)$. Hence it is a log-smooth scheme over $V^{\prime,\times}$,
  of Cartier type. The augmentation $N_X(X_{V\pri})\to X$ is an $h$-hypercovering.

   Consider  the bi-simplicial scheme
 $Y_{\scriptscriptstyle\bullet}\times_{X}N_X(X_{V\pri})_{\scriptscriptstyle\bullet} $,
 \begin{align*}
 (Y_{\scriptscriptstyle\bullet}\times_{X}N_X(X_{V\pri})_{\scriptscriptstyle\bullet}) _{n,m}:=Y_n\times_{X}N_X(X_{V\pri}) _m& \simeq (U_n\times_UU\times_{K}K^{\prime,m+1},(\overline{U}_n\times_{\overline{U}}(\overline{U}\times_{V}V^{\prime,m+1})^{\norm})^{\norm})\\
   & \simeq \coprod_i(U_n\times_{K_{n,i}}K_{n,i}\times_{K}K^{\prime,m+1},\overline{U}_n\times_{V_{n,i}}(V_{n,i}\times_{V}V^{\prime,m+1})^{\norm}).
 \end{align*}
 Hence $(Y_{\scriptscriptstyle\bullet}\times_{X}N_X(X_{V\pri})_{\scriptscriptstyle\bullet}) _{n,m}\in \spp^{\log}_{K}$.
  For $n,m\leq t+1$, we have
  $$ (Y_{\scriptscriptstyle\bullet}\times_{X}N_X(X_{V\pri})_{\scriptscriptstyle\bullet})_{n,m} \simeq \coprod_i(U_n\times_{K_{n,i}}K\pri\times G_{n,i}\times G^{m},
  \overline{U}_n\times_{V_{n,i}}V\pri\times G_{n,i}\times G^{m}),
    $$
    where $G_{n,i}=\Gal(K_{n,i}/K)$.
    It is a log-scheme log-smooth over $V^{\prime, \times}$, of Cartier type.

  Consider now its diagonal $Y_{\scriptscriptstyle\bullet}\times_{X}N_X(X_{V\pri}):=\Delta (Y_{\scriptscriptstyle\bullet}\times_{X}N_X(X_{V\pri})_{\scriptscriptstyle\bullet})$.
    It  is an  $h$-hypercovering of $X$ refining $Y_{\scriptscriptstyle\bullet}$ such that, for $n\leq t+1$,  $(Y_{\scriptscriptstyle\bullet}\times_{X}N_X(X_{V\pri}))_n $ is log-smooth over $V^{\prime, \times}$, of Cartier type.  It suffices to show that the compositions
    \begin{align}
\label{secondeq}
\R\Gamma_{\hk}(X)_{\mathbf Q}  & \stackrel{}{\to}
    \R\Gamma_{\hk}(Y_{\scriptscriptstyle\bullet})_{\mathbf Q} \stackrel{\pr_1^*}{\to}\R\Gamma_{\hk}(Y_{\scriptscriptstyle\bullet}\times_{X}N_X(X_{V\pri}))_{\mathbf Q};\\
   \R\Gamma_{\crr}(X,r)_{\mathbf Q}  & \stackrel{}{\to}
    \R\Gamma_{\crr}(Y_{\scriptscriptstyle\bullet},r)_{\mathbf Q} \stackrel{\pr_1^*}{\to}\R\Gamma_{\crr}(Y_{\scriptscriptstyle\bullet}\times_{X}N_X(X_{V\pri}),r)_{\mathbf Q} \notag
    \end{align}
    are quasi-isomorphisms in degrees less than or equal to $t$. Using the commutative diagram of bi-simplicial schemes
  $$
  \begin{CD}
  Y_{\scriptscriptstyle\bullet}\times_{X}N_X(X_{V\pri}) @>\Delta >> Y_{\scriptscriptstyle\bullet}\times_{X}N_X(X_{V\pri})_{\scriptscriptstyle\bullet} @>\pr_1 >>  Y_{\scriptscriptstyle\bullet}\\
  @. @VV\pr_2 V @VVV\\
   @. N_X(X_{V\pri}) @> f >> X
   \end{CD}
  $$
we can write the second composition as
   $$ \R\Gamma_{\crr}(X,r)_{\mathbf Q} \stackrel{f^*}{\to}
    \R\Gamma_{\crr}(N_X(X_{V\pri}),r)_{\mathbf Q} \stackrel{\pr_2^*}{\to}\R\Gamma_{\crr}(Y_{\scriptscriptstyle\bullet}\times_{X}N_X(X_{V\pri})_{\scriptscriptstyle\bullet},r)_{\mathbf Q}
    \stackrel{\Delta^*}{\to}\R\Gamma_{\crr}(Y_{\scriptscriptstyle\bullet}\times_{X}N_X(X_{V\pri}),r)_{\mathbf Q}
    $$

 We claim that all of these maps are quasi-isomorphisms  in degrees less than or equal to $t$.   The map $\Delta^*$ is a quasi-isomorphism (in all degrees) by  \cite[Prop. 2.5]{Fr}.
 For the second map, fix $n\leq t+1$ and consider  the induced map $\pr_2: (Y_{\scriptscriptstyle\bullet}\times_{X}N_X(X_{V\pri})_{\scriptscriptstyle\bullet}) _{\scriptscriptstyle\bullet,n}\to N_X(X_{V\pri}) _n$. It is an $h$-hypercovering whose $(t+1)$-truncation is built from log-schemes, log-smooth  over $(V\pri,K\pri)$, of Cartier type.  It suffices to show that the induced map
 $\tau_{\leq t}\R\Gamma_{\crr}(N_X(X_{V\pri})_n,r)_{\mathbf Q} \stackrel{\pr_2^*}{\to}\tau_{\leq t}\R\Gamma_{\crr}((Y_{\scriptscriptstyle\bullet}\times_{X}N_X(X_{V\pri}))_{\scriptscriptstyle\bullet,n},r)_{\mathbf Q}$ is a quasi-isomorphism. Since all maps are defined over $K\pri$, this follows from the case considered at the beginning of the proof.

  To prove that the map $f^*:  \R\Gamma_{\crr}(X,r)_{\mathbf Q} \to
    \R\Gamma_{\crr}(N_X(X_{V\pri}),r)_{\mathbf Q} $ is a quasi-isomorphism consider first the case when the extension $V^{\prime}/V$ is unramified. Then     $ \R\Gamma_{\crr}(X_{V^{\prime}}) \simeq  \R\Gamma_{\crr}(X)\otimes_{W(k)}W(k^{\prime})$ and the map $f^*$ is a quasi-isomorphism by finite \'etale descent for crystalline cohomology. 

Assume now that the extension $V^{\prime}/V$ is totally ramified and let $\pi$ and  $\pi^{\prime}$ be uniformizers of $V$ and $V^{\prime}$, respectively. Consider the target of $f^*$ 
as a double complex. To show that $f^*$ is a quasi-isomorphism  it suffices to show that, for each $s\geq 0$, the sequence
 $$0\to H^s\R\Gamma_{\crr}(X,r)_{\mathbf Q} \stackrel{f^*}{\to}
    H^s\R\Gamma_{\crr}(N_X(X_{V\pri})_0,r)_{\mathbf Q} \stackrel{d_0^*}{\to}H^s\R\Gamma_{\crr}(N_X(X_{V\pri})_1,r)_{\mathbf Q}
    \stackrel{d_1^*}{\to}H^s\R\Gamma_{\crr}(N_X(X_{V\pri})_2,r)_{\mathbf Q}\ldots $$
    is exact. Embed it into the following diagram
$$
\xymatrix{
0 \ar[r] & H^s\R\Gamma_{\crr}(X,r)_{\mathbf Q} \ar[d]^{\wr}_{\alpha^B_{\synt,\pi}}\ar[r]^-{f^*}  & H^s\R\Gamma_{\crr}(N_X(X_{V\pri})_0,r)_{\mathbf Q}  \ar[d]^{\wr}_{\tilde{\alpha}^B_{\synt,\pi^{\prime}}}\ar[r]^{d_0^*} & H^s\R\Gamma_{\crr}(N_X(X_{V\pri})_1,r)_{\mathbf Q}\ar[d]^{\wr}_{\tilde{\alpha}^B_{\synt,\pi^{\prime}}} \ar[r] & \\
0 \ar[r] & H^s\R\Gamma^B_{\hk}(X,r)_{\mathbf Q}^{N=0}\ar[r]^-{ f^*} & 
    H^s\R\Gamma^B_{\hk}(N_X(X_{V\pri})_0,r)_{\mathbf Q}^{N^{\prime}=0}  \ar[r]^{d_0^*}
     & H^s\R\Gamma^B_{\hk}(N_X(X_{V\pri})_1,r)_{\mathbf Q}^{N^{\prime}=0}    \ar[r] &
 }
$$ 
Note that,  since all the maps $d_i^*$ are induced from automorphisms of $V\pri/V$, by the proof of Proposition \ref{hypercov11} (take the map $f$ used there to be  a given automorphism $g\in G=\Gal(K^{\prime}/K)$ and $\pi^{\prime}$, $g(\pi^{\prime})$ for the uniformizers of $V^{\prime}$) and the proof of Proposition \ref{reduction1}, we get the vertical maps above that make all the squares commute.

   Hence it suffices to show that the following sequence of Hyodo-Kato cohomology groups is exact:
$$0 \to H^s\R\Gamma_{\hk}(X)_{\mathbf Q} \stackrel{f^*}{\to}
    H^s\R\Gamma_{\hk}(N_X(X_{V\pri})_0)_{\mathbf Q}  \stackrel{ d_0^*}{\to}H^s\R\Gamma_{\hk}(N_X(X_{V\pri})_1)_{\mathbf Q}  \stackrel{ d_1^*}{\to}H^s\R\Gamma_{\hk}(N_X(X_{V\pri})_2)_{\mathbf Q} \to
     $$
     But this sequence is isomorphic to the following sequence
     $$0 \to H^s\R\Gamma_{\hk}(X)_{\mathbf Q} \stackrel{f^*}{\to}
    H^s\R\Gamma_{\hk}(X_{V\pri})_{\mathbf Q}  \stackrel{ d_0^*}{\to}H^s\R\Gamma_{\hk}(X_{V\pri})_{\mathbf Q} \times G \stackrel{ d_1^*}{\to}H^s\R\Gamma_{\hk}(X_{V\pri})_{\mathbf Q} \times G^2\to         $$
  representing the (augmented) $G$-cohomology of $H^s\R\Gamma_{\hk}(X)_{\mathbf Q}$. Since $G$ is finite, this complex is exact in degrees at least $1$. It remains to show that 
$H^0(G,H^s\R\Gamma_{\hk}(X_{V\pri})_\bq)\simeq H^s\R\Gamma_{\hk}(X)_\bq$.
 Since $K\pri/K$ is totally ramified, we have $H^s\R\Gamma_{\hk}(X_{V\pri})\simeq H^s\R\Gamma_{\hk}(X)$. Hence the action of $G$ on $H^s\R\Gamma_{\hk}(X_{V\pri})$ is trivial and we get the right $H^0$ as well. We have proved the second quasi-isomorphism from (\ref{secondeq}). Notice that along the way we have actually proved the first quasi-isomorphism.
 \end{proof}

 For $X\in {\mathcal V}ar_K$, we define a canonical $K_0$-linear map ({\em the Beilinson-Hyodo-Kato morphism})
$$\iota^B_{\dr}: \quad \R\Gamma^B_{\hk}(X_h){\to}\R\Gamma_{\dr}(X_h)
$$
as sheafification of the map $\iota^B_{\dr}: \R\Gamma^B_{\hk}(U_1,\overline{U}_1){\to}\R\Gamma_{\dr}(U,\overline{U}_K)
$. It follows from Proposition \ref{HKdR} that we prove in the next section that the cohomology groups $H^i_{\hk}(X_h):=H^i\R\Gamma^B_{\hk}(X_h)$ are finite rank $K_0$-vector spaces and that they vanish for $i>2\dim X$. This implies the following lemma.
\begin{lemma}
\label{dim}
The syntomic cohomology groups $H^i_{\synt}(X_h,r):=H^i\R\Gamma_{\synt}(X_h,r)$ vanish for $i > 2\dim X+2$.
\end{lemma}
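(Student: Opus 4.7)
The plan is to identify $\R\Gamma_{\synt}(X_h,r)$ with the totalization of an explicit $2\times 2$ bicomplex built from the Beilinson--Hyodo--Kato and de Rham cohomologies of $X_h$, each of which has cohomological amplitude bounded by $2\dim X$, and then to observe that the totalization of such a square can increase the top nonzero degree by at most $2$.

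First I would establish the quasi-isomorphism (\ref{first3}), namely
$$\R\Gamma_{\synt}(X_h,r) \simeq C_{\st}(\R\Gamma^B_{\hk}(X_h)\{r\}).$$
For this I would $h$-sheafify the map $\alpha^{\prime}_{\synt}$ of Remark \ref{reduction21}, whose functoriality and compatibility with base change are precisely what make sheafification meaningful on the basis $\spp_K^{\log}$. Proposition \ref{hypercov} upgrades the pointwise quasi-isomorphism on ss-pairs of Cartier type to an identification of $h$-sheaves between $\sss(r)_{\mathbf Q}$ and the sheafified $C^{\prime}_{\st}$-complex. Applying $\R\Gamma(X_h,-)$ and commuting it with the finite homotopy limit defining $C_{\st}$ (automatic in the $\infty$-derived category), and using the sheafified quasi-isomorphism $\kappa : \sa^B_{\hk} \stackrel{\sim}{\to} \sa_{\hk}$ to switch from Hyodo--Kato to Beilinson--Hyodo--Kato, yields the claimed identification.

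Second I would invoke the vanishing statements $H^i_{\hk}(X_h)=0$ for $i > 2\dim X$ (the forthcoming Proposition \ref{HKdR}) together with $H^i_{\dr}(X_h)=0$ for $i > 2\dim X$ (Proposition \ref{deRham1}). Realizing $C_{\st}$ as the total complex of a $2\times 2$ bicomplex, one sees by the standard filtration spectral sequence --- or equivalently by two iterated mapping-cone exact sequences, first along rows and then along columns --- that $H^n$ of the total complex is assembled from $H^n$, $H^{n-1}$, and $H^{n-2}$ of the four corners, each of which is either $\R\Gamma^B_{\hk}(X_h)$ or $\R\Gamma_{\dr}(X_h)/F^r$. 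For $n > 2\dim X + 2$ all of these contributions vanish, whence $H^n_{\synt}(X_h,r)=0$.

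The main obstacle is the first step: justifying that the pointwise quasi-isomorphisms $\alpha^{\prime}_{\synt}$, which a priori live only on ss-pairs log-smooth of Cartier type, assemble into a quasi-isomorphism of $h$-sheaves that survives the passage to $\R\Gamma(X_h,-)$. The base change compatibility in Remark \ref{reduction21} and the $\infty$-categorical commutation of $\R\Gamma(X_h,-)$ with finite homotopy limits handle this, but the bookkeeping of assembling $\alpha^{\prime}_{\synt}$ into a map of sheafified complexes is the delicate point; once it is done, the dimension count reducing to Propositions \ref{deRham1} and \ref{HKdR} is routine.
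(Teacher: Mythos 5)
Your overall strategy is the paper's own: $h$-sheafify the uniformizer-free comparison map $\alpha^{\prime}_{\synt}$ of Remark \ref{reduction21} and then bound the cohomological amplitude of the resulting presentation using the vanishing of $H^i_{\hk}(X_h)$ (Proposition \ref{HKdR}) and of $H^i_{\dr}(X_h)$ (Proposition \ref{deRham1}) in degrees $i>2\dim X$; the degree count in your second step is correct. The one step that does not work as stated is your claim that this already yields the square presentation (\ref{first3}), i.e. $\R\Gamma_{\synt}(X_h,r)\simeq C_{\st}(\R\Gamma^B_{\hk}(X_h)\{r\})$ with the Beilinson--Hyodo--Kato map $\iota^B_{\dr}$ defined on all of $\R\Gamma^B_{\hk}(X_h)$. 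Sheafifying $\alpha^{\prime}_{\synt}$ produces only the fiber presentation $C^{\prime}_{\st}(\R\Gamma_{\hk}(X_h)\{r\})=[\R\Gamma_{\hk}(X_h,r)^{N=0}\to\R\Gamma_{\dr}(X_h)/F^r]$, in which the de Rham comparison map $\iota^{\prime}_{\dr}$ is defined only on the iterated $(1-\phi_r,N)$-fiber via the crystalline zig-zag; rewriting this as the $2\times 2$ square requires comparing $\iota^{\prime}_{\dr}$ with a Hyodo--Kato map on the whole complex, and that comparison goes through the uniformizer-dependent section $\iota_{\pi}$ (or a chosen trivialization of the Fontaine--Hyodo--Kato torsor), which is exactly the step that obstructs $h$-sheafification. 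The quasi-isomorphism $\kappa:\sa^B_{\hk}\stackrel{\sim}{\to}\sa_{\hk}$ merely exchanges classical for Beilinson--Hyodo--Kato coefficients and does not supply the square. In the paper the identification (\ref{first3}) is Proposition \ref{reduction2}, whose proof (truncation at level $2\dim X+2$, base change to a finite Galois extension, descent) uses Lemma \ref{dim} itself, so invoking it here would be circular.

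The repair is immediate and is what the paper does: run your amplitude count directly on $C^{\prime}_{\st}(\R\Gamma_{\hk}(X_h)\{r\})$, which Proposition \ref{hypercov} and the functoriality built into Remark \ref{reduction21} do give you after sheafification. Since $\R\Gamma_{\hk}(X_h)$ has cohomology in degrees $\leq 2\dim X$, the fiber of $1-\phi_r$ lives in degrees $\leq 2\dim X+1$, its further fiber under $N$ in degrees $\leq 2\dim X+2$, and taking the fiber of $\iota^{\prime}_{\dr}$ over $\R\Gamma_{\dr}(X_h)/F^r$ (amplitude $\leq 2\dim X$ by Proposition \ref{deRham1}) does not raise the top degree; hence $H^i_{\synt}(X_h,r)=0$ for $i>2\dim X+2$, with no appeal to (\ref{first3}).
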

\begin{proof}
The map $\iota_{\dr}^{\prime}:\R\Gamma_{\hk}(U,\overline{U},r)^{N=0}\to \R\Gamma_{\dr}(U,\overline{U}_{K})/F^r$ from Remark \ref{reduction21} sheafifies and so does the quasi-isomorphism $\alpha^{\prime}_{\synt}:
 \R\Gamma_{\synt}(U,\overline{U},r)_\bq\stackrel{\sim}{\to}C^{\prime}_{\st}(\R\Gamma_{\hk}(U,\overline{U})\{r\})$. Hence $\R\Gamma_{\synt}(X_h,r)$ is quasi-isomorphic via 
$\alpha^{\prime}_{\synt}$ to the mapping fiber  $$
C^{\prime}_{\st}(\R\Gamma_{\hk}(X_h)\{r\}):= [\R\Gamma_{\hk}(X_h,r)^{N=0}\lomapr{\iota_{\dr}^{\prime}} \R\Gamma_{\dr}(X_h)/F^r]
$$
 The statement of the lemma follows.
\end{proof}

     For $X\in {\mathcal V}ar_{K}$ and $r\geq 0$, define the complex
 $$C_{\st}(\R\Gamma^B_{\hk}(X_h)\{r\}):=
  \left[ \begin{aligned}\xymatrix{\R\Gamma^B_{\hk}(X_h)\ar[rr]^-{(1-\phi_r,\iota^B_{\dr})}\ar[d]^{N}  & & \R\Gamma^B_{\hk}(X_h)
   \oplus \R\Gamma_{\dr}(X_h) /F^r\ar[d]^{(N,0)}\\
\R\Gamma^B_{\hk}(X_h)\ar[rr]^-{1-\phi_{r-1}}  & & \R\Gamma^B_{\hk}(X_h)}\end{aligned}\right]
$$
\begin{proposition}
\label{reduction2}
For $X\in {\mathcal V}ar_{K}$ and $r\geq 0$, there exists a canonical (in the classical derived category) quasi-isomorphism
$$\alpha_{\synt}: \R\Gamma_{\synt}(X_h,r)\stackrel{\sim}{\to}C_{\st}(\R\Gamma^B_{\hk}(X_h)\{r\}).
$$
Moreover, this morphism is compatible with finite base change (of the field $K$).
\end{proposition}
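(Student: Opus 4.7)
The plan is to reduce to the pair-wise statement of Proposition \ref{reduction1} (in its Beilinson-Hyodo-Kato incarnation $\alpha^B_{\synt,\pi}$ and its Galois-averaged refinement $\tilde\alpha^B_{\synt,\pi'}$ from Proposition \ref{hypercov11}) by using the finite cohomological dimension of both sides (Lemma \ref{dim}) to cut a semistable $h$-hypercovering down to finitely many pairs, then base-changing to a large enough finite Galois extension $K'/K$ so that all these pairs become ss-pairs over $V'^{\times}$, and finally descending via Galois descent. The map $\alpha^B_{\synt,\pi}$ itself cannot be $h$-sheafified directly (as explained in the introduction, because the argument underlying Proposition \ref{reduction1} uses a power of Frobenius depending on $K$), so the compatibility with alterations must be arranged ``by hand'' on a truncated simplicial level.

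Concretely, first choose a semistable $h$-hypercovering $\pi\colon Y_{\scriptscriptstyle\bullet}\to X$, which exists because $\spp^{ss}_K$ forms a basis for $X_h$. By Lemma \ref{dim} there is a bound $N=2\dim X+2$ such that $H^i$ of $\R\Gamma_{\synt}(X_h,r)$ vanishes for $i>N$, and the same bound applies to $C_{\st}(\R\Gamma^B_{\hk}(X_h)\{r\})$ (via the formula $H^i_{\hk}(X_h)=0$ for $i>2\dim X$ mentioned before Lemma \ref{dim}). Hence both $\R\Gamma_{\synt}(X_h,r)$ and $C_{\st}(\R\Gamma^B_{\hk}(X_h)\{r\})$ are quasi-isomorphic to the cohomology computed from the $(N+1)$-truncation $Y_{\scriptscriptstyle\bullet,\leq N+1}$. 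The truncation involves only finitely many ss-pairs $(U_i,\overline{U}_i)$, each defined over a finite extension of $K$; let $K'/K$ be a finite Galois extension containing all these fields of definition, with Galois group $G=\Gal(K'/K)$ and ring of integers $V'$.

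After base-changing the truncated hypercovering to $K'$, every pair $(U_i,\overline{U}_i)\otimes_V V'$ lies in $\spp^{\log}_{K'}$ and is log-smooth of Cartier type over $V'^{\times}$, so Proposition \ref{reduction1} (in the Beilinson-Hyodo-Kato form after Remark \ref{reduction21}) applies to each, producing term-by-term quasi-isomorphisms $\alpha^B_{\synt,\pi'}$ from $\R\Gamma_{\synt}(U_i,\overline{U}_i,r)_{\bq}$ to $C_{\st}(\R\Gamma^B_{\hk}(U_i,\overline{U}_i)\{r\})$. Replacing $\alpha^B_{\synt,\pi'}$ by the Galois-average $\tilde\alpha^B_{\synt,\pi'}=(1/|G|)\sum_{g\in G}\alpha^B_{\synt,g(\pi')}$ as in the discussion preceding Proposition \ref{hypercov11}, and invoking the commutative diagram of Proposition \ref{hypercov11}(1) for each face map (all of which arise from $G$-equivariant morphisms of pairs), we obtain a genuinely cosimplicially-coherent $G$-equivariant quasi-isomorphism from the $K'$-truncated syntomic complex to the $K'$-truncated $C_{\st}$ complex. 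Taking totalizations gives a $G$-equivariant quasi-isomorphism at the level of $\R\Gamma_{\synt}(X_{K',h},r)\simeq \R\Gamma_{\synt}(X_{K',\leq N+1,h},r)$, and Galois descent (Proposition \ref{hypercov} together with Remark \ref{descent}, noting that $C_{\st}(\R\Gamma^B_{\hk}(-_h)\{r\})$ satisfies finite Galois descent because $\R\Gamma^B_{\hk}$ and $\R\Gamma_{\dr}$ do) then yields the desired $\alpha_{\synt}$ over $K$.

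The main obstacle is ensuring canonicity: independence of the chosen $h$-hypercovering, the truncation level, the auxiliary field $K'$, and the choice of uniformizer $\pi'$. Independence of $\pi'$ is built into the averaging, independence of truncation is automatic since the cohomology of both sides is concentrated in a fixed range, and the passage to refinements of the hypercovering (or enlargements of $K'$) goes through because all the pair-level maps $\tilde\alpha^B_{\synt,\pi'}$ are compatible with pullback by Proposition \ref{hypercov11}(1). Compatibility with a finite base change $K_1/K$ follows by choosing $K'$ to contain $K_1$ and comparing the two descent problems (from $K'$ to $K_1$ versus from $K'$ to $K$), which are controlled by the same cosimplicial $\tilde\alpha^B_{\synt,\pi'}$ on $Y_{\scriptscriptstyle\bullet,\leq N+1}\otimes_V V'$.
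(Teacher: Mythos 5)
Your proposal follows essentially the same route as the paper: truncate a semistable $h$-hypercovering to a finite level dictated by Lemma \ref{dim}, base-change to a sufficiently large finite Galois extension $K'$ so that every pair in the truncation becomes log-smooth of Cartier type over $V'^{\times}$, apply the pair-level quasi-isomorphism of Proposition \ref{reduction1} in its Galois-averaged form $\tilde{\alpha}^B_{\synt,\pi'}$, verify compatibility with face maps and change of $\pi'$ via Proposition \ref{hypercov11}, and descend by taking $G$-fixed points. The only caveat is that you should track the truncations $\tau_{\leq t}$ more carefully (the $(N+1)$-truncated hypercovering computes syntomic cohomology only after applying $\tau_{\leq N}$, not on the nose), as the paper does, but this is a matter of precision rather than a gap.
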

\begin{proof}
To construct the map 
$\alpha_{\synt}$, take  a number $t\geq 2\dim X+2$ and let $Y_{\scriptscriptstyle\bullet}\to X$, $Y_{\scriptscriptstyle\bullet}=(U_{\scriptscriptstyle\bullet},\overline{U}_{\scriptscriptstyle\bullet})$,  be an $h$-hypercovering of $X$ by ss-pairs over $K$. Choose a finite Galois extension $(V\pri,K\pri)/(V,K)$ and a uniformizer $\pi^\prime$ of $V^\prime$ as in the proof of Proposition \ref{hypercov}.  Keeping the notation from that proof, refine our hypercovering  to the $h$-hypercovering $Y_{\scriptscriptstyle\bullet}\times_V {V^{\prime}}\to X_{K^{\prime}}$. Then the  truncation $(Y_{\scriptscriptstyle\bullet}\times_V {V^{\prime}})_{\leq t+1}$ is  built from log-schemes log-smooth over $V^{\prime,\times}$ and of Cartier type. We have
the following sequence of quasi-isomorphisms
\begin{align*}
\gamma_{\pi^\prime}: \R\Gamma_{\synt}(X_{K\pri,h}) &\stackrel{\sim}{\leftarrow} \tau_{\leq t}\R\Gamma_{\synt}(X_{K\pri,h}) \stackrel{\sim}{\to}\tau_{\leq t}\R\Gamma_{\synt}((U_{\scriptscriptstyle\bullet}\times_K {K^{\prime}})_{\leq t+1,h})\stackrel{\sim}{\leftarrow}
\tau_{\leq t}\R\Gamma_{\synt}((Y_{\scriptscriptstyle\bullet}\times_V {V^{\prime}})_{\leq t+1})_{\mathbf Q}\\
 & \stackrel{\sim}{\to}C_{\st}(\tau_{\leq t}\R\Gamma^B_{\hk}((Y_{\scriptscriptstyle\bullet}\times_V {V^{\prime}})_{\leq t+1})\{r\})\stackrel{\sim}{\to}C_{\st}(\tau_{\leq t}\R\Gamma^B_{\hk}((U_{\scriptscriptstyle\bullet}\times_K {K^{\prime}})_{\leq t+1,h})\{r\})\\
 & \stackrel{\sim}{\leftarrow} C_{\st}(\tau_{\leq t}\R\Gamma^B_{\hk}(X_{K^{\prime},h})\{r\})\stackrel{\sim}{\to}C_{\st}(\R\Gamma^B_{\hk}(X_{K^{\prime},h})\{r\})
 \end{align*}
The first quasi-isomorphism follows from Lemma \ref{dim}. The third   and fifth  quasi-isomorphisms follow from Proposition \ref{hypercov}. The fourth quasi-isomorphism (the map $\tilde{\alpha}_{\synt,\pi^\prime}^B$), since all the log-schemes involved are log-smooth over $V^{\prime,\times}$ and  of Cartier type, follows from Proposition \ref{reduction1}. 

   Now, set $G:=\Gal(K\pri/K)$.  Passing from $\gamma_{\pi^\prime}$ to its $G$-fixed points  we obtain the map 
$$\alpha_{\synt}:=\alpha_{\synt,\pi^\prime}:\quad  \R\Gamma_{\synt}(X_{h})\to C_{\st}(\R\Gamma^B_{\hk}(X_{h})\{r\})$$ as the composition
$$
\R\Gamma_{\synt}(X_{h})\to \R\Gamma_{\synt}(X_{K\pri,h})^G\stackrel{\gamma_{\pi^\prime}}{\to}C_{\st}(\R\Gamma^B_{\hk}(X_{K^{\prime},h})\{r\})^G\stackrel{\sim}{\leftarrow}
C_{\st}(\R\Gamma^B_{\hk}(X_{K,h})\{r\})
$$

 It remains to check that so defined map is independent of all choices.  For that, it suffices to check that, in the above construction,  for a  finite Galois extension $(V_1,K_1)$ of $(V^{\prime},K^{\prime})$, $H=\Gal(K_1/K^\prime)$, the corresponding maps
$\alpha_{\synt,?}: \R\Gamma_{\synt}(X_{h})\to C_{\st}(\R\Gamma^B_{\hk}(X_h)\{r\}) $ are the same in the classical derived category (note that this includes trivial extensions).  Easy diagram chase shows that this amounts to checking that the following  diagram commutes
$$
\xymatrix{
 \R\Gamma_{\synt}((Y_{\scriptscriptstyle\bullet}\times_V {V^{\prime}})_{\leq t+1})_{\mathbf Q}\ar[r]^-{\sim}_-{\alpha_{\synt,\pi^{\prime}}}\ar[d]
 & C_{\st}(\R\Gamma^B_{\hk}((Y_{\scriptscriptstyle\bullet}\times_V {V^{\prime}})_{\leq t+1})\{r\}) \ar[d]\\
  \R\Gamma_{\synt}((Y_{\scriptscriptstyle\bullet}\times_V {V_1})_{\leq t+1})_{\mathbf Q}^H\ar[r]^-{\sim}_-{\alpha_{\synt,\pi_1}}
 & C_{\st}(\R\Gamma^B_{\hk}((Y_{\scriptscriptstyle\bullet}\times_V {V_1})_{\leq t+1})\{r\})^H
 }
$$
 But this we have shown in Proposition \ref{hypercov11}.
 
 For the compatibility with finite base change, consider a finite field extension $L/K$. We can choose in the above a Galois extension $K^{\prime}/K$ that works for both fields. We get the same maps $\gamma_{\pi^{\prime}}$ for both $L$ and $K$. Consider now the following commutative diagram. The top and bottom rows define the maps $\alpha^L_{\synt,\pi^{\prime}}$ and $\alpha^K_{\synt,\pi^{\prime}}$, respectively.
$$
\xymatrix{
\R\Gamma_{\synt}(X_{L,h})\ar[r] &  \R\Gamma_{\synt}(X_{K\pri,h})^{G_L}\ar[r]^-{\gamma_{\pi^\prime}} & C_{\st}(\R\Gamma^B_{\hk}(X_{K^{\prime},h})\{r\})^{G_L} & 
C_{\st}(\R\Gamma^B_{\hk}(X_{L,h})\{r\})\ar[l]_-{\sim}\\
\R\Gamma_{\synt}(X_{h})\ar[u]\ar[r]  & \R\Gamma_{\synt}(X_{K\pri,h})^G\ar[u] \ar[r]^-{\gamma_{\pi^\prime}} & C_{\st}(\R\Gamma^B_{\hk}(X_{K^{\prime},h})\{r\})^G\ar[u] & 
C_{\st}(\R\Gamma^B_{\hk}(X_{K,h})\{r\})\ar[l]_-{\sim}\ar[u]
}
$$
This proves the last claim of our proposition.
\end{proof}
 
  \subsection{Geometric syntomic cohomology}
We will now study geometric syntomic cohomology, i.e., syntomic cohomology over $\ovk$. 
 Most of  the  constructions related to syntomic cohomology over $K$ have their analogs over ${\ovk}$. We will summarize them briefly. For details the reader should consult \cite{Ts}, \cite{BE2}. 

  For $(U,\overline{U})\in \spp^{ss}_{\ovk}$, $r\geq 0$,  we have the absolute crystalline cohomology complexes and their completions
\begin{align*}
\R\Gamma_{\crr}(U,\overline{U},\sj^{[r]})_n: &   =\R\Gamma_{\crr}(\overline{U}_{\eet},\R u_{\overline{U}_n/W_n(k)*}\sj^{[r]}_{\overline{U}_n/W_n(k)}),\quad
\R\Gamma_{\crr}(U,\overline{U},\sj^{[r]}):    =
\holim_n\R\Gamma_{\crr}(U,\overline{U},\sj^{[r]})_n,\\
\R\Gamma_{\crr}(U,\overline{U},\sj^{[r]})_\bq: &    =\R\Gamma_{\crr}(U,\overline{U},\sj^{[r]})\otimes\bq_p
\end{align*}
 By \cite[Theorem 1.18]{BE2}, the complex $\R\Gamma_{\crr}(U,\overline{U})$ is a perfect $A_{\crr}$-complex and $\R\Gamma_{\crr}(U,\overline{U})_n\simeq \R\Gamma_{\crr}(U,\overline{U})\otimes^{L}_{A_{\crr}}{A_{\crr}}/p^n\simeq \R\Gamma_{\crr}(U,\overline{U})\otimes^{L}{\mathbf Z}/p^n$. In general, we have
$\R\Gamma_{\crr}(U,\overline{U},\sj^{[r]})_n\simeq \R\Gamma_{\crr}(U,\overline{U},\sj^{[r]})\otimes^{L}{\mathbf Z}/p^n$.
Moreover,  $J^{[r]}_{\crr}=\R\Gamma_{\crr}(\Spec(\ovk),\Spec(\overline{V}),\sj^{[r]})$ \cite[1.6.3,1.6.4]{Ts}. The absolute  log-crystalline cohomology
 complexes are filtered $E_{\infty}$ algebras over $A_{\crr,n}$, $A_{\crr}$, or $A_{\crr,\bq}$, respectively. Moreover, the rational ones are filtered commutative dg algebras.

 For $r\geq 0$, the mod-$p^n$, completed, and rational log-syntomic complexes $\R\Gamma_{\synt}(U,\overline{U},r)_n$, $\R\Gamma_{\synt}(U,\overline{U},r)$, and $\R\Gamma_{\synt}(U,\overline{U},r)_\bq$ are defined by analogs of formulas (\ref{log-syntomic}). We have $\R\Gamma_{\synt}(U,\overline{U},r)_n\simeq \R\Gamma_{\synt}(U,\overline{U},r)\otimes^{L}{\mathbf Z}/p^n$.
  Let $\sj^{[r]}_{\crr}$, $\sa_{\crr}$, and $\sss(r)$  be the $h$-sheafifications on $\mathcal{V}ar_{\ovk}$ of the presheaves sending $(U,\overline{U})\in \spp^{ss}_{\ovk}$ to $
\R\Gamma_{\crr}(U,\overline{U},\sj^{[r]})$,  $
\R\Gamma_{\crr}(U,\overline{U})$, and $\R\Gamma_{\synt}(U,\overline{U},r)$, respectively.  Let $\sj^{[r]}_{\crr,n}$, $\sa_{\crr,n}$, and $\sss_n(r)$ denote the $h$-sheafifications of
the mod-$p^n$ versions of the respective presheaves; and let $\sj^{[r]}_{\crr,\bq}$, $\sa_{\crr,\bq}$, $\sss(r)_\bq$ be the $h$-sheafification
of the rational versions of the same presheaves.   

  For $X\in \mathcal{V}ar_{\ovk}$, set $\R\Gamma_{\crr}(X_h):=\R\Gamma(X_h,\sa_{\crr})$. It is a filtered (by $\R\Gamma(X_h,\sj^{[r]}_{\crr})$, $r\geq 0$, ) $E_{\infty}$  $A_{\crr}$-algebra equipped with the Frobenius action $\phi$. The Galois group $G_K$ acts on ${\mathcal V}ar_{\ovk}$ and it acts on $X\mapsto \R\Gamma_{\crr}(X_h)$ by transport of structure. If $X$ is defined over $K$ then $G_K$ acts naturally on $\R\Gamma_{\crr}(X_h)$. 

   For $r\geq 0$, set 
$\R\Gamma_{\synt}(X_h,r)_n=\R\Gamma(X_h,\sss_n(r))$, $\R\Gamma_{\synt}(X_h,r):=\R\Gamma(X_h,\sss(r)_\bq)$. We have 
\begin{align*}
\R\Gamma_{\synt}(X_h,r)_n & \simeq \Cone(\R\Gamma(X_h,\sj^{[r]}_{\crr,n})\stackrel{p^r-\phi}{\longrightarrow}\R\Gamma(X_h,\sa_{\crr,n}))[-1],\\
 \R\Gamma_{\synt}(X_h,r) & \simeq \Cone(\R\Gamma(X_h,\sj^{[r]}_{\crr,{\mathbf Q}})\stackrel{1-\phi_r}{\longrightarrow}\R\Gamma(X_h,\sa_{\crr,{\mathbf Q}}))[-1] .
\end{align*}
The direct sum $\bigoplus_{r\geq 0}\R\Gamma_{\synt}(X_h,r)$ is a graded $E_{\infty}$ algebra over ${\mathbf Z}_p$.

  Let $\overline{f}: Z_1\to \Spec(\overline{V}_1)^{\times}$ be  an integral, quasi-coherent log-scheme.  Suppose  that $\overline{f}$ is the base change of $\overline{f}_L:Z_{L,1}\to \Spec(\so_{L,1})^{\times}$ by $\theta_1: \Spec(\overline{\so_{L,1}})^{\times}\to\Spec(\so_{L,1})^{\times}$, for a finite extension $L/K$. That is, we have a map $\theta_{L,1}: Z_1\to Z_{L,1}$ such that the square $(\overline{f},\overline{f}_L,\theta_1,\theta_{L,1})$ is Cartesian. Assume that $\overline{f}_L$ is log-smooth of Cartier type and that the underlying map of schemes is proper. Such data $(L,Z_1,\theta_{L,1})$ form a directed set $\Sigma_1$ and, for a morphism $(L\pri,Z\pri_1,\theta\pri_{L^\prime,1})\to (L,Z_1,\theta_{L,1})$,
 we   have a canonical base change identification compatible with $\phi$-action \cite[1.18]{BE2}
$$\R\Gamma^B_{\hk}(Z_{L,1})\otimes_{L_0}L\pri_0\stackrel{\sim}{\to}
 \R\Gamma^B_{\hk}(Z\pri_{L^\prime,1}).
$$
These identifications can be made compatible with respect to $L$, so we can set
$$ \R\Gamma^B_{\hk}(Z_1):= \dirlim_{\Sigma_1}\R\Gamma^B_{\hk}(Z_{L,1})
$$
 It is a complex of $(\phi,N)$-modules over $K^{\nr}_0$, functorial with respect to morphisms of $Z_1$. 

  Consider the scheme $E_{\crr}:=\Spec(A_{\crr})$. We have $E_{\crr,1}=\Spec(\overline{V}_1)$ and we equip $E_{\crr,1}$ with the induced log-structure. This log-structure extends uniquely to a log-structure on $E_{\crr,n}$ and the PD-thickening $\Spec (\overline{V})^{\times}_1\hookrightarrow E_{\crr,n}$ is universal over ${\mathbf Z}/p^n$. Set $E_{\crr}:=\Spec(A_{\crr})$ with the limit log-structure.
Since we have \cite[1.18.1]{BE2}
$$\R\Gamma_{\crr}(Z_1)\stackrel{\sim}{\to}\R\Gamma_{\crr}(Z_1/E_{\crr}),
$$
Theorem \ref{Bthm} yields a canonical quasi-isomorphism of $B^+_{\crr}$-complexes (called {\em the crystalline Beilinson-Hyodo-Kato quasi-isomorphism})
$$
\iota^B_{\crr}:\quad  \R\Gamma_{\hk}^B(Z_1)^{\tau}_{B^+_{\crr}}\stackrel{\sim}{\to}\R\Gamma_{\crr}(Z_1)_{\bq}
$$ 
compatible with the action of Frobenius. But we have $$\R\Gamma_{\hk}^B(Z_1)^{\tau}_{B^+_{\crr}}=
(\R\Gamma_{\hk}^B(Z_1)\otimes_{K_0^{\nr}}A_{\crr,\bq}^{\tau})^{N=0}$$
and there is a canonical isomorphism $A_{\crr,\bq}^{\tau}\stackrel{\sim}{\to} B_{\st}^+$ that is compatible with Frobenius and monodromy. This implies that the above quasi-isomorphism amounts to a quasi-isomorphism of $B^+_{\crr}$-complexes
$$
\iota^B_{\crr}:\quad  \R\Gamma_{\hk}^B(Z_1)_{B^+_{\st}}\stackrel{\sim}{\to}\R\Gamma_{\crr}(Z_1)\otimes_{A_{\crr}}^{L}B^+_{\st}
$$
compatible with the action of $\phi$ and $N$. The crystalline Beilinson-Hyodo-Kato map can be canonically trivialized at $[\tilde{p}]$, where $\tilde{p}$ is a sequence of $p^n$'th roots of $p$,: 
\begin{align*}
\beta=\beta_{[\tilde{p}]}: \R\Gamma_{\hk}^B(Z_1)\otimes_{K_0^{\nr}}B^+_{\crr} & 
\stackrel{\sim}{\to}
(\R\Gamma_{\hk}^B(Z_1)\otimes_{K_0^{\nr}}B^+_{\crr}[a([\tilde{p}])])^{N=0}\\
x & \mapsto \exp(N(x)a([\tilde{p}]))
\end{align*}
This trivialization is compatible with Frobenius and monodromy.
  
  Suppose now that $\overline{f}_1:Z_1\to \Spec(\overline{V}_1)^{\times}$ is a reduction mod $p$ of a log-scheme $\overline{f}:Z\to \Spec(\overline{V})^{\times}$. Suppose  that $\overline{f}$ is the base change of $\overline{f}_L:Z_{L}\to \Spec(\so_{L})^{\times}$ by $\theta: \Spec(\overline{\so_{L}})^{\times}\to\Spec(\so_{L})^{\times}$, for a finite extension $L/K$. That is, we have a map $\theta_{L}: Z\to Z_{L}$ such that the square $(\overline{f},\overline{f}_L,\theta,\theta_{L})$ is Cartesian. Assume that $\overline{f}_L$ is log-smooth of Cartier type and that the underlying map of schemes is proper. Such data $(L,Z,\theta_{L})$ form a directed set $\Sigma$ and the reduction mod $p$ map $\Sigma\to \Sigma_1$ is cofinal. The Beilinson-Hyodo-Kato quasi-isomorphisms (\ref{HK1}) are compatible with morphisms in $\Sigma$ and their colimit yields a natural quasi-isomorphism (called again the {\em Beilinson-Hyodo-Kato quasi-isomorphism})
$$
\iota^B_{\dr}:\quad \R\Gamma_{\hk}^B(Z_1)_{\ovk}^{\tau}\stackrel{\sim}{\to} \R\Gamma(Z_{\ovk},\Omega^{\scriptscriptstyle\bullet}_{Z/\ovk}).
$$
The trivializations by  $p$ are also compatible with the maps in $\Sigma$ hence we obtain the  Beilinson-Hyodo-Kato maps
$$
\iota^B_{\dr}:=\iota^B_{\dr}\beta_{p}:\quad \R\Gamma_{\hk}^B(Z_1)\to  \R\Gamma(Z_{\ovk},\Omega^{\scriptscriptstyle\bullet}_{Z/\ovk}).
$$

   For  an ss-pair $(U,\overline{U})$ over $\ovk$, set $\R\Gamma^B_{\hk}(U,\overline{U}):=\R\Gamma^B_{\hk}((U,\overline{U})_1)$. Let $\sa^B_{\hk}$ be $h$-sheafification of the presheaf $(U,\overline{U})\mapsto \R\Gamma^B_{\hk}(U,\overline{U})$ on $\spp^{ss}_{\ovk}$. This is an $h$-sheaf of $E_{\infty}$ $K_0^{\nr}$-algebras equipped with a $\phi$-action and locally nilpotent derivation $N$ such that $N\phi=p\phi N$. For $X\in\mathcal{V}ar_{\ovk}$, set $\R\Gamma^B_{\hk}(X_h):=\R\Gamma(X_h,\sa^B_{\hk}). $
\begin{proposition}
\begin{enumerate}
\item For any $(U,\overline{U})\in \spp^{ss}_{\ovk}$, the canonical maps
\begin{equation}
\label{isomorphism}
\R\Gamma_{\crr}(U,\overline{U},\sj^{[r]})_{\mathbf Q}\stackrel{\sim}{\to} \R\Gamma(U_h,\sj^{[r]}_{\crr})_{\mathbf Q},\quad \R\Gamma^B_{\hk}(U,\overline{U})\stackrel{\sim}{\to}\R\Gamma^B_{\hk}(U_h)
\end{equation}
are quasi-isomorphisms.
\item For every $X\in \mathcal{V}ar_{\ovk}$, the cohomology groups $H^n_{\crr}(X_h):=H^n\R\Gamma_{\crr}(X_h)_{\mathbf Q}$, resp.  $H^n_{\hk}(X_h):= H^n\R\Gamma^B_{\hk}(X_h)$, are free $B^{+}_{\crr}$-modules, resp. $K_0^{\nr}$-modules, of rank equal to the rank of $ H^n(X_{\eet},{\mathbf Q}_p)$.
\end{enumerate}
\end{proposition}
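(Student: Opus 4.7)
My plan is to follow the blueprint of Proposition~\ref{hypercov} (arithmetic case), exploiting the fact that the geometric setting is actually simpler: since we are already over $\ovk$, there is no Galois descent to manage.

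For part (1), first quasi-isomorphism: by definition of $\spp^{ss}_{\ovk}$, any $(U,\overline{U})$ is the base change, via some $\ovk$-point, of an ss-pair $(U_L,\overline{U}_L)$ over a finite extension $L/K$. Setting $X:=\overline{U}_L$ viewed as a fine log-scheme log-smooth over $V_L^{\times}$ and of Cartier type, we have $\R\Gamma_{\crr}(U,\overline{U},\sj^{[r]})_{\mathbf Q}=\R\Gamma_{\crr}(X_{\overline{V}},\sj^{[r]}_{X_{\overline{V}}/W(k)})_{\mathbf Q}$, so the statement is exactly Beilinson's result \cite[2.5]{BE2} (recalled before the proposition), which gives the quasi-isomorphism with $\R\Gamma(U_{h},\sj^{[r]}_{\crr})_{\mathbf Q}$.

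For the Hyodo-Kato quasi-isomorphism, I would reduce to the crystalline case via Theorem~\ref{Bthm}. Evaluating the Beilinson--Hyodo--Kato quasi-isomorphism on the universal PD-thickening $\Spec(\overline{V})^{\times}_1\hookrightarrow E_{\crr}$ produces, after inverting $p$,
\[
\iota^B_{\crr}:\quad \R\Gamma^B_{\hk}(U,\overline{U})\otimes_{K_0^{\nr}}B^+_{\st}\stackrel{\sim}{\to}\R\Gamma_{\crr}(U,\overline{U})\otimes^L_{A_{\crr}}B^+_{\st},
\]
naturally in $(U,\overline{U})$. Since $B^+_{\st}$ is faithfully flat over $K_0^{\nr}$, the Hyodo-Kato $h$-descent statement is equivalent to the corresponding statement after $-\otimes_{K_0^{\nr}}B^+_{\st}$, which by naturality of $\iota^B_{\crr}$ and the first quasi-isomorphism reduces to the crystalline case already established. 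The only delicate point is that $\iota^B_{\crr}$ is built from a colimit over $\Sigma_1$, so one must verify that passing to this colimit commutes with $h$-sheafification; this however is automatic because filtered colimits are exact and the transition maps are quasi-isomorphisms on each representative sitting over a fixed finite extension $L/K$.

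For part (2), I would run the usual hypercovering argument: by \cite[2.5]{BE1}, $(\spp^{ss}_{\ovk},\gamma)$ forms a basis for $\mathcal{V}ar_{\ovk,h}$, so for any $X\in\mathcal{V}ar_{\ovk}$ there is an $h$-hypercovering $Y_{\scriptscriptstyle\bullet}\to X$ with $Y_n\in \spp^{ss}_{\ovk}$; part (1) then expresses $\R\Gamma_{\crr}(X_h)_{\mathbf Q}$ and $\R\Gamma^B_{\hk}(X_h)$ as the totalizations of the corresponding cosimplicial objects built from log-crystalline and Beilinson--Hyodo--Kato cohomologies of the $Y_n$. The freeness and the rank formula will follow by comparison with étale cohomology: the Beilinson comparison theorems (\cite[3.1,3.2]{BE2}, referenced here as (\ref{first5})) provide, after $h$-sheafification, natural quasi-isomorphisms
\[
\R\Gamma_{\crr}(X_h)\otimes^L_{A_{\crr}}B_{\crr}\simeq\R\Gamma(X_{\ovk,\eet},{\mathbf Q}_p)\otimes_{{\mathbf Q}_p}B_{\crr},\quad \R\Gamma^B_{\hk}(X_h)\otimes_{K_0^{\nr}}B^+_{\st}\simeq \R\Gamma(X_{\ovk,\eet},{\mathbf Q}_p)\otimes_{{\mathbf Q}_p}B^+_{\st};
\]
since étale cohomology is a perfect ${\mathbf Q}_p$-complex of the stated rank, faithfully flat descent along $B^+_{\crr}\to B_{\crr}$ (for the crystalline case) and along $K_0^{\nr}\to B^+_{\st}$ (for Hyodo--Kato) yields freeness of the cohomology groups and the rank equality. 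The main technical obstacle is the rigorous handling of the colimit construction of $\R\Gamma^B_{\hk}$ over $\Sigma_1$ and its compatibility with $h$-sheafification, which is why the reduction through $\iota^B_{\crr}$ to the crystalline statement (where Beilinson has done the hard work) is the most efficient route.
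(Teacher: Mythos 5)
There is a genuine gap, and it sits exactly where the paper's proof does its only work. Everything except the filtered statement for $r>0$ — the unfiltered crystalline descent, the Hyodo--Kato descent, and all of part (2) — is quoted wholesale from Beilinson \cite[2.4]{BE2}; the one thing that needs an argument is the quasi-isomorphism $\R\Gamma_{\crr}(U,\overline{U},\sj^{[r]})_{\mathbf Q}\to \R\Gamma(U_h,\sj^{[r]}_{\crr})_{\mathbf Q}$ for $r>0$, and your proposal disposes of it by declaring it to be ``exactly Beilinson's result recalled before the proposition.'' But the passage you are pointing to cites this very proposition for the filtered case, so your argument is circular: Beilinson proves the statement for $\sa_{\crr}$ (and for $\sa^B_{\hk}$), not for the subsheaves $\sj^{[r]}_{\crr}$. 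The paper's proof of the missing piece is short but has real content: it reduces to showing that $\R\Gamma_{\crr}(U,\overline{U},\so/\sj^{[r]})_{\mathbf Q}\to \R\Gamma(U_h,\sa_{\crr}/\sj^{[r]}_{\crr})_{\mathbf Q}$ is a quasi-isomorphism, identifies the quotients with de Rham cohomology modulo $F^r$ via Corollary \ref{Langer}, and then invokes $h$-descent for Deligne's de Rham cohomology (Proposition \ref{deRham1}). Your proposal contains no substitute for this step.

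The alternative routes you sketch for the parts the paper simply cites also have holes. Reducing the Hyodo--Kato descent to the crystalline one via $\iota^B_{\crr}$ requires commuting $-\otimes_{K_0^{\nr}}B^+_{\st}$, an infinite-dimensional base change, past $\R\Gamma(U_h,-)$ (i.e.\ past $h$-sheafification and derived global sections); this is not automatic and, to the extent it holds, it uses exactly the finiteness/perfectness assertions of part (2) that you have not yet established. In part (2), deducing that $H^n_{\crr}(X_h)$ is a free $B^+_{\crr}$-module from the comparison with \'etale cohomology after inverting $t$ does not work: $B^+_{\crr}\to B_{\crr}$ is a localization, flat but not faithfully flat, so freeness of $H^n_{\crr}(X_h)\otimes_{B^+_{\crr}}B_{\crr}$ says nothing about $t$-torsion or freeness over $B^+_{\crr}$ itself. (The descent along $K_0^{\nr}\to B^+_{\st}$ is unobjectionable since $K_0^{\nr}$ is a field, but it again presupposes the commutation of the infinite tensor with cohomology.) These are precisely the delicate points Beilinson handles in \cite[2.4]{BE2}, which is why the paper does not reprove them.
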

\begin{proof}
 Only the filtered statement in part (1) for $r > 0$ requires argument since the rest has been proven by Beilinson  in \cite[2.4]{BE2}. Take $r >0$. To prove that we have a quasi-isomorphism
$\R\Gamma_{\crr}(U,\overline{U},\sj^{[r]})_{\mathbf Q}\stackrel{\sim}{\to} \R\Gamma(U_h,\sj^{[r]}_{\crr})_{\mathbf Q}$ it suffices to show that the map
$\R\Gamma_{\crr}(U,\overline{U},\so/\sj^{[r]})_{\mathbf Q}\stackrel{}{\to} \R\Gamma(U_h,\sa_{\crr}/\sj^{[r]}_{\crr})_{\mathbf Q}$ is a quasi-isomorphism.
Since, for an ss-pair $(T,\ove{T})$ over $K$,  by Corollary \ref{Langer}, $\R\Gamma_{\crr}(T,\overline{T},\so/\sj^{[r]})_{\mathbf Q}\simeq 
\R\Gamma(\overline{T}_K,\Omega^{\scriptscriptstyle\bullet}_{(T,\overline{T}_K)}/F^r)$
this is equivalent to showing that  the map
$\R\Gamma(\overline{U}_K,\Omega^{\scriptscriptstyle\bullet}_{(U,\overline{U}_K)}/F^r)\to \R\Gamma(U_h,\sa_{\dr}/F^r)$ is a quasi-isomorphism. And this follows from Proposition \ref{deRham1}.
\end{proof}
\begin{proposition}
\label{HKdR} Let  $X\in {\mathcal V}ar_{K}$.   The natural projection $\varepsilon: X_{\ovk,h}\to X_h$ defines pullback maps
 \begin{equation}
 \label{qis11}
 \varepsilon^*: \R\Gamma^B_{\hk}(X_h)\to \R\Gamma^B_{\hk}(X_{\ovk,h})^{G_K},\quad  \varepsilon^*: \R\Gamma_{\dr}(X_h)\to \R\Gamma_{\dr}(X_{\ovk,h})^{G_K}.
 \end{equation}
 These are (filtered) quasi-isomorphisms. 
\end{proposition}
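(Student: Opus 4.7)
The plan is to reduce to a hypercovering by semistable pairs defined over $K$, apply base-change formulas pair by pair, and invoke Galois descent for semilinear actions.

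First, using that $(\spp^{ss}_K, \gamma)$ is a basis for $\mathcal{V}ar_{K,h}$, I would choose an $h$-hypercovering $Y_{\jcdot} = (U_{\jcdot},\ove{U}_{\jcdot}) \to X$ by ss-pairs over $K$. By Proposition \ref{hypercov} the complexes $\R\Gamma^B_{\hk}(X_h)$ and $\R\Gamma_{\dr}(X_h)$ (the latter filtered) are computed as the homotopy limits along $\jcdot$ of $\R\Gamma^B_{\hk}(Y_{\jcdot})$ and $\R\Gamma_{\dr}(Y_{\jcdot})$, respectively. Base-changing, $Y_{\jcdot,\ove{V}} := Y_{\jcdot} \times_V \ove{V}$ is an $h$-hypercovering of $X_{\ovk}$ by ss-pairs over $\ovk$ in the sense of the excerpt, so a parallel reduction applies over $\ovk$ via $h$-descent combined with Corollary \ref{blowup}/Proposition \ref{deRham1} for de Rham and the geometric analog of Proposition \ref{hypercov} for Hyodo-Kato.

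Next, for each individual ss-pair $(U,\ove{U})$ over $K$, I would establish $G_K$-equivariant quasi-isomorphisms
\begin{align*}
\R\Gamma^B_{\hk}((U,\ove{U}) \times_V \ove{V}) &\simeq \R\Gamma^B_{\hk}(U,\ove{U}) \otimes_{K_0} K_0^{\nr},\\
\R\Gamma_{\dr}(U_{\ovk}, \ove{U}_{\ovk}) &\simeq \R\Gamma_{\dr}(U, \ove{U}_K) \otimes_K \ovk,
\end{align*}
the second being filtered, with $G_K$ acting trivially on the first factor and via its usual action on $K_0^{\nr}$, respectively $\ovk$. For Hyodo-Kato this follows directly from the definition $\R\Gamma^B_{\hk}((U,\ove{U}) \times_V \ove{V}) = \dirlim_L \R\Gamma^B_{\hk}((U,\ove{U}) \times_V V_L)$ over finite extensions $L/K$, combined with the base-change identity $\R\Gamma^B_{\hk}((U,\ove{U}) \times_V V_L) \simeq \R\Gamma^B_{\hk}(U,\ove{U}) \otimes_{K_0} L_0$ coming from Theorem \ref{kk2}(2). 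For de Rham, flat base change along $K \to \ovk$ together with the definition of the Hodge filtration by stupid truncation $F^r = \sigma_{\geq r}\Omega^{\jcdot}$ handles the filtered case.

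Assembling these pair-by-pair identifications along $Y_{\jcdot}$ then yields (filtered) quasi-isomorphisms
$$\R\Gamma^B_{\hk}(X_{\ovk,h}) \simeq \R\Gamma^B_{\hk}(X_h) \otimes_{K_0} K_0^{\nr}, \qquad \R\Gamma_{\dr}(X_{\ovk,h}) \simeq \R\Gamma_{\dr}(X_h) \otimes_K \ovk,$$
with $G_K$ acting trivially on the first tensor factor in each case. Passing to $G_K$-fixed points and invoking $(K_0^{\nr})^{G_K}=K_0$, $\ovk^{G_K}=K$ together with the finite-dimensionality of each cohomology group (for de Rham by Proposition \ref{deRham1}; for Hyodo-Kato by tracing through the finite-dimensional Beilinson-Hyodo-Kato modules attached to a truncation of the hypercovering) then recovers the desired (filtered) quasi-isomorphisms by classical Galois descent. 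The main obstacle will be the simplicial assembly step: one must verify that the pair-by-pair base-change isomorphisms are simplicially coherent in the $\infty$-derived category, so that the derived tensor product with $K_0^{\nr}$ (respectively $\ovk$) commutes with the homotopy limit computing $\R\Gamma$. This coherence is guaranteed by the functoriality of the Beilinson-Hyodo-Kato construction recorded in \cite[1.16]{BE2} and the flatness of $K_0^{\nr}/K_0$, respectively $\ovk/K$.
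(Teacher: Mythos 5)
Your overall strategy -- reduce via Proposition \ref{hypercov} to a single ss-pair, write down a $G_K$-equivariant base-change identification for that pair, then take $G_K$-fixed points -- is the same as the paper's, and the paper is considerably terser because, after noting that the $G_K$-action on $\R\Gamma^B_{\hk}(X_{\ovk,h})$ and $\R\Gamma_{\dr}(X_{\ovk,h})$ is smooth, it simply invokes Proposition \ref{hypercov} to reduce the assertion to the level of one ss-pair without any explicit simplicial reassembly or finite-dimensionality argument.

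The genuine gap is in your base-change identity for Hyodo--Kato. An ss-pair $(U,\ove{U})$ over $K$ is split over $V_U$, where $K_U=\Gamma(\ove{U}_K,\so_{\ove{U}})$ is in general a product of finite extensions of $K$; the complex $\R\Gamma^B_{\hk}(U_1,\ove{U}_1)$ is therefore naturally a $K_{U,0}$-complex (not a $K_0$-complex), and $(U,\ove{U})\otimes_K\ovk$ decomposes as a disjoint union indexed by the $\ovk$-points of $\Spec(K_U)$. Your formula $\R\Gamma^B_{\hk}((U,\ove{U})\times_V\ove V)\simeq \R\Gamma^B_{\hk}(U,\ove U)\otimes_{K_0}K_0^{\nr}$ ignores both of these: the correct statement, after enlarging $K_U$ to a finite Galois extension over which the pair is log-smooth of Cartier type, reads
$$
\R\Gamma^B_{\hk}((U,\ove{U})\otimes_K\ovk)\ \simeq\ \R\Gamma^B_{\hk}(U_1,\ove{U}_1)\otimes_{K_{U,0}}K_0^{\nr}\times H,\qquad H=\Gal(K_U/K),
$$
and it is precisely the $H$-factor that, together with Shapiro-type descent, makes the $G_K$-fixed points come out to $\R\Gamma^B_{\hk}(U_1,\ove U_1)$ rather than something larger. (A minor additional point: the base-change identification used here is \cite[1.18]{BE2}, rather than the crystal-level compatibility of Theorem \ref{kk2}(2), which is a statement about $\epsilon_f$ and does not directly produce the displayed isomorphism of complexes.) Without the $H$-factor and the correct base ring $K_{U,0}$, the step ``classical Galois descent recovers the result'' would fail for any ss-pair that is not geometrically connected over $K$; fixing this is exactly what the short finite-extension argument in the paper accomplishes.
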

\begin{proof}
 Notice that the action of $G_K$ on $\R\Gamma^B_{\hk}(X_{\overline{K},h})\{r\}$ and $  \R\Gamma_{\dr}(X_{\ovk,h})$ is smooth, i.e., the stabilizer of every element is an open subgroup of $G_K$. We will prove only the first quasi-isomorphism - the proof of the second one being analogous. By Proposition \ref{hypercov}, it suffices to show that for any ss-pair over $K$ the natural map
 $$ \R\Gamma^B_{\hk}(U_1,\overline{U}_1)  \to \R\Gamma^B_{\hk}((U,\overline{U})\otimes _K\ovk)^{G_K}
$$
is a quasi-isomorphism. Passing to a finite  extension of $K_U$, if necessary, we may assume that $(U,\overline{U})$ is log-smooth of Cartier type over a finite Galois extension $K_U$ of $K$. Then $$ \R\Gamma^B_{\hk}((U,\overline{U})\otimes _K\ovk)  \simeq \R\Gamma^B_{\hk}(U_1,\overline{U}_1)\otimes_{K_{U,0}}K_0^{\nr}\times H,\quad H=\Gal(K_U/K).$$
Taking  $G_K$-fixed points of this quasi-isomorphism we obtain the first quasi-isomorphism of (\ref{qis11}), as wanted.
 \end{proof}

   Let $(U,\overline{U})$ be an ss-pair over    $\ovk$. Set
\begin{align*}
\R\Gamma_{\dr}^{\natural}(U,\overline{U}):=  & \R\Gamma(\overline{U}_{\eet},\LL\Omega^{\scriptscriptstyle\bullet,\wedge}_{(U,\overline{U})/W(k)}),\quad
\R\Gamma_{\dr}^{\natural}(U,\overline{U})_n: = \R\Gamma_{\dr}^{\natural}(U,\overline{U})\otimes^{{\mathbb L}}{\mathbf Z}/p^n\simeq
\R\Gamma(\overline{U}_{\eet},\LL\Omega^{\scriptscriptstyle\bullet,\wedge}_{(U,\overline{U})_n/W_n(k)}),\\
\R\Gamma_{\dr}^{\natural}(U,\overline{U})\what{\otimes}{\mathbf Z}_p:= & \holim_n \R\Gamma_{\dr}^{\natural}(U,\overline{U})_n,\quad
\R\Gamma_{\dr}^{\natural}(U,\overline{U})\what{\otimes}{\mathbf Q}_p:= (\R\Gamma_{\dr}^{\natural}(U,\overline{U})\what{\otimes}{\mathbf Z}_p)\otimes {\mathbf Q}.
\end{align*}
These are $F$-filtered $E_{\infty}$ algebras. Take the associated presheaves on $\spp^{ss}_{\overline{K}}$.
Denote by $\sa^{\natural}_{\dr}$, $\sa^{\natural}_{\dr,n},\sa^{\natural}_{\dr}\what{\otimes}{\mathbf Z}_p,\sa^{\natural}_{\dr}\what{\otimes}{\mathbf Q}_p$ their sheafifications in the $h$-topology of $\mathcal{V}ar_{\ovk}$. These are sheaves of $F$-filtered  $E_{\infty}$ algebras (viewed as the projective system of quotients modulo $F^i$).
 Set $A_{\dr}:=\LL\Omega^{\scriptscriptstyle\bullet,\wedge}_{\overline{V}/V}$.  By \cite[Lemma 3.2]{BE1} we have 
 $A_{\dr}=\sa^{\natural}_{\dr}(\Spec(\ovk))=\R\Gamma^{\natural}_{\dr}(\ovk,\overline{V})$. The corresponding $F$-filtered algebras $A_{\dr,n}$, $A_{\dr}\what{\otimes}{\mathbf Z}_p$, $A_{\dr}\what{\otimes}{\mathbf Q}_p$ are acyclic in nonzero degrees and the projections $\cdot/F^{m+1}\to \cdot/F^m$ are surjective. 
 Thus (we set $\lim_F:=\holim_F$)
 \begin{align*}
 A^{\diamond}_{\dr,n} & :=\lim_FA_{\dr,n}=\invlim_{m}H^0(A_{\dr,n}/F^m),  \quad A^{\diamond}_{\dr}:=\lim_F(A_{\dr}\what{\otimes}\bz_p)=\invlim_{m}H^0(A_{\dr}\what{\otimes}\bz_p/F^m)\\
   \lim_FA_{\dr}\what{\otimes}{\mathbf Q}_p & =\invlim_{m}H^0(A_{\dr}\what{\otimes}{\mathbf Q}_p/F^m)=B_{\dr}^+, \quad A_{\dr}\what{\otimes}{\mathbf Q}_p/F^m=B^+_{\dr}/F^m
    \end{align*}
For any $(U,\overline{U})$  over    $\ovk$, the complex $\R\Gamma_{\dr}^{\natural}(U,\overline{U})$ is an $F$-filtered $E_{\infty}$ filtered $A_{\dr}$-algebra hence 
$\lim_F\R\Gamma_{\dr}^{\natural}(U,\overline{U})_n$ is an $A_{\dr,n}^{\diamond}$-algebra, 
$\lim_F(\R\Gamma_{\dr}^{\natural}(U,\overline{U})\what{\otimes}\bq_p)$ is a $B^+_{\dr}$-algebra, etc.    We have canonical morphisms
$$\kappa^{\prime}_{r,n}:\quad \R\Gamma_{\crr}(U,\overline{U})_n\to \R\Gamma_{\crr}(U,\overline{U})_n/F^r\stackrel{\sim}{\to }\R\Gamma_{\dr}^{\natural}(U,\overline{U})_n/F^r
$$
In the case of $(\ovk,\overline{V})$, from Theorem \ref{beilinson}, we get isomorphisms $\kappa^{\prime}_{r,n}=\kappa^{-1}_r:A_{\crr,n}/J^{[r]}\stackrel{\sim}{\to} A_{\dr,n}/F^r$. Hence $A_{\dr}^{\diamond}$ is the completion of $A_{\crr}$ with respect to the $J^{[r]}$-topology.

 For $X\in\mathcal{V}ar_{\ovk}$, set $\R\Gamma_{\dr}^{\natural}(X_h):=\R\Gamma(X_h,\sa_{\dr}^{\natural}). $ Since $A_{\dr,{\mathbf Q}}=\ovk$, for any variety $X$ over $\ovk$, we have a filtered quasi-isomorphism of $\ovk$-algebras \cite[3.2]{BE1}
$
\R\Gamma^{\natural}_{\dr}(X_h)_{\mathbf Q}\stackrel{\sim}{\to} \R\Gamma_{\dr}(X_h)
$
obtained by $h$-sheafification of the quasi-isomorphism
\begin{equation}
\label{deRham11}
\R\Gamma^{\natural}_{\dr}(U,\overline{U})_{\mathbf Q}\stackrel{\sim}{\to} \R\Gamma_{\dr}(U,\ove{U}_\bq).
\end{equation}

  Concerning  the $p$-adic coefficients, we have a quasi-isomorphism 
\begin{equation}
\label{gamma}
\gamma_r: (\R\Gamma_{\dr}(X_{h})\otimes_{\ovk}B^+_{\dr}) /F^r\stackrel{\sim}{\to} \R\Gamma(X_{h},\sa^{\natural}_{\dr}\what{\otimes}{\mathbf Q}_p) /F^r
\end{equation}
To define it, consider, for any ss-pair $(U,\overline{U})$ over $\ovk$, the natural map $\R\Gamma^{\natural}_{\dr}(U,\overline{U})\to \R\Gamma^{\natural}_{\dr}(U,\overline{U})\what{\otimes}{\mathbf Z}_p$. It
  yields, by extension to $A_{\dr}\what{\otimes}{\mathbf Q}_p$ and by the quasi-isomorphism (\ref{deRham11}), a quasi-isomorphism  of $F$-filtered $\ovk$-algebras \cite[3.5]{BE2}
  $$
\gamma: \R\Gamma_{\dr}(U,\overline{U})_{\mathbf Q}\otimes_{\ovk}(A_{\dr}\what{\otimes}{\mathbf Q}_p)\stackrel{\sim}{\to}\R\Gamma^{\natural}_{\dr}(U,\overline{U})\what{\otimes}{\mathbf Q}_p
$$
Its mod $F^r$-version $\gamma_r$ after $h$-sheafification yields the quasi-isomorphism
$$\gamma_r: (\sa_{\dr}\otimes_{\ovk}B_{\dr}^+)/F^r\stackrel{\sim}{\to} \sa_{\dr}^{\natural}\what{\otimes}{\mathbf Q}_p/F^r
$$
Passing to $\R\Gamma(X_h,\scriptscriptstyle\bullet)$ we get the quasi-isomorphism (\ref{gamma}).

For $X\in {\mathcal V}ar_{\ovk}$, we have canonical quasi-isomorphisms
$$\iota^B_{\crr}: \R\Gamma^B_{\hk}(X_h)^{\tau}_{B^+_{\crr}}\stackrel{\sim}{\to}\R\Gamma_{\crr}(X_{h})_{\mathbf Q},\quad
\iota^B_{\dr}: \R\Gamma^B_{\hk}(X_h)^{\tau}_{\ovk}\stackrel{\sim}{\to} \R\Gamma_{\dr}(X_h),
$$
compatible with the $\Gal(\ovk/K)$-action.  Here ${}^{\tau}_{B^+_{\crr}}$ and ${}^{\tau}_{\ovk}$ denote the $h$-sheafification of the crystalline and the de Rham Beilinson-Hyodo-Kato twists \cite[2.5.1]{BE2}. Trivializing the first map at $[\tilde{p}]$ and the second map at $p$ we get the Beilinson-Hyodo-Kato maps
\begin{align*}
\iota^B_{\crr}:=\iota^B_{\crr}\beta_{[\tilde{p}]}:   \R\Gamma^B_{\hk}(X_h)\otimes_{K_0^{\nr}}{B^+_{\crr}}\to \R\Gamma_{\crr}(X_{h})_{\mathbf Q},\quad
\iota_{\dr} :=\iota_{\dr}\beta_p:   \R\Gamma^B_{\hk}(X_h)\to  \R\Gamma_{\dr}(X_h).
\end{align*}

 Using the quasi-isomorphism $\kappa_r^{-1}: \sa_{\crr,{\mathbf Q}} /\sj^{[r]}_{\crr,{\mathbf Q}}\stackrel{\sim}{\to} (\sa^{\natural}_{\dr}\what{\otimes}{\mathbf Q}_p)/F^r$ from Theorem \ref{beilinson}, we obtain  the following  quasi-isomorphisms of complexes of sheaves on $X_{\ovk,h}$ 
\begin{align*}
\sss(r)_{\mathbf Q}  & \stackrel{\sim}{\to}
\xymatrix{[\sj^{[r]}_{\crr,{\mathbf Q}}\ar[r]^-{1-\phi_r} & \sa_{\crr,{\mathbf Q}}]} \stackrel{\sim}{\to}
 \xymatrix{[\sa_{\crr,{\mathbf Q}}\ar[rr]^-{(1-\phi_r,\can)} & & \sa_{\crr,{\mathbf Q}}\oplus  \sa_{\crr,{\mathbf Q}} /\sj^{[r]}_{\crr,{\mathbf Q}}]}\\
   & 
\stackrel{\sim}{\leftarrow}\xymatrix{[\sa_{\crr,{\mathbf Q}}\ar[rr]^-{(1-\phi_r,\kappa_r^{-1})} & & \sa_{\crr,{\mathbf Q}}\oplus  (\sa^{\natural}_{\dr}\what{\otimes}{\mathbf Q}_p)/F^r ]}   
\end{align*}
Applying $\R\Gamma(X_{h},\scriptscriptstyle\bullet)$ and the quasi-isomorphism $\gamma_r^{-1}:\R\Gamma(X_{h},\sa^{\natural}_{\dr}\what{\otimes}{\mathbf Q}_p) /F^r\stackrel{\sim}{\to} (\R\Gamma_{\dr}(X_{h})\otimes_{\ovk}B^+_{\dr}) /F^r$ from (\ref{gamma})  we obtain the following quasi-isomorphisms 
\begin{align}
\label{kwak1}
\R\Gamma_{\synt}(X_{h},r)
 & \stackrel{\sim}{\to}
   \xymatrix{[\R\Gamma_{\crr}(X_{h})_{\mathbf Q}\ar[rr]^-{(1-\phi_r,\kappa_r^{-1})} & & \R\Gamma_{\crr}(X_{h})_{\mathbf Q}\oplus  \R\Gamma(X_{h},\sa^{\natural}_{\dr}\what{\otimes}{\mathbf Q}_p) /F^r]}\\
& \stackrel{\sim}{\to}
   \xymatrix{[\R\Gamma_{\crr}(X_{h})_{\mathbf Q}\ar[rr]^-{(1-\phi_r,\gamma_{r}^{-1}\kappa_r^{-1} )} & & \R\Gamma_{\crr}(X_{h})_{\mathbf Q}\oplus  (\R\Gamma_{\dr}(X_{h})\otimes_{\ovk}B_{\dr}^+) /F^r]}\notag
\end{align}
\begin{corollary}
For any $(U,\overline{U})\in\spp^{ss}_{\ovk}$, the canonical map 
$$\R\Gamma 
_{\synt}(U,\overline{U},r)_\bq\stackrel{\sim}{\to} \R\Gamma_{\synt} (U_h,r)
$$
is a quasi-isomorphism.
\end{corollary}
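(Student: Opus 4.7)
The plan is to deduce the statement directly from the previous proposition (the one asserting that, for an ss-pair $(U,\overline{U})$ over $\ovk$ and all $r \geq 0$, the canonical maps
$\R\Gamma_{\crr}(U,\overline{U},\sj^{[r]})_{\mathbf Q}\stackrel{\sim}{\to} \R\Gamma(U_h,\sj^{[r]}_{\crr})_{\mathbf Q}$
are quasi-isomorphisms), together with the defining presentation of rational syntomic cohomology as a mapping fiber of $1-\phi_r$.

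Concretely, I would first recall that, by definition,
\[
\R\Gamma_{\synt}(U,\overline{U},r)_\bq = \bigl[\R\Gamma_{\crr}(U,\overline{U},\sj^{[r]})_\bq \lomapr{1-\phi_r} \R\Gamma_{\crr}(U,\overline{U})_\bq\bigr],
\]
while, by the construction of $\sss(r)_\bq$,
\[
\R\Gamma_{\synt}(U_h,r) = \bigl[\R\Gamma(U_h,\sj^{[r]}_{\crr,\bq}) \lomapr{1-\phi_r} \R\Gamma(U_h,\sa_{\crr,\bq})\bigr],
\]
and the canonical comparison map is induced by the natural maps of presheaves to their $h$-sheafifications. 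Since mapping fibers are exact in the $\infty$-derived category, it suffices to show that each of the two canonical maps
\[
\R\Gamma_{\crr}(U,\overline{U},\sj^{[r]})_\bq \to \R\Gamma(U_h,\sj^{[r]}_{\crr,\bq}),\qquad
\R\Gamma_{\crr}(U,\overline{U})_\bq \to \R\Gamma(U_h,\sa_{\crr,\bq})
\]
is a quasi-isomorphism, and that these maps are compatible with the respective Frobenii.

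Both quasi-isomorphisms are an immediate application of part (1) of the previous proposition (taking $r = 0$ for the second). The compatibility with Frobenius is automatic from the construction of $\sss(r)_\bq$: the map $1-\phi_r$ on the $h$-sheafified side is defined precisely as the $h$-sheafification of the presheaf map $1-\phi_r$, hence the diagram of Frobenii commutes on the nose at the level of presheaves and descends to the required commutative square of complexes. The only subtle point---and the sole place where a genuine argument is needed---is hidden in the previous proposition, whose proof (due to Beilinson) uses $h$-descent for derived log de Rham complexes and Theorem \ref{beilinson} to pass from crystalline to de Rham pieces. Given that input, no further obstacle remains, and the two mapping fibers agree.
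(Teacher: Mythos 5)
Your proposal is correct, and it takes a shorter route than the paper's own proof. You compare the two defining mapping-fiber presentations term by term: since the triangle $\sss(r)_\bq\to\sj^{[r]}_{\crr,\bq}\lomapr{1-\phi_r}\sa_{\crr,\bq}$ is by construction the $h$-sheafification of the presheaf-level triangle, the canonical map is a map of fiber sequences, and it then suffices to invoke the quasi-isomorphism (\ref{isomorphism}) of the preceding proposition for the filtered term and its case $r=0$ (Beilinson's crystalline comparison) for the unfiltered term. The paper argues differently: it first rewrites both $\R\Gamma_{\synt}(U,\ove{U},r)_\bq$ and $\R\Gamma_{\synt}(U_h,r)$ via $\kappa_r^{-1}$ (Theorem \ref{beilinson}) and $\gamma_r^{-1}$, as in (\ref{kwak1}), so that the filtered piece is replaced by $(\R\Gamma_{\dr}(U,\ove{U})\otimes_{\ovk}B^+_{\dr})/F^r$, and then reduces to the unfiltered crystalline comparison together with the de Rham comparison of Proposition \ref{deRham1}. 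The mathematical content is the same---the filtered case $r>0$ of the preceding proposition was itself proved by exactly this de Rham reduction, via Corollary \ref{Langer} and Proposition \ref{deRham1}---so your version simply quotes that proposition as a black box, which is more economical; what the paper's detour buys is that the comparison is recorded in the $B^+_{\dr}$-presentation (\ref{kwak1}) that is used immediately afterwards to construct $\alpha_{\synt}$ and the period maps. One bookkeeping point, shared with the paper and hence not a gap: you pass silently between $\R\Gamma(U_h,\sj^{[r]}_{\crr,\bq})$ (cohomology of the sheafified rational presheaf) and $\R\Gamma(U_h,\sj^{[r]}_{\crr})_\bq$ (rationalized cohomology of the integral sheaf), an identification the paper also uses implicitly when stating the cone formula for $\R\Gamma_{\synt}(X_h,r)$.
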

\begin{proof}
Arguing as above we find quasi-isomorphisms
\begin{align*}
\R\Gamma_{\synt}(U,\overline{U},r)_\bq
 & \stackrel{\sim}{\to}
   \xymatrix{[\R\Gamma_{\crr}(U,\overline{U})_{\mathbf Q}\ar[rr]^-{(1-\phi_r,\kappa_r^{-1})} & & \R\Gamma_{\crr}(U,\overline{U})_{\mathbf Q}\oplus  (\R\Gamma^{\natural}(U,\overline{U})\what{\otimes}{\mathbf Q}_p) /F^r]}\\
& \stackrel{\sim}{\to}
   \xymatrix{[\R\Gamma_{\crr}(U,\overline{U})_{\mathbf Q}\ar[rr]^-{(1-\phi_r,\gamma_{r}^{-1}\kappa_r^{-1})} & & \R\Gamma_{\crr}(U,\overline{U})_{\mathbf Q}\oplus  (\R\Gamma_{\dr}(U,\overline{U})\otimes_{\ovk}B_{\dr}^+) /F^r]}
\end{align*}
Comparing them with quasi-isomorphisms (\ref{kwak1})  we see that it suffices to check that the natural maps
$$
\R\Gamma_{\crr}(U,\overline{U})_{\mathbf Q}  \stackrel{\sim}{\to} \R\Gamma_{\crr}(U_h)_{\mathbf Q},\quad
 \R\Gamma_{\dr}(U,\overline{U}) \stackrel{\sim}{\to}\R\Gamma_{\dr}(U_h),
$$
are (filtered) quasi-isomorphisms. 
But this is known by
the isomorphisms
(\ref{isomorphism}) and Proposition \ref{deRham1}.
\end{proof}

   Consider  the following composition of morphisms
 \begin{align}
\R\Gamma_{\synt}(X_{h},r)
 & \stackrel{\sim}{\to}
  \left[\xymatrix@C=36pt{\R\Gamma_{\crr}(X_{h})_{\mathbf Q}\ar[rr]^-{(1-\phi_r,\gamma^{-1}_r\kappa_r^{-1})} & & \R\Gamma_{\crr}(X_{h})_{\mathbf Q}\oplus (\R\Gamma_{\dr}(X_{h})\otimes_{\ovk}B^+_{\dr}) /F^r}\right]\notag\\
\label{Cccc} &   \stackrel{\sim}{\leftarrow}
   \left[\begin{aligned}\xymatrix@C=50pt{\R\Gamma^B_{\hk}(X_{h})\otimes_{K_0^{\nr}}B_{\st}^+\ar[r]^-{(1-\phi_r,\iota^B_{\dr}\otimes\iota)}\ar[d]^{N}  & \R\Gamma^B_{\hk}(X_{h})\otimes_{K_0^{\nr}}B_{\st}^+\oplus (\R\Gamma_{\dr}(X_{h})\otimes_{\ovk}B^+_{\dr}) /F^r\ar[d]^{(N,0)}\\
\R\Gamma^B_{\hk}(X_{h})\otimes_{K_0^{\nr}}B_{\st}^+\ar[r]^{1-\phi_{r-1}}  & \R\Gamma^B_{\hk}(X_{h})\otimes_{K_0^{\nr}}B_{\st}^+}\end{aligned}\right]
 \end{align}
The second quasi-isomorphism 
 uses  the map $$
 (\R\Gamma^B_{\hk}(X_{h})\otimes_{K_0^{\nr}}B_{\st}^+)^{N=0}=\R\Gamma^B_{\hk}(X_{h})^{\tau}_{B^+_{\crr}}\stackrel{\iota^B_{\crr}}{\to}\R\Gamma_{\crr}(X_{h})_{\mathbf Q}
 $$ (that is compatible with the action of $N$ and $\phi$) and  the following lemma.
\begin{lemma}
\label{HK-compatibility}
The following diagrams commute
 $$ \xymatrix{
 \R\Gamma_{\crr}(X_{h})_{\mathbf Q}\otimes_{B^+_{\crr}}B_{\st}^+\ar[rr]^-{\gamma^{-1}_r\kappa_r^{-1}\otimes \iota} & & (\R\Gamma_{\dr}(X_{h})\otimes_{\ovk}B^+_{\dr}) /F^r
  & \R\Gamma_{\crr}(X_{h})_{\mathbf Q}\otimes_{A_{\crr}}B_{\dr} \ar[r]^{\gamma_{\dr}}_{\sim}  & \R\Gamma_{\dr}(X_{h})\otimes_{\ovk}B_{\dr} 
\\
 \R\Gamma^B_{\hk}(X_{h})\otimes_{K_0^{\nr}}B_{\st}^+\ar[rru]_-{\iota^B_{\dr}\otimes\iota}\ar[u]^{\iota^B_{cr}}_{\wr} & & &\R\Gamma^B_{\hk}(X_{h})\otimes_{K_0^{\nr}}B_{\st}\ar[u]^{\iota^B_{\crr}\otimes\iota}\ar[ur]_{\iota^B_{\dr}\otimes\iota}& 
 }
  $$
  Here $\gamma_{\dr}$ is the map defined by Beilinson in \cite[3.4.1]{BE2}.
  \end{lemma}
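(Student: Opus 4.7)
\smallskip\noindent
\textbf{Proof proposal.} The plan is to reduce to a local statement on $\spp^{ss}_{\ovk}$ and then deduce the commutativity from the functoriality of Beilinson's Hyodo-Kato construction (Theorems~\ref{kk2} and~\ref{Bthm}) with respect to morphisms of PD-thickenings. By $h$-sheafification, both diagrams reduce to the corresponding statements for a single ss-pair $(U,\ove{U})$ over $\ovk$, which arises by base change from some $(U_L,\ove{U}_L)$ over a finite extension $L/K$.

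First, I would observe that both Beilinson-Hyodo-Kato quasi-isomorphisms $\iota^B_{\crr}$ and $\iota^B_{\dr}$ (before trivialization) are obtained, following the recipe of Example~\ref{crucial}, from the adjunction quasi-isomorphism $\epsilon_f \epsilon_f^{-1}\R f_{\crr*}(\so_{(U,\ove{U})_1/{\mathbf Z}_p})_\bq \simeq \R f_{\crr*}(\so_{(U,\ove{U})_1/{\mathbf Z}_p})_\bq$ evaluated on two different PD-thickenings of $\Spec(\ove{V}_1)^{\times}$: namely $E_{\crr} = \Spec(A_{\crr})$ for the crystalline case, and $\Spec(\ove{V})^{\times}$ for the de Rham case. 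There is a canonical morphism of PD-thickenings $\Spec(\ove{V})^{\times} \to E_{\crr}$ induced by the surjection $A_{\crr} \to \ove{V}$, and Theorem~\ref{kk2}(2) then gives a commutative square comparing the two evaluations. Inverting $t$, extending scalars to $B_{\dr}$, and using the canonical identification $A^{\tau}_{\crr,\bq} \simeq B^+_{\st}$, this square translates into the untrivialized form of the second diagram.

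Next I would verify that the two trivializations, at $[\tilde{p}]$ for $\iota^B_{\crr}$ and at $p$ for $\iota^B_{\dr}$, are compatible under the embedding $\iota: B^+_{\st} \hookrightarrow B^+_{\dr}$. By the very definition of the Fontaine variable $u_{\tilde{p}}$, the natural map between the crystalline and de Rham Fontaine-Hyodo-Kato pushout torsors sends $a([\tilde{p}])$ to $a(p) + u_{\tilde{p}}$; hence $\iota \circ \beta_{[\tilde{p}]}$ coincides with $\beta_p$ after the appropriate extension of scalars. Combined with the preceding step, this finishes the second diagram.

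The first diagram follows from the same argument, supplemented by Theorem~\ref{beilinson} (the quasi-isomorphism $\kappa_r$) and the quasi-isomorphism $\gamma_r$ from~(\ref{gamma}): both are defined via Beilinson's comparison between the derived log de Rham and log crystalline structure sheaves, and hence are automatically compatible with the Beilinson-Hyodo-Kato maps by construction. The main obstacle I expect is the bookkeeping of the trivializations -- specifically, verifying the identity $a([\tilde{p}]) \mapsto a(p) + u_{\tilde{p}}$ for the map of pushout torsors induced by $A_{\crr} \to B^+_{\dr}$. This is ultimately an explicit computation inside $A^{\tau}_{\crr,\bq}$ and its image in $B^+_{\dr}$, but making the coherence precise in the $\infty$-categorical framework requires some care.
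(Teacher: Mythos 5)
You have correctly located the heart of the matter -- the compatibility of the two trivializations, encoded in the identity $a([\tilde p])=a(p)+\log([\tilde p]/p)$ -- and this is indeed the key computation in the paper's proof. But the mechanism you propose for comparing the two evaluations does not work as stated, and this is a genuine gap. The de Rham Beilinson--Hyodo--Kato map is defined by evaluating the adjunction on the finite-level thickenings $(V_{L,1}^{\times}\hookrightarrow V_L^{\times})$ and passing to the colimit over $\Sigma$; it is not the evaluation on a thickening of the form $\ovv_1^{\times}\hookrightarrow\Spec(\wh{\ovv})^{\times}$. The morphism you invoke, induced by $\theta\colon A_{\crr}\to\wh{\ovv}$, only compares the crystalline evaluation with its reduction modulo $\ker\theta$, i.e.\ modulo $F^1B^+_{\dr}$ after rationalization; in particular one cannot ``invert $t$ and then extend scalars to $B_{\dr}$'' after composing with $\theta$, since $\theta(t)=0$. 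The diagrams of the lemma live at the level of $B^+_{\dr}$-coefficients (through $\kappa_r$ and $\gamma_r$), and $B^+_{\dr}$ is not an object of the crystalline site to which Theorem \ref{kk2}(2) or Theorem \ref{Bthm} applies, so the asserted ``commutative square comparing the two evaluations'' is not available by pure functoriality. Likewise, the claim that $\kappa_r$ and $\gamma_r$ are ``automatically compatible with the Beilinson--Hyodo--Kato maps by construction'' is precisely the non-formal content of the left-hand diagram, not something one gets for free.

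The paper supplies the missing device: the auxiliary log-PD-thickening $\ovv_1^{\times}\hookrightarrow\wh{A}_{\crr}$, the PD-envelope of $\ovv_1^{\times}\hookrightarrow A_{\crr}\times_{W(k)}V^{\times}$, which is the universal PD-thickening of $\ovv_1^{\times}$ over $V^{\times}$ and maps to both $(V_1^{\times}\hookrightarrow V^{\times})$ and $(\ovv_1^{\times}\hookrightarrow A_{\crr})$. Both pullbacks $\pr_1^{*}$ and $\pr_2^{*}$ on relative crystalline cohomology are quasi-isomorphisms, so the crystalline evaluation (trivialized at $[\tilde p]$) and the finite-level de Rham evaluation (trivialized at $p$) can be compared inside $\R\Gamma_{\crr}(\ove{Y}_1/\wh{A}_{\crr})_{\bq}$, where both sections $[\tilde p]$ and $p$ of the torsor coexist and $[\tilde p]/p\in 1+J_{\wh{B}^+_{\crr}}$, so that your identity $a([\tilde p])=a(p)+\log([\tilde p]/p)$ can actually be written down and verified; note that this identity cannot even be formulated over $B^+_{\crr}$ alone, since $\log([\tilde p]/p)=u_{\tilde p}$ is transcendental over $B^+_{\crr}$. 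Finally, you argue the right-hand diagram first and deduce the left-hand one, whereas $\gamma_{\dr}$ is defined by Beilinson precisely through $\gamma_r^{-1}\kappa_r^{-1}$ and the maps $\kappa^{\prime}_n$; the paper accordingly proves the left-hand diagram first (by the $\wh{A}_{\crr}$ argument) and derives the right-hand one from it, and without the intermediate thickening (or an equivalent substitute) your order of argument begs the question.
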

\begin{proof} We will start with the left diagram. It suffices to show that it canonically commutes with $X_h$ replaced by  any  ss-pair $\overline {Y}=(U,\ove{U})$ over $\ovk$ -- a base change of an ss-pair $Y$ split over $(V,K)$.
  Proceeding as in Example \ref{crucial}, we obtain  the following  diagram in which all squares but the one in the left bottom clearly commute.  
   $$
   \xymatrix{     \R\Gamma_{\hk}^B(Y_1)^{\tau}_K\ar[d]^{\id\otimes 1} \ar[r]^{\iota^B_{K}} & \R\Gamma_{\crr}(Y_1/V^{\times})_\bq/F^r\ar[d] & \R\Gamma(Y_{\eet},\LL\Omega^{\scriptscriptstyle\bullet,\wedge}_{Y/V^{\times}})\wh{\otimes}\bq_p/F^r \ar[l]_{\kappa_r}^{\sim}\ar[d]  & \R\Gamma_{\dr}(Y_K)/F^r\ar[l]_-{\gamma_r}^-{\sim}\ar[d]\\
     \R\Gamma_{\hk}^B(\ove{Y}_1)^{\tau}_{\ovk}\otimes_{\ovk} B_{\dr}^+\ar[r]^{\iota_{\ovk}^B\otimes \kappa_r}  & \R\Gamma_{\crr}(\overline{Y}_1/V^{\times})_\bq/F^r & \R\Gamma(\overline{Y}_{\eet},\LL\Omega^{\scriptscriptstyle\bullet,\wedge}_{\overline{Y}/V^{\times}})\wh{\otimes}\bq_p/F^r \ar[l]_{\kappa_r}^{\sim}  &  (\R\Gamma_{\dr}(\overline{Y}_K)\otimes _{\ovk}B_{\dr}^+)/F^r\ar[l]_{\gamma_r}^-{\sim}\ar[ld]_-{\gamma_r}^{\sim}\\
     \R\Gamma_{\hk}^B(\ove{Y}_1)^{\tau}_{B^+_{\crr}}\otimes_{B^+_{\crr}}B^+_{\st}\ar[u]^{\delta}\ar[r]^{\iota^B_{\crr}\otimes \kappa_r\iota} & \R\Gamma_{\crr}(\overline{Y}_1/A_{\crr})_\bq/F^r\ar[u]^{\wr} & (\R\Gamma^{\natural}_{\dr}(\overline{Y})\wh{\otimes}\bq_p)/F^r  \ar[l]_{\kappa_r}^{\sim}\ar[u]^{\wr} 
       }
       $$
Here we have $B^+_{\dr}/F^m= (\R\Gamma^{\natural}_{\dr}(\ove{K},\ovv)\wh{\otimes}\bq_p)/F^m$ and the map $\delta$ is defined as the composition   
   $$
    \delta:\quad    \R\Gamma_{\hk}^B(\ove{Y}_1)^{\tau}_{B^+_{\crr}}\otimes_{B^+_{\crr}}B^+_{\st}=  (\R\Gamma_{\hk}^B(\ove{Y}_1)\otimes_{K_0^{\nr}} B^+_{\st})^{N=0}\otimes_{B^+_{\crr}}B^+_{\st}\stackrel{\sim}{\to}  \R\Gamma_{\hk}^B(\ove{Y}_1)\otimes_{K_0^{\nr}}B^+_{\st}\lomapr{\beta_p\otimes \iota}  \R\Gamma_{\hk}^B(\ove{Y}_1)^{\tau}_{\ovk}\otimes_{\ovk} B_{\dr}^+  
   $$
   Recall that for the map $\iota_{\dr}^B:    \R\Gamma_{\hk}^B(Y_1)^{\tau}_K\to \R\Gamma_{\dr}(Y_K)/F^r$ we have $\iota^B_{\dr}=\gamma_r^{-1} \kappa_r^{-1}\iota^B_K$.
   Everything in sight being compatible with change of the ss-pairs $Y$ - more specifically with maps in the directed system $\Sigma$ - if this diagram commutes so does its $\Sigma$ colimit and  the left diagram in the lemma for the pair $(U,\ove{U})$. 

   It remains to show that the left bottom square in the above diagram commutes.    To do that consider the ring $\wh{A}_n$ defined as the PD-envelope of the closed immersion
$$\ovv_1^{\times}\hookrightarrow A_{\crr,n}\times _{W_n(k)}V^{\times}_n
$$
That is, $\wh{A}_n$  is the product of the PD-thickenings $(\ovv_1^{\times}\hookrightarrow A_{\crr,n})$ and $(V^{\times}_1 \hookrightarrow V^{\times}_n)$ over $(W_1(k)\hookrightarrow W_n(k))$.  By \cite[Lemma 1.17]{BE2},  this makes $\overline{V}_1^{\times}\hookrightarrow \wh{A}_{\crr,n}$ into the universal PD-thickening in the  log-crystalline site of
   $\overline{V}_1^{\times}$ over $V_n^{\times}$.  Let $\wh{A}:=\injlim_n\wh{A}_{\crr,n}$ with the limit log-structure. Set $\wh{B}^+_{\crr}:=\wh{A}_{\crr}[1/p]$.

    Using Theorem \ref{Bthm} , we obtain a canonical quasi-isomorphism $$
\iota^B_{\wh{B}^+_{\crr}}: \R\Gamma^B_{\hk}(\overline{Y}_1)^{\tau}_{\wh{B}^+_{\crr}}\stackrel{\sim}{\to} \R\Gamma_{\crr}(\ove{Y}_1/\wh{A}_{\crr})_\bq$$ 
   By construction, we have the  maps  of PD-thickenings
   $$
   \xymatrix{  (V^{\times}_1\hookrightarrow V^{\times})  &  (  \overline{V}^{\times}_1\hookrightarrow \wh{A}_{\crr})\ar[l]_{\pr_1}\ar[r]^{\pr_2} & (\overline{V}^{\times}_1\hookrightarrow A_{\crr})  }
   $$
Consider the following  diagram 
   $$
\xymatrix{ 
 \R\Gamma_{\hk}^B(\overline{Y}_1)^{\tau}_{\wh{B}^+_{\crr}}\ar[ddd]_{\iota^B_{\wh{B}^+_{\crr}}} &   & \R\Gamma_{\hk}^B(\overline{Y}_1)_{\ovk}^{\tau}\otimes_{\ovk} B_{\dr}^+/F^r
\ar[ll]_{\pr^*_1\otimes\pr^*_1\kappa_r}\ar[ddd]^{\iota_{\ovk}^B\otimes \kappa_r}\\
   & \R\Gamma_{\hk}^B(\overline{Y}_1)^{\tau}_{B^+_{\crr}}\ar[d]^{\iota^B_{\crr}} \ar[lu]^{\pr_2^*}\ar[ru]^{\delta}  & \\
  & \R\Gamma_{\crr}(\overline{Y}_1/A_{\crr})_\bq/F^r\ar[ld]_{\pr^*_2} ^{\sim}\ar[rd]^{\sim}  &\\
    \R\Gamma_{\crr}(\overline{Y}_1/\wh{A}_{\crr})_\bq/F^r   & &   \R\Gamma_{\crr}(\overline{Y}_1/V^{\times})_\bq/F^r \ar[ll]^{\pr_1^*}_{\sim}
         }
   $$
   The bottom triangle commutes since $   \R\Gamma_{\crr}(\overline{Y}_1/A_{\crr})=   \R\Gamma_{\crr}(\overline{Y}_1/W(k))$. 
 The pullback maps 
\begin{align*}
\pr_1^*:\quad   &  \R\Gamma_{\crr}(\overline{Y}_1/V^{\times}) \stackrel{\sim}{\to} \R\Gamma_{\crr}(\overline{Y}/\wh{A}_{\crr}),\\
\pr_2^*: \quad  & \R\Gamma_{\crr}(\overline{Y}/A_{\crr})_\bq/F^r\stackrel{\sim}{\to} \R\Gamma_{\crr}(\overline{Y}/\wh{A}_{\crr})_\bq/F^r
\end{align*}
are quasi-isomorphisms. Indeed, in the case of the first pullback this follows from the universal property of $\wh{A}_{\crr}$; in the case of the second one - it follows from the commutativity of the bottom triangle since the right slanted map is a quasi-isomorphism as shown by the first diagram in our proof. 

  The left trapezoid and the big square commute by the definition of the Beilinson-Bloch-Kato maps. To see that the top triangle commutes it suffices to show that for  an element
$$x\in    \R\Gamma_{\hk}^B(\overline{Y}_1)^{\tau}_{B^+_{\crr}}=   (\R\Gamma_{\hk}^B(\overline{Y}_1)\otimes_{K_0^{\nr}}B^+_{\st})^{N=0},\quad x=b\sum_{i\geq 0} N^i(m) a([\tilde{p}])^{[i]}, m\in \R\Gamma^B_{\hk}(\ove{Y}_1),b\in B^+_{\crr},$$
we have $\pr^*_2(x)=\pr^*_1\delta(x)$.
Since $\iota(a([\tilde{p}]))=\log([\tilde{p}]/p)$ \cite[4.2.2]{F1}, we calculate
\begin{align*}
\delta(x)=\delta(b\sum_{i\geq 0} N^i(m) a([\tilde{p}])^{[i]}) & = b\sum_{i\geq 0}(\sum_{j\geq 0} N^{i+j}(m)a(p)^{[j]})\log([\tilde{p}]/p)^{[i]}\\
 & =b\sum_{k\geq 0} N^{k}(m)(a(p)+\log([\tilde{p}]/p))^{[k]}
\end{align*}
Since in $\wh{B}^+_{\crr}$ we have  $[\tilde{p}]=([\tilde{p}]/p)p$ and $[\tilde{p}]/p\in 1+J_{\wh{B}^+_{\crr}}$,  it follows that  $a([\tilde{p}])=\log([\tilde{p}]/p)+a(p)$ and $$
    \pr^*_1\delta(x)=\pr^*_1(b\sum_{k\geq 0} N^{k}(m)(a(p)+\log([\tilde{p}]/p))^{[k]})=    b\sum_{k\geq 0} N^{k}(m)a([\tilde{p}])^{[k]}=\pr_2^*   (b\sum_{k\geq 0} N^{k}(m)a([\tilde{p}])^{[k]})=\pr_2^*(x),
   $$
   as wanted. It follows now that the right trapezoid in the above diagram commutes as well and that so does the left diagram in our lemma.
  
   To check the commutativity of the right diagram, consider the following map obtained from the maps $\kappa^{\prime}_{r,n}$ by passing to $F$-limit
   $$
   \kappa^{\prime}_n: \quad \R\Gamma_{\crr}(\overline{Y})_n\otimes^L_{A_{\crr,n}}A_{\dr,n}\stackrel{\sim}{\to}\invlim_F \R\Gamma_{\crr}(\overline{Y})_n/F^r
   $$
   By \cite[3.6.2]{BE2}, this is a quasi-isomorphism. Beilinson \cite[3.4.1]{BE2} defines the map
   $$
   \gamma_{\dr}:\quad \R\Gamma_{\crr}(\overline{Y})_\bq\otimes_{A_{\crr}}B_{\dr}^+\stackrel{\sim}{\to} \R\Gamma_{\dr}(\overline{Y}_K)\otimes_{\ovk}B^+_{\dr}
   $$ by $B^+_{\dr}$-linearization of the composition $\invlim_r(\gamma_r^{-1}\kappa_r^{-1})\holim_n\kappa^{\prime}_n$. We have 
   $$
   \gamma_{\dr}=\gamma_r^{-1}\kappa^{-1}_r:\R\Gamma_{\crr}(\overline{Y})_\bq\to (\R\Gamma_{\dr}(\overline{Y}_K)\otimes_{\ovk}B^+_{\dr})/F^r   
   $$
   Hence the commutativity of the right diagram follows from that of the left one.
   \end{proof}
Let $C^+(\R\Gamma^B_{\hk}(X_{h})\{r\})$ denote the second homotopy limit in the diagram (\ref{Cccc}); denote by $C(\R\Gamma^B_{\hk}(X_{h})\{r\})$ the complex $C^+(\R\Gamma^B_{\hk}(X_{h})\{r\})$ with all the pluses removed. We have defined a map $\alpha_{\synt}:
\R\Gamma_{\synt}(X_{h},r)\to C^+(\R\Gamma^B_{\hk}(X_{h})\{r\})$ and  proved the following proposition.
\begin{proposition}
There is a functorial $G_K$-equivariant quasi-isomorphism
$$\alpha_{\synt}:\quad \R\Gamma_{\synt}(X_{h},r)=\R\Gamma(X_{h},\sss(r)_{\mathbf Q})\simeq C^+(\R\Gamma^B_{\hk}(X_{h})\{r\}).
$$
\end{proposition}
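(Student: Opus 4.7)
The plan is to verify that the zigzag of maps displayed in (\ref{Cccc}) is (a) a quasi-isomorphism, (b) $G_K$-equivariant, and (c) functorial in $X$. The first, forward arrow was already shown to be a quasi-isomorphism in the derivation of (\ref{kwak1}) combined with the quasi-isomorphism $\gamma_r$ of (\ref{gamma}), so the real work is to show that the second, backwards arrow is one.

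The key algebraic input I plan to use is the short exact sequence
$$0 \to B^+_{\crr} \to B^+_{\st} \stackrel{N}{\to} B^+_{\st} \to 0,$$
which implies that the mapping fiber
$$[\R\Gamma^B_{\hk}(X_{h}) \otimes_{K_0^{\nr}} B^+_{\st} \stackrel{N}{\to} \R\Gamma^B_{\hk}(X_{h}) \otimes_{K_0^{\nr}} B^+_{\st}]$$
is canonically quasi-isomorphic to $(\R\Gamma^B_{\hk}(X_{h}) \otimes_{K_0^{\nr}} B^+_{\st})^{N=0} = \R\Gamma^B_{\hk}(X_{h})^{\tau}_{B^+_{\crr}}$. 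Composing with the crystalline Beilinson--Hyodo--Kato quasi-isomorphism $\iota^B_{\crr}: \R\Gamma^B_{\hk}(X_{h})^{\tau}_{B^+_{\crr}} \stackrel{\sim}{\to} \R\Gamma_{\crr}(X_{h})_{\mathbf Q}$ identifies the ``crystalline columns'' of the two homotopy limits appearing in (\ref{Cccc}) and intertwines the Frobenius actions; the compatibility of the ``de Rham components'' of the two arrows is exactly the left-hand commutative diagram of Lemma \ref{HK-compatibility}. Applying this termwise produces a morphism of homotopy limits whose restriction to each column is a quasi-isomorphism, so the total map is as well.

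For $G_K$-equivariance and functoriality, each of the sheaves $\sa_{\crr}$, $\sj^{[r]}_{\crr}$, $\sa^B_{\hk}$, $\sa_{\dr}$, $\sa^{\natural}_{\dr}$, $\sss(r)_{\mathbf Q}$ carries a natural $G_K$-action by transport of structure on $\mathcal{V}ar_{\ovk}$ and is functorial in $X$, while each period ring has its standard $G_K$-action. The structure morphisms $\phi$, $N$, $\iota^B_{\crr}$, $\iota^B_{\dr}$, $\kappa_r$ and $\gamma_r$ are $G_K$-equivariant and natural in $X$ by their constructions in \cite{BE2}, so both the individual homotopy limits and the composition defining $\alpha_{\synt}$ inherit these properties.

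The principal technical difficulty -- the compatibility of $\iota^B_{\dr} \otimes \iota$ with $\gamma_r^{-1}\kappa_r^{-1} \circ \iota^B_{\crr}$ as two candidate maps into $(\R\Gamma_{\dr}(X_{h}) \otimes_{\ovk} B^+_{\dr})/F^r$ -- has already been isolated and discharged in Lemma \ref{HK-compatibility}. Without it, the backwards arrow in (\ref{Cccc}) could not even be defined as a morphism of homotopy limits, so I expect this to be the only nontrivial ingredient; everything else is formal manipulation of homotopy limits built from maps already known to be quasi-isomorphisms.
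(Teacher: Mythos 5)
Your proposal is correct and is essentially the paper's own proof: the forward arrow is the quasi-isomorphism of (\ref{kwak1}) combined with $\gamma_r$ from (\ref{gamma}), and the backwards arrow is built exactly as in the text by identifying the homotopy fiber of the monodromy columns with the $N=0$ part (using surjectivity of $N$ on $B^+_{\st}$ together with local nilpotence of $N$ on $\R\Gamma^B_{\hk}(X_h)$), applying $\iota^B_{\crr}$, and invoking the left-hand diagram of Lemma \ref{HK-compatibility} for the de Rham component. The remarks on $G_K$-equivariance and functoriality match the level of justification given in the paper, so there is nothing further to add.
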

\begin{corollary}
For  $(U,\overline{U})\in \spp^{ss}_{K}$, we have a long exact sequence
\begin{align*}
\to H^i_{\synt}((U,\overline{U})_{\ovk},r)\to (H^i_{\hk}(U,\overline{U})_{\bq}\otimes_{K_0} B_{\st}^+)^{\phi=p^r,N=0}\to (H^i_{\dr}(U,\overline{U})\otimes_K B_{\dr}^+)/F^r\to H^{i+1}_{\synt}((U,\overline{U})_{\ovk},r)\to
\end{align*}
\end{corollary}
\begin{proof}By diagram (\ref{Cccc}), it suffices to show  that 
\begin{align*}
H^i[\R\Gamma^B_{\hk}((U,\overline{U})_1)\otimes_{K_0}B_{\st}^+]^{\phi=p^r,N=0}
 & \simeq (H^i_{\hk}(U,\overline{U})_{\bq}\otimes_{K_0}B^+_{st})^{\phi=p^r,N=0},\\
H^i(\R\Gamma_{\dr}(U,\overline{U})\otimes_K B_{\dr}^+)/F^r) & \simeq (H^i_{\dr}(U,\overline{U})\otimes_K B_{\dr}^+)/F^r
\end{align*}
The second isomorphism is a consequence of the degeneration of the Hodge-de Rham spectral sequence. Keeping in mind that the Beilinson-Hyodo-Kato complexes $\R\Gamma^B_{\hk}((U,\overline{U})_1)$ are built from $(\phi,N)$-modules,  the first isomorphism  follows  from   the following short exact sequences
 (for a $(\phi, N)$-module $M$) 
\begin{align*}
 0\to & M\otimes_{K_0}B^+_{\crr}\to M\otimes_{K_0}B^+_{\st}\stackrel{N}{\to} M\otimes_{K_0}B^+_{\st}\to 0,\\
0\to & (M\otimes_{K_0}B^+_{\crr})^{\phi=p^r}\to M\otimes_{K_0}B^+_{\crr}\stackrel{1-\phi_r}{\to} M\otimes_{K_0}B^+_{\crr}\to 0.
\end{align*}
The first one follows, by induction on $m$ such that $N^m=0$ on $M$, from the exact sequence (\ref{kwak11}) and the fact that $(M\otimes_{K_0}B^+_{\st})^{N=0}\simeq M\otimes_{K_0}B^+_{\crr}$. The second one follows from
\cite[Remark 2.30]{CN}.
\end{proof}
\section{Relation between syntomic cohomology and \'etale cohomology}
In this section we will study the relationship between syntomic and \'etale cohomology in both the geometric and the arithmetic situation.
\subsection{Geometric case}  We start with the geometric case. In this subsection, we will construct the geometric syntomic period map from syntomic to \'etale cohomology. We will prove that in the torsion case, on the level of $h$-sheaves it is a quasi-isomorphism modulo a universal constant; in the rational case it induces an isomorphism on cohomology groups in a stable range. Finally, we will construct the syntomic descent spectral sequence.

We will first recall the de Rham and Crystalline Poincar\'e Lemmas of Beilinson and Bhatt \cite{BE1}, \cite{BE2}, \cite{BH}.
\begin{theorem}(de Rham Poincar\'e Lemma \cite[3.2]{BE1})
\label{derham}
The maps $A_{\dr}\otimes ^{{L}}{\mathbf Z}/p^n \to \sa_{\dr}^{\natural}\otimes ^{{L}}{\mathbf Z}/p^n $ are filtered quasi-isomorphisms of $h$-sheaves on ${\mathcal V}ar_{\overline{K}}$.
\end{theorem}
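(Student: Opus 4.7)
The plan is to prove this by a d\'evissage on the Hodge filtration followed by a direct analysis of log cotangent complexes. Since both $A_{\dr}\otimes^L {\mathbf Z}/p^n$ and $\sa^{\natural}_{\dr}\otimes^L {\mathbf Z}/p^n$ are $F$-complete as prosystems whose transitions $\cdot/F^{m+1}\to \cdot/F^m$ are surjective, one reduces by the fiber sequences $\gr^m_F\to \cdot/F^{m+1}\to \cdot/F^m$ and induction on $m$ to showing that for each $i\geq 0$ the induced map
$$\gr^i_F A_{\dr}\otimes^L {\mathbf Z}/p^n\;\longrightarrow\;\gr^i_F \sa^{\natural}_{\dr}\otimes^L {\mathbf Z}/p^n$$
is an $h$-local quasi-isomorphism on $\mathcal{V}ar_{\ovk}$.

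Next I would invoke the standard identification $\gr^i_F\LL\Omega^{\scriptscriptstyle\bullet}_{Z/S}\simeq L\Lambda^i(L_{Z/S})[-i]$, already used in the proof of Corollary \ref{Langer}, to translate the problem into a comparison of derived exterior powers of log cotangent complexes modulo $p^n$ after $h$-sheafification. Because $L\Lambda^i$ preserves $h$-quasi-isomorphisms, it suffices to treat the case $i=1$, i.e., to show that
$$L_{\ovv/V}\otimes^L {\mathbf Z}/p^n\;\longrightarrow\;L_{(U,\overline U)/W(k)}\otimes^L {\mathbf Z}/p^n$$
is an $h$-local quasi-isomorphism for every ss-pair $(U,\overline U)$ over $\ovk$. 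Two transitivity triangles come into play: one for $V\to \ovv^{\times}\to (U,\overline U)$, which handles the passage from absolute to relative log cotangent complexes, and one for $W(k)\to V\to \ovv$, which compares $L_{\ovv/W(k)}$ with $L_{\ovv/V}$. The contribution of $L_{V^{\times}/W(k)}$ is controlled since it is annihilated by a power of the different $\sd_{K/K_0}$ (cf.\ the computation at the end of the proof of Corollary \ref{Langer}), and this torsion is absorbed upon passing up to $\ovv$.

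The crux is to show that $L_{(U,\overline U)/\ovv^{\times}}\otimes^L {\mathbf Z}/p^n$ becomes $h$-locally acyclic: this is the genuine Poincar\'e-type input and the main obstacle. Locally $(U,\overline U)$ over $\ovk$ is the base change to $\ovv$ of a log-smooth ss-model over some finite $V\pri/V$, so $L_{(U,\overline U)/\ovv^{\times}}$ is locally free, and one must \emph{kill} it by refining to a suitable $h$-cover. My plan here is to use alterations and log-blow-ups (available in the $h$-topology) to pass to charts where a compatible lift of Frobenius exists, and then employ a Cartier-isomorphism / deformation argument in the spirit of the crystalline Poincar\'e lemma to trivialize the locally free part mod $p^n$. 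Executing this simultaneously with the Hodge filtration, compatibly across all ss-pairs, is the most delicate point; it is precisely what forces passage to the $h$-topology rather than an \'etale one.
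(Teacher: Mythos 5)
Note first that the paper gives no proof of this theorem: it is quoted, with its citation, directly from Beilinson [BE1, Theorem~3.2], and that citation is the entire argument. So there is nothing in the paper to compare your proposal against; the only question is whether your argument stands on its own.

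It does not. You correctly identify the crux --- showing that $L_{(U,\overline U)/\ovv^{\times}}\otimes^L{\mathbf Z}/p^n$ becomes $h$-locally acyclic --- but then offer only a \emph{plan}, not a proof: alterations, Frobenius lifts, ``a Cartier-isomorphism/deformation argument.'' This is not a detail one may defer. The relative log cotangent complex of a semistable model is locally free of positive rank, and killing such a module mod $p^n$ after passing to an $h$-cover is \emph{the} content of Beilinson's de Rham Poincar\'e Lemma. In his argument this is done via the conjugate filtration on the mod-$p$ derived de Rham complex, the log Cartier isomorphism, and carefully chosen $h$-covers (in effect extracting $p$-power roots of coordinates); your text gestures at this but never carries it out, so the proposal essentially reduces the theorem to itself. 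Two subsidiary steps are also not as cheap as presented: (i) the absorption of $L_{V^\times/W(k)}$ ``upon passing up to $\ovv$'' is itself a nontrivial vanishing statement --- one needs the transition map $L_{V^\times/W(k)}\otimes\ovv\to L_{\ovv^\times/W(k)}$ to vanish integrally, using the deep ramification of $\ovv$; the estimate via the different quoted from the end of the proof of Corollary~\ref{Langer} only controls this torsion rationally, not mod $p^n$; and (ii) the reduction from general $i$ to $i=1$ is not a one-liner, since $L\Lambda^i$ does not commute with $\otimes^L{\mathbf Z}/p^n$, and one must work with the multiplicative filtration on $L\Lambda^i$ of a cofiber sequence and argue separately that $h$-local acyclicity of the cofiber propagates to each of its derived exterior powers.
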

\begin{theorem}(Filtered Crystalline Poincar\'e Lemma \cite[2.3]{BE2}, \cite[Theorem 10.14]{BH})
The map $J^{[r]}_{\crr,n}\to \sj^{[r]}_{\crr,n}$ is a quasi-isomorphism of $h$-sheaves on ${\mathcal V}ar_{\ovk}$.
\end{theorem}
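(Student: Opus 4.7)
The plan is to deduce the filtered crystalline Poincaré lemma from its de Rham analogue (Theorem \ref{derham}) by means of the Beilinson-Bhatt comparison between log-crystalline cohomology and the Hodge-completed derived log de Rham complex (Theorems \ref{beilinson} and \ref{bhatt}).

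First, I would observe that every ss-pair $(U,\overline{U})$ over $\ovk$ is G-log-syntomic over $W(k)$ (the base change to $\ovv$ of a log-smooth $V^\times$-scheme of Cartier type is log-syntomic with Cartier-type local models). Consequently, Bhatt's Theorem \ref{bhatt}, applied after reduction mod $p^n$, yields natural filtered quasi-isomorphisms
\[
\kappa:\quad \LL\Omega^{\scriptscriptstyle\bullet}_{(U,\overline{U})_n/W_n(k)} \stackrel{\sim}{\to}\R u_{(U,\overline{U})_n/W_n(k)*}(\so_{(U,\overline{U})_n/W_n(k)}),
\]
which on the $r$-th filtration level read $F^r\LL\Omega^{\scriptscriptstyle\bullet}_{(U,\overline{U})_n/W_n(k)} \stackrel{\sim}{\to}\R u_{*}(\sj^{[r]}_{(U,\overline{U})_n/W_n(k)})$. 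Passing to Hodge-completed presheaves on $\spp^{ss}_{\ovk}$ and then $h$-sheafifying on $\mathcal{V}ar_{\ovk}$ gives a filtered quasi-isomorphism of $h$-sheaves $\kappa:\sa^{\natural}_{\dr,n}\stackrel{\sim}{\to}\sa_{\crr,n}$; in particular $F^r\sa^{\natural}_{\dr,n}\stackrel{\sim}{\to}\sj^{[r]}_{\crr,n}$.

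Next I would perform the analogous computation on the ``constant'' side. Writing $\ovv=\dirlim_L \so_L$ as a filtered colimit of the rings of integers of finite extensions $L/K$, each stage $\so_L$ is log-syntomic of Cartier type over $V$ (hence over $W(k)$), so Bhatt's theorem supplies, after passing to the colimit and $p$-adic completion, a filtered identification
\[
\kappa:\quad A_{\dr,n}=\LL\Omega^{\scriptscriptstyle\bullet,\wedge}_{\ovv/V}\otimes^L{\mathbf Z}/p^n\stackrel{\sim}{\to} A_{\crr,n},
\]
which restricts to $F^rA_{\dr,n}\stackrel{\sim}{\to}J^{[r]}_{\crr,n}$. (Recall from the discussion preceding the theorem that $A_{\dr}^\diamond$ was already identified with the $J^{[\scriptscriptstyle\bullet]}$-completion of $A_{\crr}$, and the isomorphisms $\kappa_{r,n}^{\prime}$ of Theorem \ref{beilinson} witness precisely this compatibility for the graded quotients.)

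Combining these two identifications with the filtered de Rham Poincaré lemma (Theorem \ref{derham}), which asserts that the natural map $A_{\dr}\otimes^L{\mathbf Z}/p^n\to \sa^{\natural}_{\dr,n}$ is a \emph{filtered} quasi-isomorphism of $h$-sheaves, I obtain the chain
\[
J^{[r]}_{\crr,n}\;\stackrel{\kappa}{\underset{\sim}{\leftarrow}}\;F^rA_{\dr,n}\;\stackrel{\sim}{\to}\;F^r\sa^{\natural}_{\dr,n}\;\stackrel{\kappa}{\underset{\sim}{\to}}\;\sj^{[r]}_{\crr,n}.
\]
All that remains is to check that the composition is the tautological sheafification map $J^{[r]}_{\crr,n}\to\sj^{[r]}_{\crr,n}$, which is a matter of chasing the functoriality of $\kappa$ with respect to the structure morphism $(U,\overline{U})\to(\ovk,\ovv)$. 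The main technical obstacle will be the second step: justifying the identification of $F^rA_{\dr,n}$ with $J^{[r]}_{\crr,n}$ rigorously, since $\ovv/V$ is not of finite presentation and one must either take a careful colimit of Bhatt's theorem along $L/K$ or argue directly using the known explicit presentation of $A_{\crr}$ as a divided-power envelope.
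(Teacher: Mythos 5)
The chain you propose breaks at both of its outer arrows, and the failure is the same in each case: you conflate the Hodge-\emph{completed} derived de Rham objects (which is what $A_{\dr,n}$ and $\sa^{\natural}_{\dr,n}$ are, by definition -- the hat denotes completion along $F$, taken in the sense of prosystems) with the non-completed ones, which are the objects that Bhatt's comparison actually identifies with crystalline cohomology. The Corollary combining Theorems \ref{beilinson} and \ref{bhatt} gives $F^m\LL\Omega^{\scriptscriptstyle\bullet}_{Z/S}\stackrel{\sim}{\to}\R u_{Z/S*}(\sj^{[m]}_{Z/S})$ for the \emph{uncompleted} Hodge filtration, and only the quotient comparison $\LL\Omega^{\scriptscriptstyle\bullet}/F^m\stackrel{\sim}{\to}\R u_*(\so/\sj^{[m]})$ survives Hodge completion. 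For the filtration submodules the completion genuinely matters: the paper records that $A^{\diamond}_{\dr}$ is the completion of $A_{\crr}$ for the $J^{[\scriptscriptstyle\bullet]}$-topology, and this completion is strictly larger than $A_{\crr}$ already mod $p^n$ (rationally it is the difference between $B^+_{\dr}$ and $B^+_{\crr}$). Hence your claimed identifications $F^rA_{\dr,n}\stackrel{\sim}{\to}J^{[r]}_{\crr,n}$ and $F^r\sa^{\natural}_{\dr,n}\stackrel{\sim}{\to}\sj^{[r]}_{\crr,n}$ are false: comparing the two triangles $F^r\to(\cdot)\to(\cdot)/F^r$ and $J^{[r]}\to A_{\crr,n}\to A_{\crr,n}/J^{[r]}$, the quotients agree (that is $\kappa^{\prime}_{r,n}$) but the middle terms differ by the completion, so the fibres differ by the same cofibre. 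The ``main technical obstacle'' you flag (non-finite presentation of $\ovv$) is not the real issue; even granting a colimit form of Bhatt's theorem for $\ovv$, what it yields is $A_{\crr,n}\simeq\LL\Omega_{\ovv_n/W_n(k)}$ uncompleted, which does not compute $F^r$ of $A_{\dr,n}$. Note also that the filtered de Rham Poincar\'e Lemma only controls the quotients mod $F^r$ (equivalently the completed objects); it carries no information about the ``deep'' part of the filtration, which is exactly where $\sj^{[r]}_{\crr,n}$ and the completed $F^r$ diverge. If you repair your argument by working consistently with uncompleted objects, the statement you would need on the de Rham side is no longer Theorem \ref{derham} but precisely the (unfiltered) crystalline Poincar\'e Lemma in disguise -- and that input appears nowhere in your proposal.

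The paper's proof is shorter and avoids this trap: it maps the triangle $J^{[r]}_{\crr,n}\to A_{\crr,n}\to A_{\crr,n}/J^{[r]}_{\crr,n}$ to its sheafified counterpart, quotes Beilinson's unfiltered Crystalline Poincar\'e Lemma for the middle arrow $A_{\crr,n}\to\sa_{\crr,n}$, identifies the right-hand arrow with $A_{\dr,n}/F^r\to\sa^{\natural}_{\dr,n}/F^r$ via Theorem \ref{beilinson} (which needs only lci hypotheses, so applies to $\ovv_n/W_n(k)$ and to ss-pairs alike), and concludes by the de Rham Poincar\'e Lemma. In other words, the de Rham input is used only where it is valid -- on the quotients -- and the total (unfiltered) crystalline statement is taken as the second pillar; your proposal tries to dispense with that second pillar, and that is the gap.
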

\begin{proof}  We have the following map of distinguished triangles
$$
\begin{CD}
J^{[r]} _{\crr,n}@>>> A_{\crr,n} @>>> A_{\crr,n}/J^{[r]}_{\crr,n}  \\
@VVV @VV\wr V @VV\wr V\\
\sj^{[r]}_{\crr,n}@>>> \sa_{\crr,n}
 @>>> \sa_{\crr,n}/\sj^{[r]}_{\crr,n}
\end{CD}
$$
The middle map is a quasi-isomorphism by the Crystalline Poincar\'e Lemma proved in \cite[2.3]{BE2}. Hence it suffices to show that so is the rightmost map. But, by \cite[1.9.2]{BE2}, this map is quasi-isomorphic to the map $A_{\dr,n}/F^r\to \sa^{\natural}_{\dr,n}/F^r$. Since the last map is a quasi-isomorphism by the de Rham Poincar\'e Lemma (\ref{derham}) we are done.
\end{proof}


We will now recall the definitions of the crystalline, Beilinson-Hyodo-Kato, and de Rham period maps \cite[3.1]{BE2}, \cite[3.5]{BE1}.
Let $X\in {\mathcal V}ar_{\ovk}$. To define the crystalline period map $$\rho_{\crr}: \R\Gamma_{\crr}(X_h)\to \R\Gamma(X_{\eet},{\mathbf Z}_p)\what{\otimes} A_{\crr},$$ consider the natural map
$\alpha_n: \R\Gamma_{\crr}(X_h)\to \R\Gamma(X_h,\sa_{\crr,n})$ and the composition
$$\beta_n: \quad \R\Gamma(X_{\eet},{\mathbf Z}_p(r))\otimes^{L}_{{\mathbf Z}_p}A_{\crr,n}\stackrel{\sim}{\to}\R\Gamma(X_{\eet},A_{\crr,n})
\stackrel{\sim}{\to}\R\Gamma(X_{h},A_{\crr,n})
\stackrel{\sim}{\to} \R\Gamma(X_{h},\sa_{\crr,n}).
$$ 
 Set $\rho_{\crr,n}:=\beta_n^{-1}\alpha_n$ and $\rho_{\crr}:=\holim_n\rho_{\crr,n}$.
  The Hyodo-Kato period map $$\rho_{\hk}:\R\Gamma^B_{\hk}(X_h)^{\tau}_{B^+_{\crr}}\to \R\Gamma(X_{\eet},{\mathbf Q}_p)\otimes B^+_{\crr},\quad  \rho_{\hk}=\rho_{\crr,{\mathbf Q}}\iota^B_{\crr},
$$
 is obtained by composing the map $\rho_{\crr, {\mathbf Q}}$ with the quasi-isomorphism $\iota^B_{\crr}: \R\Gamma^B_{\hk}(X_h)^{\tau}_{B^+_{\crr}}\stackrel{\sim}{\to} \R\Gamma_{\crr}(X_h)_{{\mathbf Q}}$. The maps $\rho_{\crr}, \rho_{\hk}$ are morphisms of $E_{\infty}$ $A_{\crr}$- and $B^+_{\crr}$-algebras equipped with a Frobenius action; they are compatible with the action of the Galois group $G_K$. 
  
  To define the de Rham period map $\rho_{\dr}:\R\Gamma_{dR}(X_h)\otimes_{\ovk}B_{\dr}^+\to \R\Gamma(X_{\eet},{\mathbf Q}_p)\otimes B_{\dr}^+$ consider the compositions
 \begin{align*}
 \alpha:\,  & \R\Gamma_{\dr}(X_h)\stackrel{\sim}{\to}\R\Gamma^{\natural}_{\dr}(X_h)\otimes {\mathbf Q}\to \R\Gamma^{\natural}_{\dr}(X_h)\what{\otimes} {\mathbf Q}_p,\\
\beta:\, & \R\Gamma(X_{\eet},{\mathbf Z})\otimes^{\mathbf L}A_{\dr}\stackrel{\sim}{\to}\R\Gamma(X_{\eet},A_{\dr})\to \R\Gamma(X_h,A_{\dr})\to \R\Gamma(X_h,\sa_{\dr}^{\natural})=\R\Gamma^{\natural}_{\dr}(X_h).
 \end{align*}
 After tensoring the map $\beta$ with ${\mathbf Z}/p^n$ and using the de Rham Poincar\'e Lemma we get a quasi-isomorphism
 $$\beta_n: \R\Gamma(X_{\eet},{\mathbf Z}/p^n)\otimes^{\mathbf L}A_{\dr}\stackrel{\sim}{\to} \R\Gamma^{\natural}_{\dr}(X_h)\otimes^{\mathbf L}{\mathbf Z}/p^n.
   $$
  Set $\beta_{\mathbf Q}:=\holim_n\beta_n\otimes {\mathbf Q}$ and $\rho_{\dr}:=\beta^{-1}\alpha$. This is  a morphism of filtered $E_{\infty}$ $B^+_{\dr}$-algebras, compatible with $G_K$-action.
\begin{theorem}(\cite[3.2]{BE2}, \cite[3.6]{BE1})For  $X\in {\mathcal V}ar_{\ovk}$,   we have canonical quasi-isomorphisms
\begin{align*}
\rho_{\crr}:  \,& \R\Gamma_{\crr}(X_h)\otimes_{A_{\crr}}B_{\crr}\stackrel{\sim}{\to}  \R\Gamma(X_{\eet},{\mathbf Q}_p)\otimes B_{\crr},\quad \rho_{\hk}:\, \R\Gamma^B_{\hk}(X_h)^{\tau}_{B_{\crr}}\stackrel{\sim}{\to}\R\Gamma(X_{\eet},{\mathbf Q}_p)\otimes B_{\crr},\\
\rho_{\dr}: \,& \R\Gamma_{\dr}(X_h)\otimes_{\ovk}{B_{\dr}}\stackrel{\sim}{\to}\R\Gamma(X_{\eet},{\mathbf Q}_p)\otimes B_{\dr}.
\end{align*}
\end{theorem}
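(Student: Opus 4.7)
The plan is to deduce each of the three comparison quasi-isomorphisms from the appropriate Poincar\'e Lemma combined with $h$-descent for torsion constant sheaves, first at each torsion level $n$, and then by passing to the limit and inverting the relevant elements.

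For $\rho_{\crr}$, the definition $\rho_{\crr,n}=\beta_n^{-1}\alpha_n$ reduces the problem to showing that $\beta_n$ is a quasi-isomorphism. I would factor $\beta_n$ as the composition
\begin{align*}
\R\Gamma(X_{\eet},\bz/p^n)\otimes^{L}_{\bz/p^n}A_{\crr,n} & \stackrel{\sim}{\to}\R\Gamma(X_{\eet},A_{\crr,n})\stackrel{\sim}{\to}\R\Gamma(X_h,A_{\crr,n})\\
& \stackrel{\sim}{\to}\R\Gamma(X_h,\sa_{\crr,n}),
\end{align*}
where the first arrow is the projection formula applied to $A_{\crr,n}$ as a flat $\bz/p^n$-module, the second is $h$-descent for constant torsion sheaves on $\mathcal{V}ar_{\ovk}$, and the third is the $r=0$ case of the Filtered Crystalline Poincar\'e Lemma stated just above. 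Forming $\holim_n$ and inverting $p$ gives a quasi-isomorphism $\R\Gamma(X_{\eet},\bq_p)\what{\otimes}A_{\crr,\bq}\stackrel{\sim}{\to}\R\Gamma_{\crr}(X_h)_{\bq}$; passing to $\cdot\otimes_{A_{\crr}}B_{\crr}$ (i.e.\ inverting $t$ on the crystalline side, which on the \'etale side merely enlarges the coefficient ring) produces the first assertion.

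The Hyodo-Kato comparison $\rho_{\hk}$ is then immediate: by definition $\rho_{\hk}=\rho_{\crr,\bq}\iota^B_{\crr}$, so precomposing the crystalline quasi-isomorphism with the earlier crystalline Beilinson-Hyodo-Kato map $\iota^B_{\crr}\colon \R\Gamma^B_{\hk}(X_h)^{\tau}_{B^+_{\crr}}\stackrel{\sim}{\to}\R\Gamma_{\crr}(X_h)_{\bq}$ and tensoring over $B^+_{\crr}$ with $B_{\crr}$ does the job. For $\rho_{\dr}$ I would run the analogous three-step chain for each $n$, but with the de Rham Poincar\'e Lemma (Theorem \ref{derham}) in place of its crystalline sibling; this produces a filtered quasi-isomorphism $\R\Gamma(X_{\eet},\bz/p^n)\otimes^{L}A_{\dr,n}\stackrel{\sim}{\to}\R\Gamma^{\natural}_{\dr}(X_h)\otimes^{L}\bz/p^n$. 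Taking $\holim_n$, inverting $p$, and passing to $F$-completion converts the source into $\R\Gamma(X_{\eet},\bq_p)\otimes B_{\dr}^+$ (since $\lim_F A_{\dr}\what{\otimes}\bq_p=B_{\dr}^+$), while Beilinson's filtered comparison $\gamma$ identifies the target with $\R\Gamma_{\dr}(X_h)\otimes_{\ovk}B_{\dr}^+$. Inverting $t$ yields the final statement.

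The main obstacle is bookkeeping rather than any isolated deep step: one must verify that the chains of identifications above are compatible with Frobenius, the $G_K$-action, and -- in the de Rham case -- with the Hodge filtration at every truncation $F^m$; moreover the projection formula and the $h$-descent step have to be applied with some care to the topological/$p$-adically complete coefficient rings $A_{\crr,n}$ and $A_{\dr,n}$. Once these compatibilities are in place, the Poincar\'e Lemmas do all the real work and the three quasi-isomorphisms fall out in parallel.
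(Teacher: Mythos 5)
The factorization of $\beta_n$ you give (projection formula, $h$-descent for torsion constant sheaves, Poincar\'e Lemma) is sound and is essentially the easy part of Beilinson's argument, which the paper cites rather than reproves. But there is a genuine gap in your treatment of the other half of $\rho_{\crr} = \beta^{-1}\alpha$, and it is precisely where the hard content of the theorem lives. Showing that $\beta_n$ is a quasi-isomorphism only identifies $\holim_n \R\Gamma(X_h, \sa_{\crr,n})$ with $\R\Gamma(X_{\eet},{\mathbf Z}_p)\what\otimes A_{\crr}$; it says nothing about the completion map $\alpha\colon \R\Gamma_{\crr}(X_h) \to \holim_n\R\Gamma(X_h,\sa_{\crr,n})$. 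You silently conflate the source of $\alpha$ with the target of $\beta$ when you claim that "forming $\holim_n$ and inverting $p$ gives a quasi-isomorphism \dots $\stackrel{\sim}{\to}\R\Gamma_{\crr}(X_h)_{\bq}$": the target you actually get from $\holim_n \beta_n$ is the $p$-completion $\holim_n \R\Gamma(X_h,\sa_{\crr,n})$, and identifying this (after inverting $p$ or $t$) with $\R\Gamma_{\crr}(X_h)_{\bq}$ or $\R\Gamma_{\crr}(X_h)\otimes_{A_{\crr}}B_{\crr}$ is the crux. There is no a priori reason the $h$-cohomology $\R\Gamma(X_h,\sa_{\crr})$ should be derived $p$-complete (it involves an $h$-sheafification that need not commute with $\holim_n$), and the quasi-isomorphism after inverting $t$ is exactly the comparison theorem.

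What is missing is the finiteness/perfection input: one must know that $\R\Gamma_{\crr}(X_h)_{\bq}$ is a perfect $B^+_{\crr}$-complex whose cohomology groups are free of rank equal to $\mathrm{rk}\,H^n(X_{\eet},\Qp)$ -- this is exactly the Proposition stated just before the theorem in the paper, and it is itself a nontrivial theorem of Beilinson proved by reduction to the proper smooth case using $h$-descent plus resolution. With this in hand, $\alpha$ compares two perfect $B^+_{\crr}$-complexes of the same (finite) rank, and the map becomes an isomorphism after inverting $t$ because any failure of surjectivity/injectivity would contradict the rank count. Without this, your argument only establishes the ``completed'' comparison, not the statement about $\R\Gamma_{\crr}(X_h)\otimes_{A_{\crr}}B_{\crr}$ itself. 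The same gap recurs verbatim in your $\rho_{\dr}$ step; and the reduction of $\rho_{\hk}$ to $\rho_{\crr}$ via $\iota^B_{\crr}$ is fine once $\rho_{\crr}$ is settled. In short: the Poincar\'e Lemmas do the bookkeeping for $\beta$, but it is the finiteness theorem that does the "real work" you attribute to them.
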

Pulling back $\rho_{\hk}$ to the Fontaine-Hyodo-Kato ${\mathbb G}_a$-torsor $\Spec(B_{\st})/\Spec(B_{\crr})$ we get a canonical quasi-isomorphism of $B_{\st}$-complexes
\begin{equation}
\rho_{\hk}: \R\Gamma^B_{\hk}(X_h)\otimes_{K_0^{\nr}}B_{\st}\stackrel{\sim}{\to}\R\Gamma(X_{\eet},{\mathbf Q}_p)\otimes B_{\st}
\end{equation}
compatible with the $(\phi,N)$-action and with the $G_K$-action on ${\mathcal V}ar_{\ovk}$.
\begin{corollary}
\label{period-compatibility}
The period morphisms  are compatible, i.e., the following diagrams commute.
$$
\xymatrix{
\R\Gamma^B_{\hk}(X_h)\otimes_{K_0^{\nr}}B_{\st}\ar[r]^-{\iota^B_{\dr}\otimes\iota}\ar[d]^{\rho_{\hk}} & \R\Gamma_{\dr}(X_h)\otimes_{\ovk}B_{\dr}\ar[d]^{\rho_{\dr}} &
 \R\Gamma_{\crr}(X_h)\otimes_{A_{\crr}}B_{\dr}\ar[d]_{\rho_{\crr}\otimes\id_{B_{\dr}}} & \R\Gamma_{\dr}(X_h)\otimes_{\ovk}B_{\dr} \ar[dl]^{\rho_{\dr}}\ar[l]_{\gamma_{\dr}}^{\sim} \\ 
 \R\Gamma(X_{\eet},{\mathbf Q}_p)\otimes B_{\st}\ar[r]^-{1\otimes \iota} & \R\Gamma(X_{\eet},{\mathbf Q}_p)\otimes B_{\dr} & 
 \R\Gamma(X_{\eet},{\mathbf Q}_p)\otimes B_{\dr}\\
}
$$
\end{corollary}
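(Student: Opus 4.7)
The plan is to deduce both commutative squares from already-proven statements, chiefly Lemma \ref{HK-compatibility} and the defining identity $\rho_{\hk} = \rho_{\crr,\bq}\circ\iota^B_{\crr}$. I would treat the right-hand square first, then reduce the left-hand square to it.

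First I would verify the right-hand square, namely $\rho_{\dr} = (\rho_{\crr}\otimes\id_{B_{\dr}})\circ\gamma_{\dr}$. Unwinding definitions, $\rho_{\crr}$ is built from the filtered crystalline Poincar\'e lemma $J^{[r]}_{\crr,n}\stackrel{\sim}{\to}\sj^{[r]}_{\crr,n}$, while $\rho_{\dr}$ is built from the de Rham Poincar\'e lemma $A_{\dr}\otimes^L{\mathbf Z}/p^n\stackrel{\sim}{\to}\sa^{\natural}_{\dr,n}$; modulo $F^r$ these two Poincar\'e lemmas are tied together by the quasi-isomorphism $\kappa_r\colon \sa_{\crr,n}/\sj^{[r]}_{\crr,n}\stackrel{\sim}{\to}\sa^{\natural}_{\dr,n}/F^r$ of Theorem \ref{beilinson}. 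Thus the composite $(\rho_{\crr}\otimes\id_{B_{\dr}})\circ\gamma_{\dr}$ reduces, modulo $F^r$ and after tensoring to $B^+_{\dr}$, to $\rho_{\dr}\bmod F^r$. Since $\gamma_{\dr}$ was defined by Beilinson \cite[3.4.1]{BE2} precisely as the $B^+_{\dr}$-linearization of $\invlim_r(\gamma_r^{-1}\kappa_r^{-1})\circ \holim_n\kappa'_n$, passing to the inverse limit and inverting $t$ yields the claim.

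Next, for the left-hand square I would combine three inputs: (i) the definitional identity $\rho_{\hk} = \rho_{\crr,\bq}\circ\iota^B_{\crr}$, extended by $B_{\st}$-linearization over the Fontaine--Hyodo--Kato torsor so that the source becomes $\R\Gamma^B_{\hk}(X_h)\otimes_{K_0^{\nr}} B_{\st}$; (ii) the right-hand diagram of Lemma \ref{HK-compatibility}, which, after inverting $t$, gives
\[
(\iota^B_{\dr}\otimes\iota) \;=\; \gamma_{\dr}\circ(\iota^B_{\crr}\otimes\iota)\colon \R\Gamma^B_{\hk}(X_h)\otimes_{K_0^{\nr}} B_{\st}\to \R\Gamma_{\dr}(X_h)\otimes_{\ovk} B_{\dr};
\]
and (iii) the right-hand square already established. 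Chasing these gives
\[
\rho_{\dr}\circ(\iota^B_{\dr}\otimes\iota)=\rho_{\dr}\circ\gamma_{\dr}\circ(\iota^B_{\crr}\otimes\iota)
=(\rho_{\crr}\otimes\id)\circ(\iota^B_{\crr}\otimes\iota)=\rho_{\hk}\otimes\iota = (1\otimes\iota)\circ\rho_{\hk},
\]
which is the desired commutativity. Galois equivariance is automatic since every arrow arises from $h$-sheafification of a natural construction on ss-pairs and $G_K$ acts by transport of structure on ${\mathcal V}ar_{\ovk}$.

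The main obstacle I expect is bookkeeping rather than novelty: one must keep track of base rings ($A_{\crr}$, $B^+_{\crr}$, $B^+_{\st}$, $B^+_{\dr}$, and their localizations at $t$), of the identification $A^{\tau}_{\crr,\bq}\simeq B^+_{\st}$ implicit in the definition of $\iota^B_{\crr}$, of the canonical trivializations $\beta_{[\widetilde p]}$ and $\beta_p$ used to pass between $B^+_{\crr}$- and $\ovk$-coefficients, and of the comparison $\kappa'_n$ in the derived $F$-limit underlying $\gamma_{\dr}$. These compatibilities have all been verified in the proof of Lemma \ref{HK-compatibility}, so the argument consists essentially of assembling them, together with the Poincar\'e lemmas and the definition of $\rho_{\hk}$, into the two commutative diagrams.
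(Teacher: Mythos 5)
Your argument is correct and matches the paper's proof: the left-hand square is reduced, exactly as in the paper, to the identity $\rho_{\hk}=\rho_{\crr}\iota^B_{\crr}$, the right-hand square, and the right diagram of Lemma \ref{HK-compatibility}. The only difference is that for the right-hand square the paper simply cites Beilinson \cite[3.4]{BE2}, whereas you sketch that compatibility by unwinding the definitions of $\rho_{\crr}$, $\rho_{\dr}$ and $\gamma_{\dr}$ via the two Poincar\'e lemmas and $\kappa_r$ --- which is the content of the cited result, so this is fine.
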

\begin{proof}
 The second diagram commutes by \cite[3.4]{BE2}.  The commutativity of the first one can be reduced, by 
the equality $\rho_{\hk}=\rho_{\crr}\iota^B_{\crr}$ and the second diagram above, to the commutativity of the right diagram in Lemma \ref{HK-compatibility}.
\end{proof}

  We will now define the syntomic period map $$\rho_{\synt}:\, \R\Gamma_{\synt}(X_h,r)_{\mathbf Q}\to \R\Gamma(X_{\eet},{\mathbf Q}_p(r)),\quad r\geq 0.
$$
 Set
 ${\mathbf Z}/p^n(r)^{\prime}:=(1/(p^aa!){\mathbf Z}_p(r))\otimes{\mathbf Z}/p^n$, where $a$ is the largest integer $\leq r/(p-1)$.  Recall that we have 
  the fundamental exact sequence \cite[Theorem 1.2.4]{Ts}
 $$0\to {\mathbf Z}/p^n(r)^{\prime}\to J_{\crr,n}^{<r>}\lomapr{1-\phi_r}A_{\crr,n}\to 0,
 $$
 where  $$J_n^{<r>}:= \{x\in J_{n+s}^{[r]}\mid \phi(x)\in p^rA_{\crr,n+s}\}/p^n ,$$
for some $s\geq r$.
Set
 $S_n(r):=\Cone(J^{[r]}_{\crr,n}\lomapr{p^r-\phi} A_{\crr,n})[-1]$.   There is a natural morphism of complexes 
 $S_n(r)\to{\mathbf Z}/p^n(r)^{\prime}$ (induced by $p^r $ on $J_{\crr,n}^{[r]}$ and $\id$ on $A_{\crr,n}$) , whose kernel and cokernel are annihilated by $p^r$.

  The Filtered Crystalline Poincar\'e Lemma implies easily the following Syntomic Poincar\'e Lemma.
 \begin{corollary}
 \begin{enumerate}
 \item For $0\leq r\leq p-2$, there is a  unique quasi-isomorphism ${\mathbf Z}/p^n(r)\stackrel{\sim}{\longrightarrow}\sss_n(r)$ of complexes of sheaves on ${\mathcal V}ar_{\ovk,h}$  that is compatible with the Crystalline Poincar\'e Lemma.
 \item
There is a  unique quasi-isomorphism $S_n(r)\stackrel{\sim}{\to}\sss_n(r)$ of complexes of sheaves on ${\mathcal V}ar_{\ovk,h}$ that is compatible with the Crystalline Poincar\'e Lemma.
\end{enumerate}
\end{corollary}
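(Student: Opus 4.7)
The plan is to deduce both statements directly from the Filtered Crystalline Poincar\'e Lemma and the fundamental exact sequence of $p$-adic Hodge theory, using only that $\sss_n(r)$ is built as a mapping fiber out of $\sj^{[r]}_{\crr,n}$ and $\sa_{\crr,n}$ on the $h$-site, and similarly for $S_n(r)$ out of the constant sheaves $J^{[r]}_{\crr,n}$ and $A_{\crr,n}$.

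First I would settle (2). The definitions give a morphism of distinguished triangles of $h$-sheaves on $\mathcal{V}ar_{\ovk,h}$,
\begin{equation*}
\xymatrix@C=20pt{
S_n(r) \ar[r] \ar[d] & J^{[r]}_{\crr,n} \ar[r]^-{p^r-\phi} \ar[d] & A_{\crr,n} \ar[d] \\
\sss_n(r) \ar[r] & \sj^{[r]}_{\crr,n} \ar[r]^-{p^r-\phi} & \sa_{\crr,n},
}
\end{equation*}
where the vertical maps are the $h$-sheafifications of the evaluation maps. By the Filtered Crystalline Poincar\'e Lemma applied to the $r$'th Hodge filtration, the middle vertical arrow is a quasi-isomorphism; applied with $r=0$, so is the right-hand one. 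The induced map on mapping fibers is therefore a quasi-isomorphism $S_n(r)\stackrel{\sim}{\to}\sss_n(r)$, and this is by construction compatible with the Crystalline Poincar\'e Lemma.

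For (1), I would combine (2) with the fundamental exact sequence recalled before the statement, namely
\[
0 \to {\mathbf Z}/p^n(r)' \to J^{<r>}_{\crr,n} \xrightarrow{1-\phi_r} A_{\crr,n} \to 0,
\]
which realises ${\mathbf Z}/p^n(r)'$ as the mapping fibre of $1-\phi_r$. In the stable range $0\le r\le p-2$ one has $a=\lfloor r/(p-1)\rfloor=0$, so ${\mathbf Z}/p^n(r)'={\mathbf Z}/p^n(r)$; moreover in this range $J^{<r>}_{\crr,n}=J^{[r]}_{\crr,n}$ (since $\phi(J^{[r]})\subset p^r A_{\crr}$ automatically for $r\le p-1$, as the divided powers behave like ordinary powers). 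Multiplication by $p^r$ on the $J^{[r]}$-term and the identity on the $A_{\crr,n}$-term then converts $(1-\phi_r)$ into $(p^r-\phi)$, exhibiting the natural map $S_n(r)\to{\mathbf Z}/p^n(r)'={\mathbf Z}/p^n(r)$ (whose kernel and cokernel are $p^r$-torsion in general) as a quasi-isomorphism under the stated hypothesis. Composing its inverse with the quasi-isomorphism from (2) yields ${\mathbf Z}/p^n(r)\stackrel{\sim}{\to}\sss_n(r)$.

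The hard part is the uniqueness clause, but the usual argument applies: any two quasi-isomorphisms of $h$-sheaves that are compatible with the Crystalline Poincar\'e Lemma differ by an automorphism of $\sss_n(r)$ in the derived category over the trivial thickening. Such an automorphism is determined by its restriction to the universal PD-thickening $A_{\crr,n}$, where the only endomorphism commuting with the structure maps to $J^{[r]}_{\crr,n}$ and $A_{\crr,n}$ is the identity; in the stable range this is elementary, and in general one uses that the natural map $S_n(r)\to\sss_n(r)$ was canonically constructed so that no further choice enters. The main subtle point to be careful about is therefore the identification $J^{<r>}_{\crr,n}=J^{[r]}_{\crr,n}$ in the stable range; this is where the hypothesis $r\le p-2$ is actually used, and without it one can only expect (2) to hold in full generality.
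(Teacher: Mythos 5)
Your treatment of part (2) is exactly the paper's argument: form the morphism of distinguished triangles with the constant complexes $J^{[r]}_{\crr,n}\to A_{\crr,n}$ on top and $\sj^{[r]}_{\crr,n}\to\sa_{\crr,n}$ below, invoke the Filtered Crystalline Poincar\'e Lemma (and its $r=0$ case) for the two vertical arrows, and conclude for the induced map on mapping fibres; like the paper, you are brief about the uniqueness of the induced arrow, which is acceptable here.

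Part (1), however, contains a genuine gap. You deduce it from (2) by asserting that the comparison map $S_n(r)\to {\mathbf Z}/p^n(r)'$ (given by $p^r$ on the $J^{[r]}$-term and the identity on the $A_{\crr,n}$-term) becomes a quasi-isomorphism once $0\le r\le p-2$. The hypothesis $r\le p-2$ only gives ${\mathbf Z}/p^n(r)'={\mathbf Z}/p^n(r)$ and $J^{<r>}_{\crr,n}=J^{[r]}_{\crr,n}$; it does not remove the $p^r$-torsion discrepancy, and for $1\le r$ the map is not a quasi-isomorphism. Indeed, on $J^{[r]}_{\crr,n}$ one has $p^r-\phi=p^r(1-\phi_r)$, and $1-\phi_r\colon J^{[r]}_{\crr,n}\to A_{\crr,n}$ is surjective by the fundamental exact sequence (valid since $r\le p-1$); hence $H^1(S_n(r))=\coker(p^r-\phi)=A_{\crr,n}/p^rA_{\crr,n}\neq 0$, whereas ${\mathbf Z}/p^n(r)$ is concentrated in degree $0$, and likewise $H^0(S_n(r))$ is an extension of the nonzero module $A_{\crr,n}[p^r]$ by ${\mathbf Z}/p^n(r)$. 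So (1) cannot be obtained by composing (2) with this rescaling; this is precisely why the paper records that the kernel and cokernel of $S_n(r)\to{\mathbf Z}/p^n(r)'$ are merely annihilated by $p^r$. The ``analogous'' argument the paper has in mind runs the same distinguished-triangle comparison directly against the fundamental exact sequence $0\to{\mathbf Z}/p^n(r)\to J^{[r]}_{\crr,n}\lomapr{1-\phi_r}A_{\crr,n}\to 0$, i.e.\ uses the divided-Frobenius presentation of the syntomic sheaf, which exists integrally exactly because $r\le p-2$ -- not a reduction of (1) to (2) by multiplication by $p^r$. (Your observation that $\phi(J^{[r]})\subset p^rA_{\crr}$ for $r\le p-1$ is correct and is indeed where the stable-range hypothesis enters; the error is solely in the claimed quasi-isomorphism $S_n(r)\to{\mathbf Z}/p^n(r)$.)
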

\begin{proof}
We will prove the second claim - the first one is proved in an analogous way. Consider the following map of distinguished triangles
$$
\xymatrix{
\sss_n(r)\ar[r] & \sj^{[r]}_{\crr,n}\ar[r]^{p^r-\phi} & \sa_{\crr,n}\\
S_n(r)\ar[r]\ar@{-->}[u]& J^{[r]}_{\crr,n}\ar[u]^{\wr}\ar[r]^{p^r-\phi} & A_{\crr,n}\ar[u]^{\wr}
}
$$
The  triangles are distinguished by definition.
The vertical continuous arrows are quasi-isomorphisms by the Crystalline Poincar\'e Lemma. They induce the dash arrow  that is clearly a quasi-isomorphism.
\end{proof}

  Consider the natural map
$\alpha_n: \R\Gamma(X_h,\sss(r))\to \R\Gamma(X_h,\sss_n(r))$ and the zig-zag
$$\beta_n:\, \R\Gamma(X_h,\sss_n(r))\leftarrow \R\Gamma(X_{h},S_n(r))\to \R\Gamma(X_{\eet},{\mathbf Z}/p^n(r)^{\prime})\stackrel{\sim}{\leftarrow}
\R\Gamma(X_{h},{\mathbf Z}/p^n(r)^{\prime}).$$ Set $\beta:=(\holim_n\beta_{n})\otimes {\mathbf Q}$; note that this  is a quasi-isomorphism. 
 Set $$
 \rho_{\synt}:=p^{-r}\beta\alpha:\quad  \R\Gamma_{\synt}(X_h,r)\to \R\Gamma(X_{\eet},{\mathbf Q}_p(r)),
 $$
 where $\alpha:=(\holim_n\alpha_{n})\otimes {\mathbf Q}$.  The period map $\rho_{\synt}$  induces a map of graded $E_{\infty}$ algebras over $\bq_p$ compatible with the action of the Galois group $G_K$.

  The syntomic period map has a different, more global definition that we find very useful. Define the map $\rho_{\synt}^{\prime}$ by the following diagram.
$$
\xymatrix@C=40pt{
\R\Gamma_{\synt}(X_h,r)\ar[r]^{\sim} \ar[d]^{\rho_{\synt}^{\prime}}& [\R\Gamma_{\crr}(X_h)_\bq\ar[r]^-{(1-\phi_r,\gamma^{-1}_r\kappa^{-1}_r)}\ar[d]^{\rho_{\crr}} & \R\Gamma_{\crr}(X_h)_\bq\oplus \R\Gamma_{\dr}(X_h)/F^r]\ar[d]^{\rho_{\crr}+\rho_{\dr}}\\
  \R\Gamma_{\eet}(X,\bq_p(r))\ar[r]^-{\sim} & [\R\Gamma_{\eet}(X,\bq_p(r))\otimes B_{\crr}\ar[r]^-{(1-\phi_r, \can) } & \R\Gamma_{\eet}(X,\bq_p(r))\otimes B_{\crr}\oplus \R\Gamma_{\eet}(X,\bq_p(r))\otimes B_{\dr}/F^r]
}
$$
This definition makes sense since the following diagram commutes.
$$
\xymatrix{
\R\Gamma_{\crr}(X_h)_\bq\ar[r]^{\gamma^{-1}_r\kappa^{-1}_r}\ar[d]^{\rho_{\crr}} & \R\Gamma_{\dr}(X_h)/F^r\ar[d]^{\rho_{\dr}}\\
 \R\Gamma_{\eet}(X,\bq_p(r))\otimes B_{\crr}\ar[r]^-{\can } & \R\Gamma_{\eet}(X,\bq_p(r))\otimes B_{\dr}/F^r
}
$$
The  syntomic period morphisms $\rho_{\synt}$ and $\rho_{\synt}^{\prime}$ are homotopic by a  homotopy compatible with the $G_K$-action (and, unless necessary, we will not distinguish them in what follows). These two facts follow easily from the definitions.

   For $X\in {\mathcal V}ar_K$, we have a quasi-isomorphism
  
  \begin{equation}
  \label{definition1}
  \alpha_{\eet}: \R\Gamma_{}(X_{\ovk,\eet},{\mathbf Q}_p(r))\stackrel{\sim}{\to} C(\R\Gamma^B_{\hk}(X_{\ovk,h})\{r\})
\end{equation}
 that we define  as the inverse of the following composition of  quasi-isomorphisms (square brackets denote complex)
\begin{align*}
C(\R\Gamma^B_{\hk}(X_{\ovk,h})\{r\}) & \twomapr{\rho}{\sim}\R\Gamma_{}(X_{\ovk,\eet},{\mathbf Q}_p)\otimes_{{\mathbf Q}_p}
[B_{\st}\verylomapr{(N,1-\phi_r,\iota)}B_{\st}\oplus B_{\st}\oplus B_{\dr}/F^r\veryverylomapr{(1-\phi_{r-1})-N}  B_{\st}]\\
& \stackrel{\sim}{\leftarrow}\R\Gamma_{}(X_{\ovk,\eet},{\mathbf Q}_p)\otimes_{{\mathbf Q}_p}C(D_{\st}({\mathbf Q}_p(r)))
\stackrel{\sim}{\leftarrow}\R\Gamma_{}(X_{\ovk,\eet},{\mathbf Q}_p(r)).
\end{align*}
The last quasi-isomorphism is by Remark \ref{basics}. The map $\rho$ is defined using the period morphisms $\rho_{\hk}$ and $\rho_{\dr}$ and their compatibility (Corollary \ref{period-compatibility}). The map $\alpha_{\eet}$ is compatible with the action of $G_K$.

\begin{proposition}
\label{BK2}For a variety  $X\in {\mathcal V}ar_{K}$, we have a canonical, compatible with the action of  $G_K$,  quasi-isomorphism
$$\rho_{\synt}: \tau_{\leq r}\R\Gamma_{\synt}(X_{\ovk,h},r)\stackrel{\sim}{\to} \tau_{\leq r}\R\Gamma(X_{\ovk,\eet},{\mathbf Q}_p(r)).
$$
\end{proposition}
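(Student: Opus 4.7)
I would begin by identifying both sides of the period map with complexes of the type $C^{\pm}$ of Section~\ref{Jan}. The preceding proposition gives a $G_K$-equivariant quasi-isomorphism $\alpha_{\synt}\colon \R\Gamma_{\synt}(X_{\ovk,h},r) \stackrel{\sim}{\to} C^{+}(\R\Gamma^{B}_{\hk}(X_{\ovk,h})\{r\})$, while the quasi-isomorphism $\alpha_{\eet}$ of (\ref{definition1}) identifies $\R\Gamma(X_{\ovk,\eet},\bq_p(r))$ with $C(\R\Gamma^{B}_{\hk}(X_{\ovk,h})\{r\})$. Using the alternate description $\rho^{\prime}_{\synt}$ of the syntomic period map in terms of the $\rho_{\crr}$ and $\rho_{\dr}$ period morphisms, together with the compatibilities of Corollary~\ref{period-compatibility}, one verifies that under these identifications $\rho_{\synt}$ is $G_K$-equivariantly homotopic to the natural inclusion $C^{+}\hookrightarrow C$. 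The proposition thus reduces to showing that the cofiber $\mathrm{Cof}$ of this inclusion satisfies $H^{i}(\mathrm{Cof})=0$ for $i\leq r$.

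For the vanishing, I would use the hypercohomology spectral sequence
\[
E_{2}^{p,q} = H^{p}\bigl(\mathrm{Cof}(D^{q}\{r\})\bigr) \Longrightarrow H^{p+q}(\mathrm{Cof}), \qquad D^{q}:=H^{q}_{\hk}(X_{\ovk,h}),
\]
where for a filtered $(\phi,N,G_K)$-module $D$ we set $\mathrm{Cof}(D\{r\}) := C(D\{r\})/C^{+}(D\{r\})$. This spectral sequence is well defined because the formation of $C^{\pm}$ uses only tensor products over the field $K_{0}^{\nr}$ (or $\ovk$) with the period rings and the passage to the $F^{r}$-quotient. By Beilinson's comparison theorems, each $D^{q}$ is an admissible filtered $(\phi,N,G_K)$-module with $D^{q}_{K} \simeq H^{q}_{\dr}(X_h)$ via Proposition~\ref{HKdR}.

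For $q\leq r$ the shifted module $D^{q}\{r\}$ satisfies $F^{1}(D^{q}\{r\})_{K} = F^{r+1}H^{q}_{\dr}(X_h)=0$, since Deligne's Hodge filtration on the de Rham cohomology of any $K$-variety enjoys the vanishing $F^{p}H^{q}_{\dr}=0$ for $p>q$ (which follows by reduction to a smooth proper hypercover and the stupid-filtration argument on $\Omega^{\scriptscriptstyle\bullet}_{\overline{Y}}(\log D)$). Hence Corollary~\ref{resolution3} gives $C^{+}(D^{q}\{r\})\stackrel{\sim}{\to} C(D^{q}\{r\})$, so $E_{2}^{p,q}=0$ for $q\leq r$. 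Since the complexes $C^{\pm}(D\{r\})$ have cohomology only in degrees $p\in\{0,1\}$ by Remark~\ref{basics}(2), the abutment $H^{i}(\mathrm{Cof})$ for $i\leq r$ receives contributions only from $E_{\infty}^{p,q}$ with $q=i-p\leq r$, all of which vanish. The $G_K$-equivariance is automatic from the functoriality of all the constructions.

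The main obstacle is the careful bookkeeping of the Hodge filtration: one must handle how it interacts with the tensor products by the period rings $B_{\dr}^{+}, B_{\st}^{+}, B_{\dr}, B_{\st}$ when passing from fiberwise statements to the level of complexes, and one must identify the filtration on $D^{q}_{K}$ inherited from the Beilinson--Hyodo--Kato map with Deligne's Hodge filtration on $H^{q}_{\dr}(X_h)$. Once this bookkeeping is in place, the argument reduces to a fiberwise application of Corollary~\ref{resolution3} combined with the classical vanishing of Hodge filtration levels.
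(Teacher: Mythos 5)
Your proposal is correct and follows essentially the same route as the paper: both identify the two sides with $C^+(\R\Gamma^B_{\hk}(X_{\ovk,h})\{r\})$ and $C(\R\Gamma^B_{\hk}(X_{\ovk,h})\{r\})$ via $\alpha_{\synt}$ and $\alpha_{\eet}$, check (using $\rho_{\hk}=\rho_{\crr}\iota^B_{\crr}$, resp.\ Corollary \ref{period-compatibility}) that $\rho_{\synt}$ becomes the canonical map $C^+\to C$, and then reduce via the Bousfield--Kan/hypercohomology spectral sequence to the fiberwise statement of Corollary \ref{resolution3} for the admissible modules $H^j_{\hk}(X_{\ovk,h})\{r\}$, whose $F^1$ vanishes for $j\leq r$ by the Hodge filtration bound. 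Your cofiber formulation and the explicit degree bookkeeping are only cosmetic variations on the paper's argument.
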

\begin{proof}
The Bousfield-Kan spectral sequences associated to the homotopy limits defining the complexes $C^+(H^j_{\hk}(X_{\ovk,h})\{r\})$ and $C(H^j_{\hk}(X_{\ovk,h})\{r\})$ form the following commutative diagram
$$
\xymatrix{
^+E^{i,j}_2=H^i(C^+(H^j_{\hk}(X_{\ovk,h})\{r\}))\ar[d]^{\can}\ar@{=>}[r] & H^{i+j}(C^+(\R\Gamma^B_{\hk}(X_{\ovk,h})\{r\}))\ar[d]^{\can}\\
E^{i,j}_2=H^i(C(H^j_{\hk}(X_{\ovk,h})\{r\}))\ar@{=>}[r] & H^{i+j}(C(\R\Gamma^B_{\hk}(X_{\ovk,h})\{r\}))
}
$$
We have  $D_j=H^j_{\hk}(X_{\ovk,h})\{r\}\in MF_K^{\ad}(\phi, N,G_K)$. For $j\leq r$, $F^{1}D_{j,K}=F^{1-(r-j)}H^j_{\dr}(X_{h})\{r\}=0$. Hence, by  Corollary \ref{resolution3},  
we have $^+E^{i,j}_2\stackrel{\sim}{\to} E^{i,j}_2$. This implies that  $\tau_{\leq r}C^+(\R\Gamma^B_{\hk}(X_{\ovk,h})\{r\})\stackrel{\sim}{\to} \tau_{\leq r}C(\R\Gamma^B_{\hk}(X_{\ovk,h})\{r\})$.

    Since $\rho_{\hk}=\rho_{\crr}\iota^B_{\crr}$, we check easily  that we have the following commutative diagram
\begin{equation}
\label{compatibility0}
\begin{CD}
\R\Gamma_{\synt}(X_{\ovk,h},r)@>\sim >\alpha_{\synt}> C^+(\R\Gamma^B_{\hk}(X_{\ovk,h})\{r\})\\
@VV\rho_{\synt}V @VV\can V\\
\R\Gamma_{}(X_{\ovk,\eet},{\mathbf Q}_p(r))@>\sim >\alpha_{\eet}> C(\R\Gamma^B_{\hk}(X_{\ovk,h})\{r\})
\end{CD}
\end{equation}
It follows that $\rho_{\synt}: \tau_{\leq r}\R\Gamma_{\synt}(X_{\ovk,h},r)\stackrel{\sim}{\to} \tau_{\leq r}\R\Gamma(X_{\ovk,\eet},{\mathbf Q}_p(r))
$, as wanted.
\end{proof}

 Let $X\in {\mathcal V}ar_K$. The natural projection $\varepsilon: X_{\ovk,h}\to X_h$ defines pullback maps
 $$\varepsilon^*: \R\Gamma^B_{\hk}(X_h)\to \R\Gamma^B_{\hk}(X_{\ovk,h}),\quad  \varepsilon^*: \R\Gamma_{\dr}(X_h)\to \R\Gamma_{\dr}(X_{\ovk,h}).
 $$
 By construction they are compatible with the monodromy operator, Frobenius, the  action of the Galois group $G_K$, and filtration. It is also clear that they are compatible with the Beilinson-Hyodo-Kato morphisms, i.e., that
the following diagram commutes
$$
\xymatrix{
  \R\Gamma^B_{\hk}(X_h)\ar[r]^{\iota^B_{\dr}}\ar[d]^{\varepsilon^*} & \R\Gamma_{\dr}(X_h)\ar[d]^{\varepsilon^*}\\
   \R\Gamma^B_{\hk}(X_{\ovk,h})\ar[r]^{\iota^B_{\dr}} & \R\Gamma_{\dr}(X_{\ovk,h}).
}
$$
It follows that we can define a canonical pullback map $$
\varepsilon^*:\quad   C_{\st}(\R\Gamma^B_{\hk}(X_h)\{r\})\to  C^+(\R\Gamma^B_{\hk}(X_{\overline{K},h})\{r\}).
$$  
\begin{lemma}
\label{compatibilit01}
Let $r\geq 0$.
The following diagram commutes in the derived category.
$$
\xymatrix{
\R\Gamma_{\synt}(X_h,r)\ar[r]^-{\alpha_{\synt}}\ar[d]^{\varepsilon^*}  & C_{\st}(\R\Gamma^B_{\hk}(X_h)\{r\})\ar[d]^{\varepsilon^*}\\
\R\Gamma_{\synt}(X_{\ovk,h},r)\ar[r]^-{\alpha_{\synt}} & C^+(\R\Gamma^B_{\hk}(X_{\overline{K},h})\{r\}).
}
$$
\end{lemma}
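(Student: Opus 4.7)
The strategy is to verify the commutativity by reducing to a termwise check on a suitably nice h-hypercover, and then invoking the functoriality of the Beilinson-Hyodo-Kato construction. Following the proof of Proposition \ref{reduction2}, I would choose an h-hypercover $Y_{\scriptscriptstyle\bullet} \to X$ by ss-pairs over $K$, pick $t \geq 2\dim X+2$, and pass to a finite Galois extension $K'/K$ with uniformizer $\pi'$ for which the truncation $(Y_{\scriptscriptstyle\bullet} \times_V V')_{\leq t+1}$ consists of ss-pairs log-smooth of Cartier type over $V'^\times$. By construction, the arithmetic $\alpha_{\synt}$ is then realized as the termwise application of $\alpha^B_{\synt,\pi'}$ on this truncation followed by taking $G = \Gal(K'/K)$-invariants. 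After pulling back along $\varepsilon^*$, the Galois extension becomes trivial: $(Y_{\scriptscriptstyle\bullet} \times_V V')_{\leq t+1} \times_{V'} \ovv \to X_{\ovk}$ is again an h-hypercover, and by h-descent the geometric $\alpha_{\synt}$ may likewise be computed termwise from its defining diagram (\ref{Cccc}).

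Thus the claim reduces to showing, for each ss-pair $(U,\overline{U})$ over $K'$ that is log-smooth of Cartier type over $V'^\times$, the commutativity of
\begin{equation*}
\xymatrix{
\R\Gamma_{\synt}(U,\overline{U},r)_{\mathbf Q} \ar[r]^-{\alpha^B_{\synt,\pi'}} \ar[d]^{\varepsilon^*} & C_{\st}(\R\Gamma^B_{\hk}(U,\overline{U})\{r\}) \ar[d]^{\varepsilon^*} \\
\R\Gamma_{\synt}((U,\overline{U})_{\ovv},r)_{\mathbf Q} \ar[r]^-{\alpha_{\synt}} & C^+(\R\Gamma^B_{\hk}((U,\overline{U})_{\ovv})\{r\}).
}
\end{equation*}
Unraveling the two constructions through the common presentation (\ref{kwak1}) of syntomic cohomology as a homotopy fiber involving $\gamma_r^{-1}\kappa_r^{-1}$ on crystalline cohomology, the compatibility decomposes into two pieces: (a) on the crystalline side, the base change of PD-thickenings $(V'^\times_1 \hookrightarrow E_{R'}) \to (\ovv^\times_1 \hookrightarrow A_{\crr})$ intertwines the Hyodo-Kato splitting $\iota_{\pi'}$ used in $\alpha^B_{\synt,\pi'}$ with the crystalline Beilinson-Hyodo-Kato map $\iota^B_{\crr}$; (b) on the de Rham side, the Beilinson-Hyodo-Kato map $\iota^B_{\dr}$ over $K'$ is the restriction of its counterpart over $\ovk$ tensored with $B^+_{\dr}$. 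Both claims follow from Theorem \ref{kk2} (functoriality of Beilinson's equivalence $\epsilon_f$ under base change of PD-thickenings) together with the uniqueness of $\iota_{\pi'}$ as a $\phi$-equivariant section \cite[1.16]{BE2}, \cite[4.4.6]{Ts}.

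The main obstacle is bookkeeping for the trivializations of the Fontaine-Hyodo-Kato $\tau$-torsor: $\alpha^B_{\synt,\pi'}$ uses the trivialization $\beta_{\overline{p}}$ of Example \ref{crucial}.3, whereas the geometric side uses $\beta_{[\tilde{p}]}$ (on the crystalline component) and $\beta_p$ (on the de Rham component). The discrepancy is governed by the identity $a([\tilde{p}]) = \log([\tilde{p}]/p) + a(p)$ in $\wh{B}^+_{\crr}$, exactly as exploited in the proof of Lemma \ref{HK-compatibility}; once this is factored in and the diagram of torsors is drawn, the commutativity becomes formal. I expect no genuine surprises, but a careful diagram chase in the $\infty$-derived category is needed to ensure coherence of all identifications.
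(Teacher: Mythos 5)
Your first reduction is exactly the paper's: pass to an $h$-hypercovering, base-change to a finite Galois extension $K'$ so that the relevant truncation consists of ss-pairs log-smooth of Cartier type, use the base-change compatibility of $\alpha_{\synt}$ from Proposition \ref{reduction2} together with Propositions \ref{hypercov}, \ref{isomorphism}, \ref{deRham1}, and thereby reduce to a single ss-pair $Y=(U,\ove{U})$ with $\ove{Y}=Y_{\ovv}$. The de Rham-side compatibility (your point (b)) is also unproblematic and is dispatched in the paper by the uniqueness property of the homotopy fiber, which isolates the crystalline/Hyodo--Kato part of the diagram.

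The gap is in your point (a). There is no morphism of PD-thickenings relating $(V'^{\times}_1\hookrightarrow E_{R'})$ and $(\ovv^{\times}_1\hookrightarrow A_{\crr})$ to which Theorem \ref{kk2} could be applied directly: the arithmetic side is an evaluation of the crystal over Kato's ring $R'$ (with the uniformizer-dependent section $\iota_{\pi'}$ and the trivialization at $\overline{p}$), the geometric side an evaluation over $A_{\crr}$ (with $\iota^B_{\crr}$ and the trivialization at $[\tilde{p}]$), and these bases are not comparable by a canonical map respecting log-, PD- and Frobenius structures. The paper bridges them with Tsuji's ring $\wh{A}_{\st}$, the PD-envelope of $\ovv_1^{\times}\hookrightarrow A_{\crr}\times_{W(k)}W(k)[X]^{\times}$ (note this is \emph{not} the ring $\wh{A}_{\crr}$ of Lemma \ref{HK-compatibility}, which is formed with $V^{\times}$ instead of $W(k)[X]^{\times}$), which receives compatible maps from both $R'$ and $A_{\crr}$. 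Once the diagram is routed through $\R\Gamma_{\crr}(\ove{Y}/\wh{A}_{\st})^{N=0}_{\bq}$, one still has to prove the genuinely nontrivial fact that the map $\R\Gamma_{\crr}(\ove{Y})_{\bq}\to\R\Gamma_{\crr}(\ove{Y}/\wh{A}_{\st})^{N=0}_{\bq}$ is a quasi-isomorphism; via $\iota^B_{\wh{B}^+_{\st}}$ and $\iota^B_{\crr}$ this comes down to showing $M^{\tau,N=0}_{B^+_{\crr}}\stackrel{\sim}{\to}M^{\tau,N=0}_{\wh{B}^+_{\st}}$ for every $(\phi,N)$-module $M$, which uses $(\wh{B}^+_{\st})^{N=0}=B^+_{\crr}$ (Tsuji) and a reduction to trivial monodromy exploiting Kato's identification of $B^+_{\st}$ with the $N$-nilpotent part of $\wh{B}^+_{\st}$. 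Your proposal supplies neither the intermediate thickening nor this $N=0$ quasi-isomorphism; the torsor identity $a([\tilde{p}])=a(p)+\log([\tilde{p}]/p)$ that you invoke from Lemma \ref{HK-compatibility} is indeed part of the story, but by itself "functoriality plus uniqueness of $\iota_{\pi'}$" does not make the commutativity formal.
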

\begin{proof}
Take a number $t\geq 2\dim X +2$ and choose a finite Galois extension $(V^{\prime},K^{\prime})/(V,K)$ (see the proof of Proposition \ref{hypercov}) such that we have an $h$-hypercovering $Z_{\scriptscriptstyle\bullet}\to X_{K^{\prime}}$ with $(Z_{\scriptscriptstyle\bullet})_{\leq t+1}$ built from log-schemes log-smooth over $V^{\prime, \times}$ and of Cartier type. Since the top map $\alpha_{\synt}$ is  compatible with base change (cf. Proposition \ref{reduction2}) it suffices to show that the diagram in the lemma commutes with $X$ replaced by $(Z_{\scriptscriptstyle\bullet})_{\leq t+1}$. 
By (\ref{isomorphism}) and
Propositions
\ref{hypercov} and \ref{deRham1}, this reduces to showing that, for  an ss-pair $(U,\ove{U})$ split over $V$,  the following diagram commutes canonically in the $\infty$-derived category (we 
set $Y:=(U,\ove{U}), \ove{Y}:=Y_{\ovv}$, $\pi$ - a fixed uniformizer of $V$).
$$
\xymatrix{
\R\Gamma_{\synt}(Y,r)_\bq\ar[r]^-{\alpha^B_{\synt,\pi}}\ar[d]^{\varepsilon^*}  & C_{\st}(\R\Gamma^B_{\hk}(Y)\{r\})\ar[d]^{\varepsilon^*}\\
\R\Gamma_{\synt}(Y_{\ovk},r)_\bq \ar[r]^-{\alpha_{\synt}} & C^+(\R\Gamma^B_{\hk}(Y_{\overline{K}})\{r\}).
}
$$

  From the uniqueness property of the homotopy fiber functor,  it suffices to show that the following diagram commutes canonically in the $\infty$-derived category.
$$
\xymatrix{
\R\Gamma_{\crr}(Y)_\bq\ar[r] \ar[d] & \R\Gamma_{\crr}(Y/R)^{N=0} _\bq & \R\Gamma_{\hk}^B(Y_1)^{\tau,N=0}_{R_\bq}\ar[l]_{\iota_{\pi}}^{\sim} & 
\R\Gamma_{\hk}^B(Y_1)^{N=0}\ar[l]_{\beta}^{\sim}\ar[ld]\\
\R\Gamma_{\crr}(\ove{Y})_\bq
 & \R\Gamma_{\hk}^B(\ove{Y}_1)^{\tau,N=0}_{B^+_{\crr}}\ar[l]_-{\iota^B_{\crr}} ^-{\sim}& (\R\Gamma_{\hk}^B(\ove{Y}_1)\otimes _{K_0^{\nr}}B^+_{\st})^{N=0}\ar@{=}[l]
 }
$$
To do that we will need the  ring of periods $\wh{A}_{\st}$ \cite[p.253]{Ts}.
   Set
$$
\wh{A}_{\st,n}=H^0_{\crr}(\overline{V}_{n}^{\times}/R_{n}), \quad \wh{A}_{\st}=\invlim_nH^0_{\crr}(\overline{V}_{n}^{\times}/R_{n}).
$$
The ring $\wh{A}_{\st,n}$ has a natural action of $G_K$, Frobenius $\phi $,
and a monodromy
operator $N$. It is also equipped with a
PD-filtration $F^i\wh{A}_{\st,n}=H^0_{\crr}(\overline{V}_{n}^{\times}/R_{n},\sj_{\crr,n}^{[i]})$.
We have a  morphism $A_{\crr,n}\to \wh{A}_{\st,n}$ induced by the map
$H^0_{\crr}(\overline{V}_{n}/W_n(k))\to H^0_{\crr}(\overline{V}_{n}^{\times}/R_{n})$. It is compatible with the Galois action, the Frobenius,
and the filtration.
The natural map $R_{n}\to \wh{A}_{\st,n}$ is compatible with all the structures.
  We can view $\wh{A}_{\st,n}$
   as the PD-envelope of the closed immersion
   $$
   \overline{V}_n^{\times}\hookrightarrow A_{\crr,n}\times_{W_n(k)}W_n(k)[X]^{\times}
   $$
   defined by the map $\theta: A_{\crr,n}\to \overline{V}_n$ and the projection $W_n(k)[X] \to \overline{V}_n$, $X\mapsto \pi$.
   This makes $\overline{V}_1^{\times}\hookrightarrow \wh{A}_{\st,n}$ into a PD-thickening in the crystalline site of
   $\overline{V}_1$.  Set $\wh{B}^+_{\st}:=\wh{A}_{\st}[1/p]$.

Commutativity of the last diagram will follow from  the following commutative diagram 
$$
\xymatrix{
\R\Gamma_{\crr}(Y)_\bq\ar[d] \ar[rr]  & &\R\Gamma_{\crr}(\ove{Y})_\bq\ar[dl]^{\sim}\\
\R\Gamma_{\crr}(Y/R)^{N=0} _\bq\ar[r]  &  \R\Gamma_{\crr}(\ove{Y}/\wh{A}_{\st})^{N=0}_\bq\\
\R\Gamma_{\hk}^B(Y_1)^{\tau,N=0}_{R_\bq}\ar[u]^{\iota_{\pi}}_{\wr}\ar[r]& \R\Gamma_{\hk}^B(Y_1)^{\tau,N=0}_{\wh{B}^+_{\st}} \ar[u]^{\iota^B_{\wh{B}^+_{\st}}}_{\wr}
&\R\Gamma_{\hk}^B(Y_1)^{\tau,N=0}_{B^+_{\crr}}\ar[l]\ar[uu]^{\iota_{\crr}^B}_{\wr}\\
& \R\Gamma_{\hk}^B(Y_1)^{N=0} \ar[ur]\ar[ul]^{\beta}_{\sim}
}
$$
as soon as we show that 
 the map $\R\Gamma_{\crr}(\ove{Y})_\bq  \to \R\Gamma_{\crr}(\ove{Y}/\wh{A}_{\st})^{N=0}_\bq$ is a quasi-isomorphism.  Notice that the map $\iota^B_{\wh{B}^+_{\st}}$ is a quasi-isomorphism by Theorem \ref{Bthm}.  Hence using the Beilinson-Hyodo-Kato maps  $\iota^B_{\wh{B}^+_{\st}}$   and $ \iota_{\crr}^B$ this reduces to proving that the canonical map
$ \R\Gamma_{\hk}^B(Y_1)^{\tau,N=0}_{B^+_{\crr}}\to \R\Gamma_{\hk}^B(Y_1)^{\tau,N=0}_{\wh{B}^+_{\st}} $ is a quasi-isomorphism.  In fact, we claim that for any $(\phi, N)$-module $M$ we have an isomorphism 
$M_{B_{\crr}^{+}}^{\tau,N=0}\stackrel{\sim}{\to} M_{\wh{B}_{\st}^{+}}^{\tau,N=0}.$ Indeed, assume first that the monodromy $N_M$ is trivial. We calculate
\begin{align*}
M^{\tau}_{B^+_{\crr}} & =(M\otimes _{K_0}B^{+,\tau}_{\crr})^{N^{\prime}=0}=M\otimes _{K_0}(B^{+,\tau}_{\crr})^{N_{\tau}=0}=M\otimes_{K_0} B^+_{\crr},\quad N^{\prime}=
N_M\otimes 1 +1\otimes N_{\tau}=1\otimes N_{\tau},\\
M^{\tau}_{\wh{B}^+_{\st}} & =(M\otimes _{K_0}\wh{B}^{+,\tau}_{\st})^{N^{\prime}=0}=M\otimes_{K_0} (\wh{B}^{+,\tau}_{\crr})^{N_{\tau}=0}=M\otimes _{K_0}\wh{B}^+_{\st}
\end{align*}
Hence $M_{B_{\crr}^{+}}^{\tau,N=0}=M\otimes _{K_0}B^+_{\crr}$ and $M_{\wh{B}_{\st}^{+}}^{\tau,N=0}=M\otimes _{K_0}(\wh{B}^+_{\st})^{N=0}=M\otimes _{K_0}B^+_{\crr}$, where the last equality is proved in \cite[Lemma 1.6.5]{Ts}.  We are done in this case.

  In general, we can write $M\otimes _{K_0}B^+_{\st}\stackrel{\sim}{\leftarrow} M^{\prime}\otimes _{K_0}B^+_{\st}$ for a $(\phi,N)$-module $M^{\prime}$ such that $N_{M^{\prime}}=0$ (take for $M^{\prime}$ the image of the map $M\to M\otimes _{K_0}B^+_{\st}$, $m\mapsto \exp(N_M(m)u)$, for $u\in B^+_{\st}$ such that $B^+_{\st}=B^+_{\crr}[u]$, $N_{\tau}(u)=-1$). Similarly, using the fact that the  ring $B^+_{\st}$ is canonically (and compatibly with all
the structures) isomorphic to the elements of $\wh{B}^+_{\st}$ annihilated by a power of 
the monodromy operator \cite[3.7]{Kas}, we can write in a compatible way $M\otimes _{K_0}{B}^+_{\st}\stackrel{\sim}{\leftarrow} M^{\prime}\otimes _{K_0}\wh{B}^+_{\st}$  for the same module $M^{\prime}$.
  We obtained a commutative diagram
  $$
  \begin{CD}
  M_{B_{\crr}^{+}}^{\tau,N=0}@>>> M_{\wh{B}_{\st}^{+}}^{\tau,N=0}  \\
  @VV\wr V @VV\wr V\\
  M_{B_{\crr}^{+}}^{\prime \tau,N=0}@>\sim>> M_{\wh{B}_{\st}^{+}}^{\prime \tau,N=0}
  \end{CD}
  $$
  that reduces the general case to the case of trivial monodromy on $M$ that we treated above.
  \end{proof}

 Let $X\in {\mathcal V}ar_K$, $r\geq 0$. 
 Set 
 \begin{align*}
 C_{\pst}(\R\Gamma^B_{\hk}(X_{\overline{K},h})\{r\}):=
 \left[\begin{aligned}
 \xymatrix@C=50pt{\R\Gamma^B_{\hk}(X_{\ovk,h})^{G_K}\ar[r]^-{(1-\phi_r,\iota^B_{\dr})}\ar[d]^{N}  & \R\Gamma^B_{\hk}(X_{\ovk,h})^{G_K}\oplus (\R\Gamma_{\dr}(X_{\ovk,h})/F^r)^{G_K}\ar[d]^{(N,0)}\\
\R\Gamma^B_{\hk}(X_{\ovk,h})^{G_K}\ar[r]^{1-\phi_{r-1}}& \R\Gamma^B_{\hk}(X_{\ovk,h})^{G_K}}\end{aligned}\right]
 \end{align*}
 The above makes sense since the action of $G_K$ on $\R\Gamma^B_{\hk}(X_{\overline{K},h})\{r\}$ and $  \R\Gamma_{\dr}(X_{\ovk,h})$ is smooth. In particular, we have
 \begin{align*}
H^j(\R\Gamma^B_{\hk}(X_{\overline{K},h})\{r\}^{G_K})  \simeq H^j(\R\Gamma^B_{\hk}(X_{\overline{K},h})\{r\})^{G_K},\quad H^j(\R\Gamma_{\dr}(X_{\ovk,h})^{G_K}) &  \simeq  H^j(\R\Gamma_{\dr}(X_{\ovk,h}))^{G_K}. \end{align*}

Consider the canonical pullback map $$
\varepsilon^*:  C_{\st}(\R\Gamma^B_{\hk}(X_h)\{r\})\stackrel{\sim}{\to} C_{\pst}(\R\Gamma^B_{\hk}(X_{\overline{K},h})\{r\}).
$$  
By Proposition \ref{HKdR}, this is a quasi-isomorphism. 
This allows us to construct a canonical spectral sequence (the {\em syntomic descent spectral sequence})
 \begin{equation}
 \label{kwak2}
 \xymatrix{
^{\synt}E^{i,j}_2=H^i_{\st}(G_K,H^j(X_{\ovk,\eet},{\mathbf Q}_p(r)))\ar@{=>}[r] & H^{i+j}_{\synt}(X_{h},r)
}
\end{equation}Indeed, the Bousfield-Kan spectral sequences associated to the homotopy limits defining complexes  $C_{\pst}(\R\Gamma^B_{\hk}(X_{\ovk,h})\{r\}))$  and $C_{\st}(\R\Gamma^B_{\hk}(X_{h})\{r\}))$ give us  the following commutative diagram
$$
\xymatrix{
^{\pst} E^{i,j}_2=H^i(C_{\pst}(H^j_{\hk}(X_{\ovk,h})\{r\}))\ar@{=>}[r] & H^{i+j}(C_{\pst}(\R\Gamma^B_{\hk}(X_{\ovk,h})\{r\}))\\
^{\synt} E^{i,j}_2=H^i(C_{\st}(H^j_{\hk}(X_{h})\{r\}))\ar[u]^{\wr}_{\varepsilon^*}\ar@{=>}[r] & H^{i+j}(C_{\st}(\R\Gamma^B_{\hk}(X_{h})\{r\}))\ar[u]^{\wr}_{\varepsilon^*}
}
$$
Since, by Proposition \ref{reduction2}, we have $\alpha_{\synt}: H^{i+j}_{\synt}(X_h,r)
\stackrel{\sim}{\to}H^{i+j}(C_{\st}(\R\Gamma^B_{\hk}(X_h)\{r\}))$, we have obtained a spectral sequence
$$
\xymatrix{
E^{i,j}_2=H^i(C_{\pst}(H^j_{\hk}(X_{\ovk,h})\{r\}))\ar@{=>}[r] & H^{i+j}_{\synt}(X_{h},r)
}
$$

  It remains to show that there is a canonical isomorphism 
\begin{equation}
\label{eq1}
H^i(C_{\pst}(H^j_{\hk}(X_{\ovk,h})\{r\}))\simeq H^i_{\st}(G_K,H^j(X_{\ovk,\eet},{\mathbf Q}_p(r))).
\end{equation}
But, we have $D_j=H^j_{\hk}(X_{\ovk,h})\{r\}\in MF_K^{\ad}(\phi, N,G_K)$, $V_{\pst}(D_j)\simeq H^j(X_{\ovk,\eet},\bq(r))$, and $D_{\pst}(H^j(X_{\ovk,\eet},\bq(r)))\simeq D_j$.   Hence isomorphism (\ref{eq1}) follows from Remark \ref{pst=st} and we have obtained the spectral sequence (\ref{kwak2}).
\subsection{Arithmetic case} In this subsection, we define the arithmetic syntomic period map by Galois descent from the geometric case. Then we show that, via this period map, the syntomic descent spectral sequence and the \'etale Hochschild-Serre spectral sequence are compatible. Finally, we show that this implies that the arithmetic syntomic cohomology and \'etale cohomology are isomorphic in a stable range.

Let $X\in {\mathcal V}ar_K$. For $r\geq 0$, we define the canonical syntomic period map 
$$
\rho_{\synt}: \quad \R\Gamma_{\synt}(X_h,r)\to \R\Gamma(X_{\eet},{\mathbf Q}_p(r)),
$$
 as  the following composition 
\begin{align*}
\R\Gamma_{\synt}(X_h,r) & =\R\Gamma(X_h,\sss(r))_\bq \to \holim_n \R\Gamma(X_{h},\sss_n(r))_\bq \stackrel{\varepsilon^*}{\to}  \holim_n \R\Gamma (G_K, \R\Gamma(X_{\ovk,h},\sss_n(r)))_\bq\\
 & \lomapr{p^{-r}\beta} \holim_n \R\Gamma (G_K, \R\Gamma(X_{\ovk,\eet},{\mathbf Z}/p^n(r)^{\prime}))_\bq \stackrel{\sim}{\leftarrow}\holim_n \R\Gamma(X_{\eet},{\mathbf Z}/p^n(r)^{\prime})_\bq =\R\Gamma(X_{\eet},{\mathbf Q}_p(r)).
\end{align*}
It induces a morphism of graded $E_{\infty}$ algebras over $\bq_p$.

The syntomic period map $\rho_{\synt}$ is compatible with the syntomic descent and the Hochschild-Serre spectral sequences.
\begin{theorem}
\label{stHS}
For $X\in \sv ar_K$, $r\geq 0$, there is  a canonical map of spectral sequences
$$
\xymatrix{
^{\synt}E^{i,j}_2=H^i_{\st}(G_K,H^j(X_{\ovk,\eet},{\mathbf Q}_p(r)))\ar[d]^{\can}\ar@{=>}[r] & H^{i+j}_{\synt}(X_{h},r)\ar[d]^{\rho_{\synt}}\\
^{\eet}E^{i,j}_2=H^i(G_K,H^j(X_{\ovk,\eet},{\mathbf Q}_p(r)))\ar@{=>}[r] & H^{i+j}(X_{\eet},{\mathbf Q}_p(r))
}
$$
\end{theorem}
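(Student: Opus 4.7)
The plan is to derive this from the comparison theorem for spectral sequences proved in subsection \ref{Jan}, specifically the example following Theorem \ref{speccomp}. I take the abelian category $A$ to consist of continuous $\bq_p[G_K]$-modules and set $f = (-)^{G_K}$, so $\R f = \R\Gamma(G_K,-)$. Let $K^{\scriptscriptstyle\bullet} := C(\R\Gamma^B_{\hk}(X_{\ovk,h})\{r\})$, viewed as the total complex of a bicomplex whose one direction is the internal degree of the Hyodo--Kato complex $\R\Gamma^B_{\hk}(X_{\ovk,h})$ and whose other direction is the $2\times 2$ homotopy-limit structure defining $C(-)$. Via $\alpha_{\eet}$ from (\ref{definition1}), $K^{\scriptscriptstyle\bullet}$ represents $\R\Gamma(X_{\ovk,\eet},\bq_p(r))$ in $D^+(A)$. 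I endow $K^{\scriptscriptstyle\bullet}$ with the filtration $F^p K^{\scriptscriptstyle\bullet} = C(\tau_{\geq p}\R\Gamma^B_{\hk}(X_{\ovk,h})\{r\})$ coming from the canonical truncation of the Hyodo--Kato complex; the associated Postnikov pieces are $Y^p = C(H^p_{\hk}(X_{\ovk,h})\{r\})$.

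I verify assumption (\ref{ass}) via condition (7). Each $H^p_{\hk}(X_{\ovk,h})\{r\}$ belongs to $MF^{\ad}_K(\phi,N,G_K)$ with $V_{\pst}(H^p_{\hk}(X_{\ovk,h})\{r\}) \simeq H^p(X_{\ovk,\eet},\bq_p(r))$; Remark \ref{resolution2} then yields a quasi-isomorphism $C(H^p_{\hk}(X_{\ovk,h})\{r\}) \simeq H^p(X_{\ovk,\eet},\bq_p(r))[0]$, so each $Y^p$ is concentrated in cohomological degree $0$. The Postnikov triangles $X^{p+1} \to X^p \to Y^p$ are compatible with $\tau_{\leq q}$: after truncation they reduce to the standard canonical-truncation triangles of $\R\Gamma(X_{\ovk,\eet},\bq_p(r))$, giving condition (7) of Remark \ref{ass}.

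Now I apply the example after Theorem \ref{speccomp}. Since the $G_K$-action on every component of the defining $2\times 2$ diagram is smooth, $f$ commutes with the (finite) homotopy limit, so
\[
f(K^{\scriptscriptstyle\bullet}) \,=\, C(\R\Gamma^B_{\hk}(X_{\ovk,h})\{r\})^{G_K} \,=\, C_{\pst}(\R\Gamma^B_{\hk}(X_{\ovk,h})\{r\}).
\]
The induced filtration has graded pieces $C_{\pst}(H^p_{\hk}(X_{\ovk,h})\{r\})[-p]$, so its spectral sequence ${}^f E$ satisfies
\[
{}^f E_1^{p,q} \,=\, H^q(C_{\pst}(H^p_{\hk}(X_{\ovk,h})\{r\})) \,=\, H^q_{\st}(G_K,H^p(X_{\ovk,\eet},\bq_p(r)))
\]
(the second equality by Remark \ref{pst=st}), converging to $H^{p+q}(C_{\pst}(\R\Gamma^B_{\hk}(X_{\ovk,h})\{r\})) \simeq H^{p+q}_{\synt}(X_h,r)$ via $\alpha_{\synt}$ and $\varepsilon^*$; after the standard reindexing this is the syntomic descent spectral sequence (\ref{kwak2}). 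The hypercohomology couple ${}^{II}E$ of Theorem \ref{speccomp} for $\R f$ applied to $K^{\scriptscriptstyle\bullet} \simeq \R\Gamma(X_{\ovk,\eet},\bq_p(r))$ is the Hochschild--Serre couple for $\R\Gamma(X_{\eet},\bq_p(r))$. Theorem \ref{speccomp} then furnishes the required natural map ${}^{\synt}E_r \to {}^{\eet}E_r$ for $r \geq 2$; on $E_2$ terms it restricts to the canonical inclusion $H^i_{\st}(G_K,-) \hookrightarrow H^i(G_K,-)$ coming on each $H^j_{\hk}(X_{\ovk,h})\{r\}$ from the inclusion $C_{\pst}(D) = C(D)^{G_K} \hookrightarrow \R\Gamma(G_K,C(D))$.

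The main remaining obstacle is to match the map induced on the common abutment with $\rho_{\synt}$. After all the identifications above, the natural transformation $f(K^{\scriptscriptstyle\bullet}) \to \R f(K^{\scriptscriptstyle\bullet})$ on the abutment reads as the composition
\[
\R\Gamma_{\synt}(X_h,r) \stackrel{\alpha_{\synt}}{\longrightarrow} C_{\st}(\R\Gamma^B_{\hk}(X_h)\{r\}) \stackrel{\varepsilon^*}{\longrightarrow} C_{\pst}(\R\Gamma^B_{\hk}(X_{\ovk,h})\{r\}) \longrightarrow \R\Gamma(X_{\eet},\bq_p(r)),
\]
which coincides with $\rho_{\synt}$ by the commutative diagram (\ref{compatibility0}) (which expresses the geometric period map as the natural map $C^+ \to C$), Lemma \ref{compatibilit01} (compatibility of $\varepsilon^*$ with the geometric period map), and the construction of the arithmetic $\rho_{\synt}$ by Galois descent.
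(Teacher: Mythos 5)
The filtration you chose does not satisfy the assumptions (\ref{ass}) required by Theorem \ref{speccomp}, and your verification of condition (7) is incorrect. With $F^p K^{\scriptscriptstyle\bullet} = C(\tau_{\geq p}\R\Gamma^B_{\hk}(X_{\ovk,h})\{r\})$ the pieces $Y^p = C(H^p_{\hk}(X_{\ovk,h})\{r\}) \simeq H^p(X_{\ovk,\eet},\Qp(r))[0]$ are concentrated in degree $0$, so the non-derived $E_1$-page $E_1^{p,q} = H^q(Y^p)$ lives only on the row $q = 0$ and the spectral sequence degenerates at $E_1$ with $E_2^{p,0} = H^p(X_{\ovk,\eet},\Qp(r))\neq 0$ for many $p\neq 0$. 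This contradicts condition (1) of Remark \ref{ass}. Equivalently, condition (5) fails: for $q\neq 0$ the sequence $0\to H^q(X)\to H^q(Y^0)\to\cdots$ reads $0\to H^q(X_{\ovk,\eet},\Qp(r))\to 0\to\cdots$, which is not exact. Condition (7), which you claim to verify, also fails: the cone of $\tau_{\leq q}X^p\to\tau_{\leq q}Y^p$ differs from $\tau_{\leq q}X^{p+1}$ by $H^{p+q+1}(K^{\scriptscriptstyle\bullet})$ in top degree, so the truncated triangle is not exact in general. Without (\ref{ass}) the map of exact couples in Theorem \ref{speccomp} is not constructed, and your argument breaks at the crucial step.

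The paper instead filters $C(\R\Gamma^B_{\hk}(X_{\ovk,h})\{r\})$ in the direction of the defining homotopy limit, yielding a Postnikov system with precisely three nonzero pieces $Y^0$, $Y^1$, $Y^2$. Assumption (4) of (\ref{ass}) is then verified by reducing the exactness of $0\to\sh^j(X)\to\sh^j(Y^0)\to\sh^j(Y^1)\to\sh^j(Y^2)\to 0$, via the period quasi-isomorphisms $\rho_{\hk},\rho_{\dr}$, to the fundamental exact sequence of $p$-adic Hodge theory; it is this filtration, not yours, whose graded pieces form a resolution of $X$ degree by degree, which is the entire content of (\ref{ass}). Separately, you work in the category of ``continuous $\Qp[G_K]$-modules'' and treat $\R f = \R\Gamma(G_K,-)$ as a routine derived functor, but the period-ring modules in play do not sit in a nice abelian category with enough $f$-adapted objects; the paper passes to sheaves on Scholze's pro-\'etale site $\Spec(K)_{\proeet}$ and does substantial work (continuous sections for $N$ and $(1-\phi_r,\can)$, analysis of $\nu$ on stalks, perfectness of the Hyodo--Kato and \'etale complexes) to make the Postnikov system live in a $D^+(A)$ where Theorem \ref{speccomp} applies.
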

\begin{proof}We work in the (classical) derived category. The Bousfield-Kan spectral sequences associated to the homotopy limits defining complexes $C(\R\Gamma^B_{\hk}(X_{\ovk,h})\{r\})$ and $C_{\pst}(\R\Gamma^B_{\hk}(X_{\ovk,h})\{r\})$, and Theorem \ref{speccomp}  give us  the following commutative diagram of spectral sequences
$$
\xymatrix{ ^{II}E^{i,j}_2=H^i(G_K,C(H^j_{\hk}(X_{\ovk,h})\{r\}))\ar@{=>}[r] & H^{i+j}(G_K,C(\R\Gamma^B_{\hk}(X_{\ovk,h})\{r\}))\\
^{\pst} E^{i,j}_2=H^i(C_{\pst}(H^j_{\hk}(X_{\ovk,h})\{r\}))\ar[u]^{\delta}\ar@{=>}[r] & H^{i+j}(C_{\pst}(\R\Gamma^B_{\hk}(X_{\ovk,h})\{r\}))\ar[u]^{\delta}
}
$$

    More specifically, in the language of Section \ref{Jan}, set $X=C(\R\Gamma^B_{\hk}(X_{\ovk,h})\{r\})$ (hopefully, the notation will not be too confusing). Filtering complex $X$  in the direction of the homotopy limit we obtain a Postnikov system (\ref{postnikov}) with $Y^i=0$, $i\geq 3$, and 
\begin{align*}
Y^0 & =\R\Gamma^B_{\hk}(X_{\ovk,h})\{r\}\otimes_{K_0^{\nr}} B_{\st},\\
 Y^1 & =\R\Gamma^B_{\hk}(X_{\ovk,h})\{r-1\}\otimes_{K_0^{\nr}} B_{\st}\oplus (\R\Gamma^B_{\hk}(X_{\ovk,h})\{r\}\otimes_{K^{\nr}_0} B_{\st}\oplus (\R\Gamma_{\dr}(X_{\ovk})\otimes_{\ovk} B_{\dr})/F^r),\\
 Y^2 & =\R\Gamma^B_{\hk}(X_{\ovk,h})\{r-1\}\otimes _{K_0^{\nr}}B_{\st}.
\end{align*}
Still in the setting of Section \ref{Jan}, take for $A$ the abelian category of sheaves of abelian groups on the pro-\'etale site  $\Spec(K)_{\proeet}$ of Scholze \cite[3]{Sch}. 
\begin{remark}
We work with the pro-\'etale site to make sense of the continuous cohomology $\R\Gamma(G_K,\cdot)$. If the reader is willing to accept that this is possible then he can skip the tedious parts of the proof involving passage to the pro-\'etale site (and existence of continuous sections). 
\end{remark}
Recall that there is a projection map $\nu: \Spec(K)_{\proeet}\to \Spec(K)_{\eet}$ such that, for an \'etale sheaf $\sff$, we have the quasi-isomorphism  $\nu^*: \sff\stackrel{\sim}{\to}\R\nu_*\nu^*\sff$ \cite[5.2.6]{BhS}. More generally,  for a topological $G_K$-module $M$, we get a sheaf $\nu M$ on $\Spec(K)_{\proeet}$ by setting, for a profinite $G_K$-set $S$, 
$\nu M(S)=\Hom_{\cont,G_K}(S,M)$, and Scholze showed that there is a canonical quasi-isomorphism $H^*(\Spec(K)_{\proeet},\nu M)\simeq H^*_{\cont}(G_K,M)$ 
\cite[3.7 (iii)]{Sch}, \cite{ScE}. In this proof we will need this kind of quasi-isomorphisms for complexes $M$ as well and this will require extra arguments. For that observe that the functor $\nu$ is left exact. To study  right exactness, it suffices to look 
at the global sections  on profinite sets $S$ with a free $G_K$-action of the form $S=S^{\prime}\times G_K$ for a profinite set $S^{\prime}$ with trivial $G_K$-action\footnote{To see this, for a profinite $G_K$-set $S^{\prime}$, use the covering $S^{\prime}\times G_K\to S^{\prime}$, where the first $S^{\prime}$ has trivial $G_K$-action, induced from the $G_K$-action on $S^{\prime}$.}. Then, for any $G_K$-module $T$,  we have $\Gamma(S,\nu T)=\Hom_{\cont}(S^{\prime},T)$. It follows that, for  a surjective map $T_1\to T_2$ of $G_K$-modules, the pullback map $\nu T_1\to \nu T_2$ is also surjective if the original map had a continuous set-theoretical section. This is a criterion familiar from continuous cohomology and we will use it often.

We will see the complex $X$ as a complex of sheaves on the site  $\Spec(K)_{\proeet}$ in the following way: represent  $\R\Gamma^B_{\hk}(X_{\ovk,h})$ and 
$\R\Gamma_{\dr}(X_{\ovk})$ by (filtered) perfect complexes of $K_0^{\nr}$- and $\ovk$-modules, respectively,   think of $X$ as $\nu X$, and work on the pro-\'etale site. This makes sense, i.e., functor $\nu$  transfers (filtered) quasi-isomorphisms of representatives of $\R\Gamma^B_{\hk}(X_{\ovk,h})$ and $\R\Gamma_{\dr}(X_{\ovk})$ to quasi-isomorphisms of the corresponding sheaves $\nu X$. To see this look at the Postnikov system of sheaves on $\Spec(K)_{\proeet}$ obtained by pulling back by $\nu$ the above Postnikov system. Now, look 
at the global sections on profinite sets $S=S^{\prime}\times G_K$ as above and note that we have $\Gamma(S, \nu Y^0)=\Hom_{\cont}(S^{\prime},Y^0)$. Conclude that, by  perfectness of the Beilinson-Hyodo-Kato complexes, quasi-isomorphisms of representatives of $\R\Gamma^B_{\hk}(X_{\ovk,h})$ yield   quasi-isomorphisms of the  sheaves $\nu Y^0$. By a similar argument, we get the analogous statement for $Y^2$. For $Y^1$, we just have to show that filtered quasi-isomorphisms of representatives of $\R\Gamma_{\dr}(X_{\ovk})$ yield   quasi-isomorphisms of the sheaves $\nu ((\R\Gamma_{\dr}(X_{\ovk})\otimes_{\ovk}B_{\dr})/F^r)$. Again, we look at the global section on $S=S^{\prime}\times G_K$ as above. By compactness of $S^{\prime}$ we may replace $(\R\Gamma_{\dr}(X_{\ovk})\otimes_{\ovk}B_{\dr})/F^r$ by  
$(t^{-i}\R\Gamma_{\dr}(X_{\ovk})\otimes_{\ovk}B^+_{\dr})/F^r$, for some $i\geq 0$, where, using devissage, we can again argue by (filtered) perfection of $\R\Gamma_{\dr}(X_{\ovk})$. Observe that the same argument shows that  $\sh^j(\nu Y^i)\simeq \nu H^j(Y^i)$, for $i=0,1,2$.

   The above Postnikov system gives rise to an exact couple
$$
D_1^{i,j}=\sh^j(X^i),\quad E_1^{i,j}=\sh^j(Y^i) \Longrightarrow \sh^{i+j}(X)
$$
This is the Bousfield-Kan spectral sequence associated to $X$.

   Consider now the complex $X_{\pst}:=C_{\pst}(\R\Gamma^B_{\hk}(X_{\ovk,h})\{r\})$. We claim that
 the canonical map
 \begin{align*}
   C_{\pst}(\R\Gamma^B_{\hk}(X_{\overline{K},h})\{r\}) \stackrel{\sim}{\to}
  C(\R\Gamma^B_{\hk}(X_{\overline{K},h})\{r\})^{G_K}
 \end{align*}
 is a quasi-isomorphism (recall that taking $G_K$-fixed points corresponds to taking global sections on the pro-\'etale site). In particular, that the term on the right hand side makes sense. To see this, 
 it suffices to show that the canonical maps
 \begin{align*}
  (\R\Gamma_{\dr}(X_{\overline{K},h})/F^r)^{G_K} & \stackrel{\sim}{\to}  ((\R\Gamma_{\dr}(X_{\overline{K},h})\otimes_{\ovk}B_{\dr})/F^r)^{G_K},\\
  \R\Gamma^B_{\hk}(X_{\overline{K},h})^{G_K} &  \stackrel{\sim}{\to}  (\R\Gamma^B_{\hk}(X_{\overline{K},h})\otimes_{K_0^{\nr}}B_{\st})^{G_K}
\end{align*}
 are quasi-isomorphisms and to use the fact that the action of $G_K$ on $\R\Gamma_{\hk}^B(X_{\ovk,h})$ is smooth. The fact that the first map is a quasi-isomorphism  follows from the filtered quasi-isomorphism 
 $\R\Gamma_{\dr}(X)\otimes_K\ovk\stackrel{\sim}{\to}\R\Gamma_{\dr}(X_{\overline{K},h})$ and the fact that $B_{\dr}^{G_K}=K$. Similarly, the second map is a quasi-isomorphism because, by \cite[4.2.4]{F1}, $\R\Gamma^B_{\hk}(X_{\overline{K},h})$ is the subcomplex of those elements of $\R\Gamma^B_{\hk}(X_{\overline{K},h})\otimes_{K^{\nr}_0}B_{\st}$ whose stabilizers in $G_K$ are open. 
 
  Taking  the $G_K$-fixed points of the above Postnikov system  we get an exact couple 
$$
{}^{\pst} D_1^{i,j}=H^j(X^i_{\pst}),\quad {}^{\pst}E_1^{i,j}=H^j(Y^i_{\pst}) \Longrightarrow H^{i+j}(X_{\pst})
$$
corresponding to the Bousfield-Kan filtration of the complex $X_{\pst}$. On the other hand, applying $\R\Gamma(\Spec(K)_{\proeet},\cdot)$
to the same Postnikov system we obtain an exact couple
$$
{}^{I}D_1^{i,j}=H^j(\Spec(K)_{\proeet},X^i),\quad {}^{I}E_1^{i,j}=H^j(\Spec(K)_{\proeet},Y^i) \Longrightarrow H^{i+j}(\Spec(K)_{\proeet},X)
$$
together with  a natural map of exact couples $({}^{\pst} D_1^{i,j}, {}^{\pst}E_1^{i,j})\to ({}^{I}D_1^{i,j}, {}^{I}E_1^{i,j})$.

  We also have the hypercohomology exact couple
$$
{}^{II}D_2^{i,j}=H^{i+j}(\Spec(K)_{\proeet},\tau_{\leq j-1}X),\quad {}^{II}E_2^{i,j}=H^i(\Spec(K)_{\proeet},\sh^j(X)) \Longrightarrow H^{i+j}(\Spec(K)_{\proeet},X)
$$  
Theorem \ref{speccomp} gives us  a natural morphism of exact couples $({}^{I}D^{i,j}_2,{}^{I}E^{i,j}_2)\to ({}^{II}D^{i,j}_2,{}^{II}E^{i,j}_2)$ -- hence a natural morphism of spectral sequences ${}^{I}E_2^{i,j}\to {}^{II}E_2^{i,j}$ compatible with the identity map on the common abutment -- if our original Postnikov system satisfies the equivalent conditions (\ref{ass}). We will check the condition (4), i.e., that the following long sequence is exact for all $j$ 
$$
0\to \sh^j(X)\to \sh^j(Y^0)\to \sh^j(Y^1)\to \sh^j(Y^2)\to 0
$$
For that it is enough to show that 
\begin{enumerate}
\item $\sh^j(\nu Y^i)\simeq \nu H^j(Y^i)$, for $i=0,1,2$;
\item $\sh^j(\nu X)\simeq \nu H^j(X)$;
\item   the following long sequence of $G_K$-modules
$$
0\to H^j(X)\to H^j(Y^0)\to H^j(Y^1)\to H^j(Y^2)\to 0
$$
is exact;
\item the pullback $\nu$ preserves its exactness. 
\end{enumerate}
The assertion in (1) was shown above.
The sequence in (3) is equal to the top sequence in the following commutative diagram (where we set $M=H^j_{\hk}(X_{\ovk,h})$, $M_{\dr}=H^j_{\dr}(X_{\ovk,h})$, $E= H^j(X_{\ovk,\eet},\bq_p)$).
$$
\xymatrix@C=40pt{
 H^j(X)\ar[d]^{\alpha_{\eet}^{-1}}_{\wr}\ar[r] & M\otimes_{K_0^{\nr}} B_{\st}\ar[r]^-{(N,1-\phi_r,\iota)}\ar[d]^{\rho_{\hk}}_{\wr} & M\otimes_{K_0^{\nr}}( B_{\st} \oplus  B_{\st})\oplus (M_{\dr}\otimes_{\ovk} B_{\dr})/F^r\ar[r]^-{(1-\phi_{r-1})-N}\ar[d]^{\rho_{\hk}+\rho_{\hk}+\rho_{\dr}}_{\wr}& M\otimes_{K_0^{\nr}} B_{\st}\ar[d]^{\rho_{\hk}}_{\wr} \\
E(r)\ar@{^{(}->}[r] & E\otimes B_{\st}\ar[r]^-{(N,1-\phi_r,\iota)} & E\otimes (B_{\st}\oplus B_{\st})\oplus E\otimes B_{\dr}/F^r\ar@{->>}[r]^-{(1-\phi_{r-1})-N} &
E\otimes B_{\st}
}
$$
Since the bottom sequence is just a fundamental exact sequence of $p$-adic Hodge Theory, the top sequence is exact, as wanted.

  To prove assertion (4), we pass to the bottom exact sequence above and apply $\nu$ to it. It is easy to see that it enough now to show that the following surjections have continuous $\bq_p$-linear sections
\begin{align*}
B_{\st}\stackrel{N}{\to}B_{\st},\quad B_{\crr}\verylomapr{(1-\phi_r,\can)}B_{\crr}\oplus B_{\dr}/F^r.
\end{align*}
For the monodromy, write $B_{\st}=B_{\crr}[u_s]$ and take for a continuous section the map induced by $bu_s^i\mapsto -(b/(i+1))u_s^{i+1}$, $b\in B_{\crr}$. For the second map, the existence of continuous section was proved in \cite[1.18]{BK}. For a different argument: observe that an analogous statement was proved in \cite[Prop. II.3.1]{Col} with $B_{\max}$ in place of $B_{\crr}$ as a consequence of the general theory of $p$-adic Banach spaces. We will just modify it here. Write $A_i=t^{-i}B^+_{\crr}$ and $B_i=t^{-i}B^+_{\crr}\oplus t^{-i}B^+_{\dr}/t^r$ for $i\geq 1$. These are $p$-adic Banach spaces. Observe  that $B_i\subset B_{i+1}$ is closed. Indeed, it is enough to show that $tB^+_{\crr}\subset B^+_{\crr}$ is closed. But we have
$tB^+_{\crr}=\bigcap_{n\geq 0}\ker(\theta\circ \phi^n)$.

   It follows \cite[Prop. I.1.5]{Col} that we can find a closed complement $C_{i+1}$ of $B_i$ in $B_{i+1}$. Set $f=(1-\phi_r,\can):B_{\crr}\to B_{\crr}\oplus B_{\dr}/F^r$. We know that $f$ maps $A_i$  onto $B_i$. Write 
$t^{-i}B^+_{\crr}\oplus t^{-i}B^+_{\dr}/t^r=B_1\oplus (\oplus_{j=2}^{i-1}C_j)$. By \cite[Prop. I.1.5]{Col}, we can find a continuous section $s_1: B_1\to A_1$ of $f$ and, if $i\geq 2$, a continuous section $s_i: C_i\to A_i$ of $f$.  Define the map $s: t^{-i}B^+_{\crr}\oplus t^{-i}B^+_{\dr}/t^r \to B_{\crr}$ by  $s_1$ on $B_1$ and by $s_i$ on $C_i$ for $i\geq 2$. Taking inductive limit over $i$ we get our section of $f$.

  To prove assertion (2), take a perfect representative of the complex $\R\Gamma(X_{\ovk, \eet},{\mathbf Z}_p(r))$. Consider the complex $Z=\R\Gamma(X_{\ovk, \eet},\bq_p(r))$ as a complex of sheaves on $\Spec(K)_{\proeet}$. As before, we see that this makes sense  and we easily find that (canonically)
$\sh^j(Z)\simeq \nu H^j(X_{\ovk,\eet},\bq_p(r))$. To prove (2), it is enough to show  that we can also pass with the map $\alpha_{\eet}: \R\Gamma(X_{\ovk,\eet},\bq_p(r))\stackrel{\sim}{\to} C(\R\Gamma^B_{\hk}(X_{\ovk,h})\{r\})$ to the site $\Spec(K)_{\proeet}$. Looking at its definition (cf. (\ref{definition1})) we see that we need to show that the period quasi-isomorphisms $\rho_{\crr}, \rho_{\hk}, \rho_{\dr}$ as well as the quasi-isomorphism
$$
\bq_p(r)\stackrel{\sim}{\to}[ B_{\st}\verylomapr{(N,1-\phi_r,\iota)}  B_{\st}\oplus B_{\st}\oplus B_{\dr}/F^r\veryverylomapr{(1-\phi_{r-1})-N}B_{\st}]
$$
can be lifted to the pro-\'etale site. The last fact we have just shown. For the crystalline period map $\rho_{\crr}$ this follows from the fact that it is defined integrally and all the relevant complexes are perfect. For the Hyodo-Kato period map $\rho_{\hk}$ - it follows from the case of $\rho_{\crr}$ and from perfection of complexes involved in the definition of the Beilinson-Hyodo-Kato map. For the de Rham period map $\rho_{\dr}$ this follows from perfection of the involved complexes as well as from the exactness of $\holim_n$ (in the definition of $\rho_{\dr}$) on the pro-\'etale site of $K$ (cf. \cite[3.18]{Sch}).

 We define the map of spectral sequences $\delta:=(\delta_D,\delta):=({}^{\pst} D_2^{i,j}, {}^{\pst}E_2^{i,j})\to ({}^{II}D^{i,j}_2,{}^{II}E^{i,j}_2)$ -- that we stated at the beginning of the proof -- as the composition of the  two maps constructed above $$\delta:\quad 
({}^{\pst} D_2^{i,j}, {}^{\pst}E_2^{i,j})\to({}^{I}D^{i,j}_2,{}^{I}E^{i,j}_2)\to ({}^{II}D^{i,j}_2,{}^{II}E^{i,j}_2).$$
To get the spectral sequence from the theorem we need to pass from $^{II}E_2$ to  the Hochschild-Serre spectral sequence. To do that consider the hypercohomology exact couple
$$
{}^{\eet}D_2^{i,j}=H^{i+j}(\Spec(K)_{\proeet},\tau_{\leq j-1}Z),\quad {}^{\eet}E_2^{i,j}=H^i(\Spec(K)_{\proeet},\sh^j(Z)) \Longrightarrow H^{i+j}(\Spec(K)_{\proeet},Z)
$$  
and, via $\alpha_{\eet}^{-1}$,   a natural morphism of exact couples $({}^{II}D^{i,j}_2,{}^{II}E^{i,j}_2)\to ({}^{\eet}D^{i,j}_2,{}^{\eet}E^{i,j}_2)$ -- hence a natural morphism of spectral sequences ${}^{II}E_2^{i,j}\to {}^{\eet}E_2^{i,j}$ compatible with the map $\alpha_{\eet}^{-1}$ on the abutment. We have a quasi-isomorphism $\psi: \R\Gamma(\Spec(K)_{\proeet},Z)\stackrel{\sim}{\to} \R\Gamma(X_{\eet},\bq_p(r))$ defined as the composition
\begin{align*}
\psi:\quad \R\Gamma(\Spec(K)_{\proeet},\R\Gamma(X_{\ovk,\eet},\bq_p(r))) & \stackrel{\sim}{\to}\bq\otimes\holim_n
\R\Gamma(G_K,\R\Gamma (X_{\ovk,\eet},{\mathbf Z}/p^n(r)))\\ & =\bq\otimes \holim_n \R\Gamma(X_{\eet},{\mathbf Z}/p^n(r))=\R\Gamma (X_{\eet},\bq(r))
\end{align*}

  We have obtained  the following natural maps of spectral sequences
$$
\xymatrix{^{\synt}E^{i,j}_2=H^i_{\st}(G_K,H^j(X_{\ovk,\eet},{\mathbf Q}_p(r)))\ar[d]_{\wr}\ar@{=>}[r] & H^{i+j}_{\synt}(X_{h},r)\ar[d]^{\alpha_{\synt}}_{\wr}\\
 E^{i,j}_2=H^i(C_{\st}(H^j_{\hk}(X_{h})\{r\}))\ar[d]^{\alpha_{\eet}^{-1}\delta\varepsilon^*}\ar@{=>}[r] & H^{i+j}(C_{\st}(\R\Gamma^B_{\hk}(X_{h})\{r\}))\ar[d]^{\psi \alpha_{\eet}^{-1}\delta \varepsilon^*}\\
^{\eet}E^{i,j}_2=H^i(G_K,H^j(X_{\ovk,\eet},{\mathbf Q}_p(r)))\ar@{=>}[r] & H^{i+j}(X_{\eet},{\mathbf Q}_p(r))
}
$$  
It remains to show that the right vertical composition $\gamma: H^{i+j}_{\synt}(X_{h},r)\to H^{i+j}(X_{\eet},{\mathbf Q}_p(r))$ is equal to the map $\rho_{\synt}$. Since we have the equality $\alpha_{\synt}=\rho_{\synt}\alpha_{\eet}$ (in the derived category) from (\ref{compatibility0}) and, by Lemma \ref{compatibilit01}, $\varepsilon^*\alpha_{\synt}=\alpha_{\synt}\varepsilon^*$, the map $\gamma$  can be written as the composition
\begin{align*}
\tilde{\rho}_{\synt}:\quad H^{i+j}_{\synt}(X_{h},r)\stackrel{\varepsilon^*}{\to}H^{i+j}(\Spec(K)_{\proeet},\nu \R\Gamma_{\synt}(X_{\ovk,h},r)) & \stackrel{\rho_{\synt}}{\to}
H^{i+j}(\Spec(K)_{\proeet},\nu\R\Gamma (X_{\ovk,\eet},\bq_p(r)))\\
 & \stackrel{\psi}{\to} H^{i+j}(X_{\eet},\bq_p(r)),
\end{align*}
where the period map $\rho_{\synt}$ is understood to be on sheaves on $\Spec(K)_{\proeet}$. There is no problem with that since we care only about the induced map on cohomology groups.  
It is easy now to see that $\tilde{\rho}_{\synt}=\rho_{\synt}$, as wanted.
\end{proof}
\begin{remark}
If $X$ is proper and smooth, it is known that the \'etale Hochschild-Serre spectral sequence degenerates, i.e., that ${}^{\eet}E_2={}^{\eet}E_{\infty}$. 
It is very likely that so does the syntomic descent spectral sequence in this case, i.e., that ${}^{\synt}E_2={}^{\synt}E_{\infty}$\footnote{This was, in fact, shown in \cite{DN}.}. 
\end{remark}
\begin{corollary}
\label{BK1}
For  $X\in {\mathcal V}ar_{K}$, we have a canonical quasi-isomorphism
$$\rho_{\synt}: \quad\tau_{\leq r}\R\Gamma_{\synt}(X_{h},r)_{\mathbf Q}\stackrel{\sim}{\to} \tau_{\leq r}\R\Gamma(X_{\eet},{\mathbf Q}_p(r)).
$$
\end{corollary}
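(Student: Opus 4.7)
The plan is to deduce this from Theorem \ref{stHS} together with Proposition \ref{resolution33}, by checking that the comparison of $E_2$-pages is an isomorphism in a range that suffices to control the truncated abutment. Concretely, Theorem \ref{stHS} provides a morphism of spectral sequences
\[
\phi^{i,j}\colon {}^{\synt}E^{i,j}_2 = H^i_{\st}(G_K,V_j) \longrightarrow {}^{\eet}E^{i,j}_2 = H^i(G_K,V_j),
\qquad V_j := H^j(X_{\ovk,\eet},\bq_p(r)),
\]
compatible with $\rho_{\synt}$ on the abutment. So I would reduce the corollary to showing that $\phi^{i,j}$ is an isomorphism on all $E_\infty^{i,j}$ with $i+j\le r$, and then invoke the usual argument that an iso of $E_\infty$-terms on the graded pieces of the abutment filtration gives an iso on $H^n$ for $n\le r$.

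Next, I would prove $\phi^{i,j}$ is an isomorphism whenever $j\le r$. The representation $V_j$ is potentially semistable and its admissible filtered $(\phi,N,G_K)$-module is $D_j := H^j_{\hk}(X_{\ovk,h})\{r\}$, with $K$-part the Hodge-filtered $H^j_{\dr}(X_h)$ shifted by $r$. The key input is that $F^{j+1}H^j_{\dr}(X_h)=0$ for every variety $X/K$: for smooth $X$ this follows from the interpretation $F^p H^j = \mathbb H^j(\overline X,\Omega^{\ge p}_{\overline X}(\log D))$, which vanishes for $p>j$ as the complex is concentrated in degrees $\ge p$; for general $X$ one uses an $h$-hypercovering by smooth varieties $Y_\bullet$, so that the Bousfield--Kan spectral sequence $E_1^{p,q}=H^q_{\dr}(Y_p)\Rightarrow H^{p+q}_{\dr}(X_h)$ has $F^{j+1}E_1^{p,q}\subseteq F^{q+1}H^q_{\dr}(Y_p)=0$ for $p+q=j$. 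Thus for $j\le r$ we have $F^1(D_{j,K}) = F^{r+1}H^j_{\dr}(X_h)=0$, and Proposition \ref{resolution33} gives $\phi^{i,j}$ an iso for all $i\ge 0$.

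Finally, I would propagate the $E_2$-iso (valid for $j\le r$) to $E_\infty$ in the range $i+j\le r$. Since $G_K$ has $p$-cohomological dimension $2$, the only possibly nonzero differential on either spectral sequence is $d_2^{0,j}\colon E_2^{0,j}\to E_2^{2,j-1}$, so $E_\infty = E_3$. Given $(i,j)$ with $i+j\le r$: the outgoing differential from $E_2^{0,j}$ has target with second index $j-1\le r-1$, lying in the iso range; an incoming differential into $E_2^{2,j}$ exists only when $i=2$, hence $j\le r-2$, and originates from $E_2^{0,j+1}$ with $j+1\le r-1$, again in the iso range. Hence $\phi$ is an iso on $E_3^{i,j}=E_\infty^{i,j}$ for $i+j\le r$, giving the quasi-isomorphism on $\tau_{\le r}$. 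The principal (but minor) obstacle is checking $F^{j+1}H^j_{\dr}(X_h)=0$ uniformly in $X$; once that is in hand, everything else is formal from the already-established machinery.
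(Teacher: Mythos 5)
Your argument is essentially the paper's own proof: Theorem \ref{stHS} gives the compatible pair of spectral sequences, and Proposition \ref{resolution33} applied to $D_j=H^j_{\hk}(X_{\ovk,h})\{r\}$, whose condition $F^1D_{j,K}=0$ holds precisely because $j\leq r$ and the Hodge filtration on $H^j_{\dr}(X_h)$ dies above degree $j$, yields the isomorphism ${}^{\synt}E_2^{i,j}\stackrel{\sim}{\to}{}^{\eet}E_2^{i,j}$ in the relevant range. The two points you spell out -- the vanishing $F^{j+1}H^j_{\dr}(X_h)=0$ via a smooth (nc-pair) $h$-hypercovering, and the bookkeeping with the three-column spectral sequences ($E_3=E_\infty$) passing from the $E_2$-isomorphism to the truncated abutments -- are exactly what the paper leaves implicit in its ``This implies'' step, and your treatment of them is correct.
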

\begin{proof} By Theorem \ref{stHS},  the syntomic descent and the Hochschild-Serre spectral sequence are compatible.
We have  $D_j=H^j_{\hk}(X_{\ovk,h})\{r\}\in MF_K^{\ad}(\phi, N,G_K)$. For $j\leq r$, $F^{1}D_{j,K}=F^{1-(r-j)}H^j_{\dr}(X_{h})=0$. Hence, by  Proposition \ref{resolution33},  
we have $^{\synt}E^{i,j}_2\stackrel{\sim}{\to} {}^{\eet}E^{i,j}_2$. This implies that 
 $\rho_{\synt}: \tau_{\leq r}\R\Gamma_{\synt}(X_h,r)\stackrel{\sim}{\to} \tau_{\leq r}\R\Gamma(X_{\eet},{\mathbf Q}_p(r))$, as wanted.
\end{proof}
\begin{remark}
All of the above automatically extends to finite diagrams of $K$-varieties, hence to essentially finite diagrams of $K$-varieties (i.e., the diagrams for which every truncation of their cohomology $\tau_{\leq n}$ is computed by truncating the cohomology of some finite diagram). This includes, in particular, simplicial and cubical varieties.
\end{remark}

\begin{proposition}
\label{compBK}
Let $X\in{\mathcal V}ar_K$ and $i\geq 0$. The composition
\begin{align*}
H^q_{\dr}(X)/F^r \stackrel{\partial}{\to} H^{q+1}_{\synt}(X_h,r)
\stackrel{\rho_{\synt}}{\longrightarrow} H^{q+1}_{\eet}(X,\Qp(r))\to
H^{q+1}_{\eet}(X_{\ovk},\Qp(r))
\end{align*}
is the zero map. The map induced by the syntomic descent spectral sequence
$$
H^q_{\dr}(X)/F^r\to H^1(G_K,H^q_{\eet}(X_{\ovk},\Qp(r)))
$$
is equal to the Bloch-Kato exponential associated with the Galois
representation $V^q(r) = H^q_{\eet}(X_{\ovk},\Qp(r))$.
\end{proposition}

\begin{proof}In what follows we will omit the passage to the pro-\'etale site. 
Consider the Postnikov system from the proof of Theorem \ref{stHS},
which arises from the complex $X=C(\R\Gamma^B_{\hk}(X_{\ovk,h})\{r\})$; then  $Y^p = C^p(\R\Gamma^B_{\hk}(X_{\ovk,h})\{r\})$. The discussion
from Example \ref{filtered} then applies to the functor $f(-) = (-)^{G_K}$
and yields the following four exact couples.

\noindent
(1) $D_1^{p,q} = H^q(X^p)$, $E_1^{p,q} = H^q(Y^p) =
C^p(H^q_{\hk}(X_{\ovk,h})\{r\})) = C^p(H^q_{\hk}\{r\})$. The corresponding
quasi-isomorphism
$H^q(X) \stackrel{\sim}{\to} E_1^{{\scriptscriptstyle\bullet},q}$
is then identified, via the various period maps, with
$$ V^q(r) \stackrel{\sim}{\to} C(H^q_{\hk}\{r\}) = C(D_{\pst}(V^q(r))).$$
(2) ${}^f D_1^{p,q} = H^q(f(X^p))$, ${}^f E_1^{p,q} = H^q(f(Y^p)) =
f(H^q(Y^p)) = C^p_{\st}(H^q_{\hk}\{r\}) = f(E_1^{p,q})$.

\noindent
(3) ${}^I D_1^{p,q} = (\R^q f)(X^p)$, ${}^I E_1^{p,q} = (\R^q f)(Y^p)$.

\noindent
(4) ${}^{II} D_2^{p,q} = (\R^{p+q}f)(\tau_{\leq q-1} X)$,
${}^{II} E_2^{p,q} = (\R^p f)(H^q(X)) = H^p(G_K, V^q(r))$.

\noindent
There is a canonical morphism of exact couples (2) $\to$ (3) and
a morphism (3) $\to$ (4) given by the maps $(u, v)$ from the proof
of Theorem \ref {speccomp}. As observed in \ref {BKexp}, the
Bloch-Kato exponential for $V = V^q(r)$ is obtained by applying
$R^0 f$ to
\begin{align*}
Z^1 C(H^q_{\hk}\{r\})  & = Z^1(E_1^{{\scriptscriptstyle\bullet},q})
\stackrel{\can}{\longrightarrow}
(\sigma_{\geq 1} C(H^q_{\hk}\{r\}))[1] =
(\sigma_{\geq 1} C(E_1^{{\scriptscriptstyle\bullet},q})) [1]
\stackrel{-\can}{\longrightarrow} C(H^q_{\hk}\{r\}))[1] =
E_1^{{\scriptscriptstyle\bullet},q} [1]\\
 & \stackrel{\sim}{\leftarrow} V^q(r) [1] = H^q(X) [1],
\end{align*}
hence is equal to the composite map
$$
f(Z^1(E_1^{{\scriptscriptstyle\bullet},q})) =
Z^1({}^f E_1^{{\scriptscriptstyle\bullet},q}) \to
{}^f E_2^{1,q} \stackrel{\can}{\longrightarrow}
{}^I E_2^{1,q} \stackrel{- v^\prime = v}{\longrightarrow}
(R^1 f)(E_1^{{\scriptscriptstyle\bullet},q}) = {}^{II} E_2^{p,q},
$$
which coincides, in turn, with
$$ Z^1 C_{\st}(H^q_{\hk}\{r\}) \stackrel{\can}{\longrightarrow}
H^1_{\st}(G_K, V^q(r)) \to H^1(G_K, V^q(r)).$$
After restricting to the de Rham part of $Z^1 C(H^q_{\hk}\{r\})$
we obtain the desired statement about $H^q_{\dr}(X)/F^r$.
\end{proof}
In more concrete terms, the above proposition says that the following diagram commutes
$$
\xymatrix{
H^{q+1}_{\synt}(X_h,r)_0\ar[r]^{\rho_{\synt}} &  H^{q+1}_{\eet}(X,\Qp(r))_0\ar[d]\\
H^q_{\dr}(X)/F^r\ar[u]^{\partial}\ar[r]^-{\exp_{\bk}} & H^1(G_K,H^q_{\eet}(X_{\ovk},\Qp(r))),
}
$$
where the subscript $0$ refers to the classes that vanish in $H^{q+1}_{\synt}(X_{\ovk,h},r)$ and $H^{q+1}_{\eet}(X_{\ovk},\Qp(r))$, respectively.
\begin{remark}
Assume that  $r>q$. Then in the above diagram all the maps are isomorphisms. Indeed,  we have $F^rH^q_{\dr}(X)=0$. By \cite[Theorem 6.8]{BER}, the map $\exp_{\bk}$ is  an isomorphism. By Proposition \ref{BK2} and Corollary \ref{BK1}, so is the period map $\rho_{\synt}$. Since, by Theorem \ref{main}, $H^2(G_K,H^{q}_{\eet}(X_{\ovk},\Qp(r)))=H^2(G_K,H^{q-1}_{\eet}(X_{\ovk},\Qp(r)))=0$, the vertical map is an isomorphism as well. Hence so is the map $\partial$.
\end{remark}
\section{Syntomic regulators}
In this section we prove that Soul\'e's \'etale regulators land in the semistable Selmer groups. This will be done by constructing syntomic regulators that are compatible with the \'etale ones via the period map and by exploiting the syntomic descent spectral sequence. 
\subsection{Construction of syntomic Chern classes}
We start with the construction of syntomic Chern classes. This will be standard once we prove that syntomic cohomology satisfies projective space theorem and homotopy property. 

  In this subsection we will work in the (classical) derived category. For a fine log-scheme $(X,M)$, log-smooth over $V^{\times}$, we have the  log-crystalline and log-syntomic  first Chern class maps of complexes of sheaves on $X_{\eet}$ \cite[2.2.3]{Ts}
\begin{align*}
c_1^{\crr}:   j_*\so^*_{X_{\tr}}\stackrel{\sim}{\to} M^{\gp}\to M^{\gp}_n\to R\varepsilon_*\sj^{[1]}_{X_n/W_n(k)} [1], & \quad
 c_1^{\st}:   j_*\so^*_{X_{\tr}}\stackrel{\sim}{\to} M^{\gp}\to M^{\gp}_n\to R\varepsilon_*\sj^{[1]}_{X_n/R_n}[1],\\
 c_1^{\hk}:   j_*\so^*_{X_{\tr}}\stackrel{\sim}{\to} M^{\gp}\to M^{\gp}_0\to R\varepsilon_*\sj^{[1]}_{X_0/W_n(k)^0}[1], & \quad c_1^{\synt}:   j_*\so^*_{X_{\tr}}\stackrel{\sim}{\to} M^{\gp}\to \sss(1)_{X,\bq} [1].
\end{align*}
Here $\varepsilon$ is the projection from the corresponding crystalline site to the \'etale site. The maps $c_1^{\crr}, c_1^{\st},$ and $c_1^{\synt}$ are clearly compatible. So are the maps $c_1^{\st}$ and $ c_1^{\hk}$.
For ss-pairs $(U,\overline{U})$ over $K$, we get the induced  functorial maps
\begin{align*}
c_1^{\crr}:\Gamma(U,\so_U^*)  \stackrel{\sim}{\leftarrow}\Gamma(\overline{U},j_*\so_U^*)\to \R\Gamma_{\crr}(U,\overline{U},
\sj^{[1]})[1], & \quad c_1^{\st}:\Gamma(U,\so_U^*)  \to \R\Gamma_{\crr}((U,\overline{U})/R,
 \sj^{[1]})[1],\\
c_1^{\hk}:\Gamma(U,\so_U^*) \to \R\Gamma_{\crr}((U,\overline{U})_0/W_n(k)^0,\sj^{[1]})[1], &
\quad c_1^{\synt}:   \Gamma(U,\so_U^*)  \to \R\Gamma_{\synt}(U,\overline{U},1)_{\bq}[1].
\end{align*}

  For $X\in {\mathcal V}ar_K$ we can glue the absolute log-crystalline and log-syntomic classes to obtain the absolute crystalline and syntomic first Chern class maps
$$c_1^{\crr}: \so^*_{X_h}\to \sj_{\crr,X}[1],\quad
c_1^{\synt}: \so^*_{X_h}\to \sss(1)_{X,\bq}[1].
$$
They  induce  (compatible) maps
\begin{align*}
c_1^{\crr}: \Pic(X) & =H^1(X_{\eet},\so^*_X)\to H^1(X_{h},\so^*_X)\stackrel{c_1^{\crr}}{\to }H^2(X_h,\sj_{\crr}),\\
c_1^{\synt}: \Pic(X) & =H^1(X_{\eet},\so^*_X)\to H^1(X_{h},\so^*_X)\stackrel{c_1^{\synt}}{\to }H^2_{\synt}(X_h,1).
\end{align*}
 Recall that, for a log-scheme $(X,M)$ as above,  we also have the log de Rham first Chern class  map $$c_1^{\dr}: j_*\so^*_{X_{\tr}}\stackrel{\sim}{\to} M^{\gp}\to M^{\gp}_n\stackrel{\dlog}{\to} \Omega^{\scriptscriptstyle\bullet}_{(X,M)_n/V^{\times}_n} [1].$$
 For ss-pairs $(U,\overline{U})$ over $K$, it induces  maps
 $$c_1^{\dr}:\Gamma(U,\so_U^*)  \stackrel{\sim}{\leftarrow}\Gamma(\overline{U},j_*\so_U^*)\to \R\Gamma(\overline{U},\Omega^{\scriptscriptstyle\bullet}_{(U,\overline{U})/V^{\times}} )[1].\\
 $$
 By the map $\R\Gamma_{\crr}(U,\overline{U},\sj^{[1]})\to \R\Gamma_{\crr}(U,\overline{U})\to \R\Gamma(\overline{U},\Omega^{\scriptscriptstyle\bullet}_{(U,\overline{U})/V^{\times}} )$ they are compatible with the absolute  log-crystalline and log-syntomic classes  \cite[2.2.3]{Ts}.
 \begin{lemma}
 \label{compatibility}
 For strict ss-pairs $(U,\overline{U})$ over $K$, the Hyodo-Kato map and the Hyodo-Kato isomorphism
 $$
 \iota: H^2_{\hk}(U,\overline{U})_{\mathbf Q}\to H^2_{\crr}((U,\overline{U})/R)_{\mathbf Q},\quad
  \iota_{\dr,\pi}: H^2_{\hk}(U,\overline{U})_{\mathbf Q}\otimes_{K_0}K\stackrel{\sim}{\to} H^2(\overline{U}_{K},\Omega^{\scriptscriptstyle\bullet}_{(U,\overline{U}_{K})/K})
  $$
 are compatible with   first Chern class maps.
  \end{lemma}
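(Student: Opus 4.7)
The plan is to reduce both compatibilities to assertions at the level of the $\dlog$ map of sheaves on $\overline{U}_{\eet}$, and then to invoke the uniqueness characterization of the Hyodo--Kato section.

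First I would unwind the construction of all three Chern class maps: each of $c_1^{\hk}, c_1^{\st}, c_1^{\dr}$ is, by the discussion preceding the lemma, induced by the boundary $\dlog$ map $M^{\gp} \to \sj^{[1]}_{?}[1]$ (resp.\ $\Omega^{\scriptscriptstyle\bullet}_{?/V^{\times}}[1]$) associated to the filtration $\sj^{[1]} \hookrightarrow \so \twoheadrightarrow \so/\sj^{[1]}$ combined with the identification of the graded piece with $\Omega^1$. These boundary maps are functorial with respect to morphisms of log-PD-thickenings (and compatible with the Hodge filtration), so under the natural pullbacks
\[
\R\Gamma_{\crr}((U,\overline{U})/R, \sj^{[1]})_{\mathbf Q} \xrightarrow{i_0^*} \R\Gamma_{\hk}(U,\overline{U})_{\mathbf Q},\qquad
\R\Gamma_{\crr}((U,\overline{U})/R, \sj^{[1]})_{\mathbf Q} \xrightarrow{i_\pi^*} \R\Gamma_{\crr}(U,\overline{U},\sj^{[1]})_{\mathbf Q}
\]
we automatically have $i_0^* c_1^{\st}(L) = c_1^{\hk}(L)$ and $i_\pi^* c_1^{\st}(L) = c_1^{\crr}(L)$, and the latter agrees (via $\gamma_1^{-1}$, Corollary \ref{Langer}) with $c_1^{\dr}(L)$.

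Next, for the compatibility with $\iota = \iota_\pi$, I would appeal to the characterization of $\iota_\pi$ (\cite[1.16]{BE2}, \cite[4.4.6]{Ts}): it is the unique $W(k)$-linear section of $i_0^*$ that commutes with $\phi$ and $N$. Thus it suffices to verify that $c_1^{\st}(L)$ is the unique class in $H^2_{\crr}((U,\overline{U})/R,\sj^{[1]})_{\mathbf Q}$ which (a) lifts $c_1^{\hk}(L)$ under $i_0^*$, (b) satisfies $\phi_1 c_1^{\st}(L) = c_1^{\st}(L)$, and (c) is annihilated by the normalized monodromy $N$. Condition (a) is functoriality, recalled above. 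Condition (b) follows from the fact that absolute Frobenius on $M^{\gp}$ is the $p$th power map and $\phi$ on $\sj^{[1]}$ is divisible by $p$, together with the functoriality of $\dlog$. Condition (c) is the key input: in the description of \cite[Lemma~4.2]{Kas} (see the short exact sequence (\ref{monodromyC}) in the proof of Proposition \ref{hypercov11}), the operator $N$ acts on the log de Rham complex representing $\R\Gamma_{\crr}((U,\overline{U})/R)$ by the residue map along $\dlog T$, and the representative of $c_1^{\st}(L)$ provided by $\dlog f$ for any local section $f$ of $M^{\gp}$ carries no $\dlog T$ component. Uniqueness of the lift satisfying (a)-(c) is a consequence of the distinguished triangle $\R\Gamma_{\crr}(U,\overline{U})_{\mathbf Q} \to \R\Gamma_{\crr}((U,\overline{U})/R)_{\mathbf Q} \xrightarrow{N} \R\Gamma_{\crr}((U,\overline{U})/R)_{\mathbf Q}$ together with the weight reasoning used in Dwork's trick (Proposition \ref{reduction1}): the difference of two such lifts lies in $\ker(i_0^*) \cap \ker(N)$, hence in the image of $H^2_{\crr}(U,\overline{U})_{\mathbf Q}$, and is also a Frobenius fixed vector; this combined with the $\phi$-convergence estimate in the proof of Proposition \ref{reduction1} forces it to vanish.

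Finally, the compatibility with $\iota_{\dr,\pi}$ is immediate from the first step and the definition $\iota_{\dr,\pi} = \gamma_1^{-1} i^*_\pi \iota_\pi$: composing the first compatibility with $i^*_\pi$ and $\gamma_1^{-1}$ and using that $i^*_\pi c_1^{\st} = c_1^{\crr}$ and $\gamma_1^{-1} c_1^{\crr} = c_1^{\dr}$ gives $\iota_{\dr,\pi} c_1^{\hk}(L) = c_1^{\dr}(L)$. The main obstacle is Condition (c) together with the accompanying uniqueness argument on $H^2$: the vanishing $N c_1^{\st}(L)=0$ is conceptually clear from the $\dlog f$ description but has to be verified with care in the quasi-coherent model used to compute $\R\Gamma_{\crr}((U,\overline{U})/R)_{\mathbf Q}$, and the uniqueness requires the Dwork-style Frobenius argument rather than a purely formal diagram chase.
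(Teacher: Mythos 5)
Your argument runs on the same engine as the paper's proof, so the core is sound: one reduces to the statement for $\iota_\pi$ using $\iota_{\dr,\pi}=\gamma_r^{-1}i_\pi^*\iota_\pi$ and the compatibility of $i_0^*$, $i_\pi^*$, $\gamma_r$ with first Chern classes, and then kills the difference $\zeta=\iota_\pi(c_1^{\hk}(\sll))-c_1^{\st}(\sll)$, which lies in $\ker(i_0^*)$ and satisfies $\phi(\zeta)=p\zeta$, by the Dwork-type estimate. The paper does exactly this in explicit form: $\ker(i_0^*)$ on $H^2$ is identified with $I\otimes_{W(k)}H^2_{\hk}(U,\overline{U})_\bq$, $I=\ker(R_\bq\to K_0)$, so $\zeta=T\gamma$; then $\phi(T\gamma)=T^p\phi(\gamma)$ together with $\phi(\zeta)=p\zeta$ forces $\gamma\in\bigcap_n T^nH^2_{\crr}((U,\overline{U})/R)_\bq$, hence $\zeta=0$ --- which is the same injectivity of $p-\phi$ on $I\otimes_{W(k)}H^2_{\hk}$ that you extract from the proof of Proposition \ref{reduction1}. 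Where your write-up diverges, and should be tightened, is in the packaging. First, your condition (c) and the appeal to the $(\phi,N)$-characterization of $\iota_\pi$ are superfluous: conditions (a) and (b) alone determine the lift, precisely because $p-\phi$ is injective on $\ker(i_0^*)$, and the paper's proof never touches the monodromy. Moreover your chain-level justification of (c) (``$\dlog f$ carries no $\dlog T$ component'') is not literally correct, since sections of $M^{\gp}\simeq j_*\so^*_{U}$ may involve the coordinate cutting out the special fibre; if you do want $N c_1^{\st}(\sll)=0$, deduce it on cohomology from the factorization of $c_1^{\st}$ through the absolute crystalline class $c_1^{\crr}$ and the triangle $\R\Gamma_{\crr}(U,\overline{U})\to\R\Gamma_{\crr}((U,\overline{U})/R)\stackrel{N}{\to}\R\Gamma_{\crr}((U,\overline{U})/R)$. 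Second, in your uniqueness step the detour through $\ker(N)=\im\bigl(H^2_{\crr}(U,\overline{U})_\bq\bigr)$ is not what interacts with the convergence estimate: that estimate lives on $\ker(i_0^*)\simeq I\otimes_{W(k)}H^2_{\hk}(U,\overline{U})_\bq$ and should be applied there directly (also, the difference of the two lifts is a $\phi$-eigenvector of eigenvalue $p$, i.e.\ $\phi_1$-fixed, not a Frobenius-fixed vector). With these repairs your proof coincides with the paper's.
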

 \begin{proof}
 Since $\iota_{\dr,\pi}=i^*_{\pi}\iota\otimes\id$ and the map $i^*_{\pi}$ is compatible with first Chern classes, it suffices to show the compatibility for the Hyodo-Kato map $\iota$. Let $\sll$ be a line bundle on $U$. Since the map $\iota$ is a section of the map $i^*_0:H^2_{\crr}((U,\overline{U})/R)_{\mathbf Q}\to H^2_{\hk}(U,\overline{U})_{\mathbf Q}$ and the  map $i^*_0$ is compatible with first Chern classes, we have that the element  $\zeta\in H^2_{\crr}((U,\overline{U})/R)_{\mathbf Q}$ defined as  $\zeta=\iota(c_1^{\hk}(\sll))-c_1^{\st}(\sll)$ lies in  $ TH^2_{\crr}((U,\overline{U})/R)_{\mathbf Q}$. Hence $\zeta=T\gamma$. Since the map $\iota$ is compatible with Frobenius and $\phi(c_1^{\hk}(\sll))=pc_1^{\hk}(\sll)$, $\phi(c_1^{\st}(\sll))=pc_1^{\st}(\sll)$, we have $\phi(\zeta)=p\zeta$. Since $\phi(T\gamma)=T^p\phi(\gamma)$ this implies that $\gamma\in\bigcap_{n=1}^{\infty}T^nH^2_{\crr}((U,\overline{U})/R)_{\mathbf Q}$, which  is not possible unless $\gamma$ (and hence $\zeta$) are zero. But this is what we wanted to show.
 \end{proof}

  We  have the following projective space theorem for syntomic cohomology.
\begin{proposition}
\label{projective}
Let $\se$ be a locally free sheaf of rank $d+1$, $d\geq 0$, on a scheme $X\in {\mathcal V}ar_{K}$. Consider the associated projective bundle $\pi:{\mathbb P}(\se)\to X$.  Then we have the following quasi-isomorphism of complexes of sheaves on $X_h$
\begin{align*}
\bigoplus_{i=0}^d{c}_1^{\synt}(\so(1))^i\cup\pi^*: \quad
\bigoplus_{i=0}^d \sss(r-i)_{X,{\mathbf Q}}[-2i] \stackrel{\sim}{\to} R\pi_*
\sss(r)_{{\mathbb P}(\se),{\mathbf Q}}, \quad 0\leq d \leq r.
\end{align*}
Here, the class ${c}_1^{\synt}(\so(1))\in H^2_{\synt}({\mathbb P}(\se)_h, 1)$ refers to the class of the tautological bundle on ${\mathbb P}(\se)$.
\end{proposition}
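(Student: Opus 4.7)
The plan is to reduce the projective bundle formula for syntomic cohomology to the analogous formulas for its Hyodo-Kato and logarithmic de Rham building blocks. The first step is to $h$-sheafify the functorial, base-change-compatible quasi-isomorphism $\alpha^{\prime}_{\synt}$ of Remark \ref{reduction21}: this presents $\sss(r)_{X,\bq}$ as a homotopy fiber of $h$-sheaves built from the Hyodo-Kato sheaf $\sa_{\hk,X}$ (with its Frobenius $\phi$, monodromy $N$, and the operator $1-\phi_r$) and $\sa_{\dr,X}/F^r$ (Deligne's filtered de Rham complex). Because $\R\pi_*$, direct sums and homotopy limits all commute with each other, and because the first Chern classes $c_1^{\synt}$, $c_1^{\hk}$, $c_1^{\dr}$ are pairwise compatible under the structure maps (Lemma \ref{compatibility} handles the Hyodo-Kato/de Rham compatibility, and by construction $c_1^{\synt}$ lifts both), it suffices to prove projective bundle formulas separately for the $h$-sheaves $\sa_{\hk,X}$ and $\sa_{\dr,X}/F^{s}$ (for $0 \le s \le r$), and then to take the corresponding homotopy fiber.

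For the de Rham piece, by Proposition \ref{deRham1} the sheaf $\sa_{\dr,X}$ is computed on any $h$-hypercovering by nc-pairs by Deligne's filtered logarithmic de Rham complex, so the formula reduces to the classical projective bundle formula for $\R\pi_*\Omega^{\scriptscriptstyle\bullet}$ compatible with the Hodge filtration (since $c_1^{\dr}(\so(1)) \in F^1$), yielding
\begin{equation*}
\bigoplus_{i=0}^d c_1^{\dr}(\so(1))^i \cup \pi^* : \bigoplus_{i=0}^d \bigl(\sa_{\dr,X}/F^{r-i}\bigr)[-2i] \stackrel{\sim}{\longrightarrow} \R\pi_*\bigl(\sa_{\dr,\mathbb{P}(\se)}/F^r\bigr).
\end{equation*}
For the Hyodo-Kato piece, one refines to an $h$-hypercovering of $X$ by strict ss-pairs $(U,\overline{U})$, log-smooth of Cartier type over some $V^{\prime,\times}$, along which $\se|_U$ extends to a locally free sheaf $\overline{\se}$ on $\overline{U}$ so that $\mathbb{P}(\overline{\se}) \to \overline{U}$ is again a strict ss-pair. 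The Hyodo-Kato isomorphism $\iota_{\dr,\pi}$ of (\ref{HKqis}), combined with Lemma \ref{compatibility} (which ensures that $\iota_{\dr,\pi}$ sends $c_1^{\hk}$ to $c_1^{\dr}$), pulls the classical de Rham projective bundle formula on $(\mathbb{P}(\overline{\se}),\overline{U}_K)$ back to the Hyodo-Kato projective bundle formula on $(U,\overline{U})$; faithful flatness of $K/K_0$ gives the statement over $K_0$, and the formula is Frobenius-equivariant because $\pi^*$ commutes with $\phi$ and $\phi(c_1^{\hk}(\so(1))) = p\,c_1^{\hk}(\so(1))$. After $h$-sheafification this yields the analogous quasi-isomorphism for $\sa_{\hk,X}$.

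Assembling these two constituent formulas via the homotopy fiber presentation of $\sss(r)_{X,\bq}$, and invoking compatibility of the first Chern classes under $\iota^{\prime}_{\dr}$ to see that the direct-sum cup products upgrade to a morphism of homotopy fibers, concludes the proof. The main technical point --- and the step requiring most care --- is the Frobenius-and-filtration bookkeeping: cup product with $c_1^{\synt}(\so(1))^i$ must send a class in $\sss(r-i)_{X,\bq}[-2i]$ to a class in $\sss(r)_{\mathbb{P}(\se),\bq}$, which works because $\phi(c_1^{\hk}(\so(1))^i) = p^i c_1^{\hk}(\so(1))^i$ turns the Frobenius twist $\phi_{r-i}$ into $\phi_r$, while $c_1^{\dr}(\so(1))^i \in F^i$ turns the filtration level $F^{r-i}$ into $F^r$. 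The numerical hypothesis $d \le r$ is exactly what ensures $r - i \ge 0$ on every summand, so that each syntomic twist lies in the range where the theory is defined.
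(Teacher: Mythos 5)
Your overall strategy coincides with the paper's: present syntomic cohomology as a homotopy fibre built out of a Frobenius-eigenspace (Hyodo-Kato) piece and a filtered de Rham piece, prove the projective bundle formula for each piece by reducing through the Hyodo-Kato isomorphism to the classical de Rham statement, and assemble using the compatibilities of the various first Chern classes (Lemma \ref{compatibility}), with the same Frobenius/filtration bookkeeping explaining the twists. The difference is organizational: the paper first reduces, via Proposition \ref{hypercov}, to the group-level statement for the trivial bundle ${\mathbb P}^d_{\overline{U}}$ over an ss-pair --- since the assertion is a quasi-isomorphism of $h$-sheaves it can be checked locally, where $\se$ is trivial --- and then peels off layers by distinguished triangles: syntomic into absolute crystalline plus de Rham$/F^r$, then the monodromy triangle relating $\R\Gamma_{\crr}(U,\overline{U})$ to $\R\Gamma_{\crr}((U,\overline{U})/R)$, then $\iota$ and $\iota_{\dr,\pi}$ down to Hyodo-Kato and log de Rham cohomology. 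You instead sheafify $\alpha^{\prime}_{\synt}$ of Remark \ref{reduction21} (as is done in Lemma \ref{dim}) and work with the two pieces directly, which is legitimate and somewhat shorter.

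Two points need patching. First, your refinement ``along which $\se|_U$ extends to $\overline{\se}$ on $\overline{U}$'' is both unjustified as stated and unnecessary: trivialize $\se$ Zariski-locally on $X$ before refining the hypercovering, which is exactly why the paper only ever treats ${\mathbb P}^d_{\overline{U}}$. Second, and more substantively, your assembly step requires cup product with $c_1(\so(1))^i$ to be a morphism of the entire diagrams defining the homotopy fibres, hence to commute not only with $1-\phi_{*}$ and with $\iota^{\prime}_{\dr}$ but also with the monodromy $N$, since the presentation of Remark \ref{reduction21} involves the fibre of $N$. As $N$ is a derivation this amounts to the vanishing $N(c_1^{\hk}(\so(1)))=0$ (equivalently, that the class lifts to the $N$-fibre), which is true but is precisely the point the paper has to address: it invokes the compatibility of the monodromy triangle with the action of $c_1^{\crr}(\so(1))$, citing \cite[Lemma 4.3.7]{Ts}. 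You single out the Frobenius and filtration twists as the main technical point but never mention $N$; once this check is added, your argument closes.
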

\begin{proof}
By (tedious) checking of many compatibilities we will reduce the above projective space theorem to the projective space theorems for the Hyodo-Kato and the filtered de Rham cohomologies.

 To prove our proposition it suffices to show that for any ss-pair $(U,\overline{U})$ over $K$ and the projective space $\pi: {\mathbb P}^d_{\overline{U}}\to\overline{U} $ of dimension $d$ over $\overline{U}$ we have a  projective space theorem for syntomic cohomology ($a\geq 0$)
$$
\bigoplus_{i=0}^d{c}_1^{\synt}(\so(1))^i\cup\pi^*: \quad
\bigoplus_{i=0}^d H^{a-2i}_{\synt}(U_h,r-i)\stackrel{\sim}{\to} H^a_{\synt}({\mathbb P}^d_{U,h},r), \quad 0\leq d \leq r.
$$
By Proposition \ref{hypercov} and the compatibility of the maps $ H^{*}_{\synt}(U,\overline{U},j)_{{\mathbf Q}}\stackrel{\sim}{\to} H^{*}_{\synt}(U_h,j)_{{\mathbf Q}}$ with products and first Chern classes, this reduces to proving a projective space theorem for log-syntomic cohomology, i.e.,  a quasi-isomorphism of complexes
\begin{align*}
\bigoplus_{i=0}^d{c}_1^{\synt}(\so(1))^i\cup\pi^*: \quad
\bigoplus_{i=0}^d H^{a-2i}_{\synt}(U,\overline{U},r-i)_{{\mathbf Q}}\stackrel{\sim}{\to} H^a_{\synt}({\mathbb P}^d_{U},{\mathbb P}^d_{\overline{U}},r)_{\mathbf Q}, \quad 0\leq d \leq r,
\end{align*}
where the class ${c}_1^{\synt}(\so(1))\in H^2_{\synt}({\mathbb P}^d_{U},{\mathbb P}^d_{\overline{U}}, 1)$ refers to the class of the tautological bundle on ${\mathbb P}^d_{\overline{U}}$.

  By the distinguished triangle 
 \begin{equation*}
{\mathrm R}\Gamma_{\synt}(U,\overline{U},r)_{{\mathbf Q}} \to
\R\Gamma_{\crr}(U,\overline{U},r)_{{\mathbf Q}}\stackrel{}{\to} \R\Gamma_{\dr}(U,\overline{U}_{K})/F^r
\end{equation*} and its compatibility with the action of $c_1^{\synt}$, it suffices to prove the following two quasi-isomorphisms for the twisted absolute log-crystalline complexes and for the filtered log de Rham complexes ($0\leq d \leq r$)
\begin{align*}
\bigoplus_{i=0}^d{c}_1^{\crr}(\so(1))^i\cup\pi^*:  &\quad
\bigoplus_{i=0}^d H^{a-2i}_{\crr}(U,\overline{U},r-i)_{{\mathbf Q}}  \stackrel{\sim}{\to} H^a_{\crr}({\mathbb P}^d_{U},{\mathbb P}^d_{\overline{U}},r)_{\mathbf Q}, \\
\bigoplus_{i=0}^d{c}_1^{\dr}(\so(1))^i\cup\pi^*: & \quad
\bigoplus_{i=0}^d F^{r-i}H^{a-2i}_{\dr}(U,\overline{U}_K) \stackrel{\sim}{\to} F^{r}H^a_{\dr}({\mathbb P}^d_{U},{\mathbb P}^d_{\overline{U}_K}).
\end{align*}
 For the log de Rham cohomology, notice that the above map is quasi-isomorphic to the map \cite[3.2]{BE1}
 $$\bigoplus_{i=0}^d{c}_1^{\dr}(\so(1))^i\cup\pi^*:  \quad
\bigoplus_{i=0}^d F^{r-i}H^{a-2i}_{\dr}(U) \stackrel{\sim}{\to} F^{r}H^a_{\dr}({\mathbb P}^d_{U}). $$
 Hence well-known to be a quasi-isomorphism.

  For the twisted log-crystalline cohomology, notice that since Frobenius behaves well with respect to ${c}_1^{\crr}$, it suffices to prove a projective space theorem for the absolute log-crystalline cohomology $H^{*}_{\crr}(U,\overline{U})_{{\mathbf Q}} $.
 \begin{equation*}
\bigoplus_{i=0}^d{c}_1^{\crr}(\so(1))^i\cup\pi^*:  \quad
\bigoplus_{i=0}^d H^{a-2i}_{\crr}(U,\overline{U})_{{\mathbf Q}}  \stackrel{\sim}{\to} H^a_{\crr}({\mathbb P}^d_{U},{\mathbb P}^d_{\overline{U}})_{\mathbf Q}
\end{equation*}
Without loss of generality we may assume that the pair $(U,\overline{U})$ is split over $K$. By the  distinguished triangle  
\begin{equation*}
\R\Gamma_{\crr}(U,\overline{U})\to \R\Gamma_{\crr}((U,\overline{U})/R)\stackrel{N}{\to}
\R\Gamma_{\crr}((U,\overline{U})/R))
\end{equation*}
and its compatibility with the action of $c^{\crr}_1(\so(1))$ (cf. \cite[Lemma 4.3.7]{Ts}),
 it suffices to prove a projective space theorem for the  log-crystalline cohomology $H^{*}_{\crr}((U,\overline{U})/R)_{{\mathbf Q}} $. Since the $R$-linear isomorphism  $\iota:  H^{*}_{\hk}(U,\overline{U})_{{\mathbf Q}} \otimes R_{\mathbf Q}\stackrel{\sim}{\to}H^{*}_{\crr}((U,\overline{U})/R)_{{\mathbf Q}} $ is compatible with products \cite[Prop. 4.4.9]{Ts} and first Chern classes (cf. Lemma \ref{compatibility}) we reduce the problem to showing the projective space theorem for the Hyodo-Kato cohomology.
\begin{equation*}
\bigoplus_{i=0}^d{c}_1^{\hk}(\so(1))^i\cup\pi^*: \quad
\bigoplus_{i=0}^d H^{a-2i}_{\hk}(U,\overline{U})_{{\mathbf Q}}  \stackrel{\sim}{\to} H^a_{\hk}({\mathbb P}^d_{U},{\mathbb P}^d_{\overline{U}})_{\mathbf Q}
\end{equation*}
Tensoring by $K$ and using the isomorphism $ \iota_{\dr,\pi}: H^{*}_{\hk}(U,\overline{U})_{{\mathbf Q}} \otimes_{K_0} K\stackrel{\sim}{\to}H^{*}_{\dr}(U,\overline{U}_K)$ that is compatible with products \cite[Cor. 4.4.13]{Ts} and first Chern classes (cf. Lemma \ref{compatibility}) we reduce to checking the projective space theorem for the log de Rham cohomology $ H^{*}_{\dr}(U,\overline{U}_K) $. And we have done this above.
\end{proof}
The above proof proves also the projective space theorem for the absolute crystalline cohomology.
\begin{corollary}
\label{projective1}
Let $\se$ be a locally free sheaf of rank $d+1$, $d\geq 0$, on a scheme $X\in {\mathcal V}ar_{K}$. Consider the associated projective bundle $\pi:{\mathbb P}(\se)\to X$.  Then we have the following quasi-isomorphism of complexes of sheaves on $X_h$
\begin{align*}
\bigoplus_{i=0}^d{c}_1^{\crr}(\so(1))^i\cup\pi^*: \quad
\bigoplus_{i=0}^d \sj^{[r-i]}_{X,{\mathbf Q}}[-2i] \stackrel{\sim}{\to} R\pi_*
\sj^{[r]}_{{\mathbb P}(\se),{\mathbf Q}}, \quad 0\leq d \leq r.
\end{align*}
Here, the class ${c}_1^{\crr}(\so(1))\in H^2({\mathbb P}(\se)_h, \sj_{\crr})$ refers to the class of the tautological bundle on ${\mathbb P}(\se)$.
\end{corollary}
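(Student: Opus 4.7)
The plan is to reduce this filtered projective space theorem for absolute log-crystalline cohomology to two ingredients already established inside the proof of Proposition \ref{projective}: the (unfiltered) absolute log-crystalline PS theorem, and the filtered log de Rham PS theorem. Beilinson's comparison Corollary \ref{Langer} will bridge the filtered quotient of crystalline with the filtered de Rham complex. Concretely, since $\mathbb{P}(\se)$ trivializes Zariski-locally and a basis of the $h$-site of $X$ is given by ss-pairs $(U,\overline{U})$ over $K$, it suffices to establish, for the projection $\pi:\mathbb{P}^d_{\overline{U}}\to\overline{U}$, the assertion
$$
\bigoplus_{i=0}^d c_1^{\crr}(\so(1))^i\cup\pi^*:\ \bigoplus_{i=0}^d H^{a-2i}_{\crr}(U,\overline{U},\sj^{[r-i]})_{\bq}\stackrel{\sim}{\to} H^a_{\crr}(\mathbb{P}^d_U,\mathbb{P}^d_{\overline{U}},\sj^{[r]})_{\bq},\quad 0\leq d\leq r.
$$

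The passage from the ss-pair statement back to a quasi-isomorphism on $X_h$ follows the pattern of Proposition \ref{hypercov}: $h$-descent for $\sj^{[r]}_{\crr,\bq}$ is a consequence of $h$-descent for $\sa_{\crr,\bq}$, already contained in the proof of Proposition \ref{hypercov} via Hyodo-Kato, together with $h$-descent for $\sa_{\crr,\bq}/\sj^{[r]}_{\crr,\bq}$, which is the filtered de Rham descent furnished by Corollaries \ref{Langer} and \ref{blowup}.

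The core step is to apply the distinguished triangle of log-crystalline sheaves $\sj^{[r]}\to\so\to\so/\sj^{[r]}$, which yields the distinguished triangle of complexes
$$
\R\Gamma_{\crr}(U,\overline{U},\sj^{[r]})_{\bq}\to\R\Gamma_{\crr}(U,\overline{U})_{\bq}\to\R\Gamma_{\crr}(U,\overline{U},\so/\sj^{[r]})_{\bq},
$$
compatible with $\pi^*$ and with cup product by $c_1^{\crr}(\so(1))$. The middle term already satisfies the PS theorem by the argument in Proposition \ref{projective}, where the untwisted PS for absolute log-crystalline was deduced from PS for Hyodo-Kato and log de Rham. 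For the right-hand term, Corollary \ref{Langer} supplies a canonical quasi-isomorphism
$$
\gamma_r^{-1}:\R\Gamma_{\crr}(U,\overline{U},\so/\sj^{[r]})_{\bq}\stackrel{\sim}{\to}\R\Gamma_{\dr}(U,\overline{U}_K)/F^r,
$$
and the filtered log de Rham PS
$$
\bigoplus_{i=0}^d c_1^{\dr}(\so(1))^i\cup\pi^*:\ \bigoplus_{i=0}^d F^{r-i}H^{a-2i}_{\dr}(U,\overline{U}_K)\stackrel{\sim}{\to} F^r H^a_{\dr}(\mathbb{P}^d_U,\mathbb{P}^d_{\overline{U}_K})
$$
was isolated inside the proof of Proposition \ref{projective}. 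A standard diagram chase on the triangle above then gives the desired PS for $\sj^{[r]}$.

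The one compatibility that remains to be checked, and which I expect to be the chief technical point, is that $\gamma_r^{-1}$ identifies $c_1^{\crr}(\so(1))$ with $c_1^{\dr}(\so(1))$. This is entirely analogous to Lemma \ref{compatibility}: tracing the zig-zag (\ref{composition1}) defining $\gamma_r^{-1}$, each arrow is either a base change of log-PD-thickenings or the canonical projection $\LL\Omega^{\scriptscriptstyle\bullet,\wedge}/F^r\to\Omega^{\scriptscriptstyle\bullet}/F^r$, both of which transport the $\dlog$-image of $M^{\gp}$ functorially. Granted this, $h$-sheafification of the ss-pair statement completes the proof of the asserted quasi-isomorphism of complexes on $X_h$.
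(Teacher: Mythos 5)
At the level of ss-pairs your argument is exactly what the paper intends by its one-sentence proof (``the above proof proves also\dots''): inside the proof of Proposition \ref{projective} the projective space theorem is established both for the untwisted absolute log-crystalline cohomology $H^*_{\crr}(U,\ove{U})_{\bq}$ and for the filtered log de Rham cohomology, and combining the two through the triangle $\R\Gamma_{\crr}(U,\ove{U},\sj^{[r]})_{\bq}\to\R\Gamma_{\crr}(U,\ove{U})_{\bq}\to\R\Gamma_{\crr}(U,\ove{U},\so/\sj^{[r]})_{\bq}$, Corollary \ref{Langer}, and the compatibility of $c_1^{\crr}$ with $c_1^{\dr}$ (which the paper quotes from Tsuji just before Lemma \ref{compatibility}) gives the filtered crystalline projective space theorem for pairs. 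That core is correct and is essentially the paper's route.

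The genuine gap is in your justification of the passage from the ss-pair statement to the asserted quasi-isomorphism of sheaves on $X_h$. You claim that ``$h$-descent for $\sj^{[r]}_{\crr,\bq}$ is a consequence of $h$-descent for $\sa_{\crr,\bq}$, already contained in the proof of Proposition \ref{hypercov}''. This is not contained in that proof and is in fact false over $K$: the paper emphasizes (already in the introduction) that the complexes $\R\Gamma(X_h,\sj^{[r]}_{\crr})$ are huge, i.e.\ not computed by the log-crystalline complexes of a semistable model, precisely because absolute crystalline cohomology does not descend along the ramified finite extensions forced by alterations. What the proof of Proposition \ref{hypercov} establishes is descent for the Frobenius-eigenspace complexes $\R\Gamma_{\crr}(\cdot,r)$, for Hyodo--Kato and for syntomic cohomology: the exactness of the \v{C}ech complex of a totally ramified cover is obtained there only after applying $1-\phi_r$, via the maps $\tilde{\alpha}^B_{\synt,\pi^{\prime}}$ and Hyodo--Kato descent; the statement (\ref{firsteq}) for the untwisted crystalline complexes holds only for hypercoverings split over a fixed $V$, which alterations do not provide. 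Hence you may not identify $\R\Gamma(U_h,\sj^{[s]}_{\crr,\bq})$ with $\R\Gamma_{\crr}(U,\ove{U},\sj^{[s]})_{\bq}$, and your final ``$h$-sheafification of the ss-pair statement'' is unsupported as written. To be fair, the paper's one-line proof leaves this step implicit as well; but whatever the correct bridge is --- for instance checking the quasi-isomorphism after pulling back to $\mathcal{V}ar_{\ovk,h}$, where Proposition \ref{isomorphism} does identify pair-level and $h$-cohomology of $\sj^{[r]}$ --- it cannot run through an arithmetic descent statement for $\sa_{\crr,\bq}$ that the paper itself denies.
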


  For $X\in {\mathcal V}ar_K$, using the projective space theorem (cf. Theorem \ref{projective}) and the Chern classes
$$
c_0^{\synt}: \bq_p\stackrel{\can}{\to} \sss(0)_{X_{\bq}},\quad c_1^{\synt}: \so_{X_h}^*\to \sss(1)_{X_{\bq}}[1],
$$
we obtain syntomic Chern classes $c_i^{\synt}(\se)$, for any locally free sheaf $\se$ on $X$.

  Syntomic cohomology has  homotopy invariance property. 
\begin{proposition}
\label{homotopy}
 Let $X\in {\mathcal V}ar_K$ and $f: {\mathbb A}^1_X \to X$ be the natural projection from the affine line over $X$ to $X$. Then, 
 for all $r\geq 0$, the pullback map 
$$f^*:\,\R\Gamma_{\synt}(X_h,r)\lomapr{\sim}\R\Gamma_{\synt}({\mathbb A}^1_{X,h},r)$$
is a quasi-isomorphism.
\end{proposition}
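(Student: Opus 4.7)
The plan is to use the presentation of Proposition \ref{reduction2} to reduce homotopy invariance for syntomic cohomology to homotopy invariance for its Beilinson--Hyodo--Kato and Deligne de Rham constituents, each of which can be treated by classical methods.

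The functorial quasi-isomorphism $\alpha_{\synt}:\R\Gamma_{\synt}(X_h,r)\stackrel{\sim}{\to} C_{\st}(\R\Gamma^B_{\hk}(X_h)\{r\})$, applied naturally to both $X$ and $\mathbb{A}^1_X$, reduces the proposition to showing that $f^*$ induces quasi-isomorphisms
$$\R\Gamma^B_{\hk}(X_h)\stackrel{\sim}{\to}\R\Gamma^B_{\hk}({\mathbb A}^1_{X,h}),\qquad \R\Gamma_{\dr}(X_h)/F^r\stackrel{\sim}{\to}\R\Gamma_{\dr}({\mathbb A}^1_{X,h})/F^r.$$
All the structure operators $N$, $\phi_r$, $\iota^B_{\dr}$ entering the $C_{\st}$ mapping fiber are functorial in $X$, so compatibility with $f^*$ is automatic and the reduction is valid.

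For de Rham cohomology, the statement is classical. By Proposition \ref{deRham1} and $h$-descent, it suffices to treat smooth $X$, which we achieve by an $h$-hypercovering of $X$ by smooth varieties via de Jong alterations. For smooth $X$ the identity $\R\Gamma_{\dr}({\mathbb A}^1_X)\simeq \R\Gamma_{\dr}(X)$ as filtered complexes follows from the K\"unneth formula combined with $\R\Gamma_{\dr}({\mathbb A}^1_K)\simeq K[0]$ (concentrated in filtration $0$).

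For Hyodo--Kato, by Proposition \ref{hypercov} it is enough to establish the pullback quasi-isomorphism
$\R\Gamma^B_{\hk}(U,\overline{U})\stackrel{\sim}{\to}\R\Gamma^B_{\hk}({\mathbb A}^1_U,{\mathbb P}^1_{\overline{U}})$
for an ss-pair $(U,\overline{U})$, where ${\mathbb P}^1_{\overline{U}}$ is equipped with the log structure obtained by adding the $\infty$-divisor to the pullback log structure (this is again an ss-pair of Cartier type over $V^\times$). The argument combines two ingredients. First, the projective bundle formula for Hyodo--Kato cohomology, already extracted in the proof of Proposition \ref{projective}, yields
$$\R\Gamma^B_{\hk}({\mathbb P}^1_U,{\mathbb P}^1_{\overline{U}})\simeq \R\Gamma^B_{\hk}(U,\overline{U})\,\oplus\, c_1^{\hk}(\so(1))\cdot \R\Gamma^B_{\hk}(U,\overline{U})[-2],$$
where ${\mathbb P}^1_{\overline{U}}$ here carries only the pullback log structure. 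Second, enlarging the log structure by the $\infty$-divisor gives the residue short exact sequence of log de Rham complexes, which lifts through Beilinson's equivalence (Theorem \ref{kk2}) and the Beilinson--Hyodo--Kato map to a distinguished triangle
$$\R\Gamma^B_{\hk}(U,\overline{U})[-2]\longrightarrow \R\Gamma^B_{\hk}({\mathbb P}^1_U,{\mathbb P}^1_{\overline{U}})\longrightarrow \R\Gamma^B_{\hk}({\mathbb A}^1_U,{\mathbb P}^1_{\overline{U},\log\infty}).$$
The first arrow is the Gysin map for $U_\infty\hookrightarrow {\mathbb P}^1_U$, and via the projection formula computation of $i^*i_*$ as cup with $c_1^{\hk}(\so(1))|_{U_\infty}$ it is identified with the inclusion of the second summand of the projective bundle decomposition. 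The cofiber is therefore canonically $\R\Gamma^B_{\hk}(U,\overline{U})$, and $h$-sheafifying yields the desired quasi-isomorphism for all $X\in\mathcal{V}ar_K$.

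The main obstacle is the construction and identification of the Gysin/residue triangle for Beilinson--Hyodo--Kato cohomology: the residue short exact sequence is classical on log de Rham complexes, and transports to the Hyodo--Kato side via $\iota^B_{\dr}$, Theorem \ref{kk2}, and the compatibilities of Lemma \ref{compatibility}. The identification of the first map with the second projective-bundle summand reduces to standard manipulations with first Chern classes, using that $c_1^{\hk}$ is respected by all the relevant comparison maps.
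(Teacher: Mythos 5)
Your overall skeleton is the same as the paper's: reduce, via the $C_{\st}$-type presentation of syntomic cohomology, to homotopy invariance of the Hyodo--Kato and (filtered) de Rham constituents, and settle the de Rham side by classical Hodge-theoretic arguments (the paper localizes in the $h$-topology to an ss-pair $(U,\overline{U})$, uses Proposition \ref{hypercov} and the local quasi-isomorphism of Proposition \ref{reduction1} rather than the descent-constructed $\alpha_{\synt}$ of Proposition \ref{reduction2}, which sidesteps having to worry about functoriality of that construction in $X$; your de Rham step via K\"unneth is a harmless variant of the paper's ``projective space theorem plus Gysin sequence in log de Rham cohomology''). The place where you genuinely diverge is the Hyodo--Kato step, and that is where there is a gap: you assert a residue/Gysin distinguished triangle
$$\R\Gamma^B_{\hk}(U,\overline{U})[-2]\to \R\Gamma^B_{\hk}({\mathbb P}^1_U,{\mathbb P}^1_{\overline{U}})\to \R\Gamma^B_{\hk}({\mathbb A}^1_U,{\mathbb P}^1_{\overline{U}})$$
obtained by ``lifting'' the classical residue sequence of log de Rham complexes through $\iota^B_{\dr}$ and Theorem \ref{kk2}. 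But $\iota^B_{\dr}$ is not invertible over $K_0$ (it only becomes a quasi-isomorphism after $\otimes_{K_0}K$), and Theorem \ref{kk2} is an equivalence between $(\phi,N)$-modules and nondegenerate $F$-crystals, not a device that transports de Rham-level exact sequences to the Hyodo--Kato side; nothing in the paper (in particular not Lemma \ref{compatibility}, which only compares first Chern classes) produces such a triangle, nor the identification of its first map with cup product by $c_1^{\hk}(\so(1))$. Carrying this out would require building Gysin theory at the log-crystalline/Hyodo--Kato level, which is real extra work.

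The fix is much simpler and is exactly what the paper does: a map of complexes of $K_0$-vector spaces is a quasi-isomorphism if and only if it is after the faithfully flat extension $-\otimes_{K_0}K$, and after this base change the Hyodo--Kato quasi-isomorphism $\iota_{\dr,\pi}$ (compatible with pullbacks) identifies the map $\R\Gamma_{\hk}(U,\overline{U})_{\bq}\to\R\Gamma_{\hk}({\mathbb A}^1_U,{\mathbb P}^1_{\overline{U}})_{\bq}$ with the log de Rham pullback you have already shown to be a quasi-isomorphism. So no projective bundle formula or residue triangle on the Hyodo--Kato side is needed; with this replacement your argument closes up and coincides in substance with the paper's proof.
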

\begin{proof}
Localizing in the $h$-topology of $X$ we may assume that $X=U$ - the open set of an ss-pair $(U,\overline{U})$ over $K$.  Consider the following commutative diagram.
$$
\begin{CD}
\R\Gamma_{\synt}(U,\overline{U},r)_{\mathbf Q}@> f^*>>\R\Gamma_{\synt}({\mathbb A}^1_{U},{\mathbb P}^1_{\overline{U}},r)_{\mathbf Q}\\
@VV \wr V @VV\wr V\\
\R\Gamma_{\synt}(U_h,r) @> f^*>> \R\Gamma_{\synt}({\mathbb A}^1_{U,h},r)
\end{CD}
$$
The vertical maps are quasi-isomorphisms by Proposition \ref{hypercov}. It suffices thus to  show that the top horizontal map  is a quasi-isomorphism. By Proposition  \ref{reduction1}, this reduces to showing that 
the map $$C_{\st}(\R\Gamma_{\hk}(U,\overline{U})_{\mathbf Q}\{r\})\stackrel{ f^*}{\to}C_{\st}(\R\Gamma_{\hk}({\mathbb A}^1_{U},{\mathbb P}^1_{\overline{U}})_{\mathbf Q}\{r\})$$ is a quasi-isomorphism. Or, that the map $f: ({\mathbb A}^1_{U},{\mathbb P}^1_{\overline{U}})\to (U,\overline{U})$ induces a quasi-isomorphism on the Hyodo-Kato cohomology and a filtered quasi-isomorphism on the log de Rham cohomology:
$$\R\Gamma_{\hk}(U,\overline{U})_{\mathbf Q}\stackrel{ f^*}{\to}\R\Gamma_{\hk}({\mathbb A}^1_{U},{\mathbb P}^1_{\overline{U}})_{\mathbf Q},\quad 
\R\Gamma_{\dr}(U,\overline{U}_K)\stackrel{ f^*}{\to}\R\Gamma_{\dr}({\mathbb A}^1_{U},{\mathbb P}^1_{\overline{U}_K})
$$
Without loss of generality we may assume that the pair $(U,\overline{U})$ is split over $K$. Tensoring with $K$ and using the Hyodo-Kato quasi-isomorphism we reduce the Hyodo-Kato case to the log de Rham one. The latter  follows easily from the projective space theorem and the existence of the Gysin sequence in log de Rham cohomology. 
\end{proof}
\begin{remark}
The above implies that syntomic cohomology is a Bloch-Ogus theory. A proof of this fact was kindly communicated to us by Fr\'ed\'eric D\'eglise and is contained in Appendix~B, Proposition \ref{Bloch-Ogus}.
\end{remark}
\begin{proposition}For a scheme $X$, let $K_*(X)$ denote  Quillen's higher  $K$-theory  groups of $X$.  
 For $X\in {\mathcal V}ar_K$, $i,j\geq 0$, there are
 functorial syntomic Chern class maps
$$
c_{i,j}^{\synt}:  K_j(X)  \rightarrow H^{2i-j}_{\synt}(X_h,i).
$$
\end{proposition}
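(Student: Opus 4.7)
The plan is to construct syntomic Chern classes by applying the standard Gillet–Grothendieck machinery \cite{Gil}, which produces Chern classes from higher $K$-theory once one has (a) a good first Chern class for line bundles, (b) the projective bundle formula, and (c) homotopy invariance. All three ingredients are available: (a) is the class $c_1^{\synt}\colon \so_{X_h}^* \to \sss(1)_{X,\bq}[1]$ together with $c_0^{\synt} = 1$, (b) is Proposition \ref{projective}, and (c) is Proposition \ref{homotopy}. Since syntomic cohomology is furthermore a Bloch–Ogus theory (Appendix~B, Proposition \ref{Bloch-Ogus}), the formal apparatus applies verbatim.

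The concrete steps I would carry out are as follows. First, for each $n \geq 1$, I would consider the simplicial classifying scheme $BGL_n$ over $K$, viewed as an essentially finite simplicial diagram in $\sv ar_K$ (so that the syntomic cohomology theory extends to it by the remark at the end of the previous section). Using the projective bundle formula applied iteratively along the canonical flag bundle $GL_n/B \to \Spec K$, one obtains by the splitting principle a computation
\[
H^*_{\synt}(BGL_{n,h}, *) \;=\; H^*_{\synt}(\Spec(K)_h, *)[c_1, \dots, c_n],
\]
producing universal Chern classes $c_i \in H^{2i}_{\synt}(BGL_{n,h}, i)$. Pulling back along the classifying map $\phi_E\colon X \to BGL_n$ of a rank-$n$ bundle $E$ on $X$ gives $c_i^{\synt}(E) := \phi_E^*(c_i)$, agreeing on line bundles with the map built from $c_1^{\synt}$ above, and satisfying the Whitney formula.

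Next, to extend from bundles (the $K_0$ case) to higher $K$-theory, I would represent the universal Chern classes as morphisms of simplicial presheaves on $\sv ar_K$
\[
c_i\colon BGL \;\longrightarrow\; K(\sss(i)_{\bq}[2i])
\]
into the Eilenberg–MacLane-type object built from $\sss(i)_{\bq}$ (using that syntomic cohomology is represented by a presheaf of complexes and that $BGL = \dirlim_n BGL_n$). Homotopy invariance ensures that these maps factor through the $+$-construction $BGL^+$, so that passing to $\pi_j$ and using $K_j(X) = \pi_j(BGL(X)^+)$ for $j \geq 1$ yields the desired functorial maps $c_{i,j}^{\synt}\colon K_j(X) \to H^{2i-j}_{\synt}(X_h, i)$; the $j=0$ case is the previous paragraph together with additivity on exact sequences.

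The main technical obstacle is the passage from Chern classes for bundles (given directly by the projective bundle formula) to Chern classes on higher $K$-groups, i.e.\ showing that the universal classes on $BGL$ can be promoted to a map of simplicial presheaves that factors through $BGL^+$. This is precisely what Gillet's framework \cite{Gil} handles for any cohomology theory with the three properties above, and the verification that syntomic cohomology as defined here satisfies Gillet's axioms is routine given the $h$-descent, functoriality, and multiplicative structure already established; no further $p$-adic Hodge-theoretic input is required.
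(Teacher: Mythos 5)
Your proposal is correct and takes essentially the same route as the paper: both invoke Gillet's machinery, first computing $H^*_{\synt}(B_{\scriptscriptstyle\bullet}GL_{l,h},*)\simeq H^*_{\synt}(K,*)[x^{\synt}_1,\ldots,x^{\synt}_l]$ from the projective space theorem and homotopy invariance to get universal classes, and then passing to higher $K$-theory via the $+$-construction and a Hurewicz-type argument. The only cosmetic differences are that the paper realizes the universal class as a map of pointed simplicial sheaves on $X_{\ZAR}$ into the Dold--Puppe object $\sk(2i,\sss^{\prime}(i)_X)$ rather than your Eilenberg--MacLane presheaf, and that the factorization through $BGL^+$ is supplied by the target being a simplicial abelian (hence H-space) object — a point Gillet's framework handles — rather than by $\Af$-invariance as you phrase it.
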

\begin{proof}
 Recall the
construction  of the  classes $c_{i,j}^{\synt}$.  
 First, one constructs universal classes
$C^{\synt}_{i,l}\in H^{2i}_{\synt}(B_{\scriptscriptstyle\bullet}GL_{l,h},i)$.
By a standard argument, projective space theorem and homotopy property show that  $$H^*_{\synt}(B_{\scriptscriptstyle\bullet}GL_{l,h},*)\simeq H^*_{\synt}(K,*)[x^{\synt}_1,\ldots,x^{\synt}_l],$$
 where the classes
$x^{\synt}_i\in H^{2i}_{\synt}(B_{\scriptscriptstyle\bullet}GL_{l,h},i)$ are the  syntomic Chern classes of the universal locally free sheaf on $B_{\scriptscriptstyle\bullet}GL_{l}$ (defined via a projective space theorem). 
For $l\geq i$, we define $$C^{\synt}_{i,l}=x^{\synt}_i\in
H^{2i}_{\synt}(B_{\scriptscriptstyle\bullet}GL_{l,h},i).$$ 
 The classes $C^{\synt}_{i,l}\in H^{2i}_{\synt}(B_{\scriptscriptstyle\bullet}GL_{l,h},i)$  yield compatible universal classes (see \cite[p. 221]{Gi})
$C^{\synt}_{i,l}\in H^{2i}_{\synt}(X,GL_l(\so_X),i)$, hence
 a natural map of pointed simplicial sheaves on $X_{\ZAR}$,
$C^{\synt}_i:B_{\scriptscriptstyle\bullet}GL(\so_X)\to \sk(2i,\sss^{\prime}(i){}_{X})$,
where $\sk $ is the Dold--Puppe functor of
$\tau_{\geq 0}\sss^{\prime}(i){}_{X}[2i]$
and $\sss^{\prime}(i){}_{X}$ is an
injective resolution of $\sss(i){}_{X}:=R\varepsilon_*\sss(i)_{\bq}$, $\varepsilon: X_h\to X_{\ZAR}$.
The characteristic classes $c^{\synt}_{i,j}$ are now
defined \cite[2.22]{Gi}
as the composition
\begin{align*}
K_j(X) & \to
H^{-j}(X,{\mathbf Z}\times B_{\scriptscriptstyle\bullet}GL(\so_X)^+)
\to H^{-j}(X,B_{\scriptscriptstyle\bullet}GL(\so_X)^+)\\
&  \stackrel{C^{\synt}_i}{\longrightarrow}  H^{-j}(X,
\sk(2i,\sss^{\prime}(i){}_{X}))
 \stackrel{h_j}{\rightarrow}  H^{2i-j}_{\synt}(X_h,i),
\end{align*}
where  $B_{\scriptscriptstyle\bullet}GL(\so_X)^+$ is the (pointed) simplicial sheaf on
$X$ associated to the $+\,$- construction \cite[4.2]{S}. Here, for a
(pointed) simplicial sheaf $\se_{\scriptscriptstyle\bullet} $ on $X_{\ZAR}$,
$H^{-j}(X,\se_{\scriptscriptstyle\bullet} ) =\pi_j(\R\Gamma
(X_{\ZAR},\se_{\scriptscriptstyle\bullet} ))$ is the generalized sheaf
cohomology of  $\se_{\scriptscriptstyle\bullet} $ \cite[1.7]{Gi}. The map $h_j$ is the Hurewicz map: 
\begin{align*} H^{-j}(X, \sk(2i,\sss^{\prime}(i){}_{X}
))  &  = \pi_j(\sk(2i,\sss^{\prime}(i)(X))) \!\stackrel{h_j}{\rightarrow}\!
H_j(\sk(2i,\sss^{\prime}(i)(X)))\\
 &  = H_j(\sss^{\prime}(i)(X)[2i])= H^{2i-j}_{\synt}(X_h,i).
\end{align*}
\end{proof}
\begin{proposition}
\label{syn-et}
The syntomic and the \'etale Chern classes are compatible, i.e.,
for $X\in {\mathcal V}ar_K$, $j\geq 0, 2i-j\geq 0$, the following diagram commutes
$$
\xymatrix{
& K_j(X)\ar[ld]_{c^{\synt}_{i,j}}\ar[rd]^{c^{\eet}_{i,j}} &\\
H^{2i-j}_{\synt}(X_h,i)\ar[rr]^{\rho_{\synt}} & & H^{2i-j}_{\eet}(X,\bq_p(i))}
$$
\end{proposition}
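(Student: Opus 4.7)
The plan is to apply the standard axiomatic method for comparing characteristic classes: both $c^{\synt}_{i,j}$ and $c^{\eet}_{i,j}$ are built from universal classes on $B_\bullet GL_l$ via the same Gillet-style formalism (projective bundle formula $+$ homotopy invariance $+$ multiplicativity $+$ first Chern class), so it suffices to verify that $\rho_{\synt}$ intertwines the ingredients of the two constructions. Concretely, I would argue in four steps.

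First, I would check compatibility on the first Chern class, i.e.\ that for any line bundle $\mathcal{L}$ on $X$ the square
\[
\xymatrix{
\Pic(X) \ar[r]^-{c_1^{\synt}} \ar[d]_{c_1^{\eet}} & H^2_{\synt}(X_h,1) \ar[d]^{\rho_{\synt}} \\
H^2_{\eet}(X,\bq_p(1)) \ar@{=}[r] & H^2_{\eet}(X,\bq_p(1))
}
\]
commutes. By $h$-descent and the definition of $c_1^{\synt}$ via $\dlog$, this reduces to a local statement for an ss-pair $(U,\ove{U})/K$. In the filtered crystalline description of $\sss(1)_\bq$ and the Kummer-theoretic description of $\R\Gamma(X_\eet,\bq_p(1))$, both classes come from the same element $\mathcal{L}\in M^{\gp}/\so^*$, and the syntomic period map $\rho_{\synt}$ is induced by the filtered crystalline Poincar\'e lemma and the fundamental exact sequence; the compatibility is then a direct computation already present (integrally) in the Fontaine--Messing--Kato theory and carried over here via the Beilinson/Bhatt Poincar\'e lemmas used to construct $\rho_{\synt}$.

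Second, I would note that $\rho_{\synt}$ is a morphism of graded $E_\infty$-$\bq_p$-algebras, hence compatible with cup products; in particular it is compatible with the action of $c_1^{\synt}(\so(1))$ on $\sss(r)$ via $c_1^{\eet}(\so(1))$ on $\bq_p(r)$. Combined with Step~1, this shows that under $\rho_{\synt}$ the syntomic projective bundle formula of Proposition~\ref{projective} corresponds to the \'etale one: we obtain a commutative diagram
\[
\xymatrix@C=50pt{
\bigoplus_{k=0}^{d} H^{*-2k}_{\synt}(X_h,r-k) \ar[r]^-{\bigoplus c_1^{\synt}(\so(1))^k\cup\pi^*}_-{\sim} \ar[d]_{\oplus\rho_{\synt}} & H^{*}_{\synt}({\mathbb P}(\sE)_h,r) \ar[d]^{\rho_{\synt}} \\
\bigoplus_{k=0}^{d} H^{*-2k}_{\eet}(X,\bq_p(r-k)) \ar[r]^-{\bigoplus c_1^{\eet}(\so(1))^k\cup\pi^*}_-{\sim} & H^{*}_{\eet}({\mathbb P}(\sE),\bq_p(r))
}
\]
for $r\geq d$. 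The analogous compatibility with the homotopy invariance of Proposition~\ref{homotopy} (both sides send $\R\Gamma(X_{?},-)\xrightarrow{f^*}\R\Gamma({\mathbb A}^1_{X,?},-)$ to a quasi-isomorphism, and $\rho_{\synt}$ intertwines the two) is automatic from functoriality of $\rho_{\synt}$.

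Third, from Steps~1--2, the universal syntomic Chern classes $C_{i,l}^{\synt}\in H^{2i}_{\synt}(B_\bullet GL_{l,h},i)$ are sent by $\rho_{\synt}$ to the universal \'etale Chern classes $C_{i,l}^{\eet}\in H^{2i}_{\eet}(B_\bullet GL_l,\bq_p(i))$: both $H^*(B_\bullet GL_{l,?},*)$ are polynomial rings on the Chern classes of the tautological bundle by Proposition~\ref{projective} and the projective bundle formula for \'etale cohomology, and the generators correspond by Step~2. Hence the induced maps of pointed simplicial sheaves
\[
C_i^{\synt}\colon B_\bullet GL(\so_X)\longrightarrow \sk(2i,\sss^{\prime}(i)_X),\qquad
C_i^{\eet}\colon B_\bullet GL(\so_X)\longrightarrow \sk(2i,\bq_p(i)^{\prime}_X)
\]
fit into a commutative triangle after post-composing $C_i^{\synt}$ with the Dold--Puppe functor applied to $\rho_{\synt}$. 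Passing to generalized sheaf cohomology and through the Hurewicz map (which is functorial for morphisms of sheaves of complexes) yields the required compatibility $\rho_{\synt}\circ c_{i,j}^{\synt}=c_{i,j}^{\eet}$ on $K_j(X)$.

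The main obstacle is really Step~1, the compatibility on $c_1$: the two first Chern classes are defined via rather different machinery (filtered Frobenius eigenspaces of crystalline cohomology on one side, Kummer sequence on the other), and one must unwind the construction of $\rho_{\synt}$ through the Beilinson--Bhatt filtered Poincar\'e lemma and the fundamental exact sequence
$0\to \bz/p^n(1)^{\prime}\to J^{<1>}_{\crr,n}\xrightarrow{1-\phi_1}A_{\crr,n}\to 0$
to recognize both classes as coming from the same element $\dlog[\mathcal{L}]$. Once this local identification is in place and is checked to sheafify in $h$-topology (which is formal because both sides are defined by $h$-sheafifying presheaves that already agree), Steps~2--4 are standard consequences of the fact that $\rho_{\synt}$ is a multiplicative, functorial, homotopy-invariant transformation satisfying the projective bundle formula.
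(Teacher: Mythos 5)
Your overall strategy coincides with the paper's: both arguments pass to the universal case $B_{\scriptscriptstyle\bullet}GL_l$, use the polynomial structure of its syntomic and \'etale cohomology coming from the projective space theorem, invoke the multiplicativity and functoriality of $\rho_{\synt}$, and thereby reduce everything to the compatibility of $\rho_{\synt}$ with $c_0$ and with the first Chern class of the tautological bundle. Your Steps 2--4 are exactly the paper's reduction.

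The gap is in your Step 1, which you rightly identify as the crux but then dispatch by asserting that the $c_1$-compatibility is a computation ``already present in Fontaine--Messing--Kato theory'' that can be checked locally on an ss-pair $(U,\ove{U})$ over $K$. Two problems. First, the arithmetic period map $\rho_{\synt}$ over $K$ is not given by any local formula on ss-pairs over $K$: it is defined as a composite through $\holim_n\R\Gamma(G_K,\R\Gamma(X_{\ovk,h},\sss_n(r)))$, i.e.\ by Galois descent from the geometric period map over $\ovk$, and it is only over $\ovk$ that the fundamental exact sequence and the syntomic Poincar\'e lemma enter. So ``reduce by $h$-descent to an ss-pair over $K$ and unwind $\rho_{\synt}$'' does not parse as stated; the verification must first be transported to the geometric side, which is what the paper does by pulling back along $B_{\scriptscriptstyle\bullet}GL_l\to B_{\scriptscriptstyle\bullet}GL_{l,\ovk}$ and comparing on $B_{\scriptscriptstyle\bullet}{\mathbb G}_{m,\ovk}$. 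Second, the period map of this paper is not the classical Fontaine--Messing--Kato map: it is built from the Beilinson--Bhatt Poincar\'e lemmas, so the classical symbol computation does not ``carry over'' without first identifying the two constructions, an identification which is neither done in the paper nor trivial. The paper sidesteps this by using the compatibility of syntomic with crystalline Chern classes and the diagram relating $\rho_{\synt}$ to $\rho_{\crr}$ on $B_{\scriptscriptstyle\bullet}{\mathbb G}_{m,\ovk}$, thereby reducing the whole of your Step 1 to Beilinson's statement $\rho_{\crr}(c_1^{\crr}(\sll))=c_1^{\eet}(\sll)\otimes t$ in \'etale cohomology with $B^+_{\crr}$-coefficients, which is \cite[3.2]{BE2}. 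To repair your argument, replace the local Fontaine--Messing--Kato unwinding by this reduction (or else actually prove that your unwound map agrees with Beilinson's $\rho_{\crr}$, which amounts to the same missing work).
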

\begin{proof}
We can pass to the universal case ($X=B_{\scriptscriptstyle\bullet}GL_l:=B_{\scriptscriptstyle\bullet}GL_l/K$, $l\geq 1$). We have
\begin{align*}
H^*_{\synt}(B_{\scriptscriptstyle\bullet}GL_{l,h},*) & \simeq H^*_{\synt}(K,*)[x^{\synt}_1,\ldots,x^{\synt}_l],\\ H^*_{\eet}(B_{\scriptscriptstyle\bullet}GL_l,*) & \simeq H^*_{\eet}(K,*)[x^{\eet}_1,\ldots,x^{\eet}_l]
\end{align*}
By the projective space theorem and the fact that the syntomic period map commutes with products it suffices to check that $\rho_{\synt}(x_1^{\synt})=x_1^{\eet}$ and that the syntomic period map $\rho_{\synt}$ commutes with the classes $c_0^{\synt}: \bq_p\to \sss(0)_\bq$ and $c_0^{\eet}: \bq_p\to \bq_p(0)$. The statement about $c_0$ is clear from the definition of $\rho_{\crr}$; for $c_1$ consider   the canonical map $f: B_{\scriptscriptstyle\bullet}GL_l\to B_{\scriptscriptstyle\bullet}GL_{l,\ovk}$ and the induced pullback map
$$
f^*_{\eet}:\quad H^*_{\eet}(B_{\scriptscriptstyle\bullet}GL_l,*)=H^*_{\eet}(K,*)[x_1,\ldots,x_l]\to H^*_{\eet}(B_{\scriptscriptstyle\bullet}GL_{l,\ovk},*)=\bq_p[\overline{x}_1,\ldots,\ove{x}_l]
$$
that sends the Chern classes $x^{\eet}_i$ of the universal vector bundle to the classes $\overline{x}^{\eet}_i$ of its pullback. It suffices to show that $f^*_{\eet}\rho_{\synt}(C_{1,1}^{\synt})=C_{1,1}^{\eet}$. But, by definition,   $f^*_{\eet}\rho_{\synt}=\rho_{\synt}f^*_{\synt}$ and,  by construction, we have the following commutative diagram
$$
\xymatrix{
H^2_{\synt}(B_{\scriptscriptstyle\bullet}{\mathbb G}_{m,h},1)\ar[r]^{\can}\ar[d]^{\rho_{\synt}} & H^2_{\crr}(B_{\scriptscriptstyle\bullet}{\mathbb G}_{m,\ovk,h})\ar[d]^{\rho_{cr}}\\
  H^2_{\eet}(B_{\scriptscriptstyle\bullet}{\mathbb G}_{m,\ovk},\bq_p(1))\ar[r] & H^2_{\eet}(B_{\scriptscriptstyle\bullet}{\mathbb G}_{m,\ovk}, B^+_{\crr})= H^2_{\eet}(B_{\scriptscriptstyle\bullet}{\mathbb G}_{m,\ovk},\bq_p(1))\otimes B^+_{\crr}
 }
$$
where the bottom map sends the generator of $\bq_p(1)$ to the element $t\in B^+_{\crr}$ associated to it.
Since the syntomic and the crystalline Chern classes are compatible, it suffices to show that, for a line bundle $\sll$, $\rho_{\crr}(c_1^{\crr}(\sll))=c_1^{\eet}(\sll)\otimes t$. But this is \cite[3.2]{BE2}.
\end{proof}
\begin{remark}
If $\sx$ is a scheme over $V$ and $X=\sx_K$, we can consider the syntomic Chern classes $c_{i,j}^{\synt}: K_j(\sx)\to H^{2i-j}_{\synt}(X_h,i)$ defined as the composition
$$
K_j(\sx)\to K_j(X) \lomapr{c_{i,j}^{\synt}} H^{2i-j}_{\synt}(X_h,i).
$$
By the above proposition, these classes are compatible with the \'etale Chern classes. Recall that analogous results were proved earlier for $\sx$ smooth and projective \cite{NM}, for $\sx$ -- a complement of a divisor with relative normal crossings in such, and for $\sx$ - a semistable scheme over $V$ \cite{NT}.
\end{remark}
\subsection{Image of \'etale regulators}In this subsection we show that
Soul\'e's \'etale regulators factor through the semistable Selmer groups.

   Let $X\in {\mathcal V}ar_K$. For $2r-i-1\geq 0$, set 
$$
K_{2r-i-1}(X)_0:=\ker(K_{2r-i-1}(X)\lomapr{c^{\eet}_{r,i+1}} H^0(G_K,H^{i+1}_{\eet}(X_{\ovk},\bq_p(r))))
$$
Write $r^{\eet}_{r,i}$ for the map
$$r^{\eet}_{r,i}:K_{2r-i-1}(X)_0\to  H^1(G_K,H^{i}_{\eet}(X_{\ovk},\bq_p(r)))
$$
induced by the Chern class map $c^{\eet}_{r,i+1}$ and the Hochschild-Serre spectral sequence map $\delta: H^{i+1}_{\eet}(X,\bq_p(r))_0\to H^1(G_K,H^i_{\eet}(X_{\ovk},\bq_p(r)))$,
where we set $H^{i+1}_{\eet}(X,\bq_p(r))_0:=\ker( H^{i+1}_{\eet}(X,\bq_p(r))\to H^{i+1}_{\eet}(X_{\ovk},\bq_p(r))) $.
\begin{theorem}
\label{Tony}
The map $r^{\eet}_{r,i}$ factors through the subgroup $$   H^1_{\st}(G_K,H^{i+1}_{\eet}(X_{\ovk},\bq_p(r)))\subset H^1(G_K,H^{i+1}_{\eet}(X_{\ovk},\bq_p(r))).$$
\end{theorem}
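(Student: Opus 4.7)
The plan is to combine the syntomic Chern classes (Proposition \ref{syn-et}) with the syntomic descent spectral sequence (Theorem \ref{stHS}), using the fact that the latter maps to the étale Hochschild--Serre spectral sequence in a way compatible with $\rho_{\synt}$. Fix $\xi \in K_{2r-i-1}(X)_0$ and set $c := c^{\synt}_{r, i+1}(\xi) \in H^{i+1}_{\synt}(X_h, r)$. By Proposition \ref{syn-et}, $\rho_{\synt}(c) = c^{\eet}_{r, i+1}(\xi) \in H^{i+1}_{\eet}(X, \bq_p(r))$, so the syntomic class is a lift of the étale Chern class.

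The first step is to show that $c$ lies in the first step $F^1 H^{i+1}_{\synt}(X_h, r)$ of the syntomic descent filtration. The edge map of the syntomic descent spectral sequence is
\[
H^{i+1}_{\synt}(X_h, r) \twoheadrightarrow {}^{\synt}E_{\infty}^{0, i+1} \hookrightarrow {}^{\synt}E_2^{0, i+1} = H^0_{\st}(G_K, H^{i+1}_{\eet}(X_{\ovk}, \bq_p(r))),
\]
and by Remark \ref{basics}(1) this $H^0_{\st}$ equals $H^0(G_K, H^{i+1}_{\eet}(X_{\ovk}, \bq_p(r)))$. The compatibility supplied by Theorem \ref{stHS} identifies the image of $c$ under this edge map with the image of $\rho_{\synt}(c) = c^{\eet}_{r,i+1}(\xi)$ under the corresponding edge map on the étale side. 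The latter vanishes by the very definition of $K_{2r-i-1}(X)_0$, so $c \in F^1 H^{i+1}_{\synt}(X_h, r)$.

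Next I would read off $c$ modulo $F^2$ as an element of the graded piece $F^1/F^2 = {}^{\synt}E_{\infty}^{1, i}$. Since ${}^{\synt}E_2^{p, q}$ vanishes for $p < 0$, all differentials targeting ${}^{\synt}E_r^{1, i}$ from $E_r^{1-r, i+r-1}$ are zero for $r \geq 2$, so ${}^{\synt}E_{\infty}^{1, i}$ is a \emph{subgroup} of ${}^{\synt}E_2^{1, i} = H^1_{\st}(G_K, H^i_{\eet}(X_{\ovk}, \bq_p(r)))$. Call the resulting class $\widetilde{r}_{r,i}^{\,\synt}(\xi) \in H^1_{\st}(G_K, H^i_{\eet}(X_{\ovk}, \bq_p(r)))$. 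By Theorem \ref{stHS}, the map of $E_2$-terms ${}^{\synt}E_2^{1, i} \to {}^{\eet}E_2^{1, i}$ is precisely the canonical comparison $H^1_{\st}(G_K, -) \to H^1(G_K, -)$, and it is compatible with the syntomic period map on the abutments. Chasing the corresponding image of $c^{\eet}_{r, i+1}(\xi)$ through the étale Hochschild--Serre filtration yields $r^{\eet}_{r,i}(\xi)$, hence $\widetilde{r}_{r,i}^{\,\synt}(\xi)$ maps to $r^{\eet}_{r,i}(\xi)$ under the canonical map, giving the desired factorization.

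The only delicate point, and in my view the main obstacle, is bookkeeping around the spectral sequence map from Theorem \ref{stHS}: one must verify carefully that under the isomorphism ${}^{\synt}E_2^{1, i} \simeq H^1_{\st}(G_K, H^i_{\eet}(X_{\ovk}, \bq_p(r)))$ (built from $H^j_{\hk}(X_{\ovk, h}) \simeq D_{\pst}(H^j_{\eet}(X_{\ovk}, \bq_p(r))\{-r\})$ plus Remark \ref{pst=st}) the induced map to ${}^{\eet}E_2^{1, i}$ really is the standard forgetful map on Yoneda extensions of $(\phi, N, G_K)$-modules. Granted this -- which is precisely what Theorem \ref{stHS} asserts in terms of the period map on abutments -- the argument above goes through without further effort.
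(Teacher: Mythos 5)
Your proposal is correct and follows essentially the same route as the paper: it combines the compatibility of syntomic and \'etale Chern classes (Proposition \ref{syn-et}) with the map of descent spectral sequences from Theorem \ref{stHS}, chasing the syntomic Chern class through the filtration to land in $H^1_{\st}$. Your explicit edge-map and $F^1/F^2$ bookkeeping is just a more detailed rendering of the two commutative diagrams in the paper's proof, so there is nothing to add.
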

\begin{proof}
By  Proposition \ref{syn-et}, we have the following commutative diagram
$$
\xymatrix{K_{2r-i-1}(X)\ar[d]^{c^{\synt}_{r,i+1}}\ar[dr]^{c^{\eet}_{r,i+1}}\\
H^{i+1}_{\synt}(X_h,r)\ar[r]^-{\rho_{\synt}} & H^{i+1}_{\eet}(X,\bq_p(r))\ar[r] & H^{i+1}_{\eet}(X_{\ovk},\bq_p(r))
}
$$
Hence the Chern class map $c^{\synt}_{r,i+1}:K_{2r-i-1}(X)_0\to H^{i+1}_{\synt}(X_h,r)$ factors through $H^{i+1}_{\synt}(X_h,r)_0:=\ker(H^{i+1}_{\synt}(X_h,r)\stackrel{\rho_{\synt}}{\to} H^{i+1}_{\eet}(X_{\ovk},\bq_p(r)))$. Compatibility of the syntomic descent  and the Hochschild-Serre spectral sequences (cf. Theorem \ref{stHS}) yields the following commutative diagram 
$$
\xymatrix{
K_{2r-i-1}(X)_0\ar[d]^{c^{\synt}_{r,i+1}}\ar[dr]^{c^{\eet}_{r,i+1}}\\
H^{i+1}_{\synt}(X_h,r)_0\ar[d]^{\delta}\ar[r]^{\rho_{\synt}} & H^{i+1}_{\eet}(X,\bq_p(r))_0\ar[d]^{\delta}\\
H^1_{\st}(G_K,H^{i}_{\eet}(X_{\ovk},\bq_p(r)))\ar[r]^{\can} & H^1(G_K,H^{i}_{\eet}(X_{\ovk},\bq_p(r)))
}
$$
Our theorem follows. 
\end{proof}
\begin{remark}The question of the image of Soul\'e's regulators $r_{r,i}^{\eet}$ was raised by Bloch-Kato in \cite{BK} in connection with their Tamagawa Number Conjecture. Theorem \ref{Tony} is known to follow from the constructions of Scholl \cite{Sc}. The argument goes as follows. Recall  that for a class $y\in K_{2r-i-1}(X)_0$ he constructs  an explicit extension $E_y\in \Ext^1_{\sm\sm_K}({\mathbf Q}(-r),h^{i}(X))$ in the category of mixed motives over $K$. The association $y\mapsto E_y$ is compatible with the \'etale cycle class and realization maps. By the de Rham Comparison Theorem, the \'etale realization $r^{\eet}_{r,i}(y)$ of the extension class $E_y$ in  $$ \Ext^1_{G_K}({\mathbf Q}_p(-r),H^{i}(X_{\ovk},\bq_p))=H^1(G_K,H^i_{\eet}(X_{\ovk},\bq_p(r)))$$ is de Rham, hence potentially semistable by \cite{BER}, as wanted.
\end{remark}
\appendix
\section {Vanishing of $H^2(G_K,V)$ by Laurent Berger}

Let $V$ be a $\Qp$-linear representation of $G_K$. In this appendix we prove
the following theorem.

\begin{theorem}
\label{main}
If $V$ is semistable and all its Hodge-Tate weights are $\geq 2$, then
$H^2(G_K,V)=0$.
\end{theorem}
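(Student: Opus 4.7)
The plan is to reduce the vanishing of $H^2(G_K,V)$ to the triviality of a space of Galois invariants via local Tate duality, and then to conclude using the Hodge-Tate decomposition. By Tate's local duality for continuous cohomology of finite-dimensional $\bq_p$-representations of $G_K$, the cup product yields a perfect pairing
$$ H^2(G_K, V) \times H^0(G_K, V^*(1)) \longrightarrow H^2(G_K, \bq_p(1)) \cong \bq_p, $$
where $V^* := \Hom_{\bq_p}(V, \bq_p)$. Consequently, it suffices to prove $(V^*(1))^{G_K}=0$.

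Next I would verify that $V^*(1)$ is itself semistable. The category of semistable $\bq_p$-representations of $G_K$ is a Tannakian subcategory of $\Rep(G_K)$ stable under duals, tensor products, and twists by the crystalline (hence semistable) representation $\bq_p(1)$. Passing through the equivalence with admissible filtered $(\phi,N)$-modules, the Hodge-Tate weights of $V^*(1)$ are precisely the numbers $1-w$ as $w$ ranges over the Hodge-Tate weights of $V$. The hypothesis $w\ge 2$ then forces every Hodge-Tate weight of $V^*(1)$ to satisfy $1-w\le -1$; in particular, $0$ is not a Hodge-Tate weight of $V^*(1)$.

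Finally, I would invoke the standard consequence of the Hodge-Tate decomposition: for any Hodge-Tate representation $W$ of $G_K$, the natural inclusion $W^{G_K}\hookrightarrow (W\otimes_{\bq_p}{\mathbf C}_p)^{G_K}$ identifies the target, via the decomposition $W\otimes_{\bq_p}{\mathbf C}_p\cong\bigoplus_{w\in{\mathbf Z}} {\mathbf C}_p(w)\otimes W_{(w)}$, with the zero-weight component $W_{(0)}$. Applied to $W=V^*(1)$, which has vanishing zero-weight component by the previous step, this gives $(V^*(1))^{G_K}=0$, and hence $H^2(G_K,V)=0$ by local duality.

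The only non-routine step is the sign-convention bookkeeping that confirms the twist $V\mapsto V^*(1)$ shifts and reverses Hodge-Tate weights so as to avoid the value $0$; every other ingredient is a direct invocation of a standard result in $p$-adic Hodge theory.
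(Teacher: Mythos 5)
Your argument is the standard one over a local field with \emph{finite} residue field, and in that setting it is correct (it even proves slightly more: semistability is not needed, being Hodge--Tate with weights avoiding the critical value suffices). The genuine gap is one of generality: with the conventions of this paper, $K$ is the fraction field of a complete discrete valuation ring whose residue field $k$ is only assumed \emph{perfect}, not finite, and your very first step --- local Tate duality, in particular the identification $H^2(G_K,\Qp(1))\cong\Qp$ and the perfectness of the cup-product pairing --- is a theorem about local fields with finite residue field. It fails in the stated generality: $H^2(G_K,\Zp(1))=\invlim_n \mathrm{Br}(K)[p^n]\cong \Hom_{\cont}(G_k,\Zp)$, which is $0$ when $k$ is algebraically closed and need not be of rank one for other perfect $k$, so there is no duality between $H^2(G_K,V)$ and $H^0(G_K,V^*(1))$ to invoke. (The second half of your argument --- the Hodge--Tate decomposition and Tate's vanishing ${\mathbf C}_p(j)^{G_K}=0$ for $j\neq 0$ --- does remain valid for perfect residue fields; it is specifically the duality step that breaks.)

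This is exactly why the proof in the appendix takes a different route: it replaces the unavailable duality by a d\'evissage along $1\to I_K\to G_K\to G_k\to 1$, carried out by hand on cocycles because a Hochschild--Serre spectral sequence for continuous cohomology is not available. Concretely, Galois cohomology is computed via $(\phi,\Gamma)$-modules (Lemma \ref{cc} through Proposition \ref{hsss}, which embed $H^2(G_K,V)$ into an $H^1$ of $G_K/I_K$), the inner cohomology $H^1(I_K,V)$ is identified with $\widehat{K}^{\nr}\otimes_K\ddr(V)$ via the Bloch--Kato exponential (Lemma \ref{expbij}, where the weight hypothesis enters), and the outer $H^1$ is killed by Hilbert 90 for $G_k$ --- every step valid for an arbitrary perfect residue field, which is the case the paper needs. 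To salvage your approach you would have to either assume $k$ finite (weakening the theorem) or supply a substitute for the duality step over a general perfect residue field.
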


Let $\dfont(V)$ be Fontaine's $(\phi,\Gamma)$-module attached to $V$ \cite{Fon}.
It comes with a Frobenius map $\phi$ and an action of $\Gamma_K$.
Let $H_K = \Gal(\ovk/K(\mu_{p^\infty}))$ and let $I_K =
\Gal(\ovk/K^{\nr})$. The injectivity of the restriction map
$H^2(G_K,V) \to H^2(G_L,V)$ for $L/K$ finite allows us to replace
$K$ by a finite extension, so that we can assume that $H_K I_K = G_K$
and that $\Gamma_K \simeq \Zp$. Let $\gamma$ be a topological generator
of $\Gamma_K$. Recall (\S I.5 of \cite{CC99}) that we have a map
$\psi : \dfont(V) \to \dfont(V)$.

 Ideally, our  proof of this theorem would go as follows. We use the Hochschild-Serre spectral sequence
$$
H^i(G_K/I_K,H^j(I_K,V|_{I_K})) \Rightarrow H^{i+j}(G_K,V)
$$
and, interpreting Galois cohomology in terms of  $(\phi,\Gamma)$-modules, we  compute that $H^2(I_K,V|_{I_K})=0$ and $H^1(I_K,V|_{I_K})=\hat{K}^{\nr}\otimes_KD_{\dr}(V)$.
We conclude since, by Hilbert 90,  $H^1(G_K/I_K,H^1(I_K,V|_{I_K})) =0$. However, we do not, in general, have Hochschild-Serre spectral sequences for continuous cohomology. We mimic thus the above argument with direct computations on continuous cocycles (again using $(\phi, \Gamma)$-modules). Laurent Berger is grateful to Kevin Buzzard for discussions related to the above spectral sequence.
\begin{lemma}
\label{cc}
\begin{enumerate}
\item If $V$ is a representation of $G_K$, then there is an exact sequence
$$0 \to \dfont(V)^{\psi=1} / (\gamma-1) \to H^1(G_K,V) \to
(\dfont(V)/(\psi-1))^{\Gamma_K} \to 0;$$
\item We have $H^2(G_K,V) = \dfont(V)/(\psi-1,\gamma-1)$.
\end{enumerate}
\end{lemma}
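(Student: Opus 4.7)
The plan is to invoke Herr's theorem on the computation of continuous $G_K$-cohomology via a Herr complex of $(\phi,\Gamma)$-modules, in its $\psi$-variant. After the reductions made just before the lemma ($\Gamma_K\simeq\mathbf{Z}_p$ with topological generator $\gamma$, and $H_K I_K = G_K$), this asserts that $H^*(G_K,V)$ is the cohomology of the three-term complex
\[
\dfont(V)\xrightarrow{d_0}\dfont(V)\oplus\dfont(V)\xrightarrow{d_1}\dfont(V),
\]
concentrated in degrees $0,1,2$, with $d_0(x)=((\psi-1)x,(\gamma-1)x)$ and $d_1(y,z)=(\gamma-1)y-(\psi-1)z$; the passage from the original $\phi$-Herr complex to the $\psi$-version is standard (cf.\ \cite{CC99}). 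Both parts of the lemma will then drop out as linear-algebra manipulations on this complex.

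Part (2) is immediate once this is in place: by construction $H^2(G_K,V)=\operatorname{coker}(d_1)$, which is exactly $\dfont(V)/((\psi-1)\dfont(V)+(\gamma-1)\dfont(V))$, i.e.\ the stated formula.

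For part (1), I will produce the short exact sequence by analyzing $\ker(d_1)/\operatorname{im}(d_0)$. The surjection onto $(\dfont(V)/(\psi-1))^{\Gamma_K}$ is induced by $[(y,z)]\mapsto y\bmod(\psi-1)\dfont(V)$: the relation $d_1(y,z)=0$ forces $(\gamma-1)y\in(\psi-1)\dfont(V)$, so the image is $\Gamma_K$-fixed; elements of $\operatorname{im}(d_0)$ are killed because their first coordinate lies in $(\psi-1)\dfont(V)$; and surjectivity is supplied by lifting $\bar y$ and solving $(\gamma-1)y=(\psi-1)z$. To identify the kernel with $\dfont(V)^{\psi=1}/(\gamma-1)\dfont(V)^{\psi=1}$, I will use that $[(y,z)]$ lies in it precisely when $y=(\psi-1)y'$ for some $y'$, in which case $(\psi-1)\bigl(z-(\gamma-1)y'\bigr)=0$; the assignment $[(y,z)]\mapsto[z-(\gamma-1)y']$ is then well-defined in $\dfont(V)^{\psi=1}/(\gamma-1)\dfont(V)^{\psi=1}$ (the ambiguity in $y'$ lies in $\dfont(V)^{\psi=1}$) and vanishes on $\operatorname{im}(d_0)$, with inverse $u\mapsto[(0,u)]$.

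The only nontrivial input is Herr's theorem in its $\psi$-form; everything after that reduction uses only that $\psi$ and $\gamma$ commute on $\dfont(V)$. Accordingly, the main obstacle is really just to cite or verify the correct $\psi$-version of this background theorem, after which the statements become routine diagram chases on the displayed three-term complex.
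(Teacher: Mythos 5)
Your proposal is correct and reproduces the approach underlying the paper's proof: the paper simply cites \cite{CC99} (I.5.5, II.3.2), where exactly this $\psi$-version of the Herr complex is established and the two statements are extracted from it by the same linear-algebra manipulations you carry out explicitly. Your diagram chase on $\ker(d_1)/\operatorname{im}(d_0)$ and $\operatorname{coker}(d_1)$ is sound (in particular the well-definedness of $[(y,z)]\mapsto[z-(\gamma-1)y']$ and its inverse $u\mapsto[(0,u)]$ check out), so this is a faithful unpacking of the cited result rather than a genuinely different route.
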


\begin{proof}
See I.5.5 and II.3.2 of \cite{CC99}.
\end{proof}

\begin{lemma}
\label{psimsur}
We have $\dfont(V|_{I_K})/(\psi-1)=0$
\end{lemma}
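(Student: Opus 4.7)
The plan is to prove surjectivity of $\psi-1$ on $\dfont(V|_{I_K})$ by combining a structural description of the module with a contraction estimate for $\psi$ coming from the Hodge-Tate weight hypothesis. First I would identify $\dfont(V|_{I_K})$ concretely: since $I_K$ is the Galois group over $K^{\nr}$ and we have arranged that $H_K I_K = G_K$, Fontaine's equivalence gives $\dfont(V|_{I_K}) = \widehat{\mathbf{B}_K^{\nr}} \otimes_{\mathbf{B}_K} \dfont(V)$, a finite free module over the $p$-adic completion of the maximal unramified extension of $\mathbf{B}_K$, with $\psi$ and $\Gamma_K$ extending semilinearly.

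Next I would exploit the assumption that $V$ is semistable with all Hodge-Tate weights $\geq 2$. Exactly as in the proof of Proposition \ref{resolution33}, weak admissibility of $\dfont_{\st}(V)$ combined with the weight condition produces a $W(k)$-lattice $M_0 \subset \dfont_{\st}(V)$ satisfying $\phi^{-1}(M_0)\subset p^2 M_0$; in other words, $\phi$ is highly divisible in the slope sense. Via Colmez–Fontaine's functors linking filtered $(\phi,N)$-modules with $(\phi,\Gamma)$-modules, this slope condition transports to an $\mathbf{A}_K^{\nr,+}$-lattice $M \subset \dfont(V|_{I_K})$ on which $\psi$ is $p$-adically topologically nilpotent (it inherits the contracting behaviour of $\phi^{-1}$ on the $\phi$-image part of $\dfont(V|_{I_K})$, while on the complementary $\ker(\psi)$ part there is nothing to check). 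Consequently the geometric series $-\sum_{n\geq 0}\psi^n$ converges $p$-adically on $M$ to a two-sided inverse of $\psi-1$, giving $(\psi-1)(M)=M$.

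Finally I would bootstrap from the lattice to the whole module. Since $\dfont(V|_{I_K})=\bigcup_{k\geq 0} p^{-k}M$ and $\psi-1$ commutes with multiplication by $p$, surjectivity on $M$ forces surjectivity on each $p^{-k}M$ and hence on $\dfont(V|_{I_K})$.

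The main obstacle is the construction of the contracting lattice $M$: $\psi$ is not literally $\phi^{-1}$ but the composite of $\phi^{-1}$ with a projection onto $\phi(\dfont(V|_{I_K}))$, and transporting the slope estimate from $\dfont_{\st}(V)$ to $\dfont(V|_{I_K})$ requires a lattice adapted simultaneously to the $(\phi,\Gamma)$-structure and the monodromy $N$. It is essential here that we have restricted to $I_K$: the enlarged coefficient ring $\widehat{\mathbf{B}_K^{\nr}}$ provides the room needed for the convergence argument, reflecting the fact that the analogous statement on $\dfont(V)$ itself would fail (corresponding to the possible non-vanishing of $H^2(G_K, V)$ before the final Hochschild–Serre step).
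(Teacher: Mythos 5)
Your proposal hinges on producing a lattice $M\subset \dfont(V|_{I_K})$ on which $\psi$ is topologically nilpotent, so that $-\sum_{n\geq 0}\psi^n$ inverts $\psi-1$. This step is not only unjustified (there is no functor transporting the slope estimate $\phi^{-1}(M_0)\subset p^2M_0$ on $\dfont_{\st}(V)$ to an integral statement about $\psi$ on the $(\phi,\Gamma)$-module; the passage between the two sides goes through Berger's theory over the Robba ring and does not preserve lattices or relate $\psi$ to $\phi^{-1}$ in this naive way), it is actually false: your argument would make $\psi-1$ \emph{bijective}, hence would force $\dfont(V|_{I_K})^{\psi=1}=0$. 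But by Lemma \ref{cc}(1) applied to $V|_{I_K}$ together with Lemma \ref{expbij}, one has $\dfont(V|_{I_K})^{\psi=1}/(\gamma_I-1)\cong H^1(I_K,V)\cong \widehat{K}^{\nr}\otimes_K\ddr(V)$, which is nonzero whenever $V\neq 0$. So $\psi-1$ has a large kernel on $\dfont(V|_{I_K})$, and no contraction/geometric-series argument can work; only surjectivity holds, and it must be proved by a different mechanism.

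The paper's proof is one line and uses none of your hypotheses: restricting to $I_K$ amounts to replacing the residue field $k$ by an algebraically closed field, and over such a base $\psi-1$ is surjective on $\dfont(W)$ for \emph{every} $p$-adic representation $W$ (proof of Lemma VI.7 of \cite{L01}; this is a successive-approximation/Lang-type argument on the coefficient ring with separably closed residue field, in the spirit of \cite{CC99}). In particular the semistability and the condition on Hodge--Tate weights play no role in Lemma \ref{psimsur}; they are used elsewhere in the appendix (in Lemma \ref{expbij} and in the surjectivity of $1-p\phi$ in the proof of Proposition \ref{resolution33}), which is also why your instinct that the weight hypothesis should drive the vanishing here points the argument in the wrong direction.
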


\begin{proof}
Since $V|_{I_K}$ corresponds to the case when $k$ is algebraically closed,
see the proof of Lemma VI.7 of \cite{L01}.
\end{proof}

Let $\gamma_I$ denote a generator of $\Gamma_{\widehat{K}^{\nr}}$.

\begin{lemma}
\label{psigam}
The natural map $\dfont(V|_{I_K})^{\psi=1} / (\gamma_I-1) \to
(\dfont(V|_{I_K}) / (\gamma_I-1))^{\psi=1}$ is an isomorphism
if $V^{I_K}=0$.
\end{lemma}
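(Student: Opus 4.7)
The plan is to apply the snake lemma to the commutative diagram with exact rows
$$
\begin{CD}
0 @>>> D^{\psi=1} @>>> D @>\psi-1>> D @>>> 0 \\
@. @VV\gamma_I-1V @VV\gamma_I-1V @VV\gamma_I-1V \\
0 @>>> D^{\psi=1} @>>> D @>\psi-1>> D @>>> 0,
\end{CD}
$$
where I write $D := \dfont(V|_{I_K})$ and where exactness of the rows is Lemma \ref{psimsur}. The resulting six-term exact sequence
$$
0 \to D^{\psi=1,\gamma_I=1} \to D^{\gamma_I=1} \xrightarrow{\psi-1} D^{\gamma_I=1} \xrightarrow{\delta} \frac{D^{\psi=1}}{(\gamma_I-1)D^{\psi=1}} \xrightarrow{f} \frac{D}{(\gamma_I-1)D} \xrightarrow{\psi-1} \frac{D}{(\gamma_I-1)D} \to 0
$$
identifies the natural map $f$ of the statement as the canonical surjection $D^{\psi=1}/(\gamma_I-1) \twoheadrightarrow (D/(\gamma_I-1))^{\psi=1}$, whose kernel is $\mathrm{image}(\delta) = D^{\gamma_I=1}/(\psi-1)D^{\gamma_I=1}$. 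Thus the problem reduces to showing that $\psi - 1$ is bijective on the subspace $D^{\gamma_I=1}$.

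The injectivity will be immediate: the kernel of $\psi-1$ on $D^{\gamma_I=1}$ equals $D^{\psi=1,\gamma_I=1}$, and the Herr-complex description of Galois cohomology (applied with the local field $\widehat{K}^{\nr}$ in place of $K$, so that $G_{\widehat{K}^{\nr}} = I_K$) identifies this with $H^0(I_K, V) = V^{I_K}$, which vanishes by hypothesis.

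The main obstacle will be establishing the surjectivity of $\psi - 1$ on $D^{\gamma_I=1}$, and my plan is to reduce it to a classical fact about isocrystals over $\widehat{K_0^{\nr}}$. First, a Hilbert-90--type argument should show that any $\widehat{K_0^{\nr}}$-linearly independent family in $D^{\gamma_I=1}$ remains $\mathbf{B}_{\widehat{K}^{\nr}}$-linearly independent in $D$, whence $D^{\gamma_I=1}$ is a vector space of dimension at most $\dim_{\Qp} V$ over the field $\widehat{K_0^{\nr}} = \mathbf{B}_{\widehat{K}^{\nr}}^{\Gamma_{\widehat{K}^{\nr}}=1}$. Next, since $\phi$ commutes with $\gamma_I$ it preserves $D^{\gamma_I=1}$ as an injective $\phi$-semilinear operator; being injective on a finite-dimensional space it is automatically bijective, so $\psi$ coincides with $\phi^{-1}$ on $D^{\gamma_I=1}$, making $(D^{\gamma_I=1}, \psi)$ an isocrystal over $\widehat{K_0^{\nr}}$. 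The required surjectivity of $\psi - 1$ then follows from the standard fact that $\Phi - 1$ is surjective on any isocrystal over $\widehat{K_0^{\nr}}$, itself a consequence of the surjectivity of $\phi - 1$ on $W(\overline{k})[1/p]$ (the residue field being algebraically closed) combined with the Dieudonn\'e--Manin slope decomposition.
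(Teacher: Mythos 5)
Your proposal is correct, and its first half (the snake lemma applied to $\gamma_I-1$ acting on the short exact sequence $0 \to \dfont(V|_{I_K})^{\psi=1} \to \dfont(V|_{I_K}) \xrightarrow{\psi-1} \dfont(V|_{I_K}) \to 0$, which is exactly how the paper produces its six-term sequence) coincides with the paper's argument; note only that the lemma needs just the \emph{surjectivity} of $\psi-1$ on $D^{\gamma_I=1}$ (to kill the image of the connecting map), so your injectivity discussion via the $\psi$-version of the Herr complex, while correct, is superfluous. Where you genuinely diverge is in how the connecting map is killed: the paper observes that the hypothesis $V^{I_K}=0$ forces $\dfont(V|_{I_K})^{\gamma_I=1}=0$ outright (via the identification of the $\Gamma$-invariants of the $(\phi,\Gamma)$-module with invariants of $\widehat{K}^{\nr}\otimes V$), so there is nothing left to prove, whereas you keep $D^{\gamma_I=1}$ and show $\psi-1$ is surjective on it by a descent argument (finiteness over $\widehat{K_0^{\nr}} = \mathbf{B}_{\widehat{K}^{\nr}}^{\Gamma}$, bijectivity of $\phi$ there, so $\psi=\phi^{-1}$) plus Dieudonn\'e--Manin and the surjectivity of $\Phi-1$ on isocrystals over $W(\overline{k})[1/p]$. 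Your route is sound but heavier, resting on several standard-but-nontrivial inputs that the paper avoids (the computation of $\mathbf{B}_{\widehat{K}^{\nr}}^{\Gamma}$, \'etaleness of $\phi$, the slope decomposition); what it buys is a marginally stronger statement, since the vanishing of the kernel of the natural map is obtained without invoking $V^{I_K}=0$ at that step, the hypothesis entering only through your (unneeded) injectivity claim, whereas in the paper it is exactly what makes the one-line argument work.
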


\begin{proof}
This map is part of the six term exact sequence that comes from the map
$\gamma_I-1$ applied to $0 \to \dfont(V|_{I_K})^{\psi=1} \to \dfont(V|_{I_K})
\xrightarrow{\psi-1} \dfont(V|_{I_K}) \to 0$. Its kernel is included in
$\dfont(V|_{I_K})^{\gamma_I=1}$ which is $0$, since $V^{I_K}=0$ (note that
the inclusion $(\widehat{K}^{\nr} \otimes V)^{G_K} \subseteq
(\widehat{\mathcal E}^{\nr} \otimes V)^{G_K} = \dfont(V)^{G_K}$ is an
isomorphism).
\end{proof}

Suppose that $x \in \dfont(V)/(\psi-1,\gamma-1)$. If $\tilde{x} \in \dfont(V)$
lifts $x$, then Lemma \ref{psimsur} gives us an element $y\in\dfont(V |_{I_K})$
such that $(\psi-1)y = \tilde{x}$. 
Define a cocycle $\delta(x) \in Z^1(G_K/I_K, \dfont(V |_{I_K})^{\psi=1} /
(\gamma_I-1))$ by $\delta(x) : \overline{g} \mapsto (g-1)(y)$ if $g \in G_K$
lifts $\overline{g} \in G_K/I_K$.

\begin{proposition}
\label{hsss}
If $V^{I_K}=0$, then the map
\[ \delta : \dfont(V)/(\psi-1,\gamma-1) \to H^1(G_K/I_K, (\dfont(V|_{I_K}) /
(\gamma_I-1))^{\psi=1}) \] 
is well-defined and injective.
\end{proposition}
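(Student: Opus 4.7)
The plan is to establish well-definedness and injectivity by direct cocycle manipulations, with the key input at the end being a Hilbert 90 argument identifying $G_K/I_K$-invariants of $\dfont(V|_{I_K})/(\gamma_I-1)$ with $\dfont(V)/(\gamma-1)$. Throughout, the assumptions $H_K I_K = G_K$ and $\Gamma_K \simeq \Zp$ mean that the projection $I_K \to \Gamma_K$ identifies $\Gamma_{\widehat K^{\nr}}$ with $\Gamma_K$; we take $\gamma = \gamma_I$ under this identification.

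For well-definedness, given $\tilde x \in \dfont(V)$ and $y \in \dfont(V|_{I_K})$ with $(\psi-1)y = \tilde x$ (which exists by Lemma \ref{psimsur}), the first task is to check that the cocycle $\bar g \mapsto (g-1)y$ lands in $\dfont(V|_{I_K})^{\psi=1}/(\gamma_I-1)$. The identity $(\psi-1)(g-1)y = (g-1)\tilde x$ together with the fact that $G_K$ acts on $\dfont(V)$ through $\Gamma_K = \Gamma_{\widehat K^{\nr}}$ shows $(g-1)\tilde x \in (\gamma_I-1)\dfont(V)$, so $(g-1)y$ is $\psi$-invariant modulo $(\gamma_I-1)$; here Lemma \ref{psigam} lets us pass between $\dfont(V|_{I_K})^{\psi=1}/(\gamma_I-1)$ and $(\dfont(V|_{I_K})/(\gamma_I-1))^{\psi=1}$. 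Independence of the lift $g$ of $\bar g$ follows because $I_K$ acts through $\Gamma_{\widehat K^{\nr}}$, hence modifies $(g-1)y$ by an element of $(\gamma_I-1)\dfont(V|_{I_K})$, and the cocycle identity is immediate from $(gh-1)y = (g-1)y + g(h-1)y$. A different choice of $y$ changes $\delta(x)$ by a coboundary, while a different lift $\tilde x' = \tilde x + (\psi-1)a + (\gamma-1)b$ can be absorbed by modifying $y$ to $y + a + c$ with $c = (\gamma_I-1)d$, where $d$ solves $(\psi-1)d = b$ via Lemma \ref{psimsur}; the resulting cocycle differs by terms in $(\gamma_I-1)\dfont(V|_{I_K})$, using that $\gamma_I$ commutes with $\psi$ and with the $G_K$-action.

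For injectivity, suppose $\delta(x) = 0$ as a class, so $\delta(x)(\bar g) = (g-1)z$ for some $z$ in the target. Lemma \ref{psigam} permits lifting $z$ to $\tilde z \in \dfont(V|_{I_K})^{\psi=1}$; replacing $y$ by $y' := y - \tilde z$ preserves $(\psi-1)y' = \tilde x$ and upgrades the property to $(g-1)y' \in (\gamma_I-1)\dfont(V|_{I_K})$ for every $g$. Thus the image of $y'$ in $\dfont(V|_{I_K})/(\gamma_I-1)$ is $G_K/I_K$-invariant. At this point we invoke Hilbert 90 for the Galois étale coefficient extension underlying $\dfont(V|_{I_K}) = \sse_K^{\nr}\otimes_{\sse_K}\dfont(V)$, i.e.\ $H^1(G_K/I_K, \dfont(V|_{I_K})) = 0$, together with the injectivity of $\gamma_I - 1$ on $\dfont(V|_{I_K})$ (Lemma \ref{psigam} argument, using $V^{I_K}=0$), to obtain an exact sequence giving $(\dfont(V|_{I_K})/(\gamma_I-1))^{G_K/I_K} = \dfont(V)/(\gamma-1)$. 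Hence $y' = \tilde u + (\gamma_I-1)v$ with $\tilde u \in \dfont(V)$, and applying $\psi-1$ yields $\tilde x = (\psi-1)\tilde u + (\gamma_I-1)w$ with $w := (\psi-1)v$. Finally, since $(\gamma_I-1)w \in \dfont(V)$, abelianness of $\Gamma_K$ shows that $F$ and $\gamma_I$ commute on $\dfont(V|_{I_K})$ for any $F \in G_K/I_K$, so $(\gamma_I-1)(Fw - w) = 0$, forcing $Fw = w$ by $\dfont(V|_{I_K})^{\gamma_I=1}=0$; therefore $w \in \dfont(V)$ and $\tilde x \in (\psi-1)\dfont(V) + (\gamma-1)\dfont(V)$, i.e.\ $x = 0$. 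The main obstacle in this plan is the clean Hilbert 90 identification and the careful verification of how $\gamma_I$, $\psi$, and the $G_K/I_K$-action intertwine on $\dfont(V|_{I_K})$; everything else is formal cocycle bookkeeping.
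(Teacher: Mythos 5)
Your well-definedness argument is essentially the paper's: the same observation that $\gamma_I-1$ divides the image of $i-1$ for $i\in I_K$ (equivalently, that $G_K$ acts on $\dfont(V)$ through $\Gamma_K$ once $G_K=H_KI_K$), the same coboundary bookkeeping for changes of $y$ and of the lift $\tilde x$, and the same use of Lemma \ref{psigam} to interpret the values of the cocycle. The divergence, and the genuine gap, is in your proof of injectivity.

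There you reduce to descending $y'$ modulo $\gamma_I-1$ to $\dfont(V)$, and for this you invoke ``Hilbert 90'', i.e.\ the vanishing $H^1(G_K/I_K,\dfont(V|_{I_K}))=0$ together with the unramified base-change description of $\dfont(V|_{I_K})$ and the long exact sequence attached to multiplication by $\gamma_I-1$. None of this is proved, and it is not formal bookkeeping: it is a statement about \emph{continuous} cohomology of $G_K/I_K\simeq\Gal(k^{\mathrm{sep}}/k)$ with coefficients in a large topological module. Even to define the connecting cocycle $g\mapsto(\gamma_I-1)^{-1}\bigl((g-1)y'\bigr)$ and view it in $H^1$ you need continuity of $(\gamma_I-1)^{-1}$ on its image (or a continuous section of $\dfont(V|_{I_K})\to\dfont(V|_{I_K})/(\gamma_I-1)$), and to prove the vanishing you need the identification of $\dfont(V|_{I_K})$ with the base change of $\dfont(V)$, a Galois-stable lattice, and a d\'evissage down to additive Hilbert 90 over finite residue extensions. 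This is precisely the kind of continuous-cohomology formalism the appendix says it cannot use off the shelf (that is why it argues on explicit cocycles), and the only place it carries out such a lattice/Hilbert 90 argument is at the very end of the proof of Theorem \ref{main}, for the much smaller module $\widehat{K}^{\nr}$ — not inside Proposition \ref{hsss}. A minor further point: your justification that $\gamma_I$ commutes with the $G_K/I_K$-action ``by abelianness of $\Gamma_K$'' is not the right reason; what is needed is $G_K=H_KI_K$, which gives $G_K/(H_K\cap I_K)\simeq\Gamma_K\times G_K/I_K$.

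By contrast, the paper's injectivity argument needs no descent input at all: choosing $z\in\dfont(V|_{I_K})^{\psi=1}$ cobounding $\delta(x)$ (via Lemma \ref{psigam}), it notes that $(g-1)(y-z)$ lies in $(\gamma_I-1)\dfont(V|_{I_K})^{\psi=1}$, so applying $\psi-1$ gives $(g-1)\tilde x=0$ for every $g\in G_K$, whence $\tilde x\in\dfont(V)^{G_K}\subset V^{I_K}=0$. This is both shorter and stronger (it shows $\tilde x=0$ itself, not merely $\tilde x\in(\psi-1,\gamma-1)\dfont(V)$), and it uses the hypothesis $V^{I_K}=0$ exactly where your route instead pays with the unproved $H^1$-vanishing. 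As it stands, your injectivity step is incomplete until that vanishing (and the attendant continuity issues) is established.
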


\begin{proof}
We first check that $\delta(x)(g) \in (\dfont(V|_{I_K}) /
(\gamma_I-1))^{\psi=1}$. We have $(\psi-1)(g-1)(y) = (g-1)(x)$. If we write
$g = ih \in I_KH_K$, then $(g-1)x  = (ih-1)x=(i-1)x \in (\gamma_I-1)
\dfont(V|_{I_K})$ since $\gamma_I-1$ divides the image of $i-1$ in
$\Zp\dcroc{\Gamma_{\widehat{K}^{\nr}}}$. This implies that $\delta(x)(g)
\in (\dfont(V|_{I_K}) / (\gamma_I-1))^{\psi=1}$.

We now check that $\delta(x)$ does not depend on the choices. If we choose
another lift $g' \in G_K$ of $\overline{g} \in G_K/I_K$, then $g'=ig$ for some
$i \in I_K$ and $(g'-1)y-(g-1)y = (i-1)gy \in (\gamma_I-1) \dfont(V|_{I_K})$
since $\gamma_I-1$ divides the image of $i-1$ in
$\Zp\dcroc{\Gamma_{\widehat{K}^{\nr}}}$. If we choose another $y'$ such that
$(\psi-1)y'=\tilde{x}$, then $y-y' \in \dfont(V|_{I_K})^{\psi=1}$ so that
$\delta$ and $\delta'$ are cohomologous. Finally, if $\tilde{x}'$ is another
lift of $x$, then $\tilde{x}' - \tilde{x} = (\gamma-1)a + (\psi-1)b$ with
$a,b \in \dfont(V)$. We can then take $y' = y + b + (\gamma_G-1)c$ where
$(\psi-1)c=a$. We then have $(g-1)y'=(g-1)y + (g-1)b + (\gamma_G-1)(g-1)c$.
Since $G_K=I_K H_K$, we can write $g=ih$ and $(g-1)b=(i-1)b$. Using $G_K=I_K
H_K$ once again, we see that $I_K \to G_K/H_K$ is surjective, so that we can
identify $\gamma_I$ and $\gamma_G$. The resulting cocycle is then cohomologous
to $\delta(x)$. This proves that $\delta$ is well-defined.

We now prove that $\delta$ is injective. If $\delta(x)=0$, then using Lemma
\ref{psigam} there exists $z \in \dfont(V|_{I_K})^{\psi=1}$ such that
$\delta(x)(\overline{g})$ is the image of $(g-1)(z)$ in
$\dfont(V|_{I_K})^{\psi=1} / (\gamma_I-1)$. This implies that $(g-1)(y-z)
\in (\gamma_I-1) \dfont(V|_{I_K})^{\psi=1}$. Applying $\psi-1$ gives
$(g-1)\tilde{x} = 0$ so that $\tilde{x} \in \dfont(V)^{G_K} \subset V^{I_K}=0$.
The map $\delta$ is therefore injective.
\end{proof}

\begin{lemma}
\label{expbij}
If $V$ is semistable and the weights of $V$ are all $\geq 2$, then $\exp_V :
\ddr(V|_{I_K}) \to H^1(I_K,V)$ is an isomorphism.
\end{lemma}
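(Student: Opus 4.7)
The plan is to apply the Bloch--Kato fundamental exact sequence
$$
0 \longrightarrow \Qp \longrightarrow B_e \longrightarrow B_{\dr}/B_{\dr}^+ \longrightarrow 0,
$$
where $B_e = B_{\crr}^{\phi=1}$, tensor it with $V$, and take $I_K$-cohomology. In the resulting long exact sequence, $\exp_V$ appears as the connecting map from $(B_{\dr}/B_{\dr}^+ \otimes V)^{I_K}$ to $H^1(I_K, V)$. The assumption that all Hodge--Tate weights of $V$ are $\geq 2$ forces $F^0 D_{\dr}(V|_{I_K}) = 0$, so the source identifies canonically with $D_{\dr}(V|_{I_K})$.

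For injectivity of $\exp_V$, the two preceding terms in the long exact sequence must vanish. The group $V^{I_K}$ is zero, because any nontrivial unramified subrepresentation would contribute a Hodge--Tate weight equal to $0$, contradicting the hypothesis. The group $(B_e \otimes V)^{I_K} = D_{\crr}(V|_{I_K})^{\phi=1}$ vanishes as follows: semistability of $V$ over $K$ gives the comparison $V \otimes B_{\st} \simeq D_{\st}(V) \otimes_{K_0} B_{\st}$, whence $D_{\st}(V|_{I_K}) \simeq \hat{K}_0^{\nr} \otimes_{K_0} D_{\st}(V)$; weak admissibility of $D_{\st}(V)$ then bounds its Newton slopes below by the minimal Hodge slope, which is $\geq 2$, so $\phi$ has no eigenvalue equal to $1$, and in particular no nontrivial fixed vectors after base change to $\hat{K}_0^{\nr}$.

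Finally, I would establish surjectivity by showing that the next map $H^1(I_K, V) \to H^1(I_K, B_e \otimes V)$ in the long exact sequence is zero. Using $V \otimes B_e = (V \otimes B_{\st})^{\phi=1,\,N=0} \simeq (D_{\st}(V) \otimes_{K_0} B_{\st})^{\phi=1,\,N=0}$ coming from semistability, the computation reduces to cohomology of explicit $B_{\st}$-twists over $\hat{K}^{\nr}$. The key input is that $1 - \phi$ is invertible on isocrystals all of whose Newton slopes are strictly positive, its inverse being $\sum_{n \geq 0} \phi^n$, which converges because $\phi^n$ contracts on positive-slope pieces.

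The main obstacle will be the surjectivity step: translating the Newton-slope positivity into vanishing of the relevant $H^1$ requires some care, because the coefficient ring $B_e \otimes V$ is not itself a finite object and one must argue through an intermediate resolution by $B_{\crr}$- or $B_{\st}$-modules. If the direct argument proves awkward, a clean fallback is to use the Bloch--Kato six-term exact sequence to identify $H^1_f(I_K, V)$ with the cokernel of $(1 - \phi, \pi) : D_{\crr}(V|_{I_K}) \to D_{\crr}(V|_{I_K}) \oplus D_{\dr}(V|_{I_K})$ (which equals $D_{\dr}(V|_{I_K})$ by invertibility of $1 - \phi$), and then separately verify that $H^1_f(I_K, V) = H^1(I_K, V)$ under our Hodge--Tate hypothesis.
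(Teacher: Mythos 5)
The paper's proof is a single citation: apply Theorem~6.8 of Berger's \emph{Invent.\ Math.}~148 (2002) paper to $V|_{I_K}$. That theorem is a substantial result about the Bloch--Kato exponential, proved via $(\phi,\Gamma)$-modules and the theory of $p$-adic differential equations. Your proposal is not a shortcut around that theorem; it is an attempt to reprove it, and the attempt has a genuine gap at exactly the point you flag.

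Your injectivity argument is essentially sound. The observation that $V^{I_K}=0$ is correct: $V^{I_K}$ is a $G_K$-stable unramified sub, hence has Hodge--Tate weight $0$, contradicting the hypothesis unless it is zero. The slope argument killing $D_{\crr}(V|_{I_K})^{\phi=1}$ is also correct, since weak admissibility forces the Newton polygon to lie above the Hodge polygon with the same endpoints, so the smallest Newton slope is $\geq 2 > 0$, and an isocrystal with all slopes positive has no nonzero $\phi$-fixed vectors.

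The surjectivity step is where the argument breaks down, and the issue is more serious than a delicate translation. You need $H^1(I_K, V) \to H^1(I_K, B_e\otimes V)$ to vanish, and your strategy is to reduce this to a slope computation via the comparison $V\otimes B_{\st}\simeq D_{\st}(V)\otimes_{K_0}B_{\st}$. But $(D_{\st}(V)\otimes B_{\st})^{\phi=1,\,N=0}$ is not a tensor product of a finite $\phi$-module with $B_e$: the $\phi=1$ and $N=0$ conditions are diagonal and do not decouple. Computing $H^1(I_K,-)$ of this object, or of $B_{\crr}\otimes V$, or of $B_e\otimes V$, is precisely the hard analytic content that Berger supplies. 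Over $\widehat{K}^{\nr}$ there is no Tate local duality (the cohomological dimension drops to $1$), so you also lose the tool you would normally use to compare $H^1_f$ with $H^1$. Your fallback --- identifying $H^1_f(I_K,V)$ with $\operatorname{coker}(1-\phi,\pi)$ and then claiming $H^1_f=H^1$ --- restates the difficulty rather than resolving it: proving $H^1_f(I_K,V)=H^1(I_K,V)$ under the weight hypothesis is equivalent to the surjectivity you want, and cannot be deduced from the linear-algebraic invertibility of $1-\phi$ on $D_{\crr}(V|_{I_K})$ alone. You would need a genuine computation of $H^1(I_K,B_{\crr})$ (or of the relevant $(\phi,\Gamma)$-module cohomology), which is exactly what the citation to Berger provides.
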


\begin{proof}
Apply Thm. 6.8 of \cite{BER} to $V|_{I_K}$.
\end{proof}

\begin{proof}[Proof of Theorem \ref{main}]
We can replace $K$ by $K_n$ for $n \gg 0$ and use the fact that if
$H^2(G_{K_n},V) = 0$, then $H^2(G_K,V) = 0$ since the restriction map is
injective. In particular, we
can assume that $H_K I_K = G_K$ and that $\Gamma_K$ is isomorphic to $\Zp$. By
item (2) of Lemma \ref{cc}, we have $H^2(G_K,V) = \dfont(V)/(\psi-1,\gamma-1)$,
and so by Proposition \ref{hsss} above, it is enough to prove that 
\[ H^1(G_K/I_K, (\dfont(V|_{I_K}) / (\gamma_I-1))^{\psi=1}) = 0. \]
Lemma \ref{psigam} tells us that $(\dfont(V|_{I_K}) / (\gamma_I-1))^{\psi=1} =
\dfont(V|_{I_K})^{\psi=1} / (\gamma_I-1)$. Since $\dfont(V|_{I_K})/(\psi-1)=0$
by Lemma \ref{psimsur}, item (1) of Lemma \ref{cc} tells us that
$\dfont(V|_{I_K})^{\psi=1} / (\gamma-1) = H^1(I_K,V)$.

The map $\exp_V : \ddr(V|_{I_K}) \to H^1(I_K,V)$ is an isomorphism by Lemma
\ref{expbij}, and this isomorphism commutes with the action of $G_K$ since it
is a natural map. We therefore have $H^1(I_K,V) = \widehat{K}^{\nr} \otimes_K
\ddr(V)$ as $G_K$-modules. It remains to observe that the cocycle
$\delta(x) \in Z^1(G_K/I_K, \widehat{K}^{\nr} \otimes_K \ddr(V))$
is continuous and that $H^1(G_K/I_K, \widehat{K}^{\nr})=0$ by taking
a lattice, reducing modulo a uniformizer of $K$, and applying Hilbert 90.
\end{proof}

\providecommand{\bysame}{\leavevmode ---\ }
\providecommand{\og}{``}
\providecommand{\fg}{''}
\providecommand{\smfandname}{\&}
\providecommand{\smfedsname}{\'eds.}
\providecommand{\smfedname}{\'ed.}
\providecommand{\smfmastersthesisname}{M\'emoire}
\providecommand{\smfphdthesisname}{Th\`ese}
\section{The Syntomic ring spectrum by Fr\'ed\'eric D\'eglise}

In this appendix, we explain why syntomic cohomology
 as defined in this paper is representable by a motivic
 ring spectrum in the sense of Morel and Voevodsky's homotopy
 theory. More precisely, we will exhibit a monoid
 object $\sss$ of the triangulated category of motives
 with $\Qp$-coefficients (see below),
 $DM$, such that for any variety $X$ and any pair of integers
 $(i,r)$,
$$
H^i_{\synt}(X_h,r)=\Hom_{DM}(M(X),\sss(r)[i]).
$$
In fact, it is possible to apply directly \cite[Th. 1.4.10]{DM1}
 to the graded commutative dg-algebra 
 $\R\Gamma_{\synt}(X,*)$ of Theorem \ref{main1} in view
 of the existence of Chern classes established in Section 5.1.
However, the use of $h$-topology in this paper
 makes the construction of $\E_{synt}$ much more straightforward
 and that is what we explain in this appendix.
 Reformulating slightly the  original definition of Voevodsky
 (see \cite{V1}),
 we introduce:
\begin{definition}
Let $\PSh(K,\Qp)$ be the category of presheaves
 of $\Qp$-modules over the category of varieties.

Let $C$ be a complex in $\PSh(K,\Qp)$. We say:
\begin{enumerate}
\item $C$ is $h$-local if for any h-hypercovering
  $\pi:Y_\bullet \rightarrow X$,
 the induced map:
$$
C(X) \rightarrow{\pi_*} \mathrm{Tot}^\oplus(C(Y_\bullet)) 
$$
is a quasi-isomorphism;
\item $C$ is $\Af$-local if for any variety $X$, the map induced
 by the projection:
$$
H^i(X_h,C) \rightarrow H^i(\Af_{X,h},C)
$$
is an isomorphism.
\end{enumerate}
We define the triangulated category $\DMe_h(K,\Qp)$
 of effective $h$-motives as the full subcategory
 of the derived category $D(\PSh(K,\Qp))$ made
 by the complexes which are $h$-local and $\Af$-local.
\end{definition}
Equivalently, we can define this category
 as the $\Af$-localization of the derived category
 of $h$-sheaves on $K$-varieties
  (see \cite{CD3}, Sec. 5.2 and more precisely Prop. 5.2.10, Ex. 5.2.17(2)).
 Recall also from \emph{loc. cit.},
 that there are derived tensor products and internal $\Hom$
 on $\DMe_h(K,\Qp)$.

For any integer $r \geq 0$,
 the \emph{syntomic sheaf} $\sss(r)$
 is both $h$-local (by definition) and $\Af$-local (Prop. \ref{homotopy}).
 Thus it defines an object of $\DMe_h(K,\Qp)$
 and for any variety $X$, one has an isomorphism:
$$
\Hom_{\DMe_h(K,\Qp)}(\Qp(X),\sss(r)[i])=
\Hom_{D(\PSh(K,\Qp))}(\Qp(X),\sss(r)[i])
=H^{i}_{\synt}(X_h,r)
$$
where $\Qp(X)$ is the presheaf of $\Qp$-vector spaces represented by $X$.
Thus, the representability assertion for syntomic cohomology
 is obvious in the effective setting.

Recall that one defines the Tate motive in $\DMe_h(K,\Qp)$
 as the object $\Qp(1):=\Qp(\mathbb P^1_K)/\Qp(\{\infty\})[-2]$.
 Given any complex object $C$ of $\DMe_h(K,\Qp)$, we put:
 $C(n):=C \otimes \Qp(1)^{\otimes,n}$.
 One should be careful that this notation is in conflict with
 that of $\sss(r)$ considered as an effective $h$-motive,
 as the natural twist on syntomic
 cohomology is unrelated to the twist of $h$-motives.
 To solve this matter, we are led to consider
 the following notion of Tate spectrum, borrowed from algebraic topology
 according to Morel and Voevodsky.
\begin{definition}
A \emph{Tate $h$-spectrum} (over $K$ with coefficients in $\Qp$),
 is a sequence $\E=(E_i,\sigma_i)_{i \in \mathbb N}$
 such that:
\begin{itemize}
\item for each $i \in \mathbb N$, $E_i$ is a complex of
 $\PSh(K,\Qp)$ equipped with an action of 
the symmetric group $\Sigma_i$ of the set with $i$-element,
\item for each $i \in \mathbb N$, $\sigma_i:E_i(1) \rightarrow E_{i+1}$
 is a morphism of complexes --
 called the \emph{suspension map} in degree $i$,
\item For any integers $i \geq 0$, $r>0$,
 the map induced by the morphisms $\sigma_i,\cdots,\sigma_{i+r}$:
$$
E_i(r) \rightarrow E_{i+r}
$$
is compatible with the action of $\Sigma_i \times \Sigma_r$,
 given on the left by the structural $\Sigma_i$-action
  on $E_i$ and the action of $\Sigma_r$ via the permutation
  isomorphism of the tensor structure on $C(\PSh(K,\Qp))$,
 and on the right via the embedding
  $\Sigma_i \times \Sigma_r \rightarrow \Sigma_{i+r}$.
\end{itemize}
A morphism of Tate $h$-spectra $f:\E \rightarrow \mathbb F$ is a sequence
 of  $\Sigma_i$-equivariant maps
  $(f_i:E_i \rightarrow F_i)_{i \in \mathbb N}$ 
 compatible with the suspension maps.
 The corresponding category will be denoted by $\Sp(K,\Qp)$.
\end{definition}
There is an adjunction of categories:
\begin{equation} \label{eq:suspension}
\Sigma^\infty:C\big(\PSh(K,\Qp)\big) \leftrightarrows \Sp(K,\Qp):\Omega^\infty
\end{equation}
such that for any complex $K$ of $h$-sheaves, $\Sigma^\infty C$
 is the Tate spectrum equal in degree $n$ to $C(n)$, equipped with the obvious
 action of $\Sigma_n$ induced by the symmetric structure on tensor product
 and with the obvious suspension maps. 
\begin{definition}
A morphism of Tate spectra   $(f_i:E_i \rightarrow F_i)_{i \in \mathbb N}$ 
 is a level quasi-isomorphism if for any $i$,
 $f_i$ is a quasi-isomorphism.

A Tate spectrum $\E$ is called a $\Omega$-spectrum if for any $i$,
 $E_i$ is $h$-local and $\Af$-local and the map of complexes
$$
E_i \rightarrow \underline{\Hom}(\Qp(1),E_{i+1})
$$
is a  quasi-isomorphism.

We define the triangulated category $\DM_h(K,\Qp)$
 of $h$-motives over $K$ with coefficients in $\Qp$ as the  category of
 Tate $\Omega$-spectra localized by the level quasi-isomorphisms.
\end{definition}
The category of $h$-motives notably enjoys the following properties:
\begin{enumerate}
\item[(DM1)] The adjunction of categories \eqref{eq:suspension}
 induces an adjunction of triangulated categories:
$$
\Sigma^\infty:\DMe_h(K,\Qp) \leftrightarrows \DM_h(K,\Qp):\Omega^\infty
$$
such that for a Tate $\Omega$-spectrum $\E$, and any integer
 $r \geq 0$, $\Omega^\infty(\E(r))=E_r$
 (see \cite[Sec. 5.3.d, and Ex. 5.3.31(2)]{CD3}).

Given any variety $X$, we define the (stable)
 $h$-motive of $X$ as $M(X):=\Sigma^\infty \Qp(X)$.
\item[(DM2)] There exists a symmetric closed monoidal structure on $\DM(K,\Qp)$
 such that $\Sigma^\infty$ is monoidal and such that
 $\Sigma^\infty \Qp(1)$ admits a tensor inverse
  (see \cite[Sec. 5.3, and Ex. 5.3.31(2)]{CD3}).
 By abuse of notations, we put: $\Qp=\Sigma^\infty \Qp$.
\item[(DM3)] The triangulated monoidal category $\DM_h(K,\Qp)$ is equivalent
 to all known versions of triangulated categories of mixed motives
 over $\Spec(K)$ with coefficients in $\Qp$
  (see \cite[Sec. 16, and Th. 16.1.2]{CD3}).
In particular, it contains as a full subcategory
 the category $\DM_{gm}(K) \otimes \Qp$ obtained from the category
 of Voevodsky geometric motives (\cite[chap.5]{FSV})
 by tensoring $\Hom$-groups with $\Qp$
 (see \cite[Cor. 16.1.6, Par. 15.2.5]{CD3}).
\end{enumerate}

With that definition, the construction of a Tate spectrum
 representing syntomic cohomology is almost obvious.
 In fact,
 we consider the sequence of presheaves 
$$
\sss:=(\sss(r), r \in \mathbb N),
$$
where each $\sss(r)$ with the trivial action of $\Sigma_r$.
 According to the first paragraph of Section 5.1,
 we can consider 
 the first Chern class of the canonical invertible sheaf $\mathbb P^1$:
 $\bar c \in H^2_{\synt}(\mathbb P^1_K,1)=H^2(\mathbb P^1_{K,h},\sss(1))$.
 Take any lift
  $c:\Qp(\mathbb P^1_K) \rightarrow \sss(1)[2]$ of this
        class. By the definition of the Tate twist, it defines an element
         $\Qp(1) \rightarrow \sss(1)$ still denoted by $c$.
 We define the suspension map:
$$
\sss(r) \otimes \Qp(1) \lomapr{\id\otimes c} \sss(r) \otimes \sss(1)
 \xrightarrow{\mu} \sss(r+1)
$$
 where $\mu$ is the multiplication coming from the graded
 dg-structure on $\sss(*)$.
Because this dg-structure is commutative, we obtain
 that these suspension maps induce a structures of a Tate spectrum
 on $\sss$.
 Moreover, $\sss$ is a Tate $\Omega$-spectrum because each
 $\sss(r)$ is $h$-local and $\Af$-local, and the map obtained by
 adjunction from $\sigma_r$ is a quasi-isomorphism because
 of the projective bundle theorem for $\mathbb P^1$ (an easy case
 of Proposition \ref{projective}).

Now, by definition of $\DM_h(K,\Qp)$ and because of property (DM1) above,
 for any variety $X$, and any integers $(i,r)$,
 we get:
$$
\Hom_{\DM_h(K,\Qp)}(M(X),\sss(r)[i])
 =\Hom_{\DMe_h(K,\Qp)}(\Qp(X),\Omega^\infty(\sss(r))[i])
 =H^i_{\synt}(X_h,r).
$$
Moreover, the commutative dg-structure on the complex $\sss(*)$
 induces a monoid structure on the associated Tate spectrum.
 In other words, $\sss$ is a ring spectrum (strict and commutative).
 This construction is completely analogous to the proof of
 \cite[Prop. 1.4.10]{DM1}. In particular, we can apply all the constructions
 of \cite[Sec. 3]{DM1} to the ring spectrum $\sss$. Let us summarize
 this briefly:
\begin{proposition}
\label{Bloch-Ogus}
\begin{enumerate}
\item Syntomic cohomology is covariant with respect
 to projective morphisms of smooth varieties
  (Gysin morphisms in the terminology of \cite{DM1}). More precisely, to  a projective morphism of smooth $K$-varieties $f: Y\to X$ one can associate a Gysin morphism in syntomic cohomology
  $$f_*: H^n_{\synt}(Y_h,i)\to H^{n-2d}_{\synt}(X_h,i-d), $$
  where $d$ is the dimension of $f$. 
\item The syntomic regulator over $\Qp$ is induced by the unit
 $\eta:\Qp \rightarrow \sss$
 of the ring spectrum $\sss$:
\begin{align*}
r_{\synt}:H^{r,i}_M(X) \otimes \Qp=&\Hom_{\DM_h(K,\Qp)}(M(X),\Qp(r)[i]) \\
& \longrightarrow \Hom_{\DM_h(K,\Qp)}(M(X),\sss(r)[i])=H^i_{\synt}(X_h,r).
\end{align*}
It is compatible with product, pullbacks and pushforwards.
\item The syntomic cohomology has a natural extension to 
 $h$-motives\footnote{and in particular to the usual Voevodsky geometrical
 motives by (DM3) above.}:
$$
\DM_h(K,\Qp)^{op} \rightarrow D(\Qp), \quad M \mapsto \Hom_{\DM_h(K,\Qp)}(M,\sss)
$$
and the syntomic regulator $r_{\synt}$ can be extended to motives.
\item There exists a canonical syntomic Borel-Moore homology
 $H^{\synt}_*(-,*)$ such that the pair of functors
 $(H_{\synt}^*(-,*),H^{\synt}_*(-,*))$ defines a Bloch-Ogus theory.
\item To the ring spectrum $\sss$ there is associated a cohomology with
 compact support satisfying the usual properties.
\end{enumerate}
\end{proposition}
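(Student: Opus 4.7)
The strategy is to leverage the construction of $\sss$ as a strict commutative monoid in $\DM_h(K,\Qp)$ and to invoke the general formalism developed in \cite[\S 3]{DM1} for motivic ring spectra. The plan is therefore \emph{not} to reprove each of the five assertions individually, but to verify that the hypotheses of the relevant abstract constructions of \emph{loc. cit.} are met by $\sss$, and then to read off the assertions (1)--(5) as specializations. The key inputs are already in place in the paper: $h$-locality of each $\sss(r)$ holds by definition; $\Af$-locality is Proposition \ref{homotopy}; the projective bundle formula needed to check the $\Omega$-spectrum condition is the $\mathbb{P}^1$-case of Proposition \ref{projective}; and the commutative dg-algebra structure on $\R\Gamma_{\synt}(X,*)$ from Theorem \ref{main1} supplies the multiplication $\mu$ making $\sss$ into a strictly commutative monoid.

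For assertion (1), I would argue as follows. Given a projective morphism $f:Y\to X$ between smooth $K$-varieties of relative dimension $d$, the six-functor formalism on $\DM_h(K,\Qp)$ (see \cite[\S 1]{DM1}, \cite{CD3}) produces a purity isomorphism $M(Y)\simeq f^!\Qp(d)[2d]$ up to a twist/shift, and consequently a Gysin morphism $f_*:M(Y)\to M(X)(-d)[-2d]$. Composing with the $h$-motive of a syntomic cohomology class yields the desired pushforward $H^n_{\synt}(Y_h,i)\to H^{n-2d}_{\synt}(X_h,i-d)$. Functoriality, projection formulas, and base change then follow formally from the abstract six-functor package applied to the ring spectrum $\sss$. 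For assertion (2), one uses that the unit $\eta:\Qp\to\sss$ of the ring spectrum induces the regulator, and its compatibility with products, pullbacks, and Gysin pushforwards is automatic from the fact that $\eta$ is a morphism of ring spectra (cf.\ \cite[Prop.~3.2.11]{DM1}).

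Assertion (3) is essentially tautological once $\sss$ is an object of $\DM_h(K,\Qp)$: the cohomological functor $M\mapsto \Hom_{\DM_h(K,\Qp)}(M,\sss(*)[*])$ extends syntomic cohomology to all $h$-motives, and by property (DM3) in particular to Voevodsky's geometric motives; the motivic regulator extends $r_{\synt}$ because $\eta$ is a natural transformation of functors. For (4), the Borel-Moore theory is defined by $H^{\synt}_n(X,r) := \Hom_{\DM_h(K,\Qp)}(\sss(r)[n], p_!p^*\Qp)$ where $p:X\to\Spec(K)$ is the structure morphism (equivalently via the dual motive $M^{\rm BM}(X)$); the Bloch-Ogus axioms, i.e.\ a cap product between $H^*_{\synt}$ and $H^{\synt}_*$ inducing a Poincar\'e-duality isomorphism in the smooth case and a long exact localization sequence for closed immersions, follow from the general framework of \cite[\S 3]{DM1} applied to $\sss$. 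Finally, (5) is obtained from the compactly supported functor $R\Gamma_c(X,\sss(r)):=\Hom_{\DM_h}(p_!\Qp,\sss(r)[*])$, whose standard properties (long exact sequences for open/closed decompositions, K\"unneth, etc.) are formal consequences of the six-functor formalism.

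The only potential obstacle is verifying that the multiplicative structure used to define the suspension maps (through $\bar c\in H^2_{\synt}(\mathbb{P}^1_K,1)$) is sufficiently rigid to make $\sss$ a \emph{strictly} commutative monoid in the appropriate sense; however, this is already built into the graded commutative dg-algebra structure of $\R\Gamma_{\synt}(X,*)$ supplied by Theorem \ref{main1}, so one only needs to check that the symmetric-group equivariance condition in the definition of a Tate spectrum is satisfied -- and this follows word for word as in the proof of \cite[Prop.~1.4.10]{DM1}. Once this point is confirmed, all five assertions drop out of the general machinery.
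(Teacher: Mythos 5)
Your proposal is correct and follows essentially the same approach as the paper: the paper constructs the ring spectrum $\sss$ by verifying exactly the inputs you list ($h$-locality, $\Af$-locality from Proposition \ref{homotopy}, the $\mathbb{P}^1$-projective bundle formula from Proposition \ref{projective}, and the commutative dg-algebra structure), and then simply defers the five assertions to \cite[Sec.~3.1--3.2]{DM1} applied to $\sss$, as you do. Your additional unpacking of how each assertion drops out of the six-functor machinery is consistent with what the cited references provide.
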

For points (1) and (2), we refer the reader to \cite[Sec. 3.1]{DM1}
 and for the remaining ones to \cite[Sec. 3.2]{DM1}.

\begin{remark}
Note that the construction of the syntomic ring spectrum
 $\sss$ in $\DM_h(K,\Qp)$ automatically yields the general projective
 bundle theorem (already obtained in Prop. \ref{projective}).
 More generally, the ring spectrum $\sss$ is \emph{oriented}
 in the terminology of motivic homotopy theory.
 Thus, besides the theory of Gysin morphisms,
 this gives various constructions
  -- symbols, residue morphisms -- and yields various formulas 
 -- excess intersection formula, blow-up formulas
 (see \cite{Deg8} for more details).
\end{remark}
\printindex

\end{document}